\definecolor{myblue}{RGB}{6,0,255}
\definecolor{mygreen}{RGB}{0,139,0}
\definecolor{myred}{RGB}{255,4,0}
\newcommand{\DefinedAs}[0]{\mathrel{\mathop:}=}
\newcommand{\AsDefined}[0]{=\mathrel{\mathop:}}
\newcommand{\rbr}[1]{\left(#1\right)}
\newcommand{\sbr}[1]{\left[#1\right]}
\newcommand{\cbr}[1]{\left\{#1\right\}}
\newcommand{\abr}[1]{\left|#1\right|}
\newcommand{\calP}{{\mathcal{P}}}
\newcommand{\field}[1]{\mathbb{#1}}
\newcommand{\Ind}[1]{ \field{I}{\{{#1}\}} }
\newcommand{\norm}[1]{\left\|{#1}\right\|}
\DeclareMathOperator*{\argmin}{argmin}
\DeclareMathOperator*{\argmax}{argmax}
\DeclareMathOperator*{\minimize}{minimize}
\DeclareMathOperator*{\maximize}{maximize}
\DeclareMathOperator*{\subject}{subject~to}
\begin{document}
	
	\title{Convergence and Sample Complexity of Natural Policy Gradient Primal-Dual Methods for Constrained MDPs}
	
	\author{\name Dongsheng~Ding
		\email dongshed@utk.edu \\
		\addr 
		University of Tennessee, Knoxville \\
		Knoxville, TN 37996, USA
		\AND
		\name Kaiqing~Zhang \email kaiqing@umd.edu \\
		\addr 
		University of Maryland, College Park\\
		College Park, MD 20742, USA
		\AND
		\name Jiali~Duan \email jduan24@apple.com \\
		\addr 
		Apple Inc. 
		\\
		Cupertino, CA 95014, USA
		\AND
		\name Tamer~Ba\c{s}ar \email basar1@illinois.edu \\
		\addr 
		University of Illinois Urbana-Champaign\\
		Champaign, IL 61820, USA
		\AND
		\name Mihailo~R.\ Jovanovi\'c \email mihailo@usc.edu \\
		\addr  
		University of Southern California\\
		Los Angeles, CA 90089, USA
	}
	
	\editor{Ambuj~Tewari}
	
	\maketitle
	
	\begin{abstract}%
		We study the sequential decision making problem of  maximizing the expected total reward while satisfying a constraint on the expected total utility. We employ the natural policy gradient method to solve the discounted infinite-horizon optimal control problem for Constrained Markov Decision Processes (constrained MDPs). Specifically, we propose a new Natural Policy Gradient Primal-Dual (NPG-PD) method that updates the primal variable via natural policy gradient ascent and the dual variable via projected subgradient descent. Although the underlying maximization involves a nonconcave objective function and a nonconvex constraint set, under the softmax policy parametrization, we prove that our method achieves global convergence with sublinear rates regarding both the optimality gap and the constraint violation. Such convergence is independent of the size of the state-action space, i.e., it is~dimension-free. Furthermore, for log-linear and general smooth policy parametrizations, we establish sublinear convergence rates up to a function approximation error caused by restricted policy parametrization. We also provide convergence and finite-sample complexity guarantees for two sample-based NPG-PD algorithms. We use a set of computational experiments to showcase the effectiveness of our approach.
		
	\end{abstract}
	
	\vspace*{0.15cm}
	
	\begin{keywords} Constrained Markov decision processes, Natural policy gradient, Constrained nonconvex optimization, Method of Lagrange multipliers, Primal-dual algorithms
	\end{keywords}
	
	\section{Introduction}
	\label{sec.introduction}
    
	Reinforcement learning (RL) studies sequential decision-making problems with the objective of maximizing the expected total reward while interacting with an unknown environment~\citep{sutton2018reinforcement}. Markov Decision Processes (MDPs) are typically used to model the dynamics of the environment. However, in many safety-critical applications, e.g., in autonomous driving~\citep{fisac2018general}, robotics~\citep{ono2015chance}, cyber-security~\citep{zhang2019non}, and financial management~\citep{abe2010optimizing}, the control system is also subject to constraints on its utilities or costs. In this setting, constrained Markov Decision Processes (constrained MDPs) are used to model the environment dynamics~\citep{altman1999constrained} and, in addition to maximizing the expected total reward it is also important to take into account the constraints on the expected total utility/cost as an extra learning objective. 
	
	Policy gradient (PG)~\citep{sutton2000policy} and natural policy gradient (NPG)~\citep{kakade2002natural} methods have enjoyed substantial empirical successes in solving MDPs~\citep{schulman2015trust,lillicrap2015continuous,mnih2016asynchronous,schulman2017proximal,sutton2018reinforcement}. PG methods,~or more generally {\em direct policy search\/} methods, have also been used to solve constrained MDPs~\citep{uchibe2007constrained,borkar2005actor,bhatnagar2012online,chow2017risk,tessler2018reward,liang2018accelerated,paternain2022safe,achiam2017constrained,spooner2020natural}, but most existing theoretical guarantees are asymptotic and/or only provide local convergence around stationary-point policies. On the other hand, it is desirable to show that, for arbitrary initial condition, a solution that enjoys $\epsilon$-optimality gap and $\epsilon$-constraint violation is computed using a finite number of iterations and/or samples. It is thus imperative to establish global convergence guarantees for PG methods when solving constrained MDPs. 
	
	In this work, we provide a theoretical foundation for the non-asymptotic global convergence of PG methods  for solving  constrained MDPs, and answer the following questions: 
	
	\begin{itemize}
		\item[(i)] Can we employ PG methods to solve optimal control problems for constrained MDPs? 
		
		\item[(ii)] Do PG methods converge to the globally optimal solution that satisfies constraints?
		
		\item[(iii)] What is the convergence rate of PG methods and the effect of the function approximation error caused by a restricted policy parametrization?
		
		\item[(iv)] What is the sample complexity of model-free PG methods? 
	\end{itemize}
	
	\subsection{Preview of key contributions} 
	
	Our key contributions are:
	
	\begin{itemize}
		\item[(i)] We propose a simple but effective primal-dual  policy gradient algorithm for solving discounted infinite-horizon optimal control problems for constrained MDPs. Our Natural Policy Gradient Primal-Dual (NPG-PD) method employs natural policy gradient ascent to update the primal variable and projected subgradient descent to update the dual variable.
		
		\item[(ii)] We exploit the structure of the softmax policy parametrization to establish global convergence guarantees in spite of the fact that the objective function in the maximization problem is not concave and the constraint set is not convex. In particular, we prove that our NPG-PD method achieves global convergence at a rate of $O(1/\sqrt{T})$ for both the optimality gap and the constraint violation, where $T$ is the total number of iterations. Our convergence guarantees are dimension-free, i.e., the rate is independent of the size of the state-action space. 
		
		\item[(iii)] We establish sublinear convergence at a rate of $O(1/\sqrt{T})$ in both the optimality gap and the constraint violation for log-linear and general smooth policy parametrizations, up to a function approximation error caused by restricted policy parametrization. This is accomplished by providing a new regret-type primal-dual analysis in the function approximation case, thereby eliminating the need for in-policy class comparison.
		
		\item[(iv)] We provide convergence and finite-sample complexity guarantees for two sample-based NPG-PD algorithms. The new sample complexity of  $O(1/\epsilon^4)$ for generating an $\epsilon$-optimal policy results from our new regret-type primal-dual analysis, accompanied by a practical stochastic gradient ascent method. We utilize a set of computational experiments to showcase the effectiveness of our approach. 
	\end{itemize}
	
	At this point it is worth highlighting the main differences between the results of this paper (as an extended version of our earlier NeurIPS paper~\citep{ding2020natural}) and those in~\cite{ding2020natural}. Although our algorithmic framework here builds on the NPG-PD method of~\cite{ding2020natural}, our new characterization of function approximation error, which is based on the estimation-transfer error decomposition in both the optimality gap and the constraint violation, facilitates the derivation of convergence and sample complexity results described in (iii) and (iv) above. In contrast, the earlier function approximation study of~\cite{ding2020natural} utilizes the classical notion of \emph{compatible function approximation} which is often challenging to control. Furthermore,~\cite{ding2020natural} adopts a standard drift analysis of constraint violation in online optimization, assuming in-policy class feasibility. This approach not only yields sub-optimal rates relative to the tabular case, but it also leaves the optimality of a comparison policy within policy class unjustified. 
	
	Our results as summarized in (iii) and (iv) above extend the PG methods studied in~\cite{agarwal2021optimality} and provide a novel contribution to the constrained MDP setting. In contrast to our earlier work~\citep{ding2020natural}, we establish here the optimal rate for log-linear and general smooth policy parameterizations up to a function approximation error, and eliminate the in-policy class feasibility assumption. By providing a new regret-type primal-dual analysis, we show that the derived rate for the function approximation case matches the optimal rate for the tabular case. Furthermore, in contrast to the sample complexity result of~\cite{ding2020natural}, which holds only when the estimates of value functions are bounded, we employ and analyze a more practical version of stochastic gradient method that does not require this boundedness assumption. Our new analysis allows us to establish an improved sample complexity of $O(1/\epsilon^4)$, compared to the previous $O(1/\epsilon^8)$, where $\epsilon$ is the desired level of accuracy.
	
	In addition to these technical differences, we also characterize the zero constraint violation performance (on average) of our method in both the tabular and the function approximation settings, and conduct computational experiments on a set of benchmark robotic simulation tasks to demonstrate the effectiveness of our approach. These are all new results compared to our conference version~\citep{ding2020natural}.
	
	\subsection{Related work} 
	
	Our work builds on Lagrangian-based constrained MDP algorithms~\citep{altman1999constrained,abad2002self,abad2003policy,borkar2005actor}. However, convergence guarantees of these algorithms are either local (to stationary-point or locally optimal policies)~\citep{bhatnagar2012online,chow2017risk,tessler2018reward} or asymptotic~\citep{borkar2005actor}. In the tabular setting, we compare the convergence rates in Table~\ref{tab.comparison_tabular} by assuming the exact evaluation of policy gradients. When function approximation is used for policy parametrization,~\cite{yu2019convergent} recognized the lack of convexity and showed asymptotic convergence (to a stationary point) of a method based on successive convex relaxations. In contrast, we establish convergence to a globally optimal solution in spite of the lack of convexity. References~\citep{paternain2019constrained,paternain2022safe} are closely related to our work.~\citet{paternain2019constrained} provided a duality analysis for constrained MDPs in the policy space and proposed a provably convergent dual descent algorithm by assuming access to a nonconvex optimization oracle. However, it is not clear how to obtain a solution to a primal nonconvex optimization problem and the global convergence guarantees are not established.~\citet{paternain2022safe} proposed a primal-dual algorithm and provided empirical results, but did not offer a convergence analysis. In spite of the lack of convexity, our work provides global convergence guarantees for a new primal-dual algorithm without using any optimization oracles. For the function approximation setting, we compare the convergence rates and sample complexities in Table~\ref{tab.comparison_function_approximation}. Other related policy optimization methods include CPG~\citep{uchibe2007constrained}, accelerated PDPO~\citep{liang2018accelerated}, CPO~\citep{achiam2017constrained,yang2020projection}, FOCOPS~\citep{zhang2020first}, IPPO~\citep{liu2020ipo}, and CUP~\citep{yang2022constrained} but theoretical guarantees for these algorithms are still lacking. Recently, optimism principles have been used for efficient exploration in constrained  MDPs~\citep{singh2022learning,zheng2020constrained,ding2020provably,qiu2020upper,efroni2020exploration,bai2023provably,yu2021provably,liu2021learning,wei2021provably}. In comparison, our work focuses on the optimization landscape within a primal-dual framework in both model-based and model-free settings.
	
	Our work is also pertinent to the global convergence results of PG methods.~\citet{fazel2018global,malpanbhakhabarwai20,mohzarsoljovCDC19,mohsoljovL4DC20,mohsoljovLCSS21,mohzarsoljovTAC22} provided global convergence guarantees and quantified sample complexity of (natural) PG methods for the nonconvex linear quadratic regulator problem of both discrete- and continuous-time systems.~\cite{zhang2020global} showed that locally optimal policies for MDPs are achievable using PG methods with reward reshaping.~\citet{wang2019neural} demonstrated that (natural) PG methods converge to the globally optimal value when overparametrized neural networks are used. A variant of NPG, trust-region policy optimization (TRPO)~\citep{schulman2015trust}, converges to the globally optimal policy with overparametrized neural networks~\citep{liu2019neural} and for regularized MDPs~\citep{shani2019adaptive}.~\citet{bhandari2024global,bhandari2020note} studied global optimality and convergence of PG methods from a policy iteration perspective.~\citet{agarwal2021optimality} characterized global convergence properties of (natural) PG methods and studied computational, approximation, and sample size issues. Additional recent advances along these lines include~\citep{mei2020global,zhang2020variational,cen2022fast,liu2020improved,khodadadian2022linear}. While all these references handled the lack of convexity in the objective function, additional effort is required to deal with \emph{nonconvex constraints}  that arise in 
	constrained MDPs. Our paper addresses \mbox{this challenge.}
	
	We also remark on some recent work on Lagrangian-based policy optimization.~\citet{liu2021policy,li2024faster,ying2022dual} examined a two-timescale scheme for updating the primal-dual variables by updating policy in an inner loop via an NPG-style subroutine for each dual iterate. In spite of the improved convergence that results from the proposed modifications of the Lagrangian and the dual update, the double-loop scheme often increases computational cost and introduces difficulty in parameter tuning. Additionally,~\citet{liu2021policy,liu2021learning,bai2021achieving} proposed policy optimization algorithms that offer zero constraint violation at the end of training, and~\cite{ding2023last,muller2024truly,montenegro2024last} introduced regularization to single-timescale primal-dual algorithms that achieve policy-iterate convergence, which is orthogonal to our work.
	
	\renewcommand{\arraystretch}{2.0}
	
	\begin{table}
		\begin{center}
			\begin{tabular}{ c||c } 
				\hline
				\textbf{Algorithm} & \textbf{Iteration/Sample complexities}  \\ 
				\hline
				\hline
				PG-PD~\citep{abad2002self}  & asymptotic \\ 
				\hline
				PG-PD~\citep{borkar2005actor} & asymptotic \\ 
				\hline
				NPG-PD (Theorem~\ref{thm.convergence.softmax}, Theorem~\ref{thm.samplecomplexity.loglinear}) & $O\left(\, {1}/{\epsilon^2}\, \right)$ / $O\left(\, {1}/{\epsilon^4} \,\right)$   \\ 
				\hline
			\end{tabular}
		\end{center}
		\caption{Complexity comparison of our NPG-PD method with closely related algorithms for the tabular case with finitely many states/actions. The iteration complexity is determined by the number of gradient-based updates that an algorithm takes to achieve $\epsilon$-optimality gap and $\epsilon$-constraint violation, $\frac{1}{T} \sum_{t\,=\,0}^{T-1} \big( V_r^\star (\rho) - V_r^{(t)} (\rho) \big) \leq \epsilon$ and $\big[\,\frac{1}{T} \sum_{t\,=\,0}^{T-1} \big(b- V_g^{(t)} (\rho) \big)\,\big]_+ \leq \epsilon$, and the sample complexity is determined by the number of trajectory rollouts. }
		\label{tab.comparison_tabular}
	\end{table}
	
	\begin{table}
		\begin{center}
			\begin{tabular}{ c||c } 
				\hline
				\textbf{Algorithm} & \textbf{Iteration/Sample complexities} \\ 
				\hline
				\hline
				PDO~\citep{chow2017risk} &  asymptotic  \\ 
				\hline
				RCPO~\citep{tessler2018reward} & asymptotic  \\ 
				\hline
				CBP~\citep{jain2022towards}
				&  $O\left(\, {1} / {\epsilon^2} \,\right)$ /  --- \\ 
				\hline
				C-NPG-PD~\citep{bai2023achieving} &  $O\left( \, {1} / {\epsilon^2} \, \right)$ /  $O\left(\, {1}/{\epsilon^4} \,\right)$ \\ 
				\hline
				NPG-PD (Theorem~\ref{thm.convergence.loglinear}, Theorem~\ref{thm.samplecomplexity.loglinear}) & $O\left(\, {1}/ {\epsilon^2}\, \right)$ / $O\left(\, {1} / {\epsilon^4} \,\right)$ \\
				\hline
				NPG-PD (Theorem~\ref{thm.convergence.general}, Theorem~\ref{thm.samplecomplexity.general}) & $O\left(\, {1}/ {\epsilon^2} \,\right)$ / $O\left(\, {1}/{\epsilon^4} \,\right)$  \\
				\hline
			\end{tabular}
		\end{center}
		\caption{Complexity comparison of our NPG-PD method with closely related algorithms for the function approximation case with potentially infinitely many states/actions. The iteration complexity is determined by the number of gradient-based iterations an algorithm takes to ensure $\epsilon$-optimality gap and $\epsilon$-constraint violation up to a function approximation error $\epsilon_{\text{fa}}$, $\mathbb{E}\big[\,\frac{1}{T} \sum_{t\,=\,0}^{T-1} \big( V_r^\star (\rho) - V_r^{(t)} (\rho) \big) \,\big]\leq \epsilon +\sqrt{\epsilon_{\text{fa}}}$ and $\mathbb{E}\big[\,\big[\,\frac{1}{T} \sum_{t\,=\,0}^{T-1} \big(b- V_g^{(t)} (\rho) \big)\,\big]_+\,\big] \leq \epsilon +\sqrt{\epsilon_{\text{fa}}}$. The sample complexity is determined by the number of trajectory rollouts to ensure $\epsilon$-optimality gap and $\epsilon$-constraint violation up to a function approximation error $\epsilon_{\text{fa}}$ that is given either by the bias error $\bar{{\epsilon}}_{\text{bias}}$ (C-NPG-PD) or the transfer error $\epsilon_{\text{\normalfont  bias}}$ (NPG-PD). The bias error $\bar{{\epsilon}}_{\text{bias}}$ contains the transfer error $\epsilon_{\text{\normalfont  bias}}$, which captures how well an approximation function class covers the true value function, and the error of policy representation. 
		}
		\label{tab.comparison_function_approximation}
	\end{table}
	
	\subsection{Paper outline}
	
	In Section~\ref{sec.formulation}, we formulate an optimal control problem for constrained Markov decision processes and provide necessary background material. In Section~\ref{sec.npgpd}, we describe our natural policy gradient primal-dual method. We provide convergence guarantees for our algorithm under the tabular softmax policy parametrization in Section~\ref{sec.softmax} and under log-linear and general smooth policy parametrizations in Section~\ref{sec.fa}. We establish convergence and finite-sample complexity guarantees for two model-free primal-dual algorithms in Section~\ref{sec.sample} and provide computational experiments in Section~\ref{sec.experiments}. We close the paper with remarks in Section~\ref{sec.conclusion}.
	
	\section{Problem setup }
	\label{sec.formulation}
	
	In Section~\ref{subsec.cmdps}, we introduce constrained Markov decision processes. In Section~\ref{subsec.Lagrange}, we present the method of Lagrange multipliers, formulate a saddle-point problem for the constrained policy optimization, and exhibit several problem properties: strong duality, boundedness of the optimal dual variable, and constraint violation. In Section~\ref{subsec.policy}, we introduce a parametrized formulation of the constrained policy optimization problem, provide an example of a constrained MDP that is not convex, and present several useful policy parametrizations.
	
	\subsection{Constrained Markov decision processes}
	\label{subsec.cmdps}
	
	We consider an infinite-horizon discounted Constrained Markov Decision Process~\citep{piunovskiyoptimal,altman1999constrained} 
	\[
	\text{CMDP}\,( \, S, \, A, \, P, \, r, \, g, \, b, \, \gamma, \, \rho \, )
	\]
	where $S$ is a finite state space, $A$ is a finite action space, $P$ is a transition probability measure which specifies the transition probability $P(s'\,\vert\,s,a)$ from state $s$ to the next state $s'$ under action $a\in A$, $r$: $S\times A\to[0,1]$ is a reward function, $g$: $S\times A\to[0,1]$ is a utility function, $b$ is a constraint offset, $\gamma \in [0, 1)$ is a discount factor, and $\rho$ is an initial  distribution over $S$.
	
	For any state $s_t$, a stochastic policy $\pi$: $S\to\Delta_A$ is a function in the probability simplex $\Delta_A$ over the action space $A$, i.e., $a_t\sim \pi(\cdot\,\vert\,s_t)$ at time $t$. Let $\Pi$ be a set of all possible policies. A policy $\pi \in \Pi$, together with the initial state distribution $\rho$, induces a distribution over trajectories $\tau=\{(s_t,a_t,r_t,g_t)\}_{t\,=\,0}^\infty$, where $s_0\sim\rho$, $a_t\sim\pi(\cdot\,\vert\,s_t)$ and $s_{t+1} \sim P(\cdot\,\vert\,s_t,a_t)$ for all $t\geq 0$. 
	
	Given a policy $\pi$, the value functions $V_{r}^{\pi}$, $V_{g}^{\pi}$: $S\to\mathbb{R}$ associated with the reward $r$ or the utility $g$ are determined by the expected values of total discounted rewards or utilities received under policy $\pi$:
	\[
	V_{r}^{\pi}(s) 
	\; \DefinedAs \;
	\mathbb{E} \sbr{\,\displaystyle{\sum_{t \, = \,0 }^{\infty}} \gamma^t r(s_t, a_t) \,\big\vert\,\pi, s_0 =s\,},
	~~
	V_{g}^{\pi}(s)
	\; \DefinedAs \;
	\mathbb{E} \sbr{\,\displaystyle{\sum_{t \, = \,0 }^{\infty}} \gamma^t g(s_t, a_t) \,\big\vert\,\pi, s_0 =s\,}
	\]
	where the expectation $\mathbb{E}$ is taken over the randomness of the trajectory $\tau$ induced by $\pi$. Starting from an arbitrary state-action pair $(s,a)$ and following a policy $\pi$, we also introduce the state-action value functions $Q_{r}^{\pi}(s,a)$, $Q_{g}^{\pi}(s,a)$: $S\times A\to\mathbb{R}$ together with their advantage functions $A_r^\pi$, $A_g^\pi$: $S\times A\to\mathbb{R}$:
	\[
	\begin{array}{rcl}
		Q_{\diamond}^{\pi}(s,a) 
		& \DefinedAs &
		\displaystyle{\mathbb{E} \sbr{\,\sum_{t\,=\,0}^{\infty}\gamma^t\diamond(s_t,a_t ) \,\big\vert\,\pi,  s_0 =s,a_0=a\,}}
		\\[0.25cm]
		A_\diamond^\pi 
		& \DefinedAs &
		Q_{\diamond}^{\pi}(s,a) \,-\, V_{\diamond}^{\pi}(s)
	\end{array}
	\] 
	where the symbol $\diamond$ represents either $r$ or $g$. Since $r$, $g \in [0,1]$, we have
	\[
	V_{\diamond}^{\pi}(s) \, \in \, \left[\, 0, \, \frac{1}{1-\gamma} \,\right]
	\] 
	and their expected values under the initial distribution $\rho$ are determined by
	\[
	V_{\diamond}^{\pi}(\rho) 
	\; \DefinedAs \; 
	\mathbb{E}_{s_0\,\sim\,\rho} \left[ \, V_{\diamond}^{\pi}(s_0) \, \right]. 
	\]
	
	Having defined a policy as well as the state-action value functions for the discounted constrained MDP, the objective is to find a policy that maximizes the expected reward value over all policies, subject to a constraint on the expected utility value:
	\begin{equation}\label{eq.cmdp}
		\begin{array}{rl}
			\maximize\limits_{\pi\,\in\,\Pi}
			&
			V_{r}^{\pi}(\rho)
			\\[0.2cm]
			\subject 
			& 
			V_{g}^{\pi}(\rho) 
			\;\geq\; 
			b.
		\end{array}
	\end{equation}
	In view of the aforementioned boundedness of $V_{r}^{\pi}(s)$ and $V_{g}^{\pi}(s)$, we set the constraint offset $b\in ( 0,{1}/(1-\gamma)]$ to make Problem~\eqref{eq.cmdp} meaningful. 
	
	\begin{remark}
		For notational convenience, we consider a single constraint in Problem~\eqref{eq.cmdp}, but our convergence guarantees are readily generalizable to problems with multiple constraints. 
	\end{remark}
	
	\subsection{Method of Lagrange multipliers}
	\label{subsec.Lagrange}
	
	By dualizing constraints~\citep{luenberger1984linear,bertsekas2014constrained}, we cast Problem~\eqref{eq.cmdp} into the following max-min problem:
	\begin{equation}\label{eq.Lagrangian}
		\maximize_{\pi\,\in\,\Pi}\;\,\minimize_{\lambda\,\geq\, 0} \;\, V_{r}^{\pi}(\rho) 
		\,+\, 
		\lambda \left(\,V_{g}^{\pi}(\rho)\,-\,b\,\right)
	\end{equation}
	where $V_L^{\pi,\lambda}(\rho) \DefinedAs V_{r}^{\pi}(\rho) +\lambda\, (V_{g}^{\pi}(\rho)-b)$ is the Lagrangian of Problem~\eqref{eq.cmdp}, $\pi$ is the primal variable, and $\lambda$ is the Lagrange multiplier or dual variable which is nonnegative. The associated dual objective function is defined as 
	\[
	V_D^{\lambda} (\rho)
	\; \DefinedAs \;
	\maximize_{\pi \,\in\, \Pi} \; V_L^{\pi,\lambda}(\rho). 
	\]
	
	Instead of utilizing the linear-programming-based  method~\citep{piunovskiyoptimal,altman1999constrained}, we employ {\em direct policy search\/} method to solve Problem~\eqref{eq.Lagrangian}. Direct policy search is attractive for three reasons: (i) it allows us to directly optimize/monitor the value functions that we are interested in; (ii) it can deal with large state-action spaces via policy parameterization, e.g., neural nets; and (iii) it can utilize policy gradient estimates via simulations of the policy. Since Problem~\eqref{eq.cmdp} is a nonconcave constrained maximization problem and the policy space is often infinite-dimensional, Problems~\eqref{eq.cmdp} and~\eqref{eq.Lagrangian} are challenging. 
	
	In spite of these challenges, Problem~\eqref{eq.cmdp} has nice properties in the policy space when it is strictly feasible. We adapt the standard Slater condition in constrained optimization~\citep{bertsekas2014constrained} and assume strict feasibility of Problem~\eqref{eq.cmdp} throughout the paper.
	
	\begin{assumption}[Slater condition]
		\label{as.slater}
		There exist a constant $\xi>0$ and a policy $\bar{\pi} \in \Pi$ such that $V_{g}^{\bar{\pi}}(\rho) -b \geq \xi$ holds.
	\end{assumption}
	
	The Slater condition is mild in practice because we usually have {\em a priori\/} knowledge on a strictly feasible policy, e.g., the minimal utility is achievable by a particular policy so that the constraint becomes loose.
	
	Let $\pi^\star$ denote an optimal solution to Problem~\eqref{eq.cmdp}, let $\lambda^\star$ be an optimal dual variable:
	\[
	\lambda^\star 
	\; \in \; 
	\argmin_{\lambda\,\geq\, 0} \; V_D^{\lambda} (\rho)
	\] 
	and let the set of all optimal dual variables be $\Lambda^\star$. We use the shorthand notation $V_r^{\pi^\star}(\rho) = V_r^{\star}(\rho)$ and $V_D^{\lambda^\star} (\rho) = V_D^{\star} (\rho)$ whenever it is clear from the context. We recall the strong duality for constrained MDPs and we prove boundedness of an optimal dual variable $\lambda^\star$. 
	
	\begin{lemma}[Strong duality and boundedness of $\lambda^\star$]
		\label{lem.duality}
		Let Assumption~\ref{as.slater} hold. Then 
		\begin{itemize}
			\item[\normalfont(i)] $V_r^{\star}(\rho) \, = \, V_D^{\star} (\rho)$; 
			\item[\normalfont(ii)] $ 0 \, \leq \, \lambda^\star \, \leq \, \left(V_r^{\star}(\rho)-V_r^{  \bar{\pi}} (\rho) \right) / {\xi} .
			$
		\end{itemize}
	\end{lemma}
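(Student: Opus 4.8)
The plan is to prove part~\normalfont(i) via the standard occupancy-measure reformulation that reveals the hidden convexity of Problem~\eqref{eq.cmdp}, and then to deduce part~\normalfont(ii) as an immediate consequence of strong duality together with the Slater policy $\bar{\pi}$.

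For part~\normalfont(i), I would introduce the discounted state-action occupancy measure induced by a policy $\pi$,
\[
	d^\pi_\rho(s,a) \; = \; (1-\gamma)\sum_{t\,=\,0}^\infty \gamma^t \, \Pr\rbr{s_t = s, \, a_t = a \,\mid\, \pi, \, s_0\sim\rho},
\]
and recall that $V_r^\pi(\rho) = \frac{1}{1-\gamma}\sum_{s,a} d^\pi_\rho(s,a)\, r(s,a)$ and analogously for $g$; that is, both the objective and the constraint are \emph{linear} in $d^\pi_\rho$. The crucial structural fact is that the set of admissible occupancy measures $\mathcal{D} \DefinedAs \cbr{ d^\pi_\rho : \pi \in \Pi }$ is a convex polytope, characterized by the Bellman flow (balance) constraints, and every $d\in\mathcal{D}$ is realized by the stationary policy $\pi(a\,\vert\,s) = d(s,a)/\sum_{a'}d(s,a')$. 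Hence Problem~\eqref{eq.cmdp} is \emph{equivalent} to a linear program over the convex set $\mathcal{D}$.

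With this reformulation, strong duality follows from convex duality theory: the constraint $V_g^\pi(\rho)\geq b$ becomes an affine inequality over $\mathcal{D}$, and Assumption~\ref{as.slater} supplies a strictly feasible point, so Slater's condition holds for the resulting convex program and the duality gap is zero. It then remains to verify that the dual of this LP coincides with our dual function $V_D^{\lambda}(\rho) = \max_{\pi\in\Pi} V_L^{\pi,\lambda}(\rho)$: for each fixed $\lambda\geq 0$ the Lagrangian $V_r^\pi(\rho) + \lambda\rbr{V_g^\pi(\rho)-b}$ is linear in $d^\pi_\rho$, so maximizing over $\pi\in\Pi$ is the same as maximizing over $d\in\mathcal{D}$, which matches the inner maximization in the LP dual. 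This yields $V_r^\star(\rho) = V_D^\star(\rho)$.

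For part~\normalfont(ii), nonnegativity $\lambda^\star\geq 0$ holds by definition of the dual variable. For the upper bound, I would invoke strong duality and optimality of $\lambda^\star$ together with the Slater policy $\bar{\pi}$,
\[
	V_r^\star(\rho) \; = \; V_D^\star(\rho) \; = \; \max_{\pi\in\Pi} V_L^{\pi,\lambda^\star}(\rho) \; \geq \; V_r^{\bar{\pi}}(\rho) + \lambda^\star\rbr{V_g^{\bar{\pi}}(\rho)-b} \; \geq \; V_r^{\bar{\pi}}(\rho) + \lambda^\star\xi,
\]
where the last inequality uses $V_g^{\bar{\pi}}(\rho)-b\geq\xi$ from Assumption~\ref{as.slater}. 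Rearranging gives $\lambda^\star \leq (V_r^\star(\rho)-V_r^{\bar{\pi}}(\rho))/\xi$. The main obstacle is part~\normalfont(i): in the policy parametrization the objective is nonconcave and the feasible set nonconvex, so a zero duality gap is not automatic; the decisive step is exposing the hidden convexity through the occupancy-measure change of variables, after which everything reduces to classical convex duality under Slater, and part~\normalfont(ii) is a one-line consequence.
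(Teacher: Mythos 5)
Your proposal is correct, and for part (ii) it is essentially the paper's own argument: the paper bounds the sublevel set $\Lambda_a=\{\lambda\geq 0\,\vert\,V_D^\lambda(\rho)\leq a\}$ via $a \geq V_D^\lambda(\rho)\geq V_r^{\bar{\pi}}(\rho)+\lambda\,(V_g^{\bar{\pi}}(\rho)-b)\geq V_r^{\bar{\pi}}(\rho)+\lambda\,\xi$ and then sets $a=V_r^\star(\rho)=V_D^\star(\rho)$ so that $\Lambda_a=\Lambda^\star$; your chain $V_r^\star(\rho)=V_D^{\lambda^\star}(\rho)\geq V_r^{\bar{\pi}}(\rho)+\lambda^\star\xi$ is the same inequality specialized at $\lambda^\star$, and since $\lambda^\star\in\Lambda^\star$ was arbitrary, the two phrasings cover exactly the same ground.

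Where you genuinely diverge is part (i). The paper does not prove strong duality at all — it cites \citet[Theorem~3.6]{altman1999constrained} and \citet{paternain2019constrained,paternain2022safe} — whereas you reconstruct the proof via the occupancy-measure change of variables: values are linear in $d^\pi_\rho$, the set of occupancy measures is a convex polytope cut out by the Bellman flow constraints, every point of that polytope is realized by a stationary policy, and the Slater point transfers to a strictly feasible point of the resulting linear program, after which zero duality gap is classical; the identification of the LP dual with $V_D^\lambda(\rho)$ follows because, for fixed $\lambda$, maximizing the (linear-in-$d$) Lagrangian over $\Pi$ equals maximizing over the polytope. This is precisely the route underlying Altman's cited theorem, and it is also consistent with the machinery the paper itself deploys elsewhere (the occupancy-measure LP in Appendix~B and the averaging-of-occupancy-measures step in the proof of Theorem~\ref{thm.convergence.softmax}). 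What the paper's choice buys is brevity; what yours buys is a self-contained lemma whose part (i) does not outsource the one genuinely structural fact — hidden convexity — to external references. Note that the alternative route in \citet{paternain2019constrained} argues instead through concavity of the perturbation function $v(\tau)$, which is the object the paper does use in Lemma~\ref{lem.constraint}, so either scaffolding would have been compatible with the rest of the paper. No gaps in your argument.
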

	\begin{proof}
		The proof of (i) is standard; e.g., see~\citet[Theorem~3.6]{altman1999constrained} or~\citet[Theorem~3]{paternain2022safe}.
		The proof of (ii) builds on the constrained convex optimization~\cite[Section~8.5]{beck2017first}. 	
		Let $\Lambda_a\DefinedAs\{ \lambda\geq 0\,\vert\, V_D^\lambda(\rho) \leq a \}$ be a sublevel set of the dual objective function for $a\in\mathbb{R}$. For any $\lambda \in\Lambda_a$, we have
		\[
		a 
		\; \geq \; 
		V_D^\lambda(\rho) 
		\; \geq \; 
		V_{r}^{\bar{\pi}}(\rho) 
		\,+\, 
		\lambda \left(V_{g}^{\bar{\pi}}(\rho) \,-\, b\right) 
		\; \geq \; 
		V_{r}^{\bar{\pi}}(\rho)
		 \,+\, 
		 \lambda\,\xi
		\]
		where $\bar{\pi}$ is a Slater point. Thus, $\lambda \leq \rbr{a -V_r^{  \bar{\pi}} (\rho)}/\xi$.	
		If we take $a = V_r^{\star}(\rho) = V_D^{\star}(\rho)$, then $\Lambda_a=\Lambda^\star$, which proves (ii).
	\end{proof}
	
	\begin{remark}
		In the proof of Lemma~\ref{lem.duality}~(ii), we choose a particular sublevel set of the dual objective function based on the strong duality~(i). However, the boundedness of an optimal dual variable $\lambda^\star$ does not necessarily depend on the strong duality~(i). In general, we have weak duality $V_D^\star(\rho)\geq  V_r^\star(\rho)$. In this case, the choice of $a=V_r^\star(\rho)$ yields an empty sublevel set $\Lambda_a = \emptyset$, and the choice of $a=V_D^\star(\rho)$ leads to  $0\leq \lambda^\star \leq \rbr{V_D^\star(\rho) -V_r^{  \bar{\pi}} (\rho)}/\xi$, which depends on an optimal dual variable.
	\end{remark}

	Let the value function associated with Problem~\eqref{eq.cmdp} be determined by
	\[
	v(\tau) 
	\; \DefinedAs \; 
	\maximize\limits_{\pi \,\in \, \Pi}
	\; 
	\left\{ \, V_{r}^{\pi}(\rho) \,\big\vert\,V_{g}^{\pi}(\rho) \geq b \; + \; \tau \, \right\}. 
	\]
	Using the concavity of $v(\tau)$ (e.g., see~\citet[Proposition~1]{paternain2019constrained}), in Lemma~\ref{lem.constraint} we establish a bound on the constraint violation, thereby extending a result from the constrained convex optimization~\cite[Section~8.5]{beck2017first} to a constrained nonconvex setting.
	
	\begin{lemma}[Constraint violation]\label{lem.constraint}
		Let Assumption~\ref{as.slater} hold. For any $C\geq 2\lambda^\star$, if there exists a policy $\pi\in\Pi$ and $\delta>0$ such that $V_r^\star(\rho)-V_r^\pi(\rho)+C \, [\,b-V_g^\pi(\rho)\,]_{+}\leq \delta$, then $[\,b-V_g^\pi(\rho)\,]_{+}\leq 2\delta/C$, where $[ \,x\, ]_+ \DefinedAs \max(x,0)$.
	\end{lemma}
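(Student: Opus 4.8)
The plan is to reduce the claim to a single \emph{sensitivity inequality} that ties the perturbation function $v$ to the optimal dual variable $\lambda^\star$, and then to close the argument with elementary algebra. Concretely, I would first establish that
\[
	v(\tau) \;\leq\; V_r^\star(\rho) \,-\, \lambda^\star \tau
	\qquad \text{for every } \tau,
\]
and then evaluate this bound at the particular perturbation induced by the given policy $\pi$.

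To obtain the sensitivity inequality I would invoke weak duality for the $\tau$-perturbed problem. For any $\lambda \geq 0$, relaxing the constraint $V_g^\pi(\rho) \geq b + \tau$ gives $v(\tau) \leq \max_{\pi \in \Pi}\big[\, V_r^\pi(\rho) + \lambda (V_g^\pi(\rho) - b - \tau)\,\big] = V_D^\lambda(\rho) - \lambda\tau$, since the additive constant $-\lambda\tau$ factors out of the maximization. Choosing $\lambda = \lambda^\star$ and using $V_D^{\lambda^\star}(\rho) = V_D^\star(\rho) = V_r^\star(\rho)$ from Lemma~\ref{lem.duality}(i) yields the displayed bound. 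Equivalently, this inequality is precisely the statement that $-\lambda^\star$ is a supergradient of the concave function $v$ at $\tau = 0$, which is the route suggested by the concavity of $v$ recalled before the lemma. Either derivation is short, but the sign conventions (the constraint tightens as $\tau$ grows, so $v$ is nonincreasing and its supergradient $-\lambda^\star$ is nonpositive) must be tracked carefully.

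Next I would specialize to the given $\pi$. Set $\hat\tau \DefinedAs V_g^\pi(\rho) - b$, so that $\pi$ is feasible (with equality) for the $\hat\tau$-perturbed problem and hence $v(\hat\tau) \geq V_r^\pi(\rho)$. Combining with the sensitivity inequality at $\tau = \hat\tau$ gives $V_r^\pi(\rho) \leq V_r^\star(\rho) + \lambda^\star (b - V_g^\pi(\rho))$. Since $\lambda^\star \geq 0$ we have $\lambda^\star(b - V_g^\pi(\rho)) \leq \lambda^\star [\,b - V_g^\pi(\rho)\,]_+$ regardless of whether $\pi$ is feasible, so that $V_r^\star(\rho) - V_r^\pi(\rho) \geq -\lambda^\star [\,b - V_g^\pi(\rho)\,]_+$. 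This single chain covers both cases and avoids a case split on the sign of $\hat\tau$.

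Finally I would substitute this lower bound into the hypothesis. Writing $w \DefinedAs [\,b - V_g^\pi(\rho)\,]_+$, the assumption $V_r^\star(\rho) - V_r^\pi(\rho) + C w \leq \delta$ combines with the previous inequality to give $(C - \lambda^\star)\,w \leq \delta$. Because $C \geq 2\lambda^\star$ forces $C - \lambda^\star \geq C/2 > 0$, dividing yields $w \leq \delta/(C - \lambda^\star) \leq 2\delta/C$, which is the claim. The only genuinely nontrivial step is the sensitivity inequality; everything after it is bookkeeping, and the main pitfall is handling the sign of $\hat\tau$ together with the truncation $[\,\cdot\,]_+$ so that the argument applies uniformly in the feasible and infeasible regimes.
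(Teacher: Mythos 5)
Your proposal is correct and follows essentially the same route as the paper's proof: both establish the sensitivity inequality $v(\tau) \leq v(0) - \lambda^\star \tau$ (i.e., that $-\lambda^\star$ is a supergradient of $v$ at $0$) via the Lagrangian bound and strong duality from Lemma~\ref{lem.duality}, evaluate it at the perturbation induced by the given policy $\pi$, and finish with the same algebra using $C - \lambda^\star \geq C/2$. The only differences are cosmetic: you choose the untruncated perturbation $\hat\tau = V_g^\pi(\rho) - b$ and absorb the truncation afterward via $\lambda^\star \geq 0$, whereas the paper takes $\tau = -[\,b - V_g^\pi(\rho)\,]_+$ from the start; both handle the feasible and infeasible regimes uniformly.
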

	\begin{proof}
		By the definition of $v(\tau)$, we have $v(0) = V_r^\star(\rho)$. 
		We also note that $v(\tau)$ is concave (see the proof of~\citet[Proposition~1]{paternain2019constrained}). First, we show that $-\lambda^\star\in\partial v(0)$. By the definition of  $V_L^{\pi,\lambda}(\rho) $ and the strong duality in Lemma~\ref{lem.duality},
		\[
		V_L^{\pi,\lambda^\star}(\rho) 
		\;\leq\; 
		\maximize_{\pi\,\in\,\Pi}\; V_L^{\pi,\lambda^\star}(\rho) 
		\;=\; 
		V_D^{\star}(\rho) 
		\;=\;
		V_r^{\star}(\rho) 
		\;=\;
		v(0),\; \text{ for all } \pi\in\Pi.
		\]
		Hence, for any $\pi\in\{ \pi\in\Pi \,\vert\,V_{g}^{\pi}(\rho) \geq b + \tau \}$,
		\[
		\begin{array}{rcl}
			v(0) \,-\, \tau \lambda^\star &\geq& V_L^{\pi,\lambda^\star}(\rho) \,-\, \tau\lambda^\star
			\\[0.2cm]
			&=& V_r^{\pi}(\rho) \,+\, \lambda^\star \left(\,V_g^\pi(\rho)\,-\,b\,\right) \,-\, \tau\lambda^\star
			\\[0.2cm]
			&=& V_r^{\pi}(\rho) \,+\, \lambda^\star \left(\,V_g^\pi(\rho)\,-\,b\,-\,\tau\,\right) 
			\\[0.2cm]
			&\geq& V_r^{\pi}(\rho).
		\end{array}
		\]
		Maximizing the right-hand side of the inequality above over $\{ \pi\in\Pi \,\vert\,V_{g}^{\pi}(\rho) \geq b + \tau \}$ yields
		\begin{equation}\label{eq.opt1}
			v(0) \,-\, \tau\lambda^\star 
			\;\geq\; 
			v(\tau)
		\end{equation}
		and, thus, $-\lambda^\star\in\partial v(0)$. 
		
		On the other hand, if we take $\tau = -\left[b - V_g^{{\pi}}(\rho)\right]_+$, then
		\begin{equation}\label{eq.opt2}
			V_r^{{\pi}}(\rho)
			\;\leq\;
			V_r^\star(\rho)
			\;=\;v(0) 
			\;\leq\; 
			v({\tau}).
		\end{equation}
		Combining ~\eqref{eq.opt1} and~\eqref{eq.opt2} yields 
		$
		V_r^{{\pi}}(\rho)-V_r^\star(\rho)  \leq-{\tau}{\lambda^\star}.
		$
		Thus,
		\[
		\rbr{\,C \,-\, \lambda^\star\,} \abr{{\tau}} 
		\; = \;  
		-\, \lambda^\star\abr{{\tau}} \,+\, C \abr{{\tau}}
		\; = \; 
		{\tau} \lambda^\star \,+\, C\abr{{\tau}}
		\; \leq \; 
		V_r^\star(\rho) \,-\, V_r^{{\pi}}(\rho) \,+\, C \abr{{\tau}}
		\]
		which completes the proof by applying the assumed condition on $\pi$.
	\end{proof}
	
	Aided by the above properties implied by the Slater condition, we target the max-min Problem~\eqref{eq.Lagrangian} using the primal-dual method. 
	
	\subsection{Policy parametrization}
	\label{subsec.policy}
	
	Introduction of a set of parametrized policies $\{\pi_\theta \,\vert\,\theta\in \Theta\}$ brings Problem~\eqref{eq.cmdp} into a constrained optimization problem over a finite-dimensional parameter space $\Theta$:
	\begin{equation}\label{eq.cmdp.p}
		\begin{array}{rl}
			\maximize\limits_{\theta\,\in\,\Theta}
			&
			V_{r}^{\pi_\theta}(\rho)
			\\[0.2cm]
			\subject 
			& 
			V_{g}^{\pi_\theta}(\rho) 
			\;\geq\; 
			b.
		\end{array}
	\end{equation}
	A parametric version of Problem~\eqref{eq.Lagrangian} is given by
	\begin{equation}\label{eq.Lagrangian.p}
		\maximize_{\theta \,\in\,\Theta}\;\,\minimize_{\lambda\,\geq\, 0} 
		\;\,  	
		V_{r}^{\pi_\theta}(\rho) \,+\, \lambda\, \big(\,V_{g}^{\pi_\theta}(\rho) \, - \, b\,\big)
	\end{equation}
	where 
	$
	V_L^{\pi_\theta,\lambda}(\rho) 
	\DefinedAs
	V_{r}^{\pi_\theta}(\rho) + \lambda (V_{g}^{\pi_\theta}(\rho)-b)
	$
	is the associated Lagrangian and $\lambda$ is the Lagrange multiplier. The dual objective function is determined by $V_D^{\lambda} (\rho) \DefinedAs \maximize_{\theta}  V_L^{\pi_\theta,\lambda}(\rho)$. The primal maximization problem~\eqref{eq.cmdp.p} is finite-dimensional but not concave, even in the absence of a constraint~\citep{agarwal2021optimality}. In Lemma~\ref{lem.nonconvex} we prove that, in general, Problem~\eqref{eq.cmdp.p} is not convex because it involves maximization of a nonconcave objective function over a nonconvex constraint set. The proof is provided in Appendix~\ref{pf.nonconvex} and it utilizes an example of a constrained MDP in Figure~\ref{fig.cmdp}. 
	
	\begin{lemma}[Lack of convexity]\label{lem.nonconvex}
		There exists a constrained MDP for which the objective function $V_r^{\pi_\theta}(s)$ in Problem~\eqref{eq.cmdp.p} is not concave and the constraint set $\{\theta \in\Theta\,\vert\, V_g^{\pi_\theta}(s) \geq b\}$ is not convex. 
	\end{lemma}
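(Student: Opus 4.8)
The plan is to prove the statement by explicit construction, exhibiting the single constrained MDP drawn in Figure~\ref{fig.cmdp} together with a low-dimensional policy parametrization for which both pathologies occur at once. First I would fix all the data of a small finite CMDP: a handful of states arranged in a short chain, two actions per state that either ``advance'' along the chain or ``stall,'' deterministic (or nearly deterministic) transitions, a reward $r$ and a utility $g$ supported on a few state-action pairs, a discount $\gamma\in[0,1)$, an initial distribution $\rho$, and a constraint offset $b\in(0,1/(1-\gamma)]$. The chain structure is the crucial modeling choice: because the probability of reaching a downstream rewarding or utility-bearing state is a \emph{product} of the per-state action probabilities, the induced maps $\theta\mapsto V_r^{\pi_\theta}(\rho)$ and $\theta\mapsto V_g^{\pi_\theta}(\rho)$ become genuinely nonlinear (polynomial) in the parameters rather than affine, which is exactly what is needed to break both concavity and convexity.

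Second, using a direct parametrization of the per-state action probabilities (so that $\Theta$ is a product of simplices), I would compute closed-form expressions for $V_r^{\pi_\theta}(\rho)$ and $V_g^{\pi_\theta}(\rho)$ by solving the finite, triangular Bellman equations along the chain. With the transitions chosen deterministic, these reduce to elementary geometric-type sums, so the resulting closed forms are easy to manipulate.

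Third, to certify non-concavity of the objective I would restrict $V_r^{\pi_\theta}(\rho)$ to a one-parameter line segment in $\Theta$, obtaining a scalar map $\varphi(\alpha)=V_r^{\pi_{\theta(\alpha)}}(\rho)$, and exhibit two endpoints together with their midpoint at which the chord lies strictly above the graph, i.e. $\varphi\big(\tfrac12(\alpha_0+\alpha_1)\big)<\tfrac12\big(\varphi(\alpha_0)+\varphi(\alpha_1)\big)$; equivalently, one may verify $\varphi''>0$ on a subinterval. Fourth, and analogously, to certify non-convexity of the constraint set I would exhibit two parameter points $\theta^{(0)},\theta^{(1)}$ that are both feasible, $V_g^{\pi_{\theta^{(0)}}}(\rho)\ge b$ and $V_g^{\pi_{\theta^{(1)}}}(\rho)\ge b$, whose midpoint is infeasible, $V_g^{\pi_{\frac12(\theta^{(0)}+\theta^{(1)})}}(\rho)<b$. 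This holds precisely because the nonlinearity identified above renders the superlevel set $\{\,\theta\in\Theta:V_g^{\pi_\theta}(\rho)\ge b\,\}$ a disconnected, hence non-convex, region for a suitably chosen threshold $b$.

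I expect the main obstacle to be the joint design step: arranging the \emph{same} CMDP so that the reward value function is non-concave and the utility superlevel set is non-convex, while keeping every value function in a clean closed form. Concretely, this amounts to choosing the chain length, the placement of $r$ and $g$, and the numerical values of $\gamma$ and $b$ so that one fixed parameter direction simultaneously witnesses the strict chord inequality for $V_r$ and the disconnected superlevel set for $V_g$; once these choices are locked in, the remaining work—evaluating the elementary closed forms at the selected points—is routine.
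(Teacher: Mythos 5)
Your proposal is correct and follows essentially the same route as the paper's own proof: the paper uses exactly the chain CMDP of Figure~\ref{fig.cmdp}, computes $V_r^{\pi}(s_1)=\pi(a_2\,\vert\,s_1)\,\pi(a_1\,\vert\,s_2)$ and $V_g^{\pi}(s_1)=\pi(a_1\,\vert\,s_1)+\pi(a_2\,\vert\,s_1)\,\pi(a_1\,\vert\,s_2)$, and certifies both pathologies by the same two midpoint violations you describe (non-concavity of $V_r$ along a parameter segment, and two feasible points with an infeasible midpoint for $b=0.9$). The only substantive difference is the parametrization: the paper restricts to a segment of softmax parameters~\eqref{eq.softmax}, whereas you use the direct parametrization with $\Theta$ a product of simplices. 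Both are legitimate instances of Problem~\eqref{eq.cmdp.p}, and your choice arguably simplifies the algebra, since the value functions become bilinear in $\big(\pi(a_2\,\vert\,s_1),\pi(a_1\,\vert\,s_2)\big)$ and the midpoint computations are immediate.

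One inaccuracy you should remove: the claim that the superlevel set $\{\theta\in\Theta \,\vert\, V_g^{\pi_\theta}(s)\geq b\}$ is \emph{disconnected} is false for this example. Writing $u=\pi(a_2\,\vert\,s_1)$ and $v=\pi(a_1\,\vert\,s_2)$, the constraint reads $u(1-v)\leq 1-b$, and its solution set in $[0,1]^2$ is path-connected: from any feasible $(u,v)$ the segment $\{(tu,v)\,\vert\,t\in[0,1]\}$ remains feasible, so every feasible point connects to the feasible face $u=0$. The set is connected yet non-convex, which is all Lemma~\ref{lem.nonconvex} requires; your midpoint certificate establishes this directly and does not rely on disconnectedness, so the certificate stands while that justification should be dropped.
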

	
	\begin{figure}[h]
		\centering
		\begin{tikzpicture}
			\node[state]             (s1) {$s_1$};
			\node[state, right=1.6cm of s1] (s2) {$s_2$};
			\node[state, right=1.6cm of s2] (s3) {$s_3$};
			\node[state, above=1.6cm of s1] (s4) {$s_4$};
			\node[state, above=1.6cm of s2] (s5) {$s_5$};
			
			\draw[every loop]
			(s1) edge[right, auto=right]  node {$(0,0)$} (s2)
			(s2) edge[right, auto=right]  node {$(0,0)$} (s3)
			(s3) edge[loop above]             node {$(0,0)$} (s3)
			(s1) edge[above, auto=right] node {$(0,1)$} (s4)
			(s4) edge[loop above]             node {$(0,0)$} (s4)
			(s2) edge[above, auto=right] node {$(1,1)$} (s5)
			(s5) edge[loop above]             node {$(0,0)$} (s5);
		\end{tikzpicture}
		\caption{An example of a constrained MDP for which the objective function $V_r^{\pi_\theta}(s)$ in Problem~\eqref{eq.cmdp.p} is not concave and the constraint set $\{\theta \in\Theta\,\vert\,V_g^{\pi_\theta}(s)\geq b\}$ is not convex. A pair $(r,g)$ associated with a directed arrow represents the $(\text{reward, utility})$ received when an action in a certain state is taken. This example is utilized in the proof of Lemma~\ref{lem.nonconvex}.} 
		\label{fig.cmdp}
	\end{figure}
	
	In general, the Lagrangian $V_L^{\pi_\theta,\lambda}(\rho)$ in Problem~\eqref{eq.Lagrangian.p} is convex in $\lambda$ but not concave in $\theta$. While many algorithms for solving max-min optimization problems, e.g., those proposed in~\citet{lin2019gradient,nouiehed2019solving,yang2020global}, require extra assumptions on the max-min structure or only guarantee convergence to a stationary point, we exploit problem structure and propose a new primal-dual method to compute a globally optimal solution to Problem~\eqref{eq.Lagrangian.p}. Before doing that, we first introduce several useful classes of policies. 
	
	\subsubsection{Direct policy parametrization}
	
	A direct parametrization of a policy is a probability distribution:
	\[
	\pi_\theta (a\,\vert\,s)  
	\; = \;	
	\theta_{s,a} \;\text{ for all } \theta \, \in \, \Delta_{A}^{|S|}
	\]
	where $\theta_s \in \Delta_{A}$ for any $s\in S$, i.e., $\theta_{s,a} \geq 0$ and $\sum_{a\,\in\,A} \theta_{s,a} = 1$. This policy class is complete since it directly represents any stochastic policy. Even though it is a challenging class of policy to work with from both theoretical and computational viewpoints~\citep{mei2020global,agarwal2021optimality}, it offers a useful sanity check for many policy search methods.
	
	\subsubsection{Softmax policy parametrization} 
	
	This class of policies is parametrized by a softmax function:
	\begin{equation}\label{eq.softmax}
		\pi_\theta(a\,\vert\,s) 
		\;=\;
		\frac{\exp(\theta_{s,a})}{\sum_{a'\,\in\, A} \exp(\theta_{s,a'})} \;\text{ for all }\; \theta \in\mathbb{R}^{|S||A|}.
	\end{equation}
	The softmax policy can be used to represent any stochastic policy, and its closure contains all stationary policies. It has been utilized to study the convergence properties of many RL algorithms~\citep{bhandari2024global,agarwal2021optimality,mei2020global,cen2022fast,khodadadian2022linear}, and it offers several algorithmic advantages: (i) it equips the policy with a special structure so that the NPG update works like the classical multiplicative weights update in online learning~\citep{freund1997decision,cesa2006prediction}; (ii) it can be used to interpret the function approximation error~\citep{agarwal2021optimality}. 
	
	\subsubsection{Log-linear policy parametrization} 
	
	A log-linear policy is given by
	\begin{equation}\label{eq.loglinear}
		\pi_\theta(a\,\vert\,s) 
		\;=\;
		\frac{\exp(  \theta^\top \phi_{s,a} )}{\sum_{a'\,\in\, A} \exp( \theta^\top \phi_{s,a'}  )} \; \text{ for all }\; \theta \in\mathbb{R}^{d}
	\end{equation}
	where $\phi_{s,a} \in \mathbb{R}^d$ is the feature map at a state-action pair $(s,a)$. The log-linear policy builds on the softmax policy by applying the softmax function to a set of linear functions in a given feature space. More importantly, it exactly characterizes the linear function approximation via policy parametrization~\citep{agarwal2021optimality}; see~\cite{miryoosefi2022simple,amani2021safe} for solving constrained MDPs with linear function approximation.
	
	\subsubsection{General policy parametrization} 
	
	A general class of stochastic policies is given by $\{\pi_\theta \,\vert\,\theta\in \Theta \}$ with $\Theta\subset\mathbb{R}^d$ without specifying a parametric structure of the policy $\pi_\theta$. The parameter space has dimension $d$ and this policy class covers the settings that utilize nonlinear function approximation, such as (deep) neural networks~\citep{liu2019neural,wang2019neural}. 
	
	When we choose the dimension $d$ such that $d\ll |S||A|$ in either the log-linear policy or the general nonlinear policy, the policy class has limited expressiveness and may not contain all stochastic policies. Motivated by this observation, the theory that we develop in Section~\ref{sec.fa} establishes global convergence up to a function approximation error caused by restricted policy parametrization.
	
	\section{Natural policy gradient primal-dual method}
	\label{sec.npgpd}
	
	In Section~\ref{subsec.warmup}, we provide a brief summary of three basic optimization methods that have been used to solve the constrained policy optimization problem~\eqref{eq.cmdp.p}. In Section~\ref{subsec.npgpd}, we propose a natural policy gradient primal-dual method which represents an extension of the natural policy gradient method to the constrained problems.
	
	\subsection{Constrained policy optimization methods}
	\label{subsec.warmup}
	
	We summarize three basic optimization methods that can be employed to solve the primal problem~\eqref{eq.cmdp.p}. We assume that the value function and the policy gradient can be evaluated exactly for any given policy. 
	
	We first introduce some useful definitions. The discounted visitation distribution $d_{s_0}^\pi$ of a policy $\pi$ and its expectation over the initial distribution $\rho$ are, respectively, given by
	\begin{equation}\label{eq.visitation}
		\begin{array}{rcl}
			d_{s_0}^\pi (s)
			& \DefinedAs & 
			(1-\gamma) \displaystyle{\sum_{t\,=\,0}^{\infty}} \gamma^t P^\pi(s_t=s \,\vert\,s_0)
			\\[0.2cm]
			d_\rho^\pi(s) 
			& \DefinedAs &
			\mathbb{E}_{s_0\,\sim\,\rho} \left[\,d_{s_0}^\pi (s)\,\right]
		\end{array}
	\end{equation}
	where  $P^\pi(s_t=s \,\vert\,s_0)$ is the probability of visiting state $s$ at time $t$ under the policy $\pi$ with an initial state $s_0$. When the use of parametrized policy $\pi_\theta$ is clear from the context, we use $V_{r}^{\theta}(\rho)$ to denote $V_{r}^{\pi_\theta}(\rho)$. When $\pi_\theta(\cdot\,\vert\,s)$ is differentiable and when it belongs to the probability simplex, i.e., $\pi_\theta\in \Delta_A^{|S|}$ for all $\theta$, the policy gradient of the Lagrangian~\eqref{eq.Lagrangian.p} is determined by
	\begin{equation}\label{eq.policy gradient}
		\begin{array}{rcl}
			\nabla_\theta V_L^{\theta,\lambda}(s_0) 
			&  =  &
			\nabla_\theta V_r^{\theta} (s_0) 
			\, + \,
			 \lambda \,\nabla_\theta V_g^{\theta} (s_0)
			\\[0.2cm]
			&  =  &
			\dfrac{1}{1 \, - \, \gamma}\, \mathbb{E}_{s\,\sim\, d_{s_0}^{\pi_\theta}}\,\mathbb{E}_{a\,\sim\,\pi_\theta(\cdot\,\vert\,s)} \sbr{\,A_L^{\theta,\lambda} (s,a) \nabla_\theta \log \pi_\theta(a\,\vert\,s)\,}
		\end{array}
	\end{equation}
	where $A_L^{\theta,\lambda} (s,a)  \, \DefinedAs\,  A_r^{\theta}(s,a)  + \lambda A_g^{\theta}(s,a)$. 
	
	\subsubsection{Dual method} 
	
	When the strong duality in Lemma~\ref{lem.duality} holds, it is convenient to work with the dual formulation of Problem~\eqref{eq.cmdp.p}:
	\begin{equation}\label{eq.cmdp.d}
		\minimize_{\lambda\,\geq\, 0} \;   V_D^{\lambda} (\rho).
	\end{equation}
	While the dual objective function is convex regardless of the concavity of the primal maximization problem~\eqref{eq.cmdp.p}, it is often non-differentiable~\citep{bertsekasnonlinear08}. Thus, a projected dual subgradient descent is used to solve the dual problem:
	\[
	\lambda^{(t+1)} 
	\; = \; 
	\mathcal{P}_+ \left( \lambda^{(t)} 
	\, - \, 
	\eta\, g_\lambda^{(t)} \right)
	\]
	where  $\calP_+( \cdot )$ is the projection to the non-negative real axis, $\eta>0$ is the stepsize, and  $g_\lambda^{(t)}$ is a subgradient of the dual objective  function evaluated at $\lambda  = \lambda^{(t)}$, i.e., $g_\lambda^{(t)} \in \partial_\lambda V_D^{\lambda^{(t)}} (\rho)$.
	
	The dual method works in the space of dual variables and it requires efficient evaluation of the subgradient of the dual objective function. We note that computing the dual objective function $V_D^\lambda(\rho)$ for a given $\lambda = \lambda^{(t)}$ in each step amounts to a standard unconstrained RL problem~\citep{paternain2019constrained}. In spite of global convergence guarantees for several policy search methods in the tabular setting, it is challenging to obtain the dual objective function and/or to compute its subgradient, e.g., when the problem dimension is high and/or when the state space is continuous. Although the primal problem can be approximated using the first-order Taylor expansion~\citep{achiam2017constrained,yang2020projection}, inverting Hessian matrices becomes a computational burden,  and it is costly to implement the dual method.
	
	\subsubsection{Primal method} 
	
	In the primal method, a policy search strategy works directly on the primal problem~\eqref{eq.cmdp.p} by seeking an optimal policy in a feasible region. The key challenge is to ensure the feasibility of the next policy iterate in the search direction, which is similar to the use of the primal method in nonlinear programming~\citep{luenberger1984linear}.
	
	An intuitive approach is to check the feasibility of each policy iterate and determine whether the constraint is active~\citep{xu2021crpo}. If the policy iterate is feasible or the constraint is inactive, we move towards maximizing the single objective function; otherwise, we look for a feasible direction. For the softmax policy parametrization~\eqref{eq.softmax}, this can be accomplished using a simple first-order gradient-based  method:
	\begin{equation}\label{eq.primal}
		\begin{array}{rcl}
			\theta_{s,a}^{(t+1)} 
			& = &
			\theta_{s,a}^{(t)}  
			\, + \, 
			\eta\, G_{s,a}^{(t)}(\rho)
			\\[0.2cm]
			G_{s,a}^{(t)}(\rho) 
			& \DefinedAs &
			\begin{cases}
				\dfrac{1}{1-\gamma} \, A_g^{(t)}(s,a), \; \text{ when } \; V_g^{(t)}(\rho) \, < \, b - \epsilon_b
				\\[0.2cm]
				\dfrac{1}{1-\gamma} \, A_r^{(t)}(s,a), \; \text{ when } \; V_g^{(t)}(\rho) \, \geq \, b - \epsilon_b
			\end{cases}
		\end{array}
	\end{equation} 
	where we use shorthand $A_r^{(t)}(s,a)$ and $A_g^{(t)}(s,a)$ to denote $A_r^{\theta^{(t)}}(s,a)$ and $A_g^{\theta^{(t)}}(s,a)$, respectively, $G_{s,a}^{(t)}(\rho)$ is the gradient-ascent direction determined by the scaled version of the advantage functions, and $\epsilon_b$ is the relaxation parameter for the constraint $V_{g}^{\pi_\theta}(\rho) \geq b$. When the iterate violates the relaxed constraint, $V_{g}^{\pi_\theta}(\rho) \geq b-\epsilon_b$ with $\epsilon_b>0$, it maximizes the constraint function to gain feasibility. More reliable evaluation of the feasibility often demands a more tractable characterization of the constraint, e.g., by utilizing Lyapunov function~\citep{chow2018lyapunov}, Gaussian process modeling~\citep{sui2018stagewise}, backward value function~\citep{satija2020constrained}, and logarithmic penalty function~\citep{liu2020ipo}. Hence, the primal method offers the adaptability of adjusting a policy to satisfy the constraint, which is desirable in safe training applications. However, global convergence theory is still lacking, and recent progress~\citep{xu2021crpo} requires a careful relaxation of the constraint. 
	
	\subsubsection{Primal-dual method}
	
	The primal-dual method simultaneously updates the primal and dual variables~\citep{arrowstudies58}. With the direct parametrization $\pi_{\theta}(a\,\vert\,s) = \theta_{s,a}$, a basic primal-dual method~\citep{abad2003policy} performs the following Policy Gradient Primal-Dual (PG-PD) update:
	\begin{equation}\label{eq.PGAGD}
		\begin{array}{rcl}
			\theta^{(t+1)}  &=&  \calP_\Theta\left(\theta^{(t)} \,+\, 
			\eta_1\,\nabla_\theta V_L^{\theta^{(t)},\lambda^{(t)}} (\rho)\right)
			\\[0.15cm]
			\lambda^{(t+1)} &=& \calP_\Lambda\left( \lambda^{(t)} \, - \, \eta_2\, \big(\,{V_g^{\theta^{(t)}}(\rho)\,-\,b}\,\big)\right)
		\end{array}
	\end{equation}
	where $\nabla_\theta V_L^{\theta^{(t)},\lambda^{(t)}} (\rho)\DefinedAs\nabla_\theta V_r^{\theta^{(t)}} (\rho) + \lambda^{(t)} \nabla_\theta V_g^{\theta^{(t)}} (\rho)$, $\eta_1>0$ and $\eta_2>0$ are the stepsizes, $\calP_\Theta$ is the projection onto a probability simplex $\Theta \DefinedAs \Delta_{A}^{|S|}$, and $\calP_\Lambda$ is the projection that will be specified later. For the max-min formulation~\eqref{eq.Lagrangian.p}, the PG-PD method~\eqref{eq.PGAGD} directly performs projected gradient ascent in the policy parameter $\theta$ and descent in the dual variable $\lambda$, both over the Lagrangian $V_L^{\theta,\lambda}(\rho)$. The primal-dual method overcomes the disadvantages of the primal and dual methods either by relaxing the precise calculation of the subgradient of the dual objective function, or by changing the descent direction via tuning of the dual variable. While this simple method provides a foundation for solving constrained MDPs~\citep{chow2017risk,tessler2018reward}, the lack of convexity in~\eqref{eq.Lagrangian.p} makes it challenging to establish the global convergence theory for the primal-dual method, which is our primary objective. 
	
	We first leverage the structure of the constrained policy optimization problem~\eqref{eq.cmdp.p} to provide a positive result in terms of both the optimality gap and the constraint violation. 
	
	\begin{theorem}[Restrictive convergence: direct policy parametrization]\label{thm.convergence.direct}
		Let Assumption~\ref{as.slater} hold with a policy class $ \{\pi_\theta = \theta\,\vert\,\theta\in\Theta\}$ and let $\Lambda=[\,0, {2}/\rbr{(1-\gamma)\xi}\,]$, $\rho>0$, $\lambda^{(0)}=0$, and $\theta^{(0)}$ be such that $V_r^{\theta^{(0)}}(\rho) \geq V_r^\star(\rho)$. If we choose
		$\eta_1 =\Theta(1)$ and $\eta_2=\Theta(1/\sqrt{T})$, then the iterates $ \theta^{(t)}$ generated by the PG-PD method~\eqref{eq.PGAGD} satisfy
		\[
		\begin{array}{rcl}
			\text{\normalfont(Optimality gap)}\;\; 
			\displaystyle 
			\frac{1}{T} \sum_{t\,=\,0}^{T-1} \big(\,V_r^\star(\rho) \,-\, V_r^{(t)}(\rho) \,\big)
			& \leq & \displaystyle
			C_1
			\frac{|A|\, |S| }{(1-\gamma)^6\, {T}^{1/4}} \left\|{d_\rho^{\pi^\star}/{\rho}} \right\|_\infty^2
			\\[0.4cm]
			\displaystyle
			\text{\normalfont(Constraint violation)}\;\;
			\sbr{\,
				\frac{1}{T}\sum_{t \, = \,0 }^{T-1} \big(\,b \,-\, V_g^{(t)}(\rho)\,\big)
				\,}_+  
			& \leq & \displaystyle
			C_2\frac{|A|\, |S| }{(1-\gamma)^6 \, T^{1/4}} \left\|{d_\rho^{\pi^\star}/{\rho}}\right\|_\infty^2
		\end{array}
		\]
		where $C_1$ and $C_2$ are absolute constants that are independent of $T$.
	\end{theorem}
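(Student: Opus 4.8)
The plan is to run a primal--dual regret analysis that substitutes \emph{gradient domination} for the missing concavity on the primal side and \emph{online gradient descent} on the dual side, and then to convert the resulting Lagrangian bound into separate guarantees on the optimality gap and the constraint violation.

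\emph{Primal side.} For any fixed multiplier $\lambda$, the Lagrangian $V_L^{\theta,\lambda}(\rho)$ in~\eqref{eq.Lagrangian.p} is the value function of an MDP with reward $r+\lambda g$; hence it is $\beta$-smooth in $\theta$ with $\beta = O((1+\lambda)|A|/(1-\gamma)^3)$ and, by the performance difference lemma, it satisfies a gradient-domination inequality against any comparator policy, in particular $\pi^\star$:
\[
V_L^{\pi^\star,\lambda}(\rho) - V_L^{\theta,\lambda}(\rho) \;\le\; \frac{1}{1-\gamma}\left\|\frac{d_\rho^{\pi^\star}}{\rho}\right\|_\infty \max_{\bar\theta\in\Theta}\langle \bar\theta-\theta,\nabla_\theta V_L^{\theta,\lambda}(\rho)\rangle .
\]
Writing $G^{(t)}$ for the gradient mapping of the primal step in~\eqref{eq.PGAGD}, the projection inequality together with smoothness bounds the right-hand side by $O(\sqrt{|S|}\,\|G^{(t)}\|)$, while the standard ascent lemma gives $V_L^{\theta^{(t+1)},\lambda^{(t)}}(\rho) \ge V_L^{\theta^{(t)},\lambda^{(t)}}(\rho) + \tfrac{\eta_1}{2}\|G^{(t)}\|^2$ for each $t$.

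I would then telescope this ascent inequality, inserting the correction $V_L^{\theta^{(t+1)},\lambda^{(t+1)}}(\rho)-V_L^{\theta^{(t+1)},\lambda^{(t)}}(\rho)=(\lambda^{(t+1)}-\lambda^{(t)})(V_g^{(t+1)}(\rho)-b)$ to account for the change of objective between iterations. Since $|V_g-b|\le 1/(1-\gamma)$ and the dual step moves $\lambda$ by at most $\eta_2/(1-\gamma)$, this ``dual drift'' contributes at most $T\eta_2/(1-\gamma)^2$, so that with $\eta_1$ of order $1/\beta$ and $\lambda^{(0)}=0$,
\[
\frac1T\sum_{t=0}^{T-1}\|G^{(t)}\|^2 \;\le\; \frac{2\big(V_L^{\theta^{(T)},\lambda^{(T)}}(\rho)-V_r^{\theta^{(0)}}(\rho)\big)}{\eta_1 T} + \frac{2\eta_2}{\eta_1(1-\gamma)^2} \;=\; O\!\left(\tfrac{1}{\sqrt T}\right),
\]
where the initialization $V_r^{\theta^{(0)}}(\rho)\ge V_r^\star(\rho)$ keeps the boundary term of the telescope controlled. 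Cauchy--Schwarz then yields $\frac1T\sum_t\|G^{(t)}\| = O(T^{-1/4})$, and combining with gradient domination gives $\frac1T\sum_t\big(V_L^{\pi^\star,\lambda^{(t)}}(\rho)-V_L^{\theta^{(t)},\lambda^{(t)}}(\rho)\big) = O(T^{-1/4})$ up to the stated $|S||A|$, $(1-\gamma)$ and $\|d_\rho^{\pi^\star}/\rho\|_\infty$ factors.

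\emph{Dual side and combination.} The dual update in~\eqref{eq.PGAGD} is online projected gradient descent on the linear losses $\lambda\mapsto\lambda(V_g^{(t)}(\rho)-b)$ over $\Lambda=[0,2/((1-\gamma)\xi)]$, so the usual regret bound gives, for every $\lambda\in\Lambda$, $\frac1T\sum_t(\lambda-\lambda^{(t)})(b-V_g^{(t)}(\rho)) \le \frac{(\lambda^{(0)}-\lambda)^2}{2\eta_2 T}+\frac{\eta_2}{2(1-\gamma)^2}$. Expanding $V_L^{\pi^\star,\lambda^{(t)}}-V_L^{\theta^{(t)},\lambda^{(t)}} = (V_r^\star-V_r^{(t)})+\lambda^{(t)}(V_g^\star-V_g^{(t)})$ and using $V_g^\star(\rho)\ge b$, the primal bound reads $\frac1T\sum_t(V_r^\star-V_r^{(t)})+\frac1T\sum_t\lambda^{(t)}(b-V_g^{(t)}) \le O(T^{-1/4})$. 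Taking the dual comparator $\lambda=0$ isolates the optimality gap; taking $\lambda=\lambda_{\max}:=2/((1-\gamma)\xi)$ and adding the dual regret produces
\[
\big(V_r^\star(\rho)-\bar V_r\big) + \lambda_{\max}\,[\,b-\bar V_g\,]_+ \;\le\; \delta, \qquad \delta=O(T^{-1/4}),
\]
where $\bar V_r,\bar V_g$ denote the running averages, which are the reward/utility values of the mixture policy with occupancy measure $\frac1T\sum_t d_\rho^{\pi^{(t)}}$. Since $\lambda_{\max}\ge 2\lambda^\star$ by Lemma~\ref{lem.duality}(ii), Lemma~\ref{lem.constraint} applies and converts this into $[\,b-\bar V_g\,]_+\le 2\delta/\lambda_{\max}=O(T^{-1/4})$, the claimed constraint-violation bound; the optimality-gap bound follows from the $\lambda=0$ inequality.

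\emph{Main obstacle and constants.} The essential difficulty is the primal--dual coupling: because the primal objective $V_L^{\cdot,\lambda^{(t)}}(\rho)$ changes at every iteration, the ascent telescope cannot be closed without paying the dual-drift term $\sum_t|\lambda^{(t+1)}-\lambda^{(t)}|/(1-\gamma)$, which with $\eta_2$ of order $1/\sqrt T$ degrades $\frac1T\sum_t\|G^{(t)}\|^2$ from the unconstrained $O(1/T)$ down to $O(1/\sqrt T)$, and hence---after Cauchy--Schwarz---turns the $O(T^{-1/2})$ rate of plain projected gradient ascent into $O(T^{-1/4})$. Balancing $\eta_1=O(1)$ against $\eta_2=O(1/\sqrt T)$ so that both the drift term and the dual regret stay $O(T^{-1/4})$ is the delicate quantitative step. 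The remaining bookkeeping---tracking the smoothness constant $\beta=O(|A|/(1-\gamma)^3)$, the simplex diameter $O(\sqrt{|S|})$, the range $O(\lambda_{\max}/(1-\gamma))$, and the mismatch coefficient $\|d_\rho^{\pi^\star}/\rho\|_\infty$---is routine and yields the stated (deliberately loose) $|A||S|(1-\gamma)^{-6}\|d_\rho^{\pi^\star}/\rho\|_\infty^2$ prefactors, with $C_1,C_2$ absorbing the residual dependence on $\xi$.
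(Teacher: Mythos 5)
Your proposal is correct in substance, but your primal-side argument takes a genuinely different route from the paper's. The paper's proof (Appendix~\ref{pf.convergence.direct}, Lemma~\ref{lem.BAP}) never invokes gradient domination or a gradient-mapping analysis: it passes to the occupancy-measure reformulation, where $F_r$ and $F_g$ are \emph{linear}, builds the comparator $\theta_\alpha = \pi^q\big(\alpha q^{\theta^\star}+(1-\alpha)q^{\theta^{(t)}}\big)$ by mixing occupancy measures (a conditional-gradient-flavored step), bounds $\|\theta_\alpha-\theta^{(t)}\|^2\le\alpha^2 D_\theta$ via the mismatch coefficient, and arrives at the quadratic recursion $\alpha^{(t+1)}\le\big(1-\tfrac{1}{2}\alpha^{(t)}\big)\alpha^{(t)}+\tfrac{1}{4\sqrt T}$ for the normalized Lagrangian gap, solved by induction to give $\alpha^{(t)}\le T^{-1/4}$ for \emph{every} $t$. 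You instead run the smooth-nonconcave projected-ascent analysis (ascent lemma, telescope with an explicit dual-drift correction, Cauchy--Schwarz to get $\tfrac1T\sum_t\|G^{(t)}\|=O(T^{-1/4})$) followed by variational gradient domination against $\pi^\star$, which yields the same $O(T^{-1/4})$ average Lagrangian regret. The dual side --- OGD regret over $\Lambda$, the choice $\lambda\in\{0,\,2/((1-\gamma)\xi)\}$ according to the sign of $\sum_t\big(b-V_g^{(t)}(\rho)\big)$, the occupancy-measure mixture policy $\pi'$, and Lemma~\ref{lem.constraint} with $C=2/((1-\gamma)\xi)\ge2\lambda^\star$ --- is identical in both proofs. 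What each approach buys: the paper's recursion controls every iterate rather than only the average, but it genuinely needs the restrictive initialization $V_r^{\theta^{(0)}}(\rho)\ge V_r^\star(\rho)$ and $\lambda^{(0)}=0$ as the base case of its induction; your telescope only needs the boundary term $V_L^{\theta^{(T)},\lambda^{(T)}}(\rho)-V_r^{(0)}(\rho)\le (1+\lambda_{\max})/(1-\gamma)$, so on your route the initialization assumption is in fact superfluous --- a small but real advantage. Your prefactors also come out tighter (order $\sqrt{|S||A|}\,\|d_\rho^{\pi^\star}/\rho\|_\infty$ rather than $|S||A|\,\|d_\rho^{\pi^\star}/\rho\|_\infty^2$), which still implies the stated bounds since $\|d_\rho^{\pi^\star}/\rho\|_\infty\ge1$.

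Three bookkeeping points to make explicit when writing this up: (i) approximate stationarity holds at $\theta^{(t+1)}$, not $\theta^{(t)}$, so gradient domination bounds the gap of $\theta^{(t+1)}$ against $\lambda^{(t)}$; re-indexing to recover $\tfrac1T\sum_t\big(V_L^{\pi^\star,\lambda^{(t)}}(\rho)-V_L^{\theta^{(t)},\lambda^{(t)}}(\rho)\big)$ costs an extra $O(\eta_2/(1-\gamma)^2)$ per term plus one boundary gap, both harmless; (ii) gradient domination must be invoked against the fixed comparator $\pi^\star$ rather than the maximizer of $V_L^{\cdot,\lambda^{(t)}}(\rho)$ --- this is legitimate because the performance-difference argument underlying that lemma works for an arbitrary comparator policy, but it should be stated, since the standard citation phrases it against the optimum; (iii) the positive part $[\,b-\bar V_g\,]_+$ only materializes after the two-case choice of dual comparator, exactly as in the paper's proof of Theorem~\ref{thm.convergence.softmax}.
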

	
	For tabular constrained MDPs with direct policy parametrization, Theorem~\ref{thm.convergence.direct} guarantees that, on average, the optimality gap $V_r^\star(\rho) - V_r^{(t)}(\rho)$ and the constraint violation $b-V_g^{(t)}(\rho)$ decay to zero at a sublinear rate $1/{T}^{1/4}$. However, this rate explicitly depends on the sizes of state/action spaces $|S|$ and $|A|$, and the distribution shift $\|{{d_\rho^{\pi^\star}}/{\rho}}\|_\infty$, which characterizes the exploration factor. A careful initialization $\theta^{(0)}$ that satisfies $V_r^{\theta^{(0)}}(\rho) \geq V_r^{\theta^\star}(\rho)$ is also required. We leave it as future work to prove a tighter rate for this tabular setting.
	
	The proof of Theorem~\ref{thm.convergence.direct}, which first appeared in~\cite{ding2022convergence}, is provided in Appendix~\ref{pf.convergence.direct} for completeness. We exploit the problem structure that casts the primal problem~\eqref{eq.cmdp.p} as a linear program in the occupancy measure~\citep{altman1999constrained} and apply the convex optimization analysis. This method is not well-suited for large-scale problems, and the projection onto a high-dimensional probability simplex is not desirable in practice. We next introduce a natural policy gradient primal-dual method to overcome these challenges and provide stronger convergence guarantees. 
	
	\subsection{Natural policy gradient primal-dual (NPG-PD) method}
	\label{subsec.npgpd}
	
	The Fisher information matrix induced by a parametrized policy $\pi_\theta$, denoted
	\[
	F_\rho(\theta) 
	\; \DefinedAs  \;  
	\mathbb{E}_{s\,\sim\,d_\rho^{\pi_\theta}} \mathbb{E}_{a\,\sim\,\pi_\theta(\cdot\,\vert\,s)}  
	\left[\,
	\nabla_\theta \log \pi_\theta(a\,\vert\,s) 
	\rbr{ \nabla_\theta \log \pi_\theta(a\,\vert\,s)}^\top
	\,\right],
	\]
	is now used in the update of the primal variable in our primal-dual algorithm. The expectations are taken over the randomness of the state-action trajectory induced by $\pi_\theta$, and the Natural Policy Gradient Primal-Dual (NPG-PD) method for solving Problem~\eqref{eq.Lagrangian.p} is given by
	\begin{equation}\label{eq.NPGAGD}
		\begin{array}{rcl}
			\theta^{(t+1)}  
			& = &  
			\theta^{(t)} 
			\; + \; 
			\eta_1\,
			 F^\dagger_\rho(\theta^{(t)}) \, \nabla_\theta V_L^{\theta^{(t)},\lambda^{(t)}} (\rho)
			\\[0.15cm]
			\lambda^{(t+1)} 
			& = & 
			\calP_\Lambda \left( \lambda^{(t)} \, - \, \eta_2\, \big(\,{V_g^{\theta^{(t)}}(\rho) \,-\,b}\,\big) \right)
		\end{array}
	\end{equation}
	where $\dagger$ denotes the Moore-Penrose inverse of a given matrix, $\calP_\Lambda(\cdot)$ denotes the projection to an  interval $\Lambda$ that will be specified later, and ($\eta_1, \eta_2$) are the constant stepsizes in the updates of the primal and dual variables. The primal update $\theta^{(t+1)}$ is obtained using a preconditioned gradient ascent via the natural policy gradient
	$
	F^\dagger_\rho(\theta^{(t)}) \nabla_\theta V_L^{(t)} (\rho) 
	$
	and it represents the policy gradient of the Lagrangian $V_L^{{(t)}}(\rho)$ in the manifold induced by the Fisher information matrix $F_\rho(\theta^{(t)})$. On the other hand, the dual update $\lambda^{(t+1)}$ is obtained using a projected subgradient descent by collecting the constraint violation
	$
	b  - V_g^{{(t)}}(\rho),
	$
	where, for brevity, we use $V_L^{(t)} (\rho)$ and $V_g^{(t)}(\rho)$ to denote $V_L^{\theta^{(t)},\lambda^{(t)}} (\rho)$ and $V_g^{\theta^{(t)}}(\rho)$, respectively.

    Throughout the paper, we abbreviate the policy notation $\pi_{\theta^{(t)}}$ as $\pi^{(t)}$, or as $\pi_\theta^{(t)}$ when it is necessary to emphasize the dependence on $\theta$.
	
	In Section~\ref{sec.softmax}, we establish the global convergence of  NPG-PD~\eqref{eq.NPGAGD} under the softmax policy parametrization. In Section~\ref{sec.fa}, we examine the general policy parametrization and, in Section~\ref{sec.sample}, we analyze the sample complexity of two sample-based implementations of the NPG-PD method~\eqref{eq.NPGAGD}. 
	
	\begin{remark}\label{re: PDL}
		The performance difference lemma~\citep{kakade2002approximately,agarwal2021optimality}, which quantifies the difference between $V_\diamond^{\pi}(s_0)$ and $V_\diamond^{\pi'}(s_0)$ for any two policies $\pi$ and $\pi'$ and any state $s_0$ as follows
		\[
		V_\diamond^{\pi}(s_0) \, - \, V_\diamond^{\pi'}(s_0) 
		\; = \;
		\frac{1}{1-\gamma} \,\mathbb{E}_{s\,\sim\,d_{s_0}^\pi, a\,\sim\,\pi(\cdot\,\vert\,s)}\left[ \, A_\diamond^{\pi'}(s,a) \,\right], 
		\]
		is utilized in our analysis, where the symbol $\diamond$ denotes either $r$ or $g$.  
	\end{remark}
	
	\section{Tabular softmax parametrization: dimension-free global convergence}
	\label{sec.softmax}
	
	We first examine the NPG-PD method~\eqref{eq.NPGAGD} under the softmax policy parametrization~\eqref{eq.softmax}.  Strong duality in Lemma~\ref{lem.duality} holds on the closure of the softmax policy class, because of the completeness of the softmax policy class. Even though the maximization problem~\eqref{eq.cmdp.p} is not concave, we establish global convergence of our algorithm with dimension-independent convergence rates. 
	
	We first exploit the softmax policy structure to show that the primal update in~\eqref{eq.NPGAGD} can be expressed in a more compact form; see Appendix~\ref{app.npgpd} for the proof.
	
	\begin{lemma}[Primal update as MWU]
		\label{lem.npgpd}
		Let $\Lambda \DefinedAs [\,0, 2/((1-\gamma)\xi)\,]$ and let $A_L^{(t)}(s,a) \DefinedAs A_r^{(t)}(s,a)+\lambda^{(t)} A_g^{(t)}(s,a)$. Under the softmax parametrized policy~\eqref{eq.softmax}, the NPG-PD algorithm~\eqref{eq.NPGAGD} can be brought to the following form
		\begin{subequations}
			\label{eq.NPG-PD-softmax}
			\begin{equation}
				\label{eq.NPGAGD.clean}
				\begin{array}{rcl}
					\theta_{s,a}^{(t+1)} 
					& = & 
					\theta_{s,a}^{(t)} 
					\, + \, 
					\dfrac{\eta_1 }{1-\gamma} \,A_L^{(t)}(s,a)
					\\[0.15cm]
					\lambda^{(t+1)} 
					& = & 
					\calP_\Lambda
					\left( \,\lambda^{(t)} 
					\, - \, 
					\eta_2\, \big(\,V_g^{{(t)}}(\rho)\,-\,b\,\big) \,\right).
				\end{array}
			\end{equation}
			Furthermore, the primal update in~\eqref{eq.NPGAGD.clean} can be equivalently expressed as 
			\begin{equation}
				\pi^{(t+1)}(a\,\vert\,s) 
				\; = \; 
				\pi^{(t)}(a\,\vert\,s) \,\frac{\exp \rbr{ \frac{\eta_1}{1-\gamma}\, A_L^{(t)}(s,a) }}{Z^{(t)}(s)}
				\label{eq.pi-softmax}
			\end{equation}
		\end{subequations}
		where $Z^{(t)}(s) \DefinedAs \sum_{a\,\in\, A} \pi^{(t)}(a\,\vert\,s) \exp \big( \frac{\eta_1}{1-\gamma} A_L^{(t)}(s,a) \big)$.	
	\end{lemma}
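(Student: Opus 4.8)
The plan is to handle the two displayed forms separately: equation~\eqref{eq.NPGAGD.clean} amounts to simplifying the natural-gradient preconditioning under the softmax geometry, while equation~\eqref{eq.pi-softmax} is a routine rearrangement of the softmax function. The dual update is already identical in~\eqref{eq.NPGAGD} and~\eqref{eq.NPGAGD.clean} once we record $V_g^{(t)}(\rho)=V_g^{\theta^{(t)}}(\rho)$, so nothing is needed there beyond fixing the projection interval $\Lambda=[\,0,2/((1-\gamma)\xi)\,]$. The crux is therefore to evaluate $F_\rho^\dagger(\theta^{(t)})\nabla_\theta V_L^{(t)}(\rho)$ in closed form.

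First I would recall that, by definition of the Moore--Penrose inverse, the NPG direction $w^\star\DefinedAs F_\rho^\dagger(\theta)\nabla_\theta V_L^{\theta}(\rho)$ is the minimum-norm solution of the normal equations $F_\rho(\theta)\,w=\nabla_\theta V_L^{\theta}(\rho)$. The key is to exhibit an explicit solution. Under softmax parametrization~\eqref{eq.softmax} the score has the closed form
\[
\frac{\partial}{\partial\theta_{s',a'}}\log\pi_\theta(a\,\vert\,s)
\;=\;
\mathbf{1}[s'=s]\big(\mathbf{1}[a'=a]-\pi_\theta(a'\,\vert\,s)\big),
\]
so that for any vector $w$ the inner product $w^\top\nabla_\theta\log\pi_\theta(a\,\vert\,s)$ equals the $\pi_\theta(\cdot\,\vert\,s)$-centered coordinate $w_{s,a}-\mathbb{E}_{a''\sim\pi_\theta(\cdot\,\vert\,s)}[w_{s,a''}]$.

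Next I would verify that $w=\tfrac{1}{1-\gamma}A_L^{\theta}$ solves the normal equations exactly. Since advantage functions are centered, $\mathbb{E}_{a\sim\pi_\theta(\cdot\,\vert\,s)}[A_L^{\theta}(s,a)]=0$, the previous display gives $w^\top\nabla_\theta\log\pi_\theta(a\,\vert\,s)=\tfrac{1}{1-\gamma}A_L^{\theta}(s,a)$. Substituting this into the definition of $F_\rho(\theta)$ and invoking the policy-gradient expression for $\nabla_\theta V_L^{\theta}(\rho)$ stated before the lemma yields $F_\rho(\theta)\,w=\tfrac{1}{1-\gamma}\mathbb{E}_{s\sim d_\rho^{\pi_\theta},\,a\sim\pi_\theta}[A_L^{\theta}(s,a)\nabla_\theta\log\pi_\theta(a\,\vert\,s)]=\nabla_\theta V_L^{\theta}(\rho)$, as desired.

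The main obstacle, and the only subtle point, is that the Moore--Penrose direction $w^\star$ need not equal $\tfrac{1}{1-\gamma}A_L^{\theta}$ verbatim: the two may differ by an element of $\ker F_\rho(\theta)$. Here I would argue that this kernel consists of vectors that are constant in $a$ within each state block (precisely the vectors annihilated by the centering map above), and that the softmax function~\eqref{eq.softmax} is invariant under adding a state-dependent constant to $\{\theta_{s,a}\}_{a}$. Hence both choices induce the same policy iterate, and we may legitimately write the primal update as $\theta_{s,a}^{(t+1)}=\theta_{s,a}^{(t)}+\tfrac{\eta_1}{1-\gamma}A_L^{(t)}(s,a)$, establishing~\eqref{eq.NPGAGD.clean}. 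Finally, to obtain~\eqref{eq.pi-softmax} I would insert this update into~\eqref{eq.softmax}, factor $\exp(\theta_{s,a}^{(t)})$ out of numerator and denominator, and divide through by $\sum_{a''}\exp(\theta_{s,a''}^{(t)})$; the surviving factors are exactly $\pi^{(t)}(a\,\vert\,s)$ times $\exp(\tfrac{\eta_1}{1-\gamma}A_L^{(t)}(s,a))$ normalized by $Z^{(t)}(s)$, which is the stated multiplicative-weights form.
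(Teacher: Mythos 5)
Your proposal is correct and follows essentially the same route as the paper's proof: both rest on the softmax score formula, the verification that the scaled centered advantage satisfies the defining equation for the natural gradient direction, the characterization of the residual degeneracy as state-dependent constant offsets, and the invariance of the softmax parametrization to such offsets (which the paper expresses by absorbing $c_s + \lambda^{(t)} d_s$ into $Z^{(t)}(s)$ and then setting $c_s = d_s = 0$). The only cosmetic differences are that you phrase the argument via the normal equations and the kernel of $F_\rho(\theta)$ whereas the paper phrases it via the compatible-function-approximation regression problem --- these are equivalent, since the minimizers of that least-squares objective are exactly the solutions of the normal equations --- and that you treat the Lagrangian advantage $A_L^{(t)}$ jointly while the paper handles $A_r^{(t)}$ and $A_g^{(t)}$ separately before recombining.
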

	
	The primal update in~\eqref{eq.NPGAGD.clean} does not depend on the state distribution $d_\rho^{\pi^{(t)}}$ that appears in the NPG-PD algorithm~\eqref{eq.NPGAGD} through the policy gradient~\eqref{eq.policy gradient}. This is because of the Moore-Penrose inverse of the Fisher information matrix in~\eqref{eq.NPGAGD}. Furthermore, the policy update~\eqref{eq.pi-softmax} is given by the multiplicative weights update (MWU), which is commonly used in online linear optimization~\citep{cesa2006prediction}. In contrast, an advantage function appears in the MWU policy update at each iteration in~\eqref{eq.pi-softmax}. {Such a connection to MWU has been first identified in the unconstrained MDP case by  \cite{agarwal2021optimality}.}
	
	In Theorem~\ref{thm.convergence.softmax}, we establish global convergence of the NPG-PD algorithm~\eqref{eq.NPGAGD.clean} with respect to both the optimality gap $V_r^\star(\rho) - V_r^{(t)}(\rho)$ and the constraint violation $b-V_g^{(t)}(\rho)$. Even though we set $\theta_{s,a}^{(0)}=0$ and $\lambda^{(0)}=0$ in the proof of Theorem~\ref{thm.convergence.softmax} in Section~\ref{subsec.softmax.pf}, the global convergence can be established for arbitrary initial conditions.
	
	\begin{theorem}[Global convergence: softmax policy parametrization]\label{thm.convergence.softmax}
		Let Assumption~\ref{as.slater} hold for $\xi>0$ and let us fix $T>0$ and $\rho\in\Delta_S$. If we choose $\eta_1 = 2 \log|A|$ and $\eta_2=2(1-\gamma)/{\sqrt{T}}$, then the iterates $\{\pi^{(t)}\}_{t=0}^{T-1}$ generated by the algorithm~\eqref{eq.NPG-PD-softmax} satisfy
		\[
		\begin{array}{rcl}
			\text{\normalfont(Optimality gap)}\;\; 
			\displaystyle 
			\frac{1}{T} \sum_{t\,=\,0}^{T-1} \big(\, V_r^\star(\rho) \,-\, V_r^{(t)}(\rho)\, \big)
			& \leq &
			\displaystyle
			\frac{7}{(1-\gamma)^2}\frac{1}{\sqrt{T}}
			\\[0.4cm]
			\text{\normalfont(Constraint violation)}\;\; 
			\displaystyle 
			\sbr{\,
				\frac{1}{T}\sum_{t \, = \,0 }^{T-1} \big(\,b\,-\,V_g^{(t)}(\rho)\,\big)
				\,}_+  
			& \leq & 
			\displaystyle
			\frac{2/\xi \, + \, 4\,\xi}{(1-\gamma)^{2}}\frac{1}{\sqrt{T}}.
		\end{array}
		\]
	\end{theorem}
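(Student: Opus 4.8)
The plan is to run a coupled online-learning analysis: view the primal update as multiplicative weights (Lemma~\ref{lem.npgpd}) and the dual update as online projected subgradient descent, derive a regret bound for each, and then splice them together through the Lagrangian. Throughout I write $\bar{V}_\diamond(\rho)\DefinedAs\frac{1}{T}\sum_{t\,=\,0}^{T-1}V_\diamond^{(t)}(\rho)$ for $\diamond\in\{r,g\}$, so the two target quantities are $V_r^\star(\rho)-\bar{V}_r(\rho)$ and $[\,b-\bar{V}_g(\rho)\,]_+$; note that by the occupancy-measure (mixture) argument there is a single policy realizing the averaged pair $(\bar{V}_r(\rho),\bar{V}_g(\rho))$, which lets me feed the averages into Lemma~\ref{lem.constraint} later.

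For the primal side I would track the potential $\Phi_t\DefinedAs\mathbb{E}_{s\,\sim\,d_\rho^{\pi^\star}}[\,\mathrm{KL}(\pi^\star(\cdot\,\vert\,s)\,\Vert\,\pi^{(t)}(\cdot\,\vert\,s))\,]$. Combining the MWU form~\eqref{eq.pi-softmax} with the performance difference lemma gives, for each $t$, the exact identity
\[
	\eta_1\big(V_L^{\pi^\star,\lambda^{(t)}}(\rho)-V_L^{(t)}(\rho)\big)=\Phi_t-\Phi_{t+1}+\mathbb{E}_{s\,\sim\,d_\rho^{\pi^\star}}\big[\log Z^{(t)}(s)\big].
\]
Summing telescopes the potential ($\Phi_0\le\log|A|$ at the uniform initialization, $\Phi_T\ge 0$), so everything reduces to controlling $\sum_t\mathbb{E}_{s\,\sim\,d_\rho^{\pi^\star}}[\log Z^{(t)}(s)]$. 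Here I would invoke the per-state NPG ascent inequality $V_L^{\pi^{(t+1)},\lambda^{(t)}}(s)-V_L^{\pi^{(t)},\lambda^{(t)}}(s)\ge\frac{1-\gamma}{\eta_1}\log Z^{(t)}(s)$, which holds because $\log Z^{(t)}\ge 0$ (Jensen, since $\mathbb{E}_{\pi^{(t)}}[A_L^{(t)}]=0$) and the start state carries mass at least $1-\gamma$ in its own discounted visitation. This bounds each $\log Z^{(t)}$ term by a one-step increment of the Lagrangian value weighted by the \emph{fixed} distribution $d_\rho^{\pi^\star}$, and is precisely what removes the distribution-mismatch factor $\|d_\rho^{\pi^\star}/\rho\|_\infty$ and makes the rate dimension-free. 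Telescoping these increments is not clean because $\lambda^{(t)}$ changes, so I would split $V_L^{\pi^{(t+1)},\lambda^{(t)}}-V_L^{\pi^{(t)},\lambda^{(t)}}=(V_L^{\pi^{(t+1)},\lambda^{(t+1)}}-V_L^{\pi^{(t)},\lambda^{(t)}})+(\lambda^{(t)}-\lambda^{(t+1)})(V_g^{(t+1)}(\cdot)-b)$: the first pieces telescope, and the correction terms are each $O(\eta_2/(1-\gamma)^2)$ since $|\lambda^{(t+1)}-\lambda^{(t)}|\le\eta_2/(1-\gamma)$ by nonexpansiveness of $\calP_\Lambda$, so over $T$ steps they aggregate to the $O(\eta_2)=O(1/\sqrt{T})$ part of the primal regret.

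For the dual side I would use nonexpansiveness of $\calP_\Lambda$ to obtain, for any comparator $\lambda\in\Lambda$,
\[
	\sum_{t\,=\,0}^{T-1}\big(V_g^{(t)}(\rho)-b\big)\big(\lambda^{(t)}-\lambda\big)\;\le\;\frac{(\lambda^{(0)}-\lambda)^2}{2\eta_2}+\frac{\eta_2}{2}\sum_{t}\big(V_g^{(t)}(\rho)-b\big)^2\;\le\;\frac{(\lambda^{(0)}-\lambda)^2}{2\eta_2}+\frac{\eta_2 T}{2(1-\gamma)^2},
\]
using $|V_g^{(t)}(\rho)-b|\le 1/(1-\gamma)$. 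To combine, I exploit feasibility of $\pi^\star$ so that $V_L^{\pi^\star,\lambda^{(t)}}(\rho)\ge V_r^\star(\rho)$, and decompose $V_L^{\pi^\star,\lambda^{(t)}}-V_L^{\pi^{(t)},\lambda}=(V_L^{\pi^\star,\lambda^{(t)}}-V_L^{(t)})+(\lambda^{(t)}-\lambda)(V_g^{(t)}(\rho)-b)$; adding the two regret bounds yields the single inequality $V_r^\star(\rho)-\bar{V}_r(\rho)-\lambda(\bar{V}_g(\rho)-b)\le\delta(\lambda)$, where $\delta(\lambda)$ is the sum of the primal and dual bounds (both $O(1/\sqrt{T})$). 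Taking $\lambda=0$ (with $\lambda^{(0)}=0$) gives the optimality-gap bound $V_r^\star(\rho)-\bar{V}_r(\rho)\le\delta(0)$, and substituting $\eta_1=2\log|A|$, $\eta_2=2(1-\gamma)/\sqrt{T}$ collapses $\delta(0)$ to $\tfrac{7}{(1-\gamma)^2}\tfrac{1}{\sqrt{T}}$.

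For the constraint violation I would instead take the comparator $\lambda=C\DefinedAs 2/((1-\gamma)\xi)$, the right endpoint of $\Lambda$; crucially $C\ge 2\lambda^\star$ by Lemma~\ref{lem.duality}(ii), so this is an admissible choice of the constant in Lemma~\ref{lem.constraint}, and I do \emph{not} divide by the possibly small $\lambda^\star$. Splitting on the sign of $b-\bar{V}_g(\rho)$ (using the $\lambda=0$ bound when the constraint is already met) yields $V_r^\star(\rho)-\bar{V}_r(\rho)+C[\,b-\bar{V}_g(\rho)\,]_+\le\delta$, after which Lemma~\ref{lem.constraint}, applied to the mixture policy realizing $(\bar{V}_r(\rho),\bar{V}_g(\rho))$, gives $[\,b-\bar{V}_g(\rho)\,]_+\le 2\delta/C$; the $C^2/(2\eta_2 T)$ term in $\delta$ divided by $C$ produces the $1/\xi$ contribution while the residual $\eta_2$ terms produce the $\xi$ contribution, reproducing $\tfrac{2/\xi+4\xi}{(1-\gamma)^2}\tfrac{1}{\sqrt{T}}$. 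The main obstacle is exactly the primal--dual coupling in the first step: because the NPG iterate ascends the \emph{time-varying} Lagrangian $V_L^{\cdot,\lambda^{(t)}}$ rather than a fixed objective, the clean telescoping of the $\log Z^{(t)}$ normalizers breaks, and the heart of the argument is showing that the correction terms are governed by $|\lambda^{(t+1)}-\lambda^{(t)}|=O(\eta_2)$ so that they aggregate to the optimal $O(1/\sqrt{T})$ rate while the per-state ascent inequality keeps the bound free of both the state--action dimension and the mismatch coefficient.
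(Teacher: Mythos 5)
Your proposal is correct and follows essentially the same route as the paper's own proof: the exact identity for $\eta_1\big(V_L^{\pi^\star,\lambda^{(t)}}(\rho)-V_L^{(t)}(\rho)\big)$ with the KL potential is the paper's key performance-difference/MWU step, the per-state ascent bound on $\log Z^{(t)}$ together with the $|\lambda^{(t+1)}-\lambda^{(t)}|\le\eta_2/(1-\gamma)$ correction terms reproduces Lemmas~\ref{lem.improvement} and~\ref{lem.average.performance}, and the projected-subgradient dual regret with comparators $\lambda=0$ and $\lambda=2/((1-\gamma)\xi)$, followed by the occupancy-measure mixture policy and Lemma~\ref{lem.constraint}, is exactly how the paper obtains the two bounds. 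The only differences are bookkeeping (you telescope $V_L^{\pi^{(t)},\lambda^{(t)}}$ directly, while the paper telescopes $V_r$ and Abel-sums the $\lambda^{(t)}V_g^{(t)}$ term), which yield the same estimates.
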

	Theorem~\ref{thm.convergence.softmax} demonstrates that, on average, the reward value function converges to its globally optimal value and that the constraint violation decays to zero. In other words, for a desired accuracy $\epsilon$, it takes $O({1}/{\epsilon^2})$ iterations to compute the solution which is $\epsilon$ away from a globally optimal one (with respect to both the optimality gap and the constraint violation). We note that the required number of iterations only depends on the desired accuracy $\epsilon$ and is \emph{independent of the sizes of the state and action spaces\/}. Although the maximization problem~\eqref{eq.cmdp.p} is not concave, our rates of  $(1/\sqrt{T},1/\sqrt{T})$ for the optimality gap and constraint violation outperform the classical ones of  $(1/\sqrt{T},1/{T}^{3/4})$~\citep{mahdavi2012trading}, and match the achievable rates for solving online {\em convex\/} minimization problems with {\em convex} constraint sets~\citep{yu2017online}. Moreover, in contrast to the bounds established for the PG-PD algorithm~\eqref{eq.PGAGD} in Theorem~\ref{thm.convergence.direct}, the bounds in Theorem~\ref{thm.convergence.softmax} for the NPG-PD algorithm~\eqref{eq.NPGAGD} under the softmax policy parameterization do not depend on the initial state distribution~$\rho$.
	
	As shown in Lemma~\ref{lem.improvement} in Section~\ref{subsec.softmax.pf}, the reward and utility value functions are coupled, and the natural policy gradient method in the unconstrained setting does not provide monotonic improvement to either of them~\cite[Section~5.3]{agarwal2021optimality}. To address this challenge, we introduce a new line of nonconvex analysis by bridging the online regret analysis in unconstrained MDPs~\citep{even2009online,agarwal2021optimality} and the Lagrangian methods in constrained optimization~\citep{beck2017first}. To bound the optimality gap, via a drift analysis of the dual update, we first establish the bounded average performance in Lemma~\ref{lem.average.performance} in Section~\ref{subsec.softmax.pf}. Furthermore, instead of using methods from constrained convex optimization~\citep{mahdavi2012trading,yu2017online,wei2019online,yuan2018online}, which either require extra assumptions or have a slow convergence rate, under strong duality, we establish that the constraint violation for the nonconvex problem~\eqref{eq.cmdp.p} converges with the same rate as the optimality gap. To the best of our knowledge, this appears to be the first such result for nonconvex constrained optimization problems. 
    
	
	\subsection{Proof of Theorem~\ref{thm.convergence.softmax}}
	\label{subsec.softmax.pf}
	We employ the performance difference lemma in Remark~\ref{re: PDL} to show the joint policy improvement per iteration in the reward and utility value functions. Neither of them is necessarily monotonic. 
	
	\begin{lemma}[Non-monotonic improvement]\label{lem.improvement}
		For any distribution of the initial state $\mu$, the iterates $(\pi^{(t)}, \lambda^{(t)})$ of the algorithm \eqref{eq.NPG-PD-softmax} satisfy
		\begin{equation}\label{eq.improvement}
			V_r^{(t+1)}(\mu) 
			\,-\, 
			V_r^{(t)}(\mu) 
			\,+\,
			\lambda^{(t)}\,
			 \big(\, V_g^{(t+1)}(\mu) \,-\, V_g^{(t)}(\mu)\, \big) 
			\; \geq \;
			\frac{1-\gamma}{\eta_1}\, 
			\mathbb{E}_{s \,\sim\,\mu} \sbr{\,\log Z^{(t)}(s)\,}
			\; \geq \; 
			0.
		\end{equation}
	\end{lemma}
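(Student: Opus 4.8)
The plan is to recognize the left-hand side of~\eqref{eq.improvement} as the change in the Lagrangian value at the \emph{fixed} multiplier $\lambda^{(t)}$, and then to run a natural-policy-gradient ascent argument for the composite reward $r+\lambda^{(t)}g$. Since the two $-\lambda^{(t)}b$ terms cancel, the left-hand side equals $V_L^{\pi^{(t+1)},\lambda^{(t)}}(\mu)-V_L^{\pi^{(t)},\lambda^{(t)}}(\mu)$, the improvement in the Lagrangian value produced by the primal step with the dual variable held fixed. Because $A_L^{(t)}=A_r^{(t)}+\lambda^{(t)}A_g^{(t)}$ is exactly the advantage function associated with the reward $r+\lambda^{(t)}g$, I would invoke the performance difference lemma (stated in the Remark) for this composite reward with $\pi=\pi^{(t+1)}$ and $\pi'=\pi^{(t)}$, and take expectation over the initial state $s_0\sim\mu$, to obtain
\[
V_L^{\pi^{(t+1)},\lambda^{(t)}}(\mu)-V_L^{\pi^{(t)},\lambda^{(t)}}(\mu) = \frac{1}{1-\gamma}\,\mathbb{E}_{s\sim d_\mu^{\pi^{(t+1)}}}\,\mathbb{E}_{a\sim\pi^{(t+1)}(\cdot|s)}\big[A_L^{(t)}(s,a)\big].
\]

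Next I would lower-bound the inner expectation state by state using the multiplicative-weights form~\eqref{eq.pi-softmax}. Taking logarithms of~\eqref{eq.pi-softmax} gives $\tfrac{\eta_1}{1-\gamma}A_L^{(t)}(s,a)=\log\tfrac{\pi^{(t+1)}(a|s)}{\pi^{(t)}(a|s)}+\log Z^{(t)}(s)$, so averaging against $\pi^{(t+1)}(\cdot|s)$ turns the log-ratio into a relative entropy plus the state-dependent constant $\log Z^{(t)}(s)$:
\[
\mathbb{E}_{a\sim\pi^{(t+1)}(\cdot|s)}\big[A_L^{(t)}(s,a)\big] = \frac{1-\gamma}{\eta_1}\Big(\mathrm{KL}\big(\pi^{(t+1)}(\cdot|s)\,\|\,\pi^{(t)}(\cdot|s)\big)+\log Z^{(t)}(s)\Big) \geq \frac{1-\gamma}{\eta_1}\log Z^{(t)}(s),
\]
using nonnegativity of the KL divergence. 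A short separate step establishes $\log Z^{(t)}(s)\geq 0$: since $\mathbb{E}_{a\sim\pi^{(t)}(\cdot|s)}[A_L^{(t)}(s,a)]=0$ (the $\pi^{(t)}$-mean of its own advantage vanishes, by linearity from $A_\diamond^{\pi}=Q_\diamond^{\pi}-V_\diamond^{\pi}$), Jensen's inequality applied to $Z^{(t)}(s)=\mathbb{E}_{a\sim\pi^{(t)}(\cdot|s)}\exp(\tfrac{\eta_1}{1-\gamma}A_L^{(t)}(s,a))$ gives $Z^{(t)}(s)\geq 1$.

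Finally I would convert the discounted visitation distribution back to the raw initial distribution $\mu$. Because every $\log Z^{(t)}(s)$ is now nonnegative and $d_\mu^{\pi^{(t+1)}}(s)\geq(1-\gamma)\mu(s)$ (the $t=0$ term of~\eqref{eq.visitation} already contributes $(1-\gamma)\mu(s)$), I can replace $d_\mu^{\pi^{(t+1)}}$ by $(1-\gamma)\mu$ at the cost of an inequality; combining this with the two displays above cancels one factor of $(1-\gamma)$ and yields the claimed bound $\tfrac{1-\gamma}{\eta_1}\mathbb{E}_{s\sim\mu}\log Z^{(t)}(s)\geq 0$. The main point to get right is the direction of these two monotonicity arguments: the performance difference lemma forces the measure to be $d_\mu^{\pi^{(t+1)}}$ rather than $d_\mu^{\pi^{(t)}}$ or $\mu$, so the reduction to $\mu$ works \emph{only} because $\log Z^{(t)}(s)\geq 0$ holds pointwise. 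This is why establishing the Jensen bound first is essential rather than incidental, and it is the one place where the argument could silently go wrong if the inequalities were chained in the opposite order.
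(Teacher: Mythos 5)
Your proof is correct and follows essentially the same route as the paper's: the performance difference lemma, the multiplicative-weights log-identity converting $A_L^{(t)}$ into a KL term plus $\log Z^{(t)}(s)$, nonnegativity of KL, Jensen's inequality to get $\log Z^{(t)}(s)\geq 0$, and the componentwise bound $d_\mu^{(t+1)}\geq(1-\gamma)\mu$ (which, as you rightly stress, is only usable because $\log Z^{(t)}(s)\geq 0$ holds pointwise). The only difference is organizational: you apply the performance difference lemma once to the composite reward $r+\lambda^{(t)}g$, whereas the paper applies it to $r$ alone and then invokes it a second time to identify the leftover $A_g^{(t)}$ sum as $\lambda^{(t)}\big(V_g^{(t+1)}(\mu)-V_g^{(t)}(\mu)\big)$ — the same algebra, packaged slightly more cleanly in your version.
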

	\begin{proof}
		Let $d_\mu^{(t+1)} \DefinedAs d_\mu^{\pi^{(t+1)}}$. The performance difference lemma in conjunction with the multiplicative weights update in~\eqref{eq.pi-softmax} yield
		\[
		\begin{array}{rcl}
			\displaystyle V_r^{(t+1)}(\mu) 
			\,-\, 
			V_r^{(t)}(\mu) 
			& = &
			\displaystyle \frac{1}{1-\gamma} \,\mathbb{E}_{s\,\sim\,d_{\mu}^{(t+1)}}\sbr{\, \sum_{a\,\in\, A} \pi^{(t+1)}(a\,\vert\,s) A_r^{(t)}(s,a) \,}
			\\[0.2cm]
			&=&\displaystyle \frac{1}{\eta_1} \,\mathbb{E}_{s\,\sim\,d_{\mu}^{(t+1)}}\sbr{\, \sum_{a\,\in\, A} \pi^{(t+1)}(a\,\vert\,s) \log\rbr{ \frac{\pi^{(t+1)}(a\,\vert\,s)}{\pi^{(t)}(a\,\vert\,s)} Z^{(t)}(s) } \,}
			\\[0.2cm]
			&&\displaystyle -\,\frac{\lambda^{(t)}}{1-\gamma} \,\mathbb{E}_{s\,\sim\,d_{\mu}^{(t+1)}}\sbr{\, \sum_{a\,\in\, A} \pi^{(t+1)}(a\,\vert\,s) A_g^{(t)}(s,a) \,}
			\\
			&  = & \displaystyle \frac{1}{\eta_1} \,\mathbb{E}_{s\,\sim\,d_{\mu}^{(t+1)}}\sbr{\, D_{\text{KL}}\rbr{\pi^{(t+1)}(\cdot\,\vert\,s) \,\big\Vert\,\pi^{(t)}(\cdot\,\vert\,s) } \,} 
			\\[0.2cm]
			&&\displaystyle 
			+\,\frac{1}{\eta_1} \,\mathbb{E}_{s\,\sim\,d_{\mu}^{(t+1)}} \sbr{\, \log Z^{(t)}(s)\,}
			\\[0.2cm]
			&&\displaystyle -\,\frac{\lambda^{(t)}}{1-\gamma} \,\mathbb{E}_{s\,\sim\,d_{\mu}^{(t+1)}}\sbr{\, \sum_{a\,\in\, A} \pi^{(t+1)}(a\,\vert\,s) A_g^{(t)}(s,a) \,}
		\end{array}
		\]
		where the last equality follows from the definition of the Kullback-Leibler divergence or relative entropy between distributions $p$ and $q$, $D_{\text{KL}} (p\,\Vert\,q) \DefinedAs \mathbb{E}_{x\,\sim\,p} \log ( {p(x)}/{q(x)} )$. Furthermore,	
		\[
		\begin{array}{rcl}
			&& \!\!\!\! \!\!\!\! \!\! \displaystyle \frac{1}{\eta_1} \,\mathbb{E}_{s\,\sim\,d_{\mu}^{(t+1)}}\sbr{\, D_{\text{KL}}\rbr{\pi^{(t+1)}(\cdot\,\vert\,s) \,\big\Vert\,\pi^{(t)}(\cdot\,\vert\,s) } \,} 
			\,+\,
			\frac{1}{\eta_1} \,\mathbb{E}_{s\,\sim\,d_{\mu}^{(t+1)}}  \sbr{\,\log Z^{(t)}(s)\,}
			\\[0.2cm]
			&& \!\!\!\! \!\!\!\! \!\! \displaystyle -\,\frac{\lambda^{(t)}}{1-\gamma} \,\mathbb{E}_{s\,\sim\,d_{\mu}^{(t+1)}}\sbr{\, \sum_{a\,\in\, A} \pi^{(t+1)}(a\,\vert\,s) A_g^{(t)}(s,a) \,}
			\\[0.2cm]
			&\overset{(a)}{\geq}&\displaystyle \frac{1}{\eta_1} \,\mathbb{E}_{s\,\sim\,d_{\mu}^{(t+1)}} \sbr{\,\log Z^{(t)}(s)\,}
			\,-\,
			\frac{\lambda^{(t)}}{1-\gamma} \,\mathbb{E}_{s\,\sim\,d_{\mu}^{(t+1)}}\sbr{\, \sum_{a\,\in\, A} \pi^{(t+1)}(a\,\vert\,s) A_g^{(t)}(s,a) \,}
			\\[0.2cm]
			&\overset{(b)}{=}& \displaystyle\frac{1}{\eta_1} \,\mathbb{E}_{s\,\sim\,d_{\mu}^{(t+1)}} \sbr{\,\log Z^{(t)}(s)\,}
			\,-\,
			\lambda^{(t)}\big({V_g^{(t+1)}(\mu) \,-\, V_g^{(t)}(\mu)}\big)
		\end{array}
		\]
		is a consequence of the performance difference lemma (see Remark~\ref{re: PDL}), where we drop a nonnegative term in $(a)$ and $(b)$. The first inequality in~\eqref{eq.improvement} follows from a component-wise inequality $d_\mu^{(t+1)} \geq (1-\gamma)\mu$, which is obtained using~\eqref{eq.visitation}.
		
		Now we prove that $\log Z^{(t)}(s)\geq 0$. From the definition of $Z^{(t)}(s)$, we have
		\[
		\begin{array}{rcl}
			\log Z^{(t)}(s) &=& \displaystyle\log \rbr{ \sum_{a\,\in\,A} \pi^{(t)}(a\,\vert\,s) \exp \rbr{ \frac{\eta_1}{1-\gamma} \rbr{A_r^{(t)}(s,a)+\lambda^{(t)} A_g^{(t)}(s,a)} }}
			\\[0.2cm]
			&\overset{(a)}{\geq}& \displaystyle\sum_{a\,\in\,A} \pi^{(t)}(a\,\vert\,s) \log \rbr{\exp \rbr{ \frac{\eta_1}{1-\gamma} \rbr{A_r^{(t)}(s,a)+\lambda^{(t)} A_g^{(t)}(s,a)} }}
			\\[0.2cm]
			&=& \displaystyle\frac{\eta_1}{1-\gamma}\sum_{a\,\in\, A} \pi^{(t)}(a\,\vert\,s) \rbr{A_r^{(t)}(s,a)+\lambda^{(t)} A_g^{(t)}(s,a)} 
			\\[0.2cm]
			&=&\displaystyle \frac{\eta_1}{1-\gamma}\sum_{a\,\in\, A} \pi^{(t)}(a\,\vert\,s) A_r^{(t)}(s,a)  \, + \, \frac{\eta_1}{1-\gamma} \lambda^{(t)}\sum_{a\,\in\,A} \pi^{(t)}(a\,\vert\,s) A_g^{(t)}(s,a)
			\\[0.2cm]
			&\overset{(b)}{=}& 0
		\end{array}
		\]
		where in $(a)$ we apply the Jensen's inequality to the concave function $\log(x)$. On the other hand, the last equality follows from the definitions of $A_r^{(t)}(s,a)$ and $A_g^{(t)}(s,a)$, which yield
		\[
		\begin{array}{rcl}
			\displaystyle{\sum_{a \, \in \, A}} \pi^{(t)}(a\,\vert\,s) A_r^{(t)}(s,a) 
			& = &
			\displaystyle{\sum_{a \, \in \, A}} \pi^{(t)}(a\,\vert\,s) \big( Q_{r}^{(t)}(s,a) - V_{r}^{(t)}(s)\big)
			\; = \; 
			0
			\\[0.35cm]
			\displaystyle{\sum_{a \, \in \, A}} \pi^{(t)}(a\,\vert\,s) A_g^{(t)}(s,a)  
			& = &
			0,
		\end{array}
		\]
		completing the proof. 
	\end{proof}
	
	Lemma~\ref{lem.improvement} states that each primal update~\eqref{eq.pi-softmax} improves the Lagrangian-like term $V_r^{(t)}(\mu) + \lambda^{(t)} V_g^{(t)}(\mu)$ to $V_r^{(t+1)}(\mu) + \lambda^{(t)} V_g^{(t+1)}(\mu)$, with improvement depending on the previous primal-dual update $(\pi^{(t)},\lambda^{(t)})$.  This lemma can be viewed as a constrained version of the policy improvement established for the unconstrained case~\citep{agarwal2021optimality},  resulting from setting $\lambda^{(t)} = 0$. In fact, the dual iterate $\lambda^{(t)}$ captures how the constraint violation of  policy improvement affects the reward value function, which is a unique feature of constrained policy optimization. Because of this superimposed effect, there is no monotonic improvement in the reward or utility value functions. 
	
	In constrained convex optimization, the primal iterate cannot reduce the unconstrained objective function, monotonically, and some averaging scheme has to be imposed~\citep{beck2017first}. In our nonconvex context, we examine the average of value functions, which is similar to the regret analysis in online optimization.
	We next compare the average value functions of the policy iterates generated by the algorithm~\eqref{eq.NPG-PD-softmax}  with the ones that result from the use of an optimal policy. 
	
	\begin{lemma}[Bounded average performance]\label{lem.average.performance}
		Let Assumption~\ref{as.slater} hold and let us fix $T>0$ and $\rho\in\Delta_S$. Then the iterates $\{(\pi^{(t)},\lambda^{(t)})\}_{t=0}^{T-1}$ generated by the algorithm~\eqref{eq.NPG-PD-softmax} satisfy
		\begin{equation}\label{eq.dualkey}
			\frac{1}{T} \sum_{t\,=\,0}^{T-1}  
			\left(
			\big(V_r^\star(\rho) - V_r^{(t)}(\rho) \big) 
			+
			\lambda^{(t)}\big(V_g^{\star}(\rho) - V_g^{(t)}(\rho)\big)
			\right)
			\, \leq \, 
			\frac{\log|A|}{\eta_1 T} 
			+\frac{1}{(1-\gamma)^2 T} 
			+\frac{2\eta_2}{(1-\gamma)^3}.
		\end{equation}
	\end{lemma}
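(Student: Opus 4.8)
The plan is to recognize that the per-iteration summand on the left is exactly a \emph{Lagrangian gap} against the optimal policy. Since the offset $b$ cancels,
$
\big(V_r^\star(\rho)-V_r^{(t)}(\rho)\big)+\lambda^{(t)}\big(V_g^\star(\rho)-V_g^{(t)}(\rho)\big)=V_L^{\pi^\star,\lambda^{(t)}}(\rho)-V_L^{\pi^{(t)},\lambda^{(t)}}(\rho),
$
so Lemma~\ref{lem.average.performance} is a regret bound for the primal MWU iterates against the fixed comparator $\pi^\star$, measured in the \emph{time-varying} Lagrangian with weight $\lambda^{(t)}$. I would therefore attack it with the online-regret machinery that underlies unconstrained NPG analysis, now carrying the extra dual-weighted utility term.

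First I would invoke the performance difference lemma separately for the reward and the utility, combine them with weight $\lambda^{(t)}$, and write $V_L^{\pi^\star,\lambda^{(t)}}(\rho)-V_L^{(t)}(\rho)=\frac{1}{1-\gamma}\,\mathbb{E}_{s\sim d_\rho^{\pi^\star}}\sum_{a}\pi^\star(a\,\vert\,s)\,A_L^{(t)}(s,a)$ with $A_L^{(t)}=A_r^{(t)}+\lambda^{(t)}A_g^{(t)}$. Next I would invert the multiplicative-weights form of the primal update from Lemma~\ref{lem.npgpd}, which yields the identity $\frac{\eta_1}{1-\gamma}A_L^{(t)}(s,a)=\log\big(\pi^{(t+1)}(a\,\vert\,s)/\pi^{(t)}(a\,\vert\,s)\big)+\log Z^{(t)}(s)$. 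Substituting this and using $\sum_a\pi^\star(a\,\vert\,s)=1$ turns the $\pi^\star$-weighted log-ratio into a KL difference $D_{\text{KL}}(\pi^\star\,\Vert\,\pi^{(t)})-D_{\text{KL}}(\pi^\star\,\Vert\,\pi^{(t+1)})$ at each state while leaving a $\log Z^{(t)}(s)$ term. Summing over $t$, the KL differences telescope and are bounded by $\frac{1}{\eta_1}\mathbb{E}_{s\sim d_\rho^{\pi^\star}}D_{\text{KL}}(\pi^\star\,\Vert\,\pi^{(0)})\le \frac{\log|A|}{\eta_1}$ under uniform initialization, which accounts for the first term on the right-hand side.

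The remaining and most delicate task is to upper bound $\frac{1}{\eta_1}\sum_{t=0}^{T-1}\mathbb{E}_{s\sim d_\rho^{\pi^\star}}\log Z^{(t)}(s)$. Here I would invoke the non-monotonic improvement Lemma~\ref{lem.improvement} with the choice $\mu=d_\rho^{\pi^\star}$, bounding each $\frac{1-\gamma}{\eta_1}\mathbb{E}_{s\sim d_\rho^{\pi^\star}}\log Z^{(t)}(s)$ by the one-step improvement $V_r^{(t+1)}(d_\rho^{\pi^\star})-V_r^{(t)}(d_\rho^{\pi^\star})+\lambda^{(t)}\big(V_g^{(t+1)}(d_\rho^{\pi^\star})-V_g^{(t)}(d_\rho^{\pi^\star})\big)$. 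Summing, the reward part telescopes to $V_r^{(T)}-V_r^{(0)}\le 1/(1-\gamma)$ and contributes the $1/\big((1-\gamma)^2 T\big)$ term. The utility part $\sum_t\lambda^{(t)}\big(V_g^{(t+1)}-V_g^{(t)}\big)$ does \emph{not} telescope because $\lambda^{(t)}$ varies with $t$; this is precisely the ingredient absent from the unconstrained NPG regret. I would control it by a drift analysis of the dual update, i.e.\ summation by parts (Abel): the increments are then bounded using the nonexpansive dual step $|\lambda^{(t+1)}-\lambda^{(t)}|\le \eta_2|V_g^{(t)}(\rho)-b|\le \eta_2/(1-\gamma)$, and the boundary term by $\lambda\le 2/\big((1-\gamma)\xi\big)$ (the projection set $\Lambda$, whose validity is justified through Lemma~\ref{lem.duality}) together with $V_g\le 1/(1-\gamma)$. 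This produces the $O(\eta_2 T)$ contribution responsible for the $\eta_2/(1-\gamma)^3$ term; collecting all pieces and dividing by $T$ gives the claim.

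The main obstacle is exactly this dual-drift control of the non-telescoping $\lambda^{(t)}$-weighted utility sum: it is the step where boundedness of the dual iterates, boundedness of $V_g$, and smallness of the dual stepsize must be combined, and where the argument genuinely departs from the classical unconstrained NPG regret bound. The routine bookkeeping (matching the exact numerical constants) is then straightforward.
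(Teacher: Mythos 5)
Your proposal follows essentially the same route as the paper's own proof: performance difference lemma plus the inverted MWU identity, KL telescoping against the fixed comparator $\pi^\star$ (yielding $\log|A|/(\eta_1 T)$), Lemma~\ref{lem.improvement} with $\mu=d_\rho^{\pi^\star}$ to absorb the $\log Z^{(t)}$ terms, a telescoping reward sum giving $1/((1-\gamma)^2T)$, and a summation-by-parts/drift analysis of the non-telescoping $\lambda^{(t)}$-weighted utility sum. All of these steps are sound and are exactly the paper's steps.

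The one place where you deviate---and where the argument as written does not deliver the stated inequality---is the boundary term of the Abel summation. You bound the boundary multiplier by the projection radius $\lambda\le 2/((1-\gamma)\xi)$, which (after the $1/T$ averaging, the $V_g\le 1/(1-\gamma)$ bound, and the extra $1/(1-\gamma)$ factor from Lemma~\ref{lem.improvement}) contributes $\tfrac{2}{(1-\gamma)^3\xi T}$, while your increment sum contributes only $\tfrac{\eta_2}{(1-\gamma)^3}$. The resulting bound,
\begin{equation*}
\frac{\log|A|}{\eta_1 T}+\frac{1}{(1-\gamma)^2T}+\frac{\eta_2}{(1-\gamma)^3}+\frac{2}{(1-\gamma)^3\xi T},
\end{equation*}
is not the lemma's right-hand side: the claimed bound $\tfrac{\log|A|}{\eta_1 T}+\tfrac{1}{(1-\gamma)^2T}+\tfrac{2\eta_2}{(1-\gamma)^3}$ is independent of the Slater constant $\xi$, and your extra $O(1/(\xi T))$ term is not dominated by $2\eta_2/(1-\gamma)^3$ in general (it exceeds it whenever $\eta_2<2/(\xi T)$). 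This $\xi$-freeness matters downstream: it is what keeps the optimality-gap bound in Theorem~\ref{thm.convergence.softmax} dimension-free and independent of $\xi$. The fix is the paper's choice: bound the boundary term $\lambda^{(T)}V_g^{(T)}$ via the growth estimate $|\lambda^{(T)}|\le \eta_2 T/(1-\gamma)$, which follows from $\lambda^{(0)}=0$ and the per-step increment bound $|\lambda^{(t+1)}-\lambda^{(t)}|\le\eta_2/(1-\gamma)$ that you already established. The boundary then contributes a second $\eta_2/(1-\gamma)^3$, recovering exactly the claimed $2\eta_2/(1-\gamma)^3$. With that single substitution, your proof becomes the paper's proof.
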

	\begin{proof}
		Let $d^\star \DefinedAs d_\rho^{\pi^\star}$. The performance difference lemma in conjunction with the multiplicative weights update in~\eqref{eq.pi-softmax} yield
		\[
		\begin{array}{rcl}
			V_r^\star(\rho) \,-\, V_r^{(t)}(\rho) 
			&=& \displaystyle\frac{1}{1-\gamma} \,\mathbb{E}_{s\,\sim\,d^\star}\sbr{ \, \sum_{a\,\in\, A} \pi^\star(a\,\vert\,s) A_r^{(t)}(s,a) \, }
			\\[0.2cm]
			&=&\displaystyle \frac{1}{\eta_1} \,\mathbb{E}_{s\,\sim\,d^\star}\sbr{ \, \sum_{a\,\in\, A} \pi^\star(a\,\vert\,s) \log\rbr{ \frac{\pi^{(t+1)}(a\,\vert\,s)}{\pi^{(t)}(a\,\vert\,s)} Z^{(t)}(s) } \, }
			\\[0.2cm]
			&&\displaystyle
			-~\frac{\lambda^{(t)}}{1-\gamma} \,\mathbb{E}_{s\,\sim\,d^\star}\sbr{ \, \sum_{a\,\in\, A} \pi^\star(a\,\vert\,s) A_g^{(t)}(s,a) \, }.
		\end{array}
		\]
		Application of the definition of the Kullback–Leibler divergence or relative entropy between distributions $p$ and $q$, $D_{\text{KL}} (p\,\Vert\,q) \DefinedAs \mathbb{E}_{x\,\sim\,p} \log ( {p(x)}/{q(x)} )$, and the performance difference lemma again yields
		\begin{equation}\label{eq.PDL}
			\begin{array}{rcl}
				V_r^\star(\rho) \,-\, V_r^{(t)}(\rho) 
				&=& \displaystyle\frac{1}{\eta_1} \,\mathbb{E}_{s\,\sim\,d^\star}\sbr{\, D_{\text{KL}}\rbr{\pi^\star(\cdot\,\vert\,s) \,\big\Vert\,\pi^{(t)}(\cdot\,\vert\,s) } - D_{\text{KL}}\rbr{\pi^\star(\cdot\,\vert\,s) \,\big\Vert\,\pi^{(t+1)}(\cdot\,\vert\,s) }\, } 
				\\[0.2cm]
				&&\displaystyle+~\frac{1}{\eta_1} \,\mathbb{E}_{s\,\sim\,d^\star}\sbr{\, \log Z^{(t)}(s)\,}
				\,-\,\frac{\lambda^{(t)}}{1-\gamma}\, \mathbb{E}_{s\,\sim\,d^\star}\sbr{\, \sum_{a\,\in\, A} \pi^\star(a\,\vert\,s) A_g^{(t)}(s,a) \,}
				\\[0.2cm]
				&=&\displaystyle \frac{1}{\eta_1} \,\mathbb{E}_{s\,\sim\,d^\star}\sbr{\, D_{\text{KL}}\rbr{\pi^\star(\cdot\,\vert\,s) \,\big\Vert\,\pi^{(t)}(\cdot\,\big\vert\,s) } - D_{\text{KL}}\rbr{\pi^\star(\cdot\,\vert\,s) \,\big\Vert\,\pi^{(t+1)}(\cdot\,\vert\,s) }\,} 
				\\[0.2cm]
				&&\displaystyle+~\frac{1}{\eta_1} \,\mathbb{E}_{s\,\sim\,d^\star} \sbr{\,\log Z^{(t)}(s)\,}
				\,-\,\lambda^{(t)} \big({V_g^\star(\rho) \,-\, V_g^{(t)}(\rho)}\big).
			\end{array}
		\end{equation}
		On the other hand, the first inequality in~\eqref{eq.improvement} with $\mu = d^\star$ becomes
		\begin{equation}\label{eq.improvement.d}
			V_r^{(t+1)}(d^\star) \,-\, V_r^{(t)}(d^\star) \,+\,\lambda^{(t)} \big( V_g^{(t+1)}(d^\star) \,-\, V_g^{(t)}(d^\star)  \big) 
			\; \geq \;
			\frac{1-\gamma}{\eta_1} \,\mathbb{E}_{s \,\sim\,d^\star} \sbr{\,\log Z^{(t)}(s)\,}.
		\end{equation}
		Hence, application of~\eqref{eq.improvement.d} to the average of~\eqref{eq.PDL} over $t=0,1,\ldots,T-1$ leads to
		\begin{equation}\label{eq.primalkey}
			\begin{array}{rcl}
				&& \!\!\!\! \!\!\!\! \!\! \displaystyle\frac{1}{T} \sum_{t\,=\,0}^{T-1} \big(\, V_r^\star(\rho) \,-\, V_r^{(t)}(\rho) \,\big)
				\\[0.2cm]
				&=& \displaystyle\frac{1}{\eta_1 T} \sum_{t\,=\,0}^{T-1}  \,\mathbb{E}_{s\,\sim\,d^\star}\sbr{\, D_{\text{KL}}\rbr{\pi^\star(\cdot\,\vert\,s) \,\big\Vert\,\pi^{(t)}(\cdot\,\vert\,s) } - D_{\text{KL}}\rbr{\pi^\star(\cdot\,\vert\,s) \,\big\Vert\,\pi^{(t+1)}(\cdot\,\vert\,s) }\,} 
				\\[0.2cm]
				&&\displaystyle+~\frac{1}{\eta_1 T} \sum_{t\,=\,0}^{T-1} \,\mathbb{E}_{s\,\sim\,d^\star} \sbr{\,\log Z^{(t)}(s)\,}
				\,-\,\frac{1}{T} \sum_{t\,=\,0}^{T-1} \lambda^{(t)}\big(\,{V_g^\star(\rho) \,-\, V_g^{(t)}(\rho)}\,\big)
				\\[0.2cm]
				&\leq &\displaystyle \frac{1}{\eta_1 T} \sum_{t\,=\,0}^{T-1}  \,\mathbb{E}_{s\,\sim\,d^\star}\sbr{\, D_{\text{KL}}\rbr{\pi^\star(\cdot\,\vert\,s) \,\big\Vert\,\pi^{(t)}(\cdot\,\vert\,s) } - D_{\text{KL}}\rbr{\pi^\star(\cdot\,\vert\,s) \,\big\Vert\,\pi^{(t+1)}(\cdot\,\vert\,s) }\,} 
				\\[0.2cm]
				&&\displaystyle+~\frac{1}{ (1-\gamma)T} \sum_{t\,=\,0}^{T-1} \,\big(\,{V_r^{(t+1)}(d^\star) \,-\, V_r^{(t)}(d^\star)}\,\big)
				\\[0.2cm]
				&&\displaystyle+~\frac{1}{(1-\gamma)T} \sum_{t\,=\,0}^{T-1}\lambda^{(t)} \big(\,{V_g^{(t+1)}(d^\star) \,-\, V_g^{(t)}(d^\star)  }\,\big)\,-\,\frac{1}{T} \sum_{t\,=\,0}^{T-1} \lambda^{(t)}\big(\,{V_g^\star(\rho) \,-\, V_g^{(t)}(\rho)}\,\big).
			\end{array}
		\end{equation}
		From the dual update in~\eqref{eq.NPGAGD.clean}, we have
		\begin{equation}\label{eq.primalkey.as}
			\begin{array}{rcl}
				&& \!\!\!\! \!\!\!\! \!\! \displaystyle\frac{1}{T} \sum_{t\,=\,0}^{T-1}\lambda^{(t)} \big({V_g^{(t+1)}(\mu) \,-\, V_g^{(t)}(\mu)  } \big)
				\\[0.2cm]
				&=&\displaystyle \frac{1}{T} \sum_{t\,=\,0}^{T-1} \big(\,{\lambda^{(t+1)}  V_g^{(t+1)}(\mu) \,-\, \lambda^{(t)} V_g^{(t)}(\mu)  }\,\big) \,+\, \frac{1}{T} \sum_{t\,=\,0}^{T-1} \big(\,{\lambda^{(t)}\,-\, \lambda^{(t+1)} }\,\big) V_g^{(t+1)}(\mu) 
				\\[0.2cm]
				&\overset{(a)}{\leq} &\displaystyle \frac{1}{T} \lambda^{(T)}  V_g^{(T)}(\mu) \,+\,\frac{1}{T} \sum_{t\,=\,0}^{T-1} \big\vert{\lambda^{(t)}\,-\, \lambda^{(t+1)} }\big\vert V_g^{(t+1)}(\mu) 
				\\[0.2cm]
				&\overset{(b)}{\leq} &\displaystyle \frac{2\eta_2}{(1-\gamma)^2}
			\end{array}
		\end{equation}
		where we take a telescoping sum for the first sum in $(a)$ and drop a non-positive term, and in $(b)$ we utilize $\vert{\lambda^{(T)}}\vert \leq {\eta_2 T}/(1-\gamma)$ and $\vert{\lambda^{(t)}- \lambda^{(t+1)} }\vert\leq {\eta_2}/(1-\gamma)$, which follows from the dual update in~\eqref{eq.NPGAGD.clean}, the non-expansiveness of the projection $\mathcal{P}_\Lambda$, and the boundedness of the value function $V_g^{(t)}(\mu)\leq {1}/(1-\gamma)$. Application of~\eqref{eq.primalkey.as} with $\mu=d^\star$ and the use of telescoping sum to~\eqref{eq.primalkey} yield
		\[
		\begin{array}{rcl}
			&& \!\!\!\! \!\!\!\! \!\! \displaystyle\frac{1}{T} \sum_{t\,=\,0}^{T-1} \big( \,V_r^\star(\rho) \,-\, V_r^{(t)}(\rho) \,\big)
			\\[0.2cm]
			&\leq&\displaystyle \frac{1}{\eta_1 T} \,\mathbb{E}_{s\,\sim\,d^\star}\sbr{\, D_{\text{KL}}\rbr{\pi^\star(\cdot\,\vert\,s) \,\big\Vert\,\pi^{(0)}(\cdot\,\vert\,s) } \,}
			\,+\,\frac{1}{(1-\gamma) T} V_r^{(T)}(d^\star) 
			\,+\,\frac{2\eta_2}{(1-\gamma)^3}
			\\[0.2cm]
			&&\displaystyle-~\frac{1}{T} \sum_{t\,=\,0}^{T-1} \lambda^{(t)} \big(\,{V_g^\star(\rho) \,-\, V_g^{(t)}(\rho)}\,\big).
		\end{array}
		\]
		Finally, we use $D_{\text{KL}}\rbr{p\,\Vert\,q}\leq \log|A|$ for $p\in \Delta_A$ and $q=\text{Unif}_A$, $V_r^{(T)}(d^\star) \leq{1}/(1-\gamma)$, and $V_g^\star(\rho)\geq b$ to complete the proof.
	\end{proof}
	
	Lemma~\ref{lem.average.performance} shows that the average difference between $(V_r^\star(\rho), V_g^\star(\rho))$ and $(V_r^{(t)}(\rho), V_g^{(t)}(\rho))$ can be bounded by a $(T,\eta_1,\eta_2)$-term. As aforementioned, when there is no constraint (e.g., $\eta_2=0$), it is straightforward to strengthen Lemma~\ref{lem.average.performance} as the fast rate result in the unconstrained case~\citep[Theorem~16]{agarwal2021optimality}. We also note that this average performance analysis generalizes to the function approximation setting in Section~\ref{sec.fa}, with an additional characterization of function approximation errors. 
	
	\begin{proof}[Proof of Theorem~\ref{thm.convergence.softmax}]
		
		\noindent\textbf{Bounding the optimality gap}. From the dual update in~\eqref{eq.NPGAGD.clean}, we have
		\begin{subequations}
			\begin{equation}\label{eq.lambda}
				\begin{array}{rcl}
					0\;\,\leq\;\,\rbr{\lambda^{(T)}}^2 &=&\displaystyle \sum_{t \, = \,0 }^{T-1} \big(\,{(\lambda^{(t+1)} )^2\,-\,(\lambda^{(t)})^2}\,\big)
					\\[0.2cm]
					&=& \displaystyle\sum_{t \, = \,0 }^{T-1} \rbr{ \big({\calP_\Lambda\, ( \, \lambda^{(t)} - \eta_2 \, (V_g^{(t)}(\rho)-b)\, ) }\big)^2\,-\,(\lambda^{(t)})^2 }
					\\[0.2cm]
					&\overset{(a)}{\leq}&\displaystyle \sum_{t \, = \,0 }^{T-1} \rbr{ \big(\,{\lambda^{(t)} \,-\, \eta_2\, (V_g^{(t)}(\rho)-b) }\,\big)^2\,-\,(\lambda^{(t)})^2 }
					\\[0.2cm]
					&=&\displaystyle 2\eta_2 \sum_{t\,=\,0}^{T-1} \lambda^{(t)} \big(\,b\,-\,V_g^{(t)}(\rho)\,\big)
					\,+\,
					\eta_2^2 \sum_{t\,=\,0}^{T-1} \big(\,V_g^{(t)}(\rho)\,-\,b\,\big)^2
					\\[0.2cm]
					&\overset{(b)}{\leq}&\displaystyle 2\eta_2 \sum_{t\,=\,0}^{T-1} \lambda^{(t)} \big(\,{V_g^\star(\rho)\,-\,V_g^{(t)}(\rho)}\,\big) 
					\,+\, 
					\frac{\eta_2^2\, T}{(1-\gamma)^2}
				\end{array}
			\end{equation}
			where $(a)$ holds  because of the projection $\mathcal{P}_\Lambda$, $(b)$ is because of the feasibility of an optimal policy $\pi^\star$: $V_g^\star (\rho) \geq b$, and $|{V_g^{(t)}(\rho)-b}|\leq {1}/(1-\gamma)$. Hence,
			\begin{equation}\label{eq.lambda.clean}
				- \,\frac{1}{T}\sum_{t\,=\,0}^{T-1} \lambda^{(t)} \big(\,{V_g^\star(\rho) \,-\, V_g^{(t)}(\rho)}\,\big)
				\; \leq \; 
				\frac{\eta_2 }{2(1-\gamma)^2}.
			\end{equation}
		\end{subequations}
		To obtain the optimality gap bound, we now substitute~\eqref{eq.lambda.clean} into~\eqref{eq.dualkey}, apply $D_{\text{KL}}\rbr{p\,\Vert\,q}\leq \log|A|$ for $p\in \Delta_A$ and $q=\text{Unif}_A$, and take $\eta_1 =2\log|A|$ and $\eta_2={2(1-\gamma)}/{\sqrt{T}}$.
		
		\vspace*{0.15cm}
		\noindent\textbf{Bounding the constraint violation}. For any $\lambda\in \big[\,0,\, {2}/((1-\gamma)\xi)\,\big]$, from the dual update in~\eqref{eq.NPGAGD.clean}, we have
		\[
		\begin{array}{rcl}
			\vert\lambda^{(t+1)} \,-\, \lambda\vert^2 
			&\overset{(a)}{\leq}& \big\vert{{\lambda^{(t)} \,-\, \eta_2\, \big(\,{V_g^{(t)}(\rho)\,-\,b }}\,\big)  \,-\,\lambda}\big\vert^2
			\\[0.2cm]
			&=& \displaystyle\big\vert{{\lambda^{(t)}  -\lambda}}\big\vert^2 
			\,  -  \,  
			2\eta_2 \big(\, {V_g^{(t)}(\rho)\, -\, b }\, \big)\big(\, {\lambda^{(t)}  \, -\, \lambda}\, \big)  
			\,+ \,
			 \eta_2^2\, \big(\, {V_g^{(t)}(\rho)\, -\, b }\, \big)^2
			\\[0.2cm]
			&\overset{(b)}{\leq}&\displaystyle \big\vert{{\lambda^{(t)}  \, -\, \lambda}}\big\vert^2 \, - \, 
			2\eta_2 \big(\, {V_g^{(t)}(\rho)\, -\, b }\, \big)\big(\, {\lambda^{(t)}  \, -\, \lambda}\, \big) 
			\, + \, 
			\frac{\eta_2^2}{(1-\gamma)^2}
		\end{array}
		\]
		where $(a)$ is because of the non-expansiveness of projection $\calP_\Lambda$ and $(b)$ is because of $({V_g^{(t)}(\rho)-b })^2\leq{1}/{(1-\gamma)^2}$. Averaging the above inequality over $t=0,\ldots,T-1$ yields
		\[
		0
		\; \leq \;
		\frac{1}{T}\, \vert\lambda^{(T)} - \lambda\vert^2 
		\; \leq \; 
		\frac{1}{T}\, \vert{{\lambda^{(0)}  -\lambda}}\vert^2 
		\, -\, 
		\frac{2\eta_2}{T}\sum_{t\,=\,0}^{T-1} \big(\,V_g^{(t)}(\rho)\,-\,b\, \big)\big(\,\lambda^{(t)}  \,-\,\lambda\,\big) 
		\, + \, 
		\frac{\eta_2^2}{(1-\gamma)^2},
		\]
		which implies
		\begin{equation}\label{eq.removelamda}
			\frac{1}{T}\sum_{t\,=\,0}^{T-1} \big(\,V_g^{(t)}(\rho) \,-\, b \,\big) \big(\,\lambda^{(t)}  \,-\, \lambda\, \big)
			\; \leq \; 
			\frac{1}{2\eta_2T}\big\vert{{\lambda^{(0)}  -\lambda}}\big\vert^2 
			\, + \, 
			\frac{\eta_2}{2(1-\gamma)^2}.
		\end{equation}	
		We now add~\eqref{eq.removelamda} to~\eqref{eq.dualkey} on both sides of the inequality, and utilize $V_g^{\star}(\rho) \geq b$ to obtain
		\begin{equation}\label{eq.choose_lambda}
			\begin{array}{rcl}
				&& \!\!\!\! \!\!\!\! \!\! \displaystyle \frac{1}{T} \sum_{t\,=\,0}^{T-1} \big( \, V_r^\star(\rho) \,-\, V_r^{(t)}(\rho) \,\big) 
				\,+\, 
				\frac{\lambda}{T} \sum_{t\,=\,0}^{T-1} \big(\, b\,-\, V_g^{(t)}(\rho)\,\big)
				\\[0.2cm]
				& \leq &
				\displaystyle \frac{\log|A|}{\eta_1 T} 
				\,+\,
				\frac{1}{(1-\gamma)^2 T} 
				\,+\,
				\frac{2\eta_2}{(1-\gamma)^3}
				\,+\,
				\frac{1}{2\eta_2T}\big\vert{{\lambda^{(0)}  \, -\, \lambda}}\big\vert^2 
				\,+\,
				\frac{\eta_2}{2(1-\gamma)^2}.
			\end{array}
		\end{equation}
		Taking $\lambda = {2}/((1-\gamma)\xi)$ when ${\sum_{t\,=\,0}^{T-1} \big({b -V_g^{(t)}(\rho)}}\big)\geq 0$ and $\lambda=0$ otherwise, we obtain
		\[
		\begin{array}{rcl}
			&& \!\!\!\! \!\!\!\! \!\!\displaystyle V_r^\star(\rho) 
			\, - \, 
			\frac{1}{T} \sum_{t\,=\,0}^{T-1}{ V_r^{(t)}(\rho) } 
			\, + \, 
			\frac{2}{(1-\gamma)\xi} \sbr{\, b \, -\,  \frac{1}{T} \sum_{t\,=\,0}^{T-1} {V_g^{(t)}(\rho)}\, }_+
			\\[0.2cm]
			& \leq & \displaystyle\frac{\log|A|}{\eta_1 T} 
			\,+\,
			\frac{1}{(1-\gamma)^2 T} 
			\,+\,
			\frac{2\eta_2}{(1-\gamma)^3}
			\,+\,
			\frac{2}{\eta_2(1-\gamma)^2\xi^2 T}
			\,+\,
			\frac{\eta_2}{2(1-\gamma)^2}.
		\end{array}
		\]
		Note that both $V_r^{(t)}(\rho)$ and $V_g^{(t)}(\rho)$ can be expressed as linear functions in the same occupancy measure~\cite[Chapter~10]{altman1999constrained} that is induced by the policy $\pi^{(t)}$ and the transition $P(s'\,\vert\,s,a)$. The convexity of the set of occupancy measures shows that the average of $T$ occupancy measures is an occupancy measure that produces a policy $\pi'$ with values $V_r^{\pi'}$ and $V_g^{\pi'}$. Hence, there exists a policy $\pi'$ such that $V_r^{\pi'}(\rho)=\frac{1}{T} \sum_{t\,=\,0}^{T-1}{ V_r^{(t)}(\rho) }$ and $V_g^{\pi'}(\rho)=\frac{1}{T} \sum_{t\,=\,0}^{T-1}{ V_g^{(t)}(\rho) }$. Thus, 
		\[
		\begin{array}{rcl}
			&& \!\!\!\! \!\!\!\! \!\! \displaystyle V_r^\star(\rho)
			\,-\, 
			V_r^{\pi'}(\rho) 
			\,+\, 
			\frac{2}{(1-\gamma)\xi} \sbr{\, b \, -\,   {V_g^{\pi'}(\rho)}\, }_+
			\\[0.2cm]
			&\leq& \displaystyle\frac{\log|A|}{\eta_1 T} 
			\,+\,
			\frac{1}{(1-\gamma)^2 T} 
			\,+\,
			\frac{2\eta_2}{(1-\gamma)^3}
			\,+\,
			\frac{2}{\eta_2(1-\gamma)^2\xi^2 T}
			\,+\,
			\frac{\eta_2}{2(1-\gamma)^2}.
		\end{array}
		\]
        From Lemma~\ref{lem.duality} (ii), we have $\lambda^\star \leq 1/((1-\gamma)\xi)$. Application of Lemma~\ref{lem.constraint} with $C= {2}/((1-\gamma)\xi)$ yields
		\[
		\sbr{\, b \, -\,   {V_g^{\pi'}(\rho)} \,}_+
		\; \leq \; 
		\frac{\xi \log|A|}{\eta_1 T}
		\,+\,
		\frac{\xi}{(1-\gamma) T} 
		\,+\,
		\frac{2\eta_2\xi}{(1-\gamma)^2}
		\,+\,
		\frac{2}{\eta_2(1-\gamma)\xi T}
		\,+\,
		\frac{\eta_2\xi}{2(1-\gamma)}
		\]
		which leads to our constraint violation bound if we further utilize $\frac{1}{T} \sum_{t\,=\,0}^{T-1} \big({b -V_g^{(t)}(\rho)}\big)=b -  {V_g^{\pi'}(\rho)}$, $\eta_1 =2\log|A|$, and $\eta_2 = {2(1-\gamma)}/{\sqrt{T}}$.
	\end{proof}
	
	\subsection{Zero constraint violation}
	\label{subsec.softmax.zero.violation}
	
	In practice, it is natural to employ a conservative constraint $V_{g}^{\pi}(\rho)\geq b+\delta$ for some $\delta>0$ in Problem~\eqref{eq.cmdp}. When our desired accuracy $\epsilon$ is small enough, there exists some $\delta$ for the algorithm~\eqref{eq.NPG-PD-softmax} to get zero constraint violation on average.  
	
	\begin{corollary}[Zero constraint violation: softmax policy parametrization]\label{thm.convergence.softmax.zeroviolation}
		Let Assumption~\ref{as.slater} hold for $\xi>0$ and let us fix $\rho\in\Delta_S$ and replace the constraint of Problem~\eqref{eq.cmdp} by $V_g^{\pi}(\rho) \geq \bar{b}$, where $\bar{b}\DefinedAs b+\delta$ for some $\delta>0$.
		For $\epsilon<{\xi}/{2}$, there exists a $\delta = \Theta(\epsilon)$ such that 
		if we choose $T = \Omega(1/\epsilon^2)$, $\eta_1 = 2 \log|A|$, and $\eta_2=2(1-\gamma)/{\sqrt{T}}$, then the iterates $\{\pi^{(t)}\}_{t=0}^{T-1}$ generated by the algorithm~\eqref{eq.NPG-PD-softmax} satisfy
		\[
		\begin{array}{rcl}
			\text{\normalfont(Optimality gap)}\;\; 
			\displaystyle 
			\frac{1}{T} \sum_{t\,=\,0}^{T-1} \big(\,V_r^\star(\rho) \,-\, V_r^{(t)}(\rho) \,\big)
			& = &
			\displaystyle
			O(\epsilon)
			\\[0.4cm]
			\text{\normalfont(Constraint violation)}\;\; 
			\displaystyle 
			\sbr{\,
				\frac{1}{T}\sum_{t \, = \,0 }^{T-1} \big(\,b\,-\,V_g^{(t)}(\rho)\,\big)
				\,}_+  
			& \leq & 
			0.
		\end{array}
		\]
	\end{corollary}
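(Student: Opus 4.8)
The plan is to run algorithm~\eqref{eq.NPG-PD-softmax} on a \emph{tightened} problem in which $b$ is replaced by $\bar{b}\DefinedAs b+\delta$, and then to pick $\delta$ just large enough that the (small) residual violation of the tightened constraint is fully absorbed, leaving no violation of the original constraint $V_g^{\pi_\theta}(\rho)\geq b$. First I would note that the tightened problem still satisfies Slater: the same point $\bar{\pi}$ gives $V_g^{\bar{\pi}}(\rho)-\bar{b}\geq \xi-\delta>0$ whenever $\delta<\xi$, so the tightened Slater parameter is $\bar{\xi}\DefinedAs\xi-\delta$. Consequently Theorem~\ref{thm.convergence.softmax} applies verbatim and, with the stated stepsizes, yields an $O(1/\sqrt{T})$ optimality gap relative to the tightened optimum together with the violation bound $\big[\frac{1}{T}\sum_{t=0}^{T-1}(\bar{b}-V_g^{(t)}(\rho))\big]_+\leq \frac{2/\bar{\xi}+4\bar{\xi}}{(1-\gamma)^2}\frac{1}{\sqrt{T}}$.

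Next I would control the price paid for tightening, i.e.\ the gap between the original optimum $V_r^\star(\rho)$ and the tightened optimum. Using the perturbation function $v(\tau)$ introduced just before Lemma~\ref{lem.constraint}, the tightened optimum is exactly $v(\delta)$ and $V_r^\star(\rho)=v(0)$, so the loss is $v(0)-v(\delta)$. Since $v$ is concave and the Slater point is feasible at level $\xi$ (hence $v(\xi)\geq V_r^{\bar{\pi}}(\rho)\geq 0$), writing $\delta$ as the convex combination $\delta=(1-\delta/\xi)\cdot 0+(\delta/\xi)\cdot\xi$ gives $v(\delta)\geq (1-\delta/\xi)v(0)+(\delta/\xi)v(\xi)$, whence $v(0)-v(\delta)\leq (\delta/\xi)(v(0)-v(\xi))\leq \frac{\delta}{\xi(1-\gamma)}$ using $v(0)=V_r^\star(\rho)\leq 1/(1-\gamma)$. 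This linear-in-$\delta$ sensitivity estimate is the key structural fact: it lets me split the original optimality gap into the tightening loss $\frac{\delta}{\xi(1-\gamma)}$ plus the tightened-problem gap $\frac{7}{(1-\gamma)^2\sqrt{T}}$.

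Finally I would tune $(\delta,T)$. To force exact zero violation of the original constraint it suffices that $\delta$ dominate the tightened violation bound, since then $\frac{1}{T}\sum_{t=0}^{T-1}V_g^{(t)}(\rho)\geq \bar{b}-\delta=b$, which gives $\big[\frac{1}{T}\sum_{t=0}^{T-1}(b-V_g^{(t)}(\rho))\big]_+=0$. The main obstacle is the circularity here: the required threshold $\frac{2/\bar{\xi}+4\bar{\xi}}{(1-\gamma)^2}\frac{1}{\sqrt{T}}$ depends on $\bar{\xi}=\xi-\delta$, which itself depends on $\delta$. I would break this by first restricting to $\delta\leq \xi/2$, so that $\bar{\xi}\geq \xi/2$ and the threshold constant is bounded by $B_0\DefinedAs (4/\xi+4\xi)/(1-\gamma)^2$; then setting $\delta=B_0/\sqrt{T}$ makes the dominating inequality $\delta\geq \frac{2/\bar{\xi}+4\bar{\xi}}{(1-\gamma)^2}\frac{1}{\sqrt{T}}$ hold. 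Choosing $T=\Omega(1/\epsilon^2)$ then forces $\delta=O(\epsilon)$ and (given the hypothesis $\epsilon<\xi/2$) also $\delta\leq \xi/2$, so the restriction is self-consistent. Substituting back, the optimality gap is $\frac{\delta}{\xi(1-\gamma)}+\frac{7}{(1-\gamma)^2\sqrt{T}}=O(\epsilon)$ while the constraint violation is exactly zero, as claimed.
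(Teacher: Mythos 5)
Your proposal is correct, and it reaches the conclusion by a genuinely different route than the paper. The paper re-opens the proof of Theorem~\ref{thm.convergence.softmax}: it runs the algorithm on the tightened problem with the enlarged dual interval $\bar\Lambda=[\,0,4/((1-\gamma)\xi)\,]$, tracks the extra term $2\epsilon/((1-\gamma)\xi)-\lambda\delta$ that the tightening injects into~\eqref{eq.choose_lambda}, and with $\lambda=4/((1-\gamma)\xi)$, $\delta=\epsilon$ lets the resulting $-2\epsilon/((1-\gamma)\xi)$ cancel the $O(1/\sqrt{T})$ terms; it also bounds $|V_r^{\pi^\star}(\rho)-V_r^{\bar\pi^\star}(\rho)|$ by appealing to continuity of optimal values of the linear program in the occupancy measure (an external citation). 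You instead treat Theorem~\ref{thm.convergence.softmax} as a black box applied to the tightened problem (with Slater constant $\bar\xi=\xi-\delta$), observe that if $\delta$ dominates the resulting violation bound with respect to $\bar b$ then the violation with respect to $b$ is exactly zero, and replace the LP-continuity argument with an elementary sensitivity bound $v(0)-v(\delta)\leq(\delta/\xi)\bigl(v(0)-v(\xi)\bigr)\leq\delta/(\xi(1-\gamma))$ obtained from concavity of the perturbation function $v(\tau)$ and feasibility of the Slater point at level $\xi$ --- objects the paper already has in hand before Lemma~\ref{lem.constraint}. Your handling of the circular dependence of the threshold on $\bar\xi$ (restricting $\delta\leq\xi/2$ so the constant is bounded by $B_0$, then setting $\delta=B_0/\sqrt{T}$ and using $\epsilon<\xi/2$ for self-consistency) is sound and plays the same role as the paper's restriction $\delta<\xi/2$, which the paper needs instead for the projection-interval containment. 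What each approach buys: yours is more modular and self-contained (no modification of the convergence proof, no external continuity result), while the paper's yields the cleaner prescription $\delta=\epsilon$ with fully explicit constants and a template that transfers directly to the function-approximation analogues (cf.\ Remark~\ref{re.zero.violation.log-linear}, which reuses the same interior modification of the regret inequality).
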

	
	\begin{proof}
		The proof idea is similar to the one used in the proof of Theorem~\ref{thm.convergence.softmax}. Using the new constraint $V_g^{\pi}(\rho) \geq \bar{b}$, Problem~\eqref{eq.cmdp} satisfies Assumption~\ref{as.slater} for $\bar\xi \DefinedAs \xi - \delta$ where $\delta<\xi$, and there exists an optimal policy $\bar\pi^{\star}$. Without loss of generality, by restricting $\delta<{\xi}/{2}$, we can replace $\Lambda$ by $\bar\Lambda \DefinedAs [\,0, 4/((1-\gamma)\xi)\,]$, which contains $[\,0, 2/((1-\gamma)\bar\xi)\,]$ for any such a $\bar{\xi}$. Thus, we can apply the NPG-PD algorithm~\eqref{eq.NPGAGD} to this conservative problem using the projection set $\bar\Lambda$. It is straightforward to check that Lemma~\ref{lem.average.performance} holds for $V_r^{\bar\pi^\star}(\rho)$ and $V_g^{\bar\pi^\star}(\rho)$. Thus, bounding of the optimality gap in the proof of Theorem~\ref{thm.convergence.softmax} proves that after $T= \Omega({1}/{\epsilon^2})$ iterations, 
		\begin{equation}\label{eq.optimality gap relaxed}
			\displaystyle 
			\frac{1}{T} \sum_{t\,=\,0}^{T-1} \big(\,V_r^{\bar\pi^\star}(\rho) \,-\, V_r^{(t)}(\rho) \,\big)
			\; = \;
			\displaystyle
			O(\epsilon).
		\end{equation}
		Let $q^\star$ and $\bar{q}^\star$ be the occupancy measures induced by the policies $\pi^\star$ and $\bar{\pi}^\star$, respectively. In the occupancy measure space, Problem~\eqref{eq.cmdp} becomes a linear program and, thus, $V_r^{\pi^\star}(\rho) = \langle r, q^\star \rangle$ and $V_r^{\bar\pi^\star}(\rho) = \langle r, \bar q^\star \rangle$. By the continuity of optimal objective function in convex optimization~\citep{terazono2015continuity}, $|V_r^{\pi^\star}(\rho) - V_r^{\bar\pi^\star}(\rho)| \leq {2\epsilon}/((1-\gamma)\xi)$ for $\delta = \epsilon$. Therefore,  we can replace $V_r^{\bar\pi^\star}(\rho)$ in~\eqref{eq.optimality gap relaxed} by $V_r^{\star}(\rho)$ to bound the optimality gap by the same desired accuracy $\epsilon$ up to some problem-dependent constant. 
		
		To establish the bound on constraint violation, the key change begins with~\eqref{eq.choose_lambda}. Since we use $\bar b = b+\delta$ and $V_r^{\bar\pi^\star}(\rho)$, the right-hand side of~\eqref{eq.choose_lambda} contains an extra term $ 2\epsilon/((1-\gamma)\xi)-\lambda \delta$. Similarly, there are two options for selecting $\lambda$:  $\lambda = {4}/((1-\gamma)\xi)$ when ${\sum_{t\,=\,0}^{T-1} \big({b -V_g^{(t)}(\rho)}}\big)\geq 0$ and $\lambda=0$ otherwise. In the first case, if we set $\delta=\epsilon$, then the extra term $- 2\epsilon/((1-\gamma)\xi)$ cancels the error $O(1/\sqrt{T})$ for $T = \Omega(1/\epsilon^2)$, concluding zero constraint violation according to Lemma~\ref{lem.constraint}. On the other hand, the second case is exactly the zero constraint violation.
	\end{proof}
	
	\section{Function approximation: convergence rate and optimality}
	\label{sec.fa}
	
	Let us consider a general form of the NPG-PD algorithm~\eqref{eq.NPGAGD}:
	\begin{equation}\label{eq.NPGAGD.primalapproximation}
		\begin{array}{rcl} 
			\theta^{(t+1)} 
			& = & 
			\theta^{(t)} 
			\,+\,
			\dfrac{\eta_1}{1-\gamma}\, w^{(t)} 
			\\[0.15cm]
			\lambda^{(t+1)} 
			& = & 
			\calP_\Lambda  \left(\lambda^{(t)} 
			\, - \, 
			\eta_2\, \big(\, V_g^{(t)}(\rho)\,-\,b\,\big)\right)
		\end{array}
	\end{equation}
	where $w^{(t)}/(1-\gamma)$ denotes either the exact natural policy gradient or its sample-based approximation. For a general policy class, $ \{\pi_\theta\,\vert\,\theta\in\Theta\}$, with the parameter space $\Theta\subset\mathbb{R}^d$, the strong duality in Lemma~\ref{lem.duality} does not necessarily hold and our analysis of Section~\ref{sec.softmax} does not apply directly. Let the parametric dual objective function $V_D^{\lambda_\theta} (\rho) \DefinedAs \maximize_{\theta \,\in\, \Theta}  V_L^{\pi_\theta,\lambda}(\rho)$ be minimized at the optimal dual variable $\lambda_\theta^\star$. Under the Slater condition of  Assumption~\ref{as.slater}, the parametrization gap~\cite[Theorem~2]{paternain2019constrained} is determined by
	\[
	V_r^{\pi^\star} (\rho)
	\; = \;
	V_D^{\lambda^\star} (\rho)
	\; \geq \;
	V_D^{\lambda_\theta^\star} (\rho)
	\; \geq \;
	V_r^{\pi^\star} (\rho) \, - \, M \epsilon_{\pi}
	\]
	where $\epsilon_\pi \DefinedAs \max_s \Vert{ \pi(\cdot\,\vert\,s)-\pi_\theta(\cdot\,\vert\,s)}\Vert_1$ is the policy approximation error and $M>0$ is a problem-dependent constant. Application of the item (ii) in Lemma~\ref{lem.duality} to the set of all optimal dual variables $\lambda_\theta^\star$ yields $\lambda_\theta^\star\in[\,0, 2/((1-\gamma)\xi)\,]$ and,  thus, $\Lambda= [\,0, 2/((1-\gamma)\xi)\,]$.
	
	To quantify the error caused by the restricted policy parametrization, let us first generalize NPG. For a distribution over state-action pair $\nu \in \Delta_{S\times A}$, we introduce the \emph{compatible function approximation error}~\citep{kakade2002natural} as the following regression objective:
	\begin{equation}\nonumber
		E^\nu (w;\theta,\lambda) 
		\; \DefinedAs \;
		\mathbb{E}_{(s,a)\,\sim\,\nu} \left[\,  \left( A_L^{\theta,\lambda}(s,a) - w^\top \nabla_\theta \log \pi_\theta(a\,\vert\,s)  \right)^2 \,\right]
	\end{equation}
	where $A_L^{\theta,\lambda}(s,a) \DefinedAs A_r^{\theta}(s,a)+\lambda A_g^{\theta}(s,a)$. 
	We can view NPG in~\eqref{eq.NPGAGD} as a minimizer of $E^\nu(w;\theta,\lambda)$ for $\nu(s,a) = d_\rho^{\pi_\theta}(s) \pi_\theta(a\,\vert\,s)$:
	\begin{equation}\label{eq.npg}
		(1-\gamma) F^\dagger_\rho(\theta) \nabla_\theta V_L^{\theta,\lambda} (\rho) 
		\; \in \;
		\argmin_{w}\; E^\nu (w;\theta,\lambda). 
	\end{equation}
	Expression~\eqref{eq.npg} follows from the first-order optimality condition and the use of $\nabla_\theta V_L^{\theta,\lambda} (\rho) \DefinedAs \nabla_\theta V_r^{\theta} (\rho)+\lambda \nabla_\theta V_g^{\theta} (\rho)$ allows us to rewrite~\eqref{eq.npg} as a linear combination of 
	\begin{equation}\label{eq.npgs}
		(1-\gamma)
		F^\dagger_\rho(\theta) \nabla_\theta V_\diamond^{\theta} (\rho) 
		\; \in \;
		\argmin_{w_\diamond}\; E_\diamond^\nu (w_\diamond;\theta)
	\end{equation}
	where $\diamond$ denotes $r$ or $g$, and the compatible function approximation error $E_\diamond^\nu (w_\diamond;\theta)$ reads
	\begin{equation}\label{eq.compatible}
		E_\diamond^\nu (w_\diamond;\theta) 
		\; \DefinedAs \;
		\mathbb{E}_{(s,a)\,\sim\,\nu} \left[\,  \left( A_\diamond^{\theta}(s,a) - w_\diamond^\top \nabla_\theta \log \pi_\theta(a\,\vert\,s)  \right)^2 \,\right].
	\end{equation} 
    Let the minimal error be $E_{\diamond,\star}^\nu \DefinedAs \minimize_{w_\diamond}   E_\diamond^\nu (w_\diamond;\theta)$.
    
	When the compatible function approximation error is zero, the global convergence follows from Theorem~\ref{thm.convergence.softmax}. However, this is not the case for a general policy class because it may not include all possible policies (e.g., if we take $d\ll |S||A|$ for tabular constrained MDPs). The intuition behind {\em compatibility\/} is that any minimizer of $E_\diamond^\nu (w_\diamond;\theta)$ can be used as the NPG direction without affecting the global convergence property; also see more discussions in~\cite{kakade2002natural,sutton2000policy,agarwal2021optimality}.
	
	Since the state-action measure $\nu$ of some feasible comparison policy $\pi$ is not known, we introduce an exploratory initial distribution $\nu_0$ over state-action pairs and define a state-action visitation distribution $\nu_{\nu_0}^\pi$ of a policy $\pi$ as 
	\[
	\nu_{\nu_0}^\pi (s,a) 
	\; = \;
	(1-\gamma) \mathbb{E}_{(s_0,a_0)\,\sim\,\nu_0}  \left[ \, \sum_{t\,=\,0}^{\infty} \gamma^t P^\pi \rbr{s_t=s, a_t=a\,\vert\,s_0, a_0} \, \right]
	\]
	where $P^\pi \rbr{s_t=s, a_t=a\,\vert\,s_0, a_0}$ is the probability of visiting a state-action pair $(s, a)$ under policy $\pi$ for an initial state-action pair $(s_0, a_0)$. Whenever clear from context, we use $\nu^{(t)}$ to denote $\nu_{\nu_0}^{\pi^{(t)}}$ for notational convenience. When the minimizer is computed exactly, we can update $w^{(t)}$ in~\eqref{eq.NPGAGD.primalapproximation} using $w^{(t)} = w_r^{(t)} + \lambda^{(t)} w_g^{(t)}$, where $w_r^{(t)}$ and $w_g^{(t)}$ are given by
	\begin{equation}\label{eq.NPGAGD.primal.general}
		w_\diamond^{(t)}
		\; \in \;
		\argmin_{w_\diamond} \; E_\diamond^{\nu^{(t)}} \big(w_\diamond;\theta^{(t)}\big).
	\end{equation} 
	Even though the exact computation of a minimizer in~\eqref{eq.NPGAGD.primal.general} may not be feasible, we can use a sample-based algorithm to approximately solve its  empirical version. By characterizing the errors that result from the sample-based solutions and from the function approximation, we next prove the convergence of the algorithm~\eqref{eq.NPGAGD.primalapproximation} for the log-linear and the general smooth policy classes.
	
	\subsection{Log-linear policy class}
	\label{sec.fa.loglinear}
	
	We first consider the policies $\pi_\theta$ in the log-linear class~\eqref{eq.loglinear}, with the feature maps $\phi_{s,a}\in \mathbb{R}^d$. In this case, the gradient $\nabla_\theta \log \pi_\theta(a\,\vert\,s)$ becomes a shifted version of the feature $\phi_{s,a}$:
	\begin{equation}\label{eq.policyshift}
		\nabla_\theta \log \pi_\theta(a\,\vert\,s)
		\; = \;
		\phi_{s,a} 
		\, - \, \mathbb{E}_{a'\,\sim\,\pi_\theta(\cdot\,\vert\,s)}  [\, \phi_{s,a'} \, ]
		\; \AsDefined \;
		\bar\phi_{s,a}.
	\end{equation}
	Thus, the compatible function approximation error~\eqref{eq.compatible}
	captures how well the linear function $\theta^\top \bar \phi_{s,a}$ approximates the advantage function $A_r^{\theta}(s,a)$ or $A_g^{\theta}(s,a)$ under the state-action distribution $\nu$. We also introduce the compatible function approximation error with respect to the state-action value function $Q_\diamond^{\theta}(s,a)$:
	\[
	\mathcal{E}_\diamond^\nu (w_\diamond ; \theta)
	\; \DefinedAs \;	
	\mathbb{E}_{(s,a)\,\sim\,\nu} 
	\left[\,  \big( \,Q_\diamond^{\theta}(s,a) \, - \, w_\diamond^\top \phi_{s,a}  \,\big)^2 \,\right].
	\]
	When there are no compatible function approximation errors, the log-linear policy update in~\eqref{eq.NPGAGD.primalapproximation} for $w^{(t)}$ that is determined by~\eqref{eq.NPGAGD.primal.general}  is given by $w^{(t)} = w_r^{(t)}+\lambda^{(t)} w_g^{(t)}$, $w_\diamond^{(t)} \in \argmin_{w_\diamond} \mathcal{E}_\diamond^{\nu^{(t)}} \big(w_\diamond ; \theta^{(t)}\big)$ for $\diamond = r$ or $g$, where $\nu^{(t)}(s,a) = d_\rho^{(t)}(s) \pi_\theta^{(t)}(a\,\vert\,s)$ is an on-policy state-action visitation distribution. This is because the softmax function is invariant to any terms that are independent of the action.
	
	Let us consider an approximate solution
	\begin{equation}\label{eq.NPGAGD.primal.app}
		w_\diamond^{(t)}  
		\; \approx \;
		\argmin_{\norm{w_\diamond}_2\,\leq\, W} \; \mathcal{E}_\diamond^{\nu^{(t)}} \big(w_\diamond; \theta^{(t)}\big)
	\end{equation}
	where the constraint with a norm bound  $W>0$ can be viewed as an $L_2$-regularization. We restrict the domain to make the approximate solution well-defined even when it is not well-posed, which is similar to imposing an $L_2$-regularization in practice. Let an exact minimizer be $w_{\diamond,\star}^{(t)} \in \argmin_{\norm{w_\diamond}_2\,\leq\, W} \mathcal{E}_\diamond^{\nu^{(t)}} (w_\diamond; \theta^{(t)})$. Fixing a state-action distribution $\nu^{(t)}$, the estimation error in $w_\diamond^{(t)}$ arises from the discrepancy between $w_\diamond^{(t)}$ and $w_{\diamond,\star}^{(t)}$, which comes from the randomness in a sample-based optimization algorithm and the mismatch between the linear function and the true state-action value function. We represent the estimation error as
	\[
	\mathcal{E}_{\diamond,\text{\normalfont est}}^{(t)}
	\;
	\DefinedAs
	\;
	\mathbb{E} 
	\left[\,
	\mathcal{E}_\diamond^{\nu^{(t)}} \big(w_\diamond^{(t)}; \theta^{(t)}\big)
	\, - \,
	\mathcal{E}_\diamond^{\nu^{(t)}} \big(w_{\diamond,\star}^{(t)}; \theta^{(t)}\big)
	\,\right]
	\]
	where the expectation $\mathbb{E}$ is taken over the randomness of the approximate algorithm used to solve~\eqref{eq.NPGAGD.primal.app}. The estimation error simplifies when the state-action value function is linear~\citep{ding2022policy}.
	
	Note that the state-action distribution $\nu^{(t)}$ is on-policy. To characterize the effect of distribution shift on $w_{\diamond,\star}^{(t)}$, we first introduce some notation. 
	We represent a fixed distribution over state-action pairs $(s,a)$ by 
	\begin{equation}
		\nu^\star(s,a)  
		\; \DefinedAs \; 
		d_\rho^{\pi^\star}(s) \circ \text{Unif}_A(a).
		\label{eq.fixed}
	\end{equation}
	The fixed distribution $\nu^\star$ samples a state from $d_\rho^{\pi^\star}(s)$ and an action uniformly from $\text{Unif}_A(a)$. We characterize the error in $w_{\diamond,\star}^{(t)}$ that arises from the distribution shift via the transfer error
	\[
	\mathcal{E}_{\diamond,\text{\normalfont bias}}^{(t)}
	\;
	\DefinedAs
	\;
	\mathbb{E} 
	\left[\,
	\mathcal{E}_\diamond^{\nu^{\star}} \big(w_{\diamond,\star}^{(t)}; \theta^{(t)}\big)
	\,\right].
	\]
	The transfer error characterizes the expressiveness of function approximation that is affected by the feature maps $\phi_{s,a}\in \mathbb{R}^d$ and the quality of the exact minimizer $w_{\diamond,\star}^{(t)}$.

	\begin{assumption}[Estimation error and transfer error]\label{as.errors}
		Both the estimation error and the transfer error are bounded, i.e., $\mathcal{E}_{\diamond,\text{\normalfont est}}^{(t)}\leq \epsilon_{\text{\normalfont  est}}$ and $\mathcal{E}_{\diamond,\text{\normalfont bias}}^{(t)} \leq \epsilon_{\text{\normalfont  bias}}$ for all $t\geq 0$, where $\diamond$ denotes either $r$ or $g$.
	\end{assumption}
	
	We also point out that it is possible to remove the domain restriction in~\eqref{eq.NPGAGD.primal.app} when some regularity assumptions on the feature maps are made in the sample-based algorithm~\citep{bach2013non}. Let $\bar{w}_{\diamond,\star}^{(t)}
	\in
	\argmin_{w_\diamond} \mathcal{E}_{\diamond}^{\nu^{(t)}} \big(w_\diamond; \theta^{(t)}\big)$. Since the expressiveness of function approximation is captured by the transfer error, the gap between the exact minimizers $w_{\diamond,\star}^{(t)}$ and $\bar w_{\diamond,\star}^{(t)}$ is contained in the transfer error.
	
	When we apply a sample-based algorithm to~\eqref{eq.NPGAGD.primal.app}, it is standard to have $\epsilon_{\text{\normalfont  est}} = O(1/\sqrt{K})$, where $K$ is the number of samples; e.g., see~\citet[Theorem~14.8]{shalev2014understanding}. A special case is the exact tabular softmax policy parametrization for which $\epsilon_{\text{\normalfont  bias}} = \epsilon_{\text{\normalfont  est}} = 0$, since the features  $\phi_{s,a}\in\mathbb{R}^d$ now reduce to indicator functions of the state/action spaces. 
	
	For any state-action distribution $\nu$, we define $\Sigma_{\nu} \DefinedAs \mathbb{E}_{(s,a)\,\sim\,\nu} \left[\, \phi_{s,a} \phi_{s,a}^\top \,\right]$, and following~\cite[Assumption 6.2]{agarwal2021optimality}, to compare $\nu$ with $\nu^\star$, we introduce the notion of \emph{relative condition number}:
	\[
	\kappa  
	\; \DefinedAs \;
	\sup_{w\,\in\,\mathbb{R}^d} \;
	\frac{w^\top \Sigma_{\nu^\star} w}{w^\top \Sigma_{\nu_0}w}. 
	\]
	
	\begin{assumption}[Bounded relative condition number]\label{as.condition}
		For an initial state-action distribution $\nu_0$ and $\nu^\star$ determined by~\eqref{eq.fixed}, the relative condition number $\kappa$ is finite.
	\end{assumption}
	
	With the estimation error $\epsilon_{\text{\normalfont  est}}$, the transfer error $\epsilon_{\text{\normalfont  bias}}$, and the relative condition number $\kappa$ in place, in Theorem~\ref{thm.convergence.loglinear} we establish convergence guarantees for the algorithm~\eqref{eq.NPGAGD.primalapproximation} using the approximate update~\eqref{eq.NPGAGD.primal.app}. Even though we set $\theta^{(0)}=0$ and $\lambda^{(0)}=0$ in the proof of Theorem~\ref{thm.convergence.loglinear}, global convergence can be established for arbitrary initial conditions. 
	
	\begin{theorem}[Convergence and optimality: log-linear policy parametrization]
		\label{thm.convergence.loglinear}
		Let Assumption~\ref{as.slater} hold for $\xi>0$ and let us fix a state distribution $\rho$ and a state-action distribution $\nu_0$. If the iterates $\{(\theta^{(t)},\lambda^{(t)})\}_{t=0}^{T-1}$ generated by the algorithm~\eqref{eq.NPGAGD.primalapproximation} using~\eqref{eq.NPGAGD.primal.app} with $\norm{\phi_{s,a}}\leq B$ and $\eta_1=\eta_2={1}/{\sqrt{T}}$ satisfy Assumptions~\ref{as.errors} and~\ref{as.condition}, then
		\[
		\begin{array}{rcl}
			\displaystyle
			\mathbb{E} 
			\sbr{\,
				\frac{1}{T} \sum_{t\,=\,0}^{T-1} \big(\,V_r^\star(\rho) \,-\,V_r^{(t)}(\rho)\,\big)
				\,}
			& \leq &  \displaystyle
			\frac{C_3}{(1-\gamma)^5}\frac{1}{\sqrt{T}} \,+\, \frac{2+4/\xi}{(1-\gamma)^2} \left( \sqrt{ |A|\, \epsilon_{\text{\normalfont  bias}}} + \sqrt{\! \frac{\kappa\,|A|\, \epsilon_{\text{\normalfont  est}}}{1-\gamma} } \right)
			\\[0.4cm]
			\displaystyle
			\mathbb{E}
			\sbr{\,
				\frac{1}{T} \sum_{t\,=\,0}^{T-1} \big(\,b\,-\,V_g^{(t)}(\rho)\,\big)
				\,}_+
			& \leq & \displaystyle
			\frac{C_4}{(1-\gamma)^4}\frac{1}{\sqrt{T}} \,+\, \left( \frac{4+2\xi}{1-\gamma}\right) \left( \!\sqrt{ |A|\, \epsilon_{\text{\normalfont  bias}}} + \sqrt{ \frac{\kappa\,|A|\, \epsilon_{\text{\normalfont  est}}}{1-\gamma} } \right)
		\end{array}
		\]
		where $C_3 \DefinedAs 1+\log |A|+5B^2 W^2/\xi^2$ and $C_4 \DefinedAs (1+\log |A|+B^2 W^2)\xi + (2+4B^2 W^2)/\xi$.
	\end{theorem}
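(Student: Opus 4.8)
The plan is to follow the template of the softmax proof (Theorem~\ref{thm.convergence.softmax}) while carefully tracking the function-approximation errors introduced by the inexact NPG direction $w^{(t)} = w_r^{(t)} + \lambda^{(t)} w_g^{(t)}$. First I would observe that, because the softmax is invariant to action-independent shifts of the score, the log-linear primal update in~\eqref{eq.NPGAGD.primalapproximation} can be written as a generalized multiplicative weights update,
\[
\pi^{(t+1)}(a\,\vert\,s) \,=\, \pi^{(t)}(a\,\vert\,s)\,\frac{\exp\big(\frac{\eta_1}{1-\gamma}(w^{(t)})^\top\bar\phi_{s,a}\big)}{Z^{(t)}(s)},
\]
with $\bar\phi_{s,a}$ as in~\eqref{eq.policyshift} and $Z^{(t)}(s)$ the corresponding normalizer. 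This is exactly the softmax MWU~\eqref{eq.pi-softmax} with the true Lagrangian advantage $A_L^{(t)}(s,a)$ replaced by its linear surrogate $(w^{(t)})^\top\bar\phi_{s,a}$. Re-running the arguments of Lemma~\ref{lem.improvement} and Lemma~\ref{lem.average.performance} with this surrogate then produces an average-performance inequality of the same shape as~\eqref{eq.dualkey}, but carrying one extra advantage-mismatch term per iteration,
\[
\mathrm{err}^{(t)} \,\DefinedAs\, \mathbb{E}_{s\,\sim\,d^\star,\, a\,\sim\,\pi^\star}\big[\,A_L^{(t)}(s,a)-(w^{(t)})^\top\bar\phi_{s,a}\,\big],
\]
which appears because the performance-difference lemma pairs the \emph{true} advantage against $\pi^\star$ whereas the KL-telescoping only sees the surrogate.

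The hard part is controlling $\mathrm{err}^{(t)}$ by the stated errors. I would split the mismatch linearly, $A_L^{(t)} - (w^{(t)})^\top\bar\phi = (A_r^{(t)}-(w_r^{(t)})^\top\bar\phi) + \lambda^{(t)}(A_g^{(t)}-(w_g^{(t)})^\top\bar\phi)$, and bound each summand under the fixed comparator $\nu^\star = d^\star\circ\mathrm{Unif}_A$ of~\eqref{eq.fixed}. For each $\diamond\in\{r,g\}$, a triangle inequality in the feature metric gives $\sqrt{\mathcal{E}_\diamond^{\nu^\star}(w_\diamond^{(t)})}\leq \sqrt{\mathcal{E}_\diamond^{\nu^\star}(w_{\diamond,\star}^{(t)})} + \|w_\diamond^{(t)}-w_{\diamond,\star}^{(t)}\|_{\Sigma_{\nu^\star}}$, separating the transfer error $\mathcal{E}_{\diamond,\text{\normalfont bias}}^{(t)}\leq\epsilon_{\text{\normalfont bias}}$ of the exact minimizer from the estimation discrepancy. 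For the latter I would use $\Sigma_{\nu^\star}\preceq\kappa\,\Sigma_{\nu_0}$ (Assumption~\ref{as.condition}) together with the componentwise bound $\nu^{(t)}\geq(1-\gamma)\nu_0$ from~\eqref{eq.visitation}, so that the on-policy estimation error controls the $\nu_0$-metric at cost $1/(1-\gamma)$ and then the $\nu^\star$-metric at cost $\kappa$, yielding $\|w_\diamond^{(t)}-w_{\diamond,\star}^{(t)}\|_{\Sigma_{\nu^\star}}^2\leq \kappa\,\mathcal{E}_{\diamond,\text{\normalfont est}}^{(t)}/(1-\gamma)\leq \kappa\,\epsilon_{\text{\normalfont est}}/(1-\gamma)$. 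Converting the resulting $\mathrm{Unif}_A$-weighted $\ell_2$ bound into the $\pi^\star$-weighted $\ell_1$ quantity $\mathrm{err}^{(t)}$ costs a Cauchy--Schwarz importance-weighting factor $\sqrt{|A|}$ (after centering the residual against $\pi^{(t)}$, which preserves both $\sum_a\pi^{(t)}A_\diamond^{(t)}=0$ and $\sum_a\pi^{(t)}\bar\phi_{s,a}=0$), so $\mathbb{E}\,\vert\mathrm{err}^{(t)}\vert$ is bounded by $(1+\lambda^{(t)})\big(\sqrt{|A|\,\epsilon_{\text{\normalfont bias}}} + \sqrt{\kappa\,|A|\,\epsilon_{\text{\normalfont est}}/(1-\gamma)}\big)$, and $\lambda^{(t)}\leq 2/((1-\gamma)\xi)$ is absorbed into the $\xi$-dependent constants.

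With the average-performance inequality now carrying $\frac{1}{T}\sum_t\mathbb{E}\,\mathrm{err}^{(t)}$, the remaining two steps mirror the softmax proof. For the optimality gap I would reuse the dual-descent computation~\eqref{eq.lambda}--\eqref{eq.lambda.clean} to discard $-\frac{1}{T}\sum_t\lambda^{(t)}(V_g^\star-V_g^{(t)})$ at cost $O(\eta_2)$, then set $\eta_1=\eta_2=1/\sqrt{T}$ and collect the $B^2W^2$ contributions from the MWU/smoothness step (which replace the tabular $\log Z^{(t)}$ bound and account for the constants $C_3,C_4$). For the constraint violation I would add the dual-drift bound~\eqref{eq.removelamda} to the average-performance inequality as in~\eqref{eq.choose_lambda}, pass to a single averaged policy via convexity of the occupancy-measure set, and invoke Lemma~\ref{lem.constraint} with $C=2/((1-\gamma)\xi)\geq 2\lambda^\star$; the prefactor $2/((1-\gamma)\xi)$ multiplying the error term is precisely what produces the $(2+4/\xi)$ and $(4+2\xi)$ coefficients in the two bounds.

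The main obstacle is step two: faithfully tracking how the $L^2$ compatible-function-approximation errors of the \emph{two} advantage regressions, measured on-policy, transfer first to the fixed comparator $\nu^\star$ (through $\kappa$ and $1/(1-\gamma)$) and then to $\pi^\star$ (through $\sqrt{|A|}$), so that the state-action dimension collapses to the single $\sqrt{|A|}$ factor and the dual multiplier enters only through the $\xi$-dependent constants rather than degrading the $O(1/\sqrt{T})$ rate. Everything else is a direct, if bookkeeping-heavy, adaptation of the tabular analysis.
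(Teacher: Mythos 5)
Your second and third steps are essentially the paper's own argument: the split of the comparator error into a transfer part (bounded on $\nu^\star$ at the cost of a $\sqrt{|A|}$ importance factor, after centering against $\pi^{(t)}$) and an estimation part (pulled back through $\Sigma_{\nu^\star}\preceq\kappa\,\Sigma_{\nu_0}$ and $(1-\gamma)\nu_0\leq\nu^{(t)}$, using the first-order optimality condition of the constrained least-squares problem so that the excess risk dominates $\nbr{w_\diamond^{(t)}-w_{\diamond,\star}^{(t)}}_{\Sigma_{\nu^{(t)}}}^2$ -- a step you assert but should justify), together with the dual-drift argument, the occupancy-measure averaging, and Lemma~\ref{lem.constraint}, all appear in the paper's proof of Theorem~\ref{thm.convergence.loglinear} and Lemma~\ref{lem.gap.violation}. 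The genuine gap is in your first step. You propose to obtain the function-approximation analogue of \eqref{eq.dualkey} by re-running Lemma~\ref{lem.improvement} and Lemma~\ref{lem.average.performance} with the surrogate $(w^{(t)})^\top\bar\phi_{s,a}$ in place of $A_L^{(t)}(s,a)$, and you claim the only new term is $\mathrm{err}^{(t)}$, measured under $d^\star\times\pi^\star$. That is not what comes out. The true-vs-surrogate discrepancy you correctly identify for the comparator occurs a second time \emph{inside} Lemma~\ref{lem.improvement}: there the performance difference lemma produces the true advantage integrated against $\pi^{(t+1)}$, while the MWU identity converts only the surrogate into $D_{\text{KL}}+\log Z^{(t)}$, and since the residual $A_L^{(t)}-(w^{(t)})^\top\bar\phi$ has zero mean under $\pi^{(t)}$ but not under $\pi^{(t+1)}$, the improvement inequality becomes
\[
V_r^{(t+1)}(\mu)-V_r^{(t)}(\mu)+\lambda^{(t)}\big(V_g^{(t+1)}(\mu)-V_g^{(t)}(\mu)\big)
\;\geq\;
\frac{1-\gamma}{\eta_1}\,\mathbb{E}_{s\sim\mu}\log Z^{(t)}(s)
\;+\;
\frac{1}{1-\gamma}\,\mathbb{E}_{s\sim d_\mu^{(t+1)},\,a\sim\pi^{(t+1)}}\big[A_L^{(t)}(s,a)-(w^{(t)})^\top\bar\phi_{s,a}\big]
\]
with $d_\mu^{(t+1)}\DefinedAs d_\mu^{\pi^{(t+1)}}$. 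When you then use this at $\mu=d^\star$ to upper-bound the $\frac{1}{\eta_1}\mathbb{E}_{s\sim d^\star}\log Z^{(t)}(s)$ term, as in \eqref{eq.improvement.d}--\eqref{eq.primalkey}, your average-performance inequality inherits, at every iteration, a sign-indefinite mismatch term evaluated under the \emph{next iterate's} visitation distribution $d_{d^\star}^{(t+1)}(s)\,\pi^{(t+1)}(a\,\vert\,s)$. Neither Assumption~\ref{as.errors} nor Assumption~\ref{as.condition} controls this quantity: $\epsilon_{\text{\normalfont bias}}$ lives on the fixed comparator $\nu^\star$ and $\epsilon_{\text{\normalfont est}}$ on the on-policy distribution $\nu^{(t)}$, and the likelihood ratio of $d_{d^\star}^{(t+1)}\times\pi^{(t+1)}$ against either of these is unbounded in general. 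So the "direct, if bookkeeping-heavy, adaptation of the tabular analysis" fails precisely at the point where the tabular template must be abandoned.

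This is also why the paper's Lemma~\ref{lem.gap.violation} never invokes the improvement lemma: it lower-bounds the KL telescoping $\mathbb{E}_{s\sim d^\star}\big(D_{\text{KL}}(\pi^\star\Vert\pi^{(t)})-D_{\text{KL}}(\pi^\star\Vert\pi^{(t+1)})\big)$ directly via the $\beta$-smoothness of $\log\pi_\theta$ (inequality \eqref{eq.smoothLL}, with $\beta=B^2$ for the log-linear class), paying a second-order penalty of order $\beta\eta_1^2\nbr{w^{(t)}}^2/(1-\gamma)^2$ -- which is exactly where the $B^2W^2$ contributions to $C_3$ and $C_4$ come from -- while keeping \emph{every} mismatch term under $d^\star\times\pi^\star$, where your step two then applies. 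If you want to stay inside your MWU picture, the minimal repair is to bound $\log Z^{(t)}$ directly rather than through Lemma~\ref{lem.improvement}: for each $s$, the exponent $\frac{\eta_1}{1-\gamma}(w^{(t)})^\top\bar\phi_{s,a}$ is zero-mean under $\pi^{(t)}(\cdot\,\vert\,s)$ and bounded in magnitude by $\frac{2\eta_1 BW(1+\lambda^{(t)})}{1-\gamma}$, so Hoeffding's lemma gives $\log Z^{(t)}(s)\leq \frac{2\eta_1^2B^2W^2(1+\lambda^{(t)})^2}{(1-\gamma)^2}$, the same second-order term as the smoothness route. With that substitution your outline goes through and recovers the stated bounds up to absolute constants.
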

	
	Theorem~\ref{thm.convergence.loglinear} shows that, on average, the reward value function converges to its globally optimal value and that the constraint violation decays to zero (up to an estimation error $\epsilon_{\text{\normalfont  est}}$ and a transfer error $\epsilon_{\text{\normalfont  bias}}$). When $\epsilon_{\text{\normalfont  bias}} = \epsilon_{\text{\normalfont  est}} = 0$, the rate $(1/\sqrt{T},1/\sqrt{T})$ matches the result in Theorem~\ref{thm.convergence.softmax} for the exact tabular softmax case. Compared to~\cite[Theorem~2]{ding2020natural}, the improved rate $1/\sqrt{T}$ in the constraint violation benefits from a new regret-type primal-dual analysis in Section~\ref{subsec.loglinear.pf}, which departs from the previous drift analysis of constraint violation. In contrast to the optimality gap, the lower order of effective horizon $1/(1-\gamma)$ in the constraint violation yields a tighter error bound.
	
	\begin{remark}
		By a natural  error decomposition (as also used in \cite{agarwal2021optimality})
		\[
		\mathcal{E}_\diamond^{\nu^{(t)}} \big(w_{\diamond}^{(t)}; \theta^{(t)}\big) 
		\; = \;
		\mathcal{E}_\diamond^{\nu^{(t)}} \big(w_{\diamond}^{(t)}; \theta^{(t)}\big) 
		\, - \, 
		\mathcal{E}_\diamond^{\nu^{(t)}} \big(w_{\diamond,\star}^{(t)}; \theta^{(t)}\big) 
		\, + \,
		\mathcal{E}_\diamond^{\nu^{(t)}} \big(w_{\diamond,\star}^{(t)}; \theta^{(t)}\big),
		\] 
		the difference term is the standard estimation error that results from the discrepancy between $w_\diamond^{(t)}$ and $w_{\diamond,\star}^{(t)}$, and the last term characterizes the approximation error in $w_{\diamond,\star}^{(t)}$. In Corollary~\ref{cor.convergence.loglinear}, we repeat Theorem~\ref{thm.convergence.loglinear} in terms of an upper bound $\epsilon_{\text{\normalfont  approx}}$ on the approximation error
		\[
		\mathcal{E}_{\diamond,\text{\normalfont approx}}^{(t)}
		\;
		\DefinedAs
		\;
		\mathbb{E}
		\sbr{
			\,
			\mathcal{E}_\diamond^{\nu^{(t)}} \big(w_{\diamond,\star}^{(t)}; \theta^{(t)}\big)
			\,
		}.
		\]
		Since $\mathcal{E}_{\diamond,\text{\normalfont approx}}^{(t)}$ utilizes an on-policy state-action distribution $\nu^{(t)}$, the error bounds in Corollary~\ref{cor.convergence.loglinear} depend on the worst-case distribution mismatch coefficeint $\norm{\nu^\star/\nu_0}_\infty$. In contrast, application of estimation and transfer errors in Theorem~\ref{thm.convergence.loglinear} does not involve the distribution mismatch coefficient. Therefore, the error bounds in Theorem~\ref{thm.convergence.loglinear} are tighter than the ones in Corollary~\ref{cor.convergence.loglinear} that utilizes this natural error decomposition.
	\end{remark}
	
	\begin{corollary}[Convergence and optimality: log-linear policy parametrization]
		\label{cor.convergence.loglinear}
		Let Assumption~\ref{as.slater} hold for $\xi>0$ and let us fix a state distribution $\rho$ and a state-action distribution $\nu_0$. If the iterates $\{(\theta^{(t)},\lambda^{(t)})\}_{t=0}^{T-1}$ generated by the algorithm~\eqref{eq.NPGAGD.primalapproximation} using~\eqref{eq.NPGAGD.primal.app} with $\norm{\phi_{s,a}}\leq B$ and $\eta_1=\eta_2={1}/{\sqrt{T}}$ satisfy Assumption~\ref{as.errors} except for  $\mathcal{E}_{\diamond,\text{\normalfont bias}}^{(t)}$, Assumption~\ref{as.condition}, and additionally $\mathcal{E}_{\diamond,\text{\normalfont approx}}^{(t)} \leq \epsilon_{\text{\normalfont  approx}}$ (for both $\diamond = r$ and $g$), 
		then
		\[
		\begin{array}{rcl}
			\displaystyle
			\mathbb{E} 
			\sbr{\, 
				\frac{1}{T} \sum_{t\,=\,0}^{T-1} \big(\,V_r^\star(\rho) \,-\,V_r^{(t)}(\rho)\,\big)
				\,}
			& \!\!\leq\!\! &  \displaystyle
			\frac{C_3}{(1-\gamma)^5}\frac{1}{\sqrt{T}} 
			\,+\, 
			C_3' \left( \sqrt{ \frac{|A| \, \epsilon_{\text{\normalfont  approx}}}{1-\gamma} \norm{\frac{\nu^{\star}}{\nu_0}}_\infty } + \sqrt{ \frac{\kappa\, |A|\, \epsilon_{\text{\normalfont  est}}}{1-\gamma} } \right)
			\\[0.4cm]
			\displaystyle
			\mathbb{E}
			\sbr{\,
				\frac{1}{T} \sum_{t\,=\,0}^{T-1} \big(\,b\,-\,V_g^{(t)}(\rho)\,\big)
				\,}_+
			& \!\!\leq\!\! & \displaystyle
			\frac{C_4}{(1-\gamma)^4}\frac{1}{\sqrt{T}} 
			\,+\, 
			C_4' \left( \sqrt{ \frac{|A|\,\epsilon_{\text{\normalfont  approx}}}{1-\gamma} \norm{\frac{\nu^{\star}}{\nu_0}}_\infty } + \sqrt{ \frac{\kappa\, |A|\, \epsilon_{\text{\normalfont  est}}}{1-\gamma} } \right)
		\end{array}
		\]
		where $C_3 \DefinedAs 1+\log |A|+5B^2 W^2/\xi^2$, $C_4 \DefinedAs (1+\log |A|+B^2 W^2)\xi + (2+4B^2 W^2)/\xi$, $C_3' \DefinedAs ({2+4/\xi})/{(1-\gamma)^2}$, and $C_4' \DefinedAs (4+2\xi)/(1-\gamma)$.
	\end{corollary}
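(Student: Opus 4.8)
The plan is to obtain Corollary~\ref{cor.convergence.loglinear} directly from Theorem~\ref{thm.convergence.loglinear}: the two statements differ only in how the function approximation error is decomposed, so the single task is to bound the transfer error $\epsilon_{\text{\normalfont bias}}$ used in Theorem~\ref{thm.convergence.loglinear} by the on-policy approximation error $\epsilon_{\text{\normalfont approx}}$ assumed here. Since the estimation error bound $\mathcal{E}_{\diamond,\text{\normalfont est}}^{(t)}\leq\epsilon_{\text{\normalfont est}}$ is retained, the estimation-error terms in both displays are identical, and only the bias/approximation term needs to be translated.

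The key step is a change-of-measure argument relating the off-policy evaluation distribution $\nu^\star$ appearing in $\mathcal{E}_{\diamond,\text{\normalfont bias}}^{(t)}$ to the on-policy visitation distribution $\nu^{(t)}$ appearing in $\mathcal{E}_{\diamond,\text{\normalfont approx}}^{(t)}$. For fixed $(w_{\diamond,\star}^{(t)},\theta^{(t)})$ the integrand $(Q_\diamond^{\theta^{(t)}}(s,a)-(w_{\diamond,\star}^{(t)})^\top\phi_{s,a})^2$ is the same nonnegative function of $(s,a)$ in both errors, so I would carry out the transfer in two pointwise steps. First, factoring through $\nu_0$ gives $\mathcal{E}_\diamond^{\nu^\star}(w;\theta)\leq\norm{\nu^\star/\nu_0}_\infty\,\mathcal{E}_\diamond^{\nu_0}(w;\theta)$, by pulling the Radon--Nikodym ratio out of the expectation. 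Second, the definition of the state-action visitation distribution yields the componentwise bound $\nu^{(t)}(s,a)\geq(1-\gamma)\nu_0(s,a)$ (the leading term in the geometric sum defining $\nu_{\nu_0}^{\pi^{(t)}}$), hence $\mathcal{E}_\diamond^{\nu_0}(w;\theta)\leq\frac{1}{1-\gamma}\mathcal{E}_\diamond^{\nu^{(t)}}(w;\theta)$.

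Chaining these two inequalities with $w=w_{\diamond,\star}^{(t)}$ and $\theta=\theta^{(t)}$ and taking expectations gives $\mathcal{E}_{\diamond,\text{\normalfont bias}}^{(t)}\leq\frac{1}{1-\gamma}\norm{\nu^\star/\nu_0}_\infty\,\mathcal{E}_{\diamond,\text{\normalfont approx}}^{(t)}$, so under the present assumptions $\epsilon_{\text{\normalfont bias}}\leq\frac{1}{1-\gamma}\norm{\nu^\star/\nu_0}_\infty\,\epsilon_{\text{\normalfont approx}}$. Substituting this into the $\sqrt{|A|\,\epsilon_{\text{\normalfont bias}}}$ terms of Theorem~\ref{thm.convergence.loglinear} turns each into $\sqrt{(|A|\,\epsilon_{\text{\normalfont approx}}/(1-\gamma))\norm{\nu^\star/\nu_0}_\infty}$, which is exactly the bias term in the corollary; the prefactors $C_3'=(2+4/\xi)/(1-\gamma)^2$ and $C_4'=(4+2\xi)/(1-\gamma)$ and the unchanged estimation-error terms then reproduce both displayed bounds verbatim.

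There is no genuine obstacle beyond bookkeeping; the only point requiring care is verifying the pointwise lower bound $\nu^{(t)}\geq(1-\gamma)\nu_0$, which is immediate from the definition of $\nu_{\nu_0}^{\pi^{(t)}}$ but is essential because it is what allows the on-policy distribution $\nu^{(t)}$---rather than $\nu_0$ itself---to carry the approximation error, thereby producing the extra factor $1/(1-\gamma)$ that distinguishes the corollary's bound from a naive one stated directly over $\nu_0$.
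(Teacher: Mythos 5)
Your proposal is correct and follows essentially the same argument as the paper: bound the transfer error $\mathcal{E}_{\diamond,\text{\normalfont bias}}^{(t)}$ by $\frac{1}{1-\gamma}\norm{\nu^\star/\nu_0}_\infty\,\mathcal{E}_{\diamond,\text{\normalfont approx}}^{(t)}$ via a change of measure together with the pointwise bound $(1-\gamma)\nu_0\leq\nu^{(t)}$, then substitute into Theorem~\ref{thm.convergence.loglinear}. The only cosmetic difference is that you factor through $\nu_0$ in two steps whereas the paper compares $\nu^\star$ to $\nu^{(t)}$ directly and then bounds the density ratio $\norm{\nu^\star/\nu^{(t)}}_\infty$ by $\frac{1}{1-\gamma}\norm{\nu^\star/\nu_0}_\infty$; the two chains of inequalities are identical in content.
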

	\begin{proof}
		From the definitions of $\mathcal{E}_\diamond^{\nu^{\star}}$ and $\mathcal{E}_\diamond^{\nu^{(t)}}$, we have 
		\[
		\mathcal{E}_\diamond^{\nu^{\star}} \big(w_{\diamond,\star}^{(t)}; \theta^{(t)}\big)
		\; \leq \;
		\norm{\frac{\nu^{\star}}{\nu^{(t)}}}_\infty\mathcal{E}_\diamond^{\nu^{(t)}} \big(w_{\diamond,\star}^{(t)}; \theta^{(t)}\big)
		\; \leq \;
		\frac{1}{1-\gamma}
		\norm{\frac{\nu^{\star}}{\nu_0}}_\infty
		\mathcal{E}_\diamond^{\nu^{(t)}} \big(w_{\diamond,\star}^{(t)}; \theta^{(t)}\big)
		\]
		where the second inequality is because of $(1-\gamma)\nu_0 \leq \nu^{(t)}$. Thus, 
		\[
		\mathcal{E}_{\diamond,\text{\normalfont bias}}^{(t)} 
		\; \leq  \;
		\frac{1}{1-\gamma}
		\norm{\frac{\nu^{\star}}{\nu_0}}_\infty \mathcal{E}_{\diamond,\text{\normalfont approx}}^{(t)}
		\]
		which allows us to replace $\mathcal{E}_{\diamond,\text{\normalfont bias}}^{(t)}$ in the proof of Theorem~\ref{thm.convergence.loglinear} by $\mathcal{E}_{\diamond,\text{\normalfont approx}}^{(t)}$.
	\end{proof}
	
	\subsection{Proof of Theorem~\ref{thm.convergence.loglinear}} 
	\label{subsec.loglinear.pf}
	
	We provide a regret-type analysis for a general class of smooth policies that subsumes the log-linear policy class as a special case, in Lemma~\ref{lem.gap.violation}. Using the property of policy smoothness, we first generalize Lemma~\ref{lem.average.performance} to the function approximation setting. Then, we can utilize the function approximation error to contain the duality gap and characterize the regret and the constraint violation performance.
	
	\begin{lemma}[Regret/Violation lemma]\label{lem.gap.violation}
		Let Assumption~\ref{as.slater} hold for $\xi>0$, let us fix a state distribution $\rho$ and $T>0$, and let $\log \pi_\theta(a\,\vert\,s)$ be $\beta$-smooth in $\theta$  for any $(s,a)$. If the iterates $\{(\theta^{(t)},\lambda^{(t)})\}_{t=0}^{T-1}$ are generated by the algorithm~\eqref{eq.NPGAGD.primalapproximation} with $\theta^{(0)} = 0$, $\lambda^{(0)}=0$, $\eta_1=\eta_2=1/{\sqrt{T}}$, and $\Vert w_\diamond^{(t)}\Vert \leq W$, then
		\[
		\begin{array}{rcl}
			\displaystyle
			\frac{1}{T} \sum_{t\,=\,0}^{T-1} \big(\,V_r^\star(\rho) \,-\,V_r^{(t)}(\rho)\,\big)
			& \leq &
			\displaystyle
			\frac{C_3}{(1-\gamma)^5} \frac{1}{\sqrt{T}}
			\, + \,
			\sum_{t\,=\,0}^{T-1} \frac{\text{\normalfont err}_r^{(t)} (\pi^\star)}{(1-\gamma)T}
			\, + \,
			\sum_{t\,=\,0}^{T-1} \frac{2\times \text{\normalfont err}_g^{(t)} (\pi^\star)}{(1-\gamma)^2\xi T}
			\\[0.4cm]
			\displaystyle
			\sbr{\,
				\frac{1}{T} \sum_{t\,=\,0}^{T-1} \big(\,{b\,-\,V_g^{(t)}(\rho)}\,\big)
				\,}_+
			& \leq &
			\displaystyle
			\frac{C_4}{(1-\gamma)^4}\frac{1}{\sqrt T}
			\, + \,
			\sum_{t\,=\,0}^{T-1} \frac{\xi \times \text{\normalfont err}_r^{(t)} (\pi^\star) }{T}
			\, + \,
			\sum_{t\,=\,0}^{T-1} \frac{2 \times \text{\normalfont err}_g^{(t)}(\pi^\star)}{(1-\gamma) T}
		\end{array}
		\]
		where $C_3 \DefinedAs 1+\log |A|+5\beta W^2/\xi^2$, $C_4 \DefinedAs (1+\log |A|+\beta W^2)\xi + (2+4\beta W^2)/\xi$, and
		\[
		\text{\normalfont err}_\diamond^{(t)} (\pi)
		\; \DefinedAs \; 
		\left\vert\,
		\mathbb{E}_{s\,\sim\,d_\rho^\pi} \mathbb{E}_{a\,\sim\,\pi(\cdot\,\vert\,s)} \sbr{A_\diamond^{(t)}(s,a) - (w_\diamond^{(t)})^\top \nabla_\theta \log \pi_\theta^{(t)}(a\,\vert\,s)}
		\,\right\vert
		\]
		where $\diamond = r$ or $g$.	
	\end{lemma}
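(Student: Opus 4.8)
The plan is to mirror the tabular argument from the proof of Theorem~\ref{thm.convergence.softmax}, replacing the exact multiplicative-weights structure by a $\beta$-smoothness surrogate and carrying the function-approximation residuals $\text{err}_\diamond^{(t)}(\pi^\star)$ as additive terms throughout. First I would establish the function-approximation analogue of Lemma~\ref{lem.average.performance}. Starting from the performance difference lemma applied to $\pi^\star$ and $\pi^{(t)}$ against the Lagrangian advantage $A_L^{(t)} = A_r^{(t)} + \lambda^{(t)} A_g^{(t)}$, one has $(V_r^\star(\rho)-V_r^{(t)}(\rho)) + \lambda^{(t)}(V_g^\star(\rho)-V_g^{(t)}(\rho)) = \frac{1}{1-\gamma}\mathbb{E}_{s\sim d^\star}\mathbb{E}_{a\sim\pi^\star}[A_L^{(t)}(s,a)]$ with $d^\star = d_\rho^{\pi^\star}$. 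I then add and subtract the NPG surrogate $(w^{(t)})^\top \nabla_\theta \log\pi^{(t)}(a\,\vert\,s)$ with $w^{(t)} = w_r^{(t)} + \lambda^{(t)} w_g^{(t)}$; the residual is, after taking absolute values, exactly $(\text{err}_r^{(t)}(\pi^\star) + \lambda^{(t)}\,\text{err}_g^{(t)}(\pi^\star))/(1-\gamma)$. For the surrogate term I use $\theta^{(t+1)}-\theta^{(t)} = \frac{\eta_1}{1-\gamma}w^{(t)}$ together with $\beta$-smoothness, $\log\frac{\pi^{(t+1)}(a\vert s)}{\pi^{(t)}(a\vert s)} \geq \langle \theta^{(t+1)}-\theta^{(t)}, \nabla_\theta\log\pi^{(t)}(a\,\vert\,s)\rangle - \frac{\beta}{2}\|\theta^{(t+1)}-\theta^{(t)}\|^2$, so that $\mathbb{E}_{a\sim\pi^\star}[(w^{(t)})^\top\nabla_\theta\log\pi^{(t)}]$ is dominated by the telescoping difference $\frac{1-\gamma}{\eta_1}(D_{\mathrm{KL}}^{(t)} - D_{\mathrm{KL}}^{(t+1)})$ plus a penalty $\frac{\beta\eta_1}{2(1-\gamma)}\|w^{(t)}\|^2$. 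Averaging over $t$, the KL terms telescope to $\frac{\log|A|}{\eta_1 T}$ (using $\pi^{(0)}=\text{Unif}_A$ from $\theta^{(0)}=0$), and bounding $\|w^{(t)}\| \leq (1+\lambda^{(t)})W \leq (1+\frac{2}{(1-\gamma)\xi})W$ makes the penalty $O(1/\sqrt T)$. This yields $\frac{1}{T}\sum_t[(V_r^\star-V_r^{(t)})+\lambda^{(t)}(V_g^\star-V_g^{(t)})] \leq O(1/\sqrt T) + \frac{1}{(1-\gamma)T}\sum_t(\text{err}_r^{(t)}+\lambda^{(t)}\text{err}_g^{(t)})$.

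For the optimality gap I proceed exactly as in Theorem~\ref{thm.convergence.softmax}: the dual update supplies $-\frac{1}{T}\sum_t\lambda^{(t)}(V_g^\star-V_g^{(t)}) \leq \frac{\eta_2}{2(1-\gamma)^2}$ (inequality~\eqref{eq.lambda.clean}), which I add to the averaged bound to cancel the $\lambda^{(t)}(V_g^\star-V_g^{(t)})$ terms, and I then use $\lambda^{(t)}\,\text{err}_g^{(t)} \leq \frac{2}{(1-\gamma)\xi}\text{err}_g^{(t)}$ since $\lambda^{(t)}\in\Lambda$. Collecting the $O(1/\sqrt T)$ contributions into $\frac{C_3}{(1-\gamma)^5}\frac{1}{\sqrt T}$ gives the stated gap bound with err-coefficients $\frac{1}{1-\gamma}$ and $\frac{2}{(1-\gamma)^2\xi}$. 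For the constraint violation I again follow Theorem~\ref{thm.convergence.softmax}: for fixed $\lambda\in\Lambda$ the dual update gives $\frac{1}{T}\sum_t(V_g^{(t)}-b)(\lambda^{(t)}-\lambda) \leq \frac{|\lambda^{(0)}-\lambda|^2}{2\eta_2 T}+\frac{\eta_2}{2(1-\gamma)^2}$ (inequality~\eqref{eq.removelamda}). Adding this to Step~1 and dropping the nonnegative term $\lambda^{(t)}(V_g^\star-b)$ via $V_g^\star\geq b$ produces the analogue of~\eqref{eq.choose_lambda}. Taking $\lambda=\frac{2}{(1-\gamma)\xi}$ when $\sum_t(b-V_g^{(t)})\geq 0$ and $\lambda=0$ otherwise, then passing to the policy $\pi'$ whose occupancy measure averages those of $\pi^{(0)},\dots,\pi^{(T-1)}$ (so $V_\diamond^{\pi'}(\rho)=\frac{1}{T}\sum_t V_\diamond^{(t)}(\rho)$), I invoke Lemma~\ref{lem.constraint} with $C=\frac{2}{(1-\gamma)\xi}\geq 2\lambda^\star$ (the last bound from item (ii) of Lemma~\ref{lem.duality}) to convert the $\lambda$-weighted bound into $[\,b-V_g^{\pi'}(\rho)\,]_+ \leq (1-\gamma)\xi\cdot(\text{RHS})$. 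Multiplying the err-coefficients by $(1-\gamma)\xi$ reproduces the claimed factors $\xi$ on $\text{err}_r^{(t)}$ and $\frac{2}{1-\gamma}$ on $\text{err}_g^{(t)}$.

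The routine part is the constant tracking that collapses every $O(1/\sqrt T)$ contribution into $C_3/(1-\gamma)^5$ and $C_4/(1-\gamma)^4$, where the $\lambda^{(t)}\leq \frac{2}{(1-\gamma)\xi}$ bound on the smoothness penalty supplies the $\xi$-dependence inside $C_3,C_4$. The genuinely delicate step is the constraint-violation conversion: unlike the standard drift analysis in online convex optimization, I cannot bound $[\,b-V_g^{\pi'}\,]_+$ directly because the constraint set is nonconvex. The argument instead hinges on two ingredients working together, namely the occupancy-measure linearization that realizes the averaged iterate as a single policy $\pi'\in\Pi$ (so that $V_r,V_g$ remain linear in the averaged measure) and the Lagrangian sensitivity result Lemma~\ref{lem.constraint}. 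Care is needed to ensure the function-approximation residuals are threaded through the $\lambda$-weighting with exactly the $\lambda^{(t)}\leq\frac{2}{(1-\gamma)\xi}$ bound, so that the $\text{err}_g$ coefficient picks up the correct power of $\xi$ after the Lemma~\ref{lem.constraint} rescaling.
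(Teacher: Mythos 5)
Your proposal is correct and follows essentially the same route as the paper's proof: the $\beta$-smoothness/KL-telescoping argument with the residuals $\text{err}_\diamond^{(t)}(\pi^\star)$ threaded through (the paper's analogue of Lemma~\ref{lem.average.performance}, culminating in its inequality~\eqref{eq.keyone}), the dual-drift bounds~\eqref{eq.lambda.ap.clean} and~\eqref{eq.removelamda}, and the final conversion via the averaged occupancy-measure policy $\pi'$ and Lemma~\ref{lem.constraint} with $C = 2/((1-\gamma)\xi) \geq 2\lambda^\star$. The only difference is cosmetic ordering — you start from the performance difference lemma and add/subtract the NPG surrogate, while the paper starts from the KL telescoping and works toward the value differences — and your coefficient accounting (e.g., the $(1-\gamma)\xi$ rescaling producing $\xi$ on $\text{err}_r^{(t)}$ and $2/(1-\gamma)$ on $\text{err}_g^{(t)}$) matches the paper's.
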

	\begin{proof}
		The smoothness of the log-linear policy in conjunction with an application of Taylor expansion to $\log \pi_\theta^{(t)}(a\,\vert\,s)$ yields
		\begin{equation}\label{eq.smoothLL}
			\log \frac{\pi_\theta^{(t)}(a\,\vert\,s) }{\pi_\theta^{(t+1)}(a\,\vert\,s) } 
			\, + \, \rbr{\theta^{(t+1)} -\theta^{(t)} }^\top \nabla_\theta \log \pi_\theta^{(t)}(a\,\vert\,s)   
			\; \leq \;
			\frac{\beta}{2} \left\Vert \theta^{(t+1)} -\theta^{(t)}  \right\Vert^2
		\end{equation}
		where $\theta^{(t+1)} - \theta^{(t)} = \eta_1 w^{(t)} / (1-\gamma)$. 
		Fixing $\pi$ and $\rho$, we use $d$ to denote $d_\rho^\pi$ to obtain
		\[
		\begin{array}{rcl}
			&& \!\!\!\! \!\!\!\! \!\! \mathbb{E}_{s\,\sim\,d} \sbr{\, D_\text{KL}\left(\pi(\cdot\,\vert\,s)\,\big\Vert\,\pi_\theta^{(t)}(\cdot\,\vert\,s)\right) \,-\, D_\text{KL}\left(\pi(\cdot\,\vert\,s)\,\big\Vert\,\pi_\theta^{(t+1)}(\cdot\,\vert\,s)\right)\, } 
			\\[0.2cm]
			& = & \displaystyle-~\mathbb{E}_{s\,\sim\,d} \mathbb{E}_{a\,\sim\,\pi(\cdot\,\vert\,s)} \sbr{\,\log \frac{\pi_\theta^{(t)}(a\,\vert\,s) }{\pi_\theta^{(t+1)}(a\,\vert\,s) }\,}
			\\[0.2cm]
			& \overset{(a)}{\geq} &\displaystyle \eta_1\mathbb{E}_{s\,\sim\,d} \mathbb{E}_{a\,\sim\,\pi(\cdot\,\vert\,s)} \sbr{\,\nabla_\theta \log \pi_\theta^{(t)}(a\,\vert\,s)\, w^{(t)}\,} 
			\,-\,
			\beta\frac{\eta_1^2}{2(1-\gamma)^2} \left\Vert w^{(t)}\right\Vert^2
			\\[0.2cm]
			& \overset{(b)}{=} & \displaystyle\eta_1\mathbb{E}_{s\,\sim\,d} \mathbb{E}_{a\,\sim\,\pi(\cdot\,\vert\,s)} \sbr{\,\nabla_\theta \log \pi_\theta^{(t)}(a\,\vert\,s)\, w_r^{(t)}\,}
			\\[0.2cm]
			&&\displaystyle +~\eta_1 \lambda^{(t)}\mathbb{E}_{s\,\sim\,d} \mathbb{E}_{a\,\sim\,\pi(\cdot\,\vert\,s)} \sbr{\,\nabla_\theta \log \pi_\theta^{(t)}(a\,\vert\,s)\, w_g^{(t)}\,} 
			\,-\,
			\beta\frac{\eta_1^2}{2(1-\gamma)^2} \left\Vert w^{(t)}\right\Vert^2
			\\[0.4cm]
			& = &\displaystyle \eta_1\mathbb{E}_{s\,\sim\,d} \mathbb{E}_{a\,\sim\,\pi(\cdot\,\vert\,s)} \sbr{\,A_r^{(t)}(s,a)\,}
			\,+\,
			\eta_1 \lambda^{(t)}\mathbb{E}_{s\,\sim\,d} \mathbb{E}_{a\,\sim\,\pi(\cdot\,\vert\,s)} \sbr{\,A_g^{(t)}(s,a) \,}
			\\[0.2cm]
			&&\displaystyle+~\eta_1\mathbb{E}_{s\,\sim\,d} \mathbb{E}_{a\,\sim\,\pi(\cdot\,\vert\,s)} \sbr{\,\nabla_\theta \log \pi_\theta^{(t)}(a\,\vert\,s)\, \big({w_r^{(t)}+\lambda^{(t)} w_g^{(t)}}\big) - \big({A_r^{(t)}(s,a)+\lambda^{(t)} A_g^{(t)}(s,a)}\big)\,}
			\\[0.2cm]
			&&\displaystyle-~\beta\frac{\eta_1^2}{(1-\gamma)^2} \Big(\left\Vert w_r^{(t)}\right\Vert^2 + \big(\lambda^{(t)}\big)^2 \left\Vert w_g^{(t)}\right\Vert^2\Big)
			\\[0.4cm]
			&\overset{(c)}{\geq}&\displaystyle \eta_1(1-\gamma) \big(\,{V_r^\pi(\rho) - V_r^{(t)}(\rho)}\, \big)
			\,+\,\eta_1(1-\gamma) \lambda^{(t)} \big(\,{ V_g^\pi(\rho) - V_g^{(t)}(\rho)}\,\big)
			\\[0.2cm]
			&&\displaystyle-~\eta_1  \text{\normalfont err}_r^{(t)} (\pi)
			\,-\,
			\eta_1 \lambda^{(t)}  \text{\normalfont err}_g^{(t)} (\pi)
			\,-\,
			\beta\frac{\eta_1^2 \, W^2}{(1-\gamma)^2} 
			\,-\,
			\beta\frac{\eta_1^2\, W^2}{(1-\gamma)^2}  \big(\lambda^{(t)}\big)^2
		\end{array}
		\]
		where $(a)$ is because of~\eqref{eq.smoothLL}, we use the update $w^{(t)} = w_r^{(t)}+\lambda^{(t)}w_g^{(t)}$ for a given $\lambda^{(t)}$ in $(b)$, and in $(c)$ we apply the performance difference lemma (see Remark~\ref{re: PDL}), the definitions of $\text{\normalfont err}_r^{(t)} (\pi)$ and $\text{\normalfont err}_g^{(t)} (\pi)$, and $ \Vert w_\diamond^{(t)}\Vert\leq W$.
		Rearrangement of the above inequality yields
		\[
		\begin{array}{rcl}
			&& \!\!\!\! \!\!\!\! \!\! V_r^\pi(\rho) \,-\, V_r^{(t)}(\rho) 
			\\[0.2cm]
			& \leq &\displaystyle \frac{1}{1-\gamma} \frac{1}{\eta_1} \mathbb{E}_{s\,\sim\,d} \sbr{\, D_\text{KL}\left(\pi(\cdot\,\vert\,s)\,\big\Vert\,\pi_\theta^{(t)}(\cdot\,\vert\,s)\right) \,-\, D_\text{KL}\left(\pi(\cdot\,\vert\,s)\,\big\Vert\,\pi_\theta^{(t+1)}(\cdot\,\vert\,s)\right) \,}  
			\\[0.4cm]
			&& \displaystyle +~\frac{1}{1-\gamma} \,\text{\normalfont err}_r^{(t)} (\pi)  
			\,+\, 
			\frac{2}{(1-\gamma)^2 \xi }\, \text{\normalfont err}_g^{(t)} (\pi)
			\,+\, 
			\beta\frac{\eta_1 W^2}{(1-\gamma)^3}  
			\,+\, 
			\beta\frac{4\eta_1 W^2}{(1-\gamma)^5\xi^2} 
			\\[0.4cm]
			&& \displaystyle
			-~ \lambda^{(t)} \big(\,{ V_g^\pi(\rho) - V_g^{(t)}(\rho)}\,\big)
		\end{array}
		\]
		where we utilize $0\leq \lambda^{(t)} \leq 2/((1-\gamma)\xi)$ from the dual update in~\eqref{eq.NPGAGD.primalapproximation}. 
		
		Averaging the inequality above over $t=0,1,\ldots,T-1$ yields
		\[
		\begin{array}{rcl}
			&& \!\!\!\! \!\!\!\! \!\!\displaystyle \frac{1}{T} \sum_{t\,=\,0}^{T-1} \big( \, V_r^\pi(\rho) \,-\, V_r^{(t)}(\rho) \,\big)
			\\[0.2cm]
			&\leq&\displaystyle \frac{1}{(1-\gamma)\eta_1 T}\sum_{t \, = \,0 }^{T-1}\mathbb{E}_{s\,\sim\,d} \sbr{\, D_\text{KL}\big(\pi(\cdot\,\vert\,s)\,\big\Vert\,\pi_\theta^{(t)}(\cdot\,\vert\,s)\big) \,-\, D_\text{KL}\left(\pi(\cdot\,\vert\,s)\,\big\Vert\,\pi_\theta^{(t+1)}(\cdot\,\vert\,s)\right) \,}  
			\\[0.2cm]
			&&\displaystyle +~\frac{1}{(1-\gamma)T}\sum_{t\,=\,0}^{T-1} \text{\normalfont err}_r^{(t)} (\pi)  
			\,+\, 
			\frac{2}{(1-\gamma)^2\xi T}\sum_{t\,=\,0}^{T-1}  \text{\normalfont err}_g^{(t)} (\pi)
			\,+\,
			\beta\frac{\eta_1W^2 }{(1-\gamma)^3} 
			\,+\, 
			\beta\frac{4\eta_1W^2}{(1-\gamma)^5\xi^2}  
			\\[0.2cm]
			&& \displaystyle 
			-~\frac{1}{T}\sum_{t\,=\,0}^{T-1}\lambda^{(t)} \big(\,{ V_g^\pi(\rho) - V_g^{(t)}(\rho)}\,\big)
		\end{array}
		\]
		which implies
		\[
		\begin{array}{rcl}
			&& \!\!\!\! \!\!\!\! \!\!\displaystyle \frac{1}{T} \sum_{t\,=\,0}^{T-1} \big( \, V_r^\pi(\rho) \,-\, V_r^{(t)}(\rho) \,\big)
			\\[0.2cm]
			&\leq&\displaystyle \frac{\log|A|}{(1-\gamma)\eta_1 T}
			\,+\,
			\frac{1}{(1-\gamma)T}\sum_{t\,=\,0}^{T-1} \text{\normalfont err}_r^{(t)} (\pi)
			\,+\,
			\frac{2}{(1-\gamma)^2\xi T}\sum_{t\,=\,0}^{T-1}  \text{\normalfont err}_g^{(t)} (\pi)
			\\[0.2cm]
			&&  \displaystyle+~\beta\frac{\eta_1W^2}{(1-\gamma)^3} 
			\,+\, 
			\beta\frac{4\eta_1W^2}{(1-\gamma)^5\xi^2} 
			\,+\, 
			\frac{1}{T}\sum_{t\,=\,0}^{T-1}\lambda^{(t)} \big({ V_g^\pi(\rho) - V_g^{(t)}(\rho)}\big).
		\end{array}
		\]
		If we choose the comparison policy $\pi  = \pi^\star$, then we have
		\begin{equation}\label{eq.keyone}
			\begin{array}{rcl}
				&& \!\!\!\! \!\!\!\! \!\! 
				\displaystyle
				\frac{1}{T} \sum_{t\,=\,0}^{T-1} \big(\,V_r^\star(\rho) \,-\, V_r^{(t)}(\rho) \,\big) \,+\, 
				\frac{1}{T}\sum_{t\,=\,0}^{T-1}\lambda^{(t)} \big(\,{ V_g^\star(\rho) \,-\,  V_g^{(t)}(\rho)}\,\big)
				\\[0.2cm]
				& \leq & \displaystyle
				\frac{\log|A|}{(1-\gamma)\eta_1 T}
				\,+\,
				\frac{1}{(1-\gamma)T}\sum_{t\,=\,0}^{T-1} \text{\normalfont err}_r^{(t)} (\pi^\star)
				\,+\,
				\frac{2}{(1-\gamma)^2\xi T}\sum_{t\,=\,0}^{T-1} \text{\normalfont err}_g^{(t)} (\pi^\star)
				\\[0.2cm]
				&& \displaystyle+~\beta\frac{\eta_1W^2}{(1-\gamma)^3} 
				\,+\, \beta\frac{4\eta_1W^2}{(1-\gamma)^5\xi^2}.
			\end{array}
		\end{equation}
		
		\noindent\textbf{Proving the first inequality}. By the same reasoning as in~\eqref{eq.lambda},
		\begin{subequations}
			\begin{equation}\label{eq.lambda.ap}
				\begin{array}{rcl}
					0
					\;\,\leq\;\,
					\rbr{\lambda^{(T)}}^2 &=& \displaystyle\sum_{t \, = \,0 }^{T-1} \big(\,{(\lambda^{(t+1)} )^2\,-\,(\lambda^{(t)})^2}\,\big)
					\\[0.2cm]		
					&\leq&\displaystyle 2\eta_2 \sum_{t\,=\,0}^{T-1} \lambda^{(t)} \big(\,b\, -\, V_g^{(t)}(\rho)\,\big)\,+\,\eta_2^2 \sum_{t\,=\,0}^{T-1} \big(\,V_g^{(t)} (\rho)\, -\, b\,\big)^2
					\\[0.2cm]
					&\overset{(a)}{\leq}&\displaystyle 2\eta_2 \sum_{t\,=\,0}^{T-1} \lambda^{(t)} \big(\,{V_g^\star(\rho)\, -\, V_g^{(t)}(\rho)}\,\big)\,+\, \frac{\eta_2^2\, T}{(1-\gamma)^2}
				\end{array}
			\end{equation}
			where $(a)$ is because of the feasibility of $\pi^\star$: $V_g^\star (\rho) \geq b$, and $|{V_g^{(t)}(\rho)-b}|\leq {1}/({1-\gamma})$. Hence,
			\begin{equation}\label{eq.lambda.ap.clean}
				- \,\frac{1}{T}\sum_{t\,=\,0}^{T-1} \lambda^{(t)} \big(\,{V_g^\star(\rho)\, -\, V_g^{(t)}(\rho)}\,\big)
				\; \leq \; 
				\frac{\eta_2 }{2(1-\gamma)^2}.
			\end{equation}
		\end{subequations}
		By adding the inequality~\eqref{eq.lambda.ap.clean} to~\eqref{eq.keyone} on both sides and taking $\eta_1=\eta_2={1}/{\sqrt{T}}$, we obtain the first inequality.
		
		\noindent\textbf{Proving the second inequality}.
		Since the dual update in~\eqref{eq.NPGAGD.primalapproximation} is the same as the one in~\eqref{eq.NPGAGD.clean}, we can use the same reasoning to conclude~\eqref{eq.removelamda}. Adding the inequality~\eqref{eq.removelamda} to~\eqref{eq.keyone} on both sides and using $V_g^{\star}(\rho) \geq b$ yield
		\begin{equation}\label{eq.dualkey_fa}
			\begin{array}{rcl}
				&& \!\!\!\! \!\!\!\! \!\! \displaystyle \frac{1}{T} \sum_{t\,=\,0}^{T-1} \big(\,{V_r^\star(\rho) \,-\, V_r^{(t)}(\rho) }\,\big) 
				\,+\, 
				\frac{\lambda}{T} \sum_{t\,=\,0}^{T-1} \big(\,{b\,-\, V_g^{(t)}(\rho)}\,\big)
				\\[0.2cm]
				&\leq&\displaystyle \frac{\log|A|}{(1-\gamma)\eta_1 T} 
				\,+\,
				\frac{1}{(1-\gamma)T}\sum_{t\,=\,0}^{T-1} \text{\normalfont err}_r^{(t)} (\pi^\star)
				\,+\,
				\frac{2}{(1-\gamma)^2\xi T}\sum_{t\,=\,0}^{T-1} \text{\normalfont err}_g^{(t)} (\pi^\star)
				\\[0.2cm]
				&& \displaystyle
				+~\beta\frac{\eta_1W^2}{(1-\gamma)^3} 
				\,+\, 
				\beta\frac{4\eta_1W^2}{(1-\gamma)^5\xi^2}
				\,+\,
				\frac{1}{2\eta_2T}\big\vert{{\lambda^{(0)} -\lambda}}\big\vert^2 
				\,+\,
				\frac{\eta_2}{2(1-\gamma)^2}.
			\end{array}
		\end{equation}
		Taking $\lambda = \frac{2}{(1-\gamma)\xi}$ when ${\sum_{t\,=\,0}^{T-1} \big({b -V_g^{(t)}(\rho)}}\big)\geq 0$ and $\lambda=0$ otherwise, we obtain
		\[
		\begin{array}{rcl}
			&& \!\!\!\! \!\!\!\! \!\!\displaystyle V_r^\star(\rho)\,-\, \frac{1}{T} \sum_{t\,=\,0}^{T-1}{ V_r^{(t)}(\rho) } \,+\, \frac{2}{(1-\gamma)\xi} \sbr{\,b \,-\, \frac{1}{T} \sum_{t\,=\,0}^{T-1} {V_g^{(t)}(\rho)}\,}_+
			\\[0.2cm]
			&\leq&\displaystyle \frac{\log|A|}{(1-\gamma)\eta_1 T} 
			\,+\,
			\frac{1}{(1-\gamma)T}\sum_{t\,=\,0}^{T-1} \text{\normalfont err}_r^{(t)} (\pi^\star)
			\,+\,
			\frac{2}{(1-\gamma)^2\xi T}\sum_{t\,=\,0}^{T-1} \text{\normalfont err}_g^{(t)} (\pi^\star)
			\\[0.2cm]
			&& \displaystyle
			+~\beta\frac{\eta_1W^2}{(1-\gamma)^3} 
			\,+\, 
			\beta\frac{4\eta_1W^2}{(1-\gamma)^5\xi^2T}
			\,+\,
			\frac{2}{\eta_2 (1-\gamma)^2\xi^2}
			\,+\,
			\frac{\eta_2}{2(1-\gamma)^2}.
		\end{array}
		\]
		
		Since $V_r^{(t)}(\rho)$ and $V_g^{(t)}(\rho)$ are linear functions in the occupancy measure~\cite[Chapter~10]{altman1999constrained}, there exists a policy $\pi'$ such that $V_r^{\pi'}(\rho)=\frac{1}{T} \sum_{t\,=\,0}^{T-1}{ V_r^{(t)}(\rho) }$ and $V_g^{\pi'}(\rho)=\frac{1}{T} \sum_{t\,=\,0}^{T-1}{ V_g^{(t)}(\rho) }$. Hence,
		\[
		\begin{array}{rcl}
			&& \!\!\!\! \!\!\!\! \!\! \displaystyle V_r^\star(\rho)
			\,-\, 
			V_r^{\pi'}(\rho) 
			\,+\, 
			\frac{2}{(1-\gamma)\xi} \sbr{\, b \,-\,  {V_g^{\pi'}(\rho)} \,}_+
			\\[0.2cm]
			&\leq&\displaystyle \frac{\log|A|}{(1-\gamma)\eta_1 T} 
			\,+\,
			\frac{1}{(1-\gamma)T}\sum_{t\,=\,0}^{T-1} \text{\normalfont err}_r^{(t)} (\pi^\star)
			\,+\,
			\frac{2}{(1-\gamma)^2\xi T}\sum_{t\,=\,0}^{T-1} \text{\normalfont err}_g^{(t)} (\pi^\star)
			\\[0.2cm]
			&& \displaystyle
			+~\beta\frac{\eta_1W^2}{(1-\gamma)^3} 
			\,+\, 
			\beta\frac{4\eta_1W^2}{(1-\gamma)^5\xi^2}
			\,+\,
			\frac{2}{\eta_2 (1-\gamma)^2\xi^2T}
			\,+\,
			\frac{\eta_2}{2(1-\gamma)^2}.
		\end{array}
		\]
		From Lemma~\ref{lem.duality} (ii), we have $\lambda^\star \leq 1/((1-\gamma)\xi)$. Application of Lemma~\ref{lem.constraint} with $C= {2}/((1-\gamma)\xi)$ yields
		\[
		\begin{array}{rcl}
			\sbr{\,b \,-\,  {V_g^{\pi'}(\rho)}\,}_+
			&\leq&\displaystyle \frac{\xi\log|A|}{\eta_1 T} 
			\,+\,\frac{\xi}{T}\sum_{t\,=\,0}^{T-1} \text{\normalfont err}_r^{(t)} (\pi^\star)
			\,+\,\frac{2}{(1-\gamma) T}\sum_{t\,=\,0}^{T-1} \text{\normalfont err}_g^{(t)} (\pi^\star)
			\\[0.2cm]
			&& \displaystyle
			+~\beta\frac{\eta_1\xi \,W^2}{(1-\gamma)^2} 
			\,+\, 
			\beta\frac{4\eta_1 W^2}{(1-\gamma)^4\xi }
			\,+\,
			\frac{2}{\eta_2 (1-\gamma)\xi T}
			\,+\,
			\frac{\eta_2\xi}{2(1-\gamma)}.
		\end{array}
		\]
		which leads to our constraint violation bound if we further utilize $\frac{1}{T} \sum_{t\,=\,0}^{T-1} \big({b -V_g^{(t)}(\rho)}\big)=b -  {V_g^{\pi'}(\rho)}$ and $\eta_1 =\eta_2={1}/{\sqrt{T}}$.
	\end{proof}

	The analysis of Lemma~\ref{lem.gap.violation} is based on the generalization of Lemma~\ref{lem.average.performance} to the function approximation setting using the property of policy smoothness. A crucial step is to use the original optimal policy as our comparison policy in hindsight, instead of a sub-optimal policy within policy class~\cite[Theorem~2]{ding2020natural}. Although the strong duality may not hold because of the insufficient expressiveness of the parametrized policy class, we can characterize the regret and constraint violation bounds, up to some function approximation errors. 
	
	\begin{proof}[Proof of Theorem~\ref{thm.convergence.loglinear}]
		
		When $\norm{\phi_{s,a}}\leq B$, for the log-linear policy class, $\log \pi_\theta(a\,\vert\,s)$ is $\beta$-smooth with $\beta = B^2$.
		By Lemma~\ref{lem.gap.violation}, it remains to consider the randomness in the sequences of $\{w_r^{(t)}, w_g^{(t)}\}$ and the error bounds for $\text{\normalfont err}_r^{(t)}(\pi^\star)$ and $\text{\normalfont err}_g^{(t)}(\pi^\star)$. Application of the triangle inequality yields 
		\begin{equation}\label{eq.err}
			\begin{array}{rcl}
				\text{\normalfont err}_r^{(t)} (\pi^\star)
				& \leq &
				\left\vert
				\mathbb{E}_{s\,\sim\,d_\rho^\star} \mathbb{E}_{a\,\sim\,\pi^\star(\cdot\,\vert\,s)} 
				\sbr{\,
					A_r^{(t)}(s,a) \, -\,  \left(w_{r,\star}^{(t)}\right)^\top \nabla_\theta \log \pi_\theta^{(t)}(a\,\vert\,s)
					\,}
				\right\vert
				\\[0.2cm]
				&& +~\left\vert
				\mathbb{E}_{s\,\sim\,d_\rho^\star} \mathbb{E}_{a\,\sim\,\pi^\star(\cdot\,\vert\,s)} 
				\sbr{\,
					\left(w_{r,\star}^{(t)}\, -\, w_{r}^{(t)}\right)^\top \nabla_\theta \log \pi_\theta^{(t)}(a\,\vert\,s) 
					\,}
				\right\vert.
			\end{array}
		\end{equation}
		Application of~\eqref{eq.policyshift} and $A_r^{(t)}(s,a)  = Q_r^{(t)}(s,a) -  \mathbb{E}_{a'\,\sim\,\pi_\theta^{(t)}(\cdot\,\vert\,s)} \big[\,Q_r^{(t)}(s,a')\,\big]$ yields
		\begin{equation}\label{eq.err0}
			\begin{array}{rcl}
				&& \!\!\!\! \!\!\!\! \!\!	
				\mathbb{E}_{s\,\sim\,d_\rho^\star} \mathbb{E}_{a\,\sim\,\pi^\star(\cdot\,\vert\,s)} 
				\sbr{\,
					A_r^{(t)}(s,a) \, - \, \left( w_{r,\star}^{(t)}\right)^\top \nabla_\theta \log \pi_\theta^{(t)}(a\,\vert\,s)
					\,}
				\\[0.2cm]
				& = & 
				\mathbb{E}_{s\,\sim\,d_\rho^\star} \mathbb{E}_{a\,\sim\,\pi^\star(\cdot\,\vert\,s)} 
				\sbr{ \, 
					Q_r^{(t)}(s,a) \,-\, \phi_{s,a}^\top w_{r,\star}^{(t)} 
					\, }
				\\[0.2cm]
				&& - \,
				\mathbb{E}_{s\,\sim\,d_\rho^\star} \mathbb{E}_{a'\,\sim\,\pi_\theta^{(t)}(\cdot\,\vert\,s)} 
				\sbr{ \, 
					Q_r^{(t)}(s,a') \, - \, \phi_{s,a'}^\top w_{r,\star}^{(t)} 
					\,}
				\\[0.2cm]
				& \leq &
				\sqrt{
					\mathbb{E}_{s\,\sim\,d_\rho^\star} \mathbb{E}_{a\,\sim\,\pi^\star(\cdot\,\vert\,s)} 
					\rbr{\,
						Q_r^{(t)}(s,a) \,-\, \phi_{s,a}^\top w_{r,\star}^{(t)} 
						\,}^2
				}
				\\[0.2cm]
				&& + ~
				\sqrt{
					\mathbb{E}_{s\,\sim\,d_\rho^\star} \mathbb{E}_{a'\,\sim\,\pi_\theta^{(t)}(\cdot\,\vert\,s)} 
					\rbr{\,
						Q_r^{(t)}(s,a') \,-\, \phi_{s,a'}^\top w_{r,\star}^{(t)} 
						\,}^2
				}
				\\[0.2cm]
				& \leq & 2
				\sqrt{ |A| \,
					\mathbb{E}_{s\,\sim\,d_\rho^\star} \mathbb{E}_{a\,\sim\,\text{Unif}_A} 
					\sbr{ \,
						\rbr{Q_r^{(t)}(s,a) \,-\, \phi_{s,a}^\top w_{r,\star}^{(t)}}^2 
						\,}
				}
				\\[0.2cm]
				& = &
				2\sqrt{ |A| \, \mathcal{E}_r^{\nu^\star} \left(w_{r,\star}^{(t)} ; \theta^{(t)}\right) }.
			\end{array}
		\end{equation}
		Similarly,
		\begin{equation}\label{eq.err1}
			\begin{array}{rcl}
				&& \!\!\!\! \!\!\!\!  \!\!
				\mathbb{E}_{s\,\sim\,d_\rho^\star} \mathbb{E}_{a\,\sim\,\pi^\star(\cdot\,\vert\,s)} 
				\sbr{\, \left(w_{r,\star}^{(t)} 
					\,-\,
					w_{r}^{(t)}\right)^\top \nabla_\theta \log \pi_\theta^{(t)}(a\,\vert\,s) 
					\,}
				\\[0.2cm]
				&=&
				\mathbb{E}_{s\,\sim\,d_\rho^\star} \mathbb{E}_{a\,\sim\,\pi^\star(\cdot\,\vert\,s)} 
				\sbr{ \,
					\left(w_{r,\star}^{(t)}
					\,-\, 
					w_{r}^{(t)}\right)^\top \phi_{s,a} 
					\, }
				\\[0.2cm]
				&& - ~\mathbb{E}_{s\,\sim\,d_\rho^\star} \mathbb{E}_{a'\,\sim\,\pi_\theta^{(t)}(\cdot\,\vert\,s)} 
				\sbr{ \,
					\left(w_{r,\star}^{(t)}
					\, - \,
					w_{r}^{(t)}\right)^\top \phi_{s,a'} 
					\,}
				\\[0.2cm]
				&\leq&
				2 \sqrt{ |A|\,
					\mathbb{E}_{s\,\sim\,d_\rho^\star} \mathbb{E}_{a\,\sim\,\text{Unif}_A} \sbr{ \rbr{ \left(w_{r,\star}^{(t)} 
							\, - \,
							w_{r}^{(t)}\right)^\top \phi_{s,a} }^2 }
				}
				\\[0.2cm]
				&=&
				2 \sqrt{ 
					|A|  
					\norm{ w_{r,\star}^{(t)}
						\, - \,
						w_{r}^{(t)} }_{\Sigma_{\nu^\star}}^2
				}
			\end{array}
		\end{equation}
		where $\Sigma_{\nu^\star} \DefinedAs \mathbb{E}_{(s,a)\,\sim\,\nu^\star} \left[\, \phi_{s,a} \phi_{s,a}^\top \,\right]$. From the definition of $\kappa$, we have
		\begin{equation}\label{eq.err2}
			\norm{ w_{r,\star}^{(t)} \, - \, w_{r}^{(t)} }_{\Sigma_{\nu^\star}}^2
			\; \leq  \;
			\kappa
			\norm{ w_{r,\star}^{(t)} \, - \, w_{r}^{(t)} }_{\Sigma_{\nu_0}}^2
			\; \leq  \;
			\frac{\kappa}{1-\gamma}
			\norm{ w_{r,\star}^{(t)} \, - \, w_{r}^{(t)} }_{\Sigma_{\nu^{(t)}}}^2
		\end{equation}
		where we use 
		$(1-\gamma)\nu_0 \leq \nu_{\nu_0}^{\pi^{(t)}} \DefinedAs \nu^{(t)}$ in the second inequality. Evaluation of the first-order optimality condition of $w_{r,\star}^{(t)} \in \argmin_{\norm{w_r}_2\,\leq\, W} \, \mathcal{E}_r^{\nu^{(t)}} (w_r; \theta^{(t)})$ yields
		\[
		\left( w_r \, - \, w_{r,\star}^{(t)} 
		\right)^\top
		\nabla_\theta \mathcal{E}_r^{\nu^{(t)}} (w_{r,\star}^{(t)}; \theta^{(t)}) 
		\; \geq  \;
		0, 
		\; \text{ for any } w_r \text{ satisfying } \norm{w_r} \,\leq\, W. 
		\]
		Thus,
		\[
		\begin{array}{rcl}
			&& \!\!\!\! \!\!\!\! \!\!
			\mathcal{E}_r^{\nu^{(t)}} (w_{r}; \theta^{(t)}) - \mathcal{E}_r^{\nu^{(t)}} (w_{r,\star}^{(t)}; \theta^{(t)}) 
			\\[0.2cm]
			& = & 	
			\mathbb{E}_{s,a\,\sim\,\nu^{(t)}} \left[\,  \rbr{ Q_r^{(t)}(s,a) \, -\,   \phi_{s,a}^\top w_{r,\star}^{(t)} \, +\,  \phi_{s,a}^\top w_{r,\star}^{(t)} \, -\,  \phi_{s,a}^\top  w_r }^2 \,\right]
			\, -\,  \mathcal{E}_r^{\nu^{(t)}} (w_{r,\star}^{(t)}; \theta^{(t)}) 
			\\[0.4cm]
			& = & 
			2 \left(w_{r,\star}^{(t)} \, -\,  w_r\right)^\top	
			\mathbb{E}_{s,a\,\sim\,\nu^{(t)}} \left[\,  \rbr{ Q_r^{(t)}(s,a) \, -\,   \phi_{s,a}^\top w_{r,\star}^{(t)} } \phi_{s,a}  \,\right]
			\\[0.2cm]
			&& +~
			\mathbb{E}_{s,a\,\sim\,\nu^{(t)}} \left[\,  \rbr{ \phi_{s,a}^\top w_{r,\star}^{(t)} \, -\,  \phi_{s,a}^\top w_r  }^2 \,\right]
			\\[0.4cm]
			& = &  \left( w_r  \, -\,  w_{r,\star}^{(t)} \right)^\top	\nabla_\theta \mathcal{E}_r^{\nu^{(t)}} (w_{r,\star}^{(t)}; \theta^{(t)})  \, +\,  \norm{ w_r \, -\,  w_{r,\star}^{(t)} }_{\Sigma_{\nu^{(t)}}}^2
			\\[0.4cm]
			& \geq &  \norm{ w_r \, -\,  w_{r,\star}^{(t)} }_{\Sigma_{\nu^{(t)}}}^2.
		\end{array}
		\]
		Taking $w_r =  w_r^{(t)}$ in the above inequality and combining it with~\eqref{eq.err1} and~\eqref{eq.err2} yield
		\begin{equation}\label{eq.err3}
			\begin{array}{rcl}
				&& \!\!\!\! \!\!\!\! \!\!
				\mathbb{E}_{s\,\sim\,d_\rho^\star} \mathbb{E}_{a\,\sim\,\pi^\star(\cdot\,\vert\,s)} 
				\sbr{ \,
					\left(w_{r,\star}^{(t)} \,-\,w_{r}^{(t)}\right)^\top \nabla_\theta \log \pi_\theta^{(t)}(a\,\vert\,s) 
					\, }
				\\[0.2cm]
				& \leq & \displaystyle
				2\sqrt{\frac{\kappa\,|A|}{1-\gamma} \left( \mathcal{E}_r^{\nu^{(t)}} (w_{r}^{(t)}; \theta^{(t)}) 
					\, - \,
					\mathcal{E}_r^{\nu^{(t)}} (w_{r,\star}^{(t)}; \theta^{(t)})  \right)}.
			\end{array}
		\end{equation}
		
		Substitution of~\eqref{eq.err0} and~\eqref{eq.err3} into the right-hand side of~\eqref{eq.err} yields
		\[
		\mathbb{E} \sbr{
			\text{\normalfont err}_r^{(t)} (\pi^\star)}
		\; \leq \;
		2\sqrt{ |A| \,  \mathbb{E} \sbr{ \mathcal{E}_r^{d^\star} (w_{r,\star}^{(t)} ; \theta^{(t)}) }}
		\, +\, 
		2\sqrt{\frac{\kappa\,|A|}{1-\gamma} \mathbb{E} \sbr{ \mathcal{E}_r^{\nu^{(t)}} (w_{r}^{(t)}; \theta^{(t)}) - \mathcal{E}_r^{\nu^{(t)}} (w_{r,\star}^{(t)}; \theta^{(t)}) } }.
		\]
		By the same reasoning, we can establish a similar bound on $\mathbb{E} \big[{
			\text{\normalfont err}_g^{(t)} (\pi^\star)}\big]$. 
		Finally, our desired results follow by applying Assumption~\ref{as.errors} and Lemma~\ref{lem.gap.violation}.
	\end{proof}
	
	To obtain zero constraint violation, we apply the algorithm~\eqref{eq.NPGAGD.primalapproximation} to Problem~\eqref{eq.cmdp} with a conservative constraint: $V_{g}^{\pi}(\rho)\geq b+\delta$ for some $\delta>0$, , as done in Corollary~\ref{thm.convergence.softmax.zeroviolation}. In addition to the parameters $(\epsilon, \delta)$, the errors of function approximation (e.g.,  $\epsilon_{\text{\normalfont  est}}$ and $\epsilon_{\text{\normalfont  bias}}$) are required to be small. 
	
	\begin{corollary}[Zero constraint violation: log-linear policy parametrization]\label{cor.convergence.loglinear.zeroviolation}
		Let Assumption~\ref{as.slater} hold for $\xi>0$ and let us fix a state distribution $\rho$ and replace the constraint of Problem~\eqref{eq.cmdp} by $V_g^{\pi}(\rho) \geq \bar{b}$, where $\bar{b}\DefinedAs b+\delta$ for some $\delta>0$.
		For $\epsilon<{\xi}/{2}$, there exists $\delta = \Theta(\epsilon)$ such that 
		if we choose $T = \Omega(1/\epsilon^2)$, $\eta_1=\eta_2={1}/{\sqrt{T}}$, and Assumption~\ref{as.errors} holds for $\epsilon_{\text{\normalfont  est}} = \epsilon_{\text{\normalfont  bias}} = O(\epsilon^2)$, then the iterates $\{\theta^{(t)}\}_{t=0}^{T-1}$ generated by the algorithm~\eqref{eq.NPGAGD.primalapproximation} satisfy 
		\[
		\begin{array}{rcl}
			\text{\normalfont(Optimality gap)}\;\; 
			\displaystyle 
			\mathbb{E}
			\left[\,
			\frac{1}{T} \sum_{t\,=\,0}^{T-1} \big(\,V_r^\star(\rho) \,-\, V_r^{(t)}(\rho)\, \big)
			\,\right]
			& = &
			\displaystyle
			O(\epsilon)
			\\[0.4cm]
			\text{\normalfont(Constraint violation)}\;\; 
			\displaystyle 
			\mathbb{E}
			\sbr{\,
				\frac{1}{T}\sum_{t \, = \,0 }^{T-1} \big(\,b\,-\,V_g^{(t)}(\rho)\,\big)
				\,}_+  
			& \leq & 
			0.
		\end{array}
		\]
	\end{corollary}
	
	\begin{proof}
		The proof idea is similar to the one used in the proof of Theorem~\ref{thm.convergence.loglinear}. Using the new constraint $V_g^{\pi}(\rho) \geq \bar{b}$, Problem~\eqref{eq.cmdp} satisfies Assumption~\ref{as.slater} for $\bar\xi \DefinedAs \xi - \delta$ where $\delta<\xi$, and there exists an optimal policy $\bar\pi^{\star}$. Without loss of generality, by restricting $\delta<{\xi}/{2}$, we can replace $\Lambda$ by $\bar\Lambda \DefinedAs [\,0, 4/((1-\gamma)\xi)\,]$, which contains $[\,0, 2/((1-\gamma)\bar\xi)\,]$ for any such $\bar{\xi}$. Thus, we can apply the NPG-PD algorithm~\eqref{eq.NPGAGD.primalapproximation} to this conservative problem using the projection set $\bar\Lambda$. It is straightforward to check that~\eqref{eq.dualkey_fa} holds for $V_r^{\bar\pi^\star}(\rho)$ and $V_g^{\bar\pi^\star}(\rho)$. Thus, the second inequality proof in Lemma~\ref{lem.gap.violation} in conjunction with $\epsilon_{\text{\normalfont  est}} = \epsilon_{\text{\normalfont  bias}} = O(\epsilon^2)$ proves that after $T= \Omega({1}/{\epsilon^2})$ iterations, 
		\begin{equation}\label{eq.optimality gap relaxed fa}
			\displaystyle 
			\mathbb{E} 
			\left[\,
			\frac{1}{T} \sum_{t\,=\,0}^{T-1} \big(\,V_r^{\bar\pi^\star}(\rho) \,-\, V_r^{(t)}(\rho) \,\big)
			\,\right]
			\; = \;
			\displaystyle
			O(\epsilon)
		\end{equation}
		where  the expectation $\mathbb{E}$ is taken over the randomness of approximate algorithm that is used to solve~\eqref{eq.NPGAGD.primal.app}.
		Let $q^\star$ and $\bar{q}^\star$ be the occupancy measures induced by policies $\pi^\star$ and $\bar{\pi}^\star$, respectively. In the occupancy measure space, Problem~\eqref{eq.cmdp} becomes a linear program, and thus, $V_r^{\pi^\star}(\rho) = \langle r, q^\star \rangle$ and $V_r^{\bar\pi^\star}(\rho) = \langle r, \bar q^\star \rangle$. By the continuity of optimal objective function in convex optimization~\citep{terazono2015continuity}, $|V_r^{\pi^\star}(\rho) - V_r^{\bar\pi^\star}(\rho)| \leq {2\epsilon}/((1-\gamma)\xi)$ for $\delta = \epsilon$. Therefore,  we can replace $V_r^{\bar\pi^\star}(\rho)$ in~\eqref{eq.optimality gap relaxed fa} by $V_r^{\star}(\rho)$ to bound the optimality gap by the same desired accuracy $\epsilon$ up to some problem-dependent constant. 
		
		To establish the bound on the constraint violation, the key change begins with~\eqref{eq.dualkey_fa}. Since we use $\bar b = b+\delta$ and $V_r^{\bar\pi^\star}(\rho)$, the right-hand side of~\eqref{eq.dualkey_fa} contains an extra term $ 2\epsilon/((1-\gamma)\xi)-\lambda \delta$. Similarly, there are two options for selecting $\lambda$:  $\lambda = {4}/((1-\gamma)\xi)$ when ${\sum_{t\,=\,0}^{T-1} \big({b -V_g^{(t)}(\rho)}}\big)\geq 0$ and $\lambda=0$ otherwise. In the first case, if we set $\delta = \epsilon$ and $\epsilon_{\text{\normalfont  est}} = \epsilon_{\text{\normalfont  bias}} = O(\epsilon^2)$, then the extra term $- 2\epsilon/((1-\gamma)\xi)$ cancels the rate $O(1/\sqrt{T})$ for $T = \Omega(1/\epsilon^2)$ and the function approximation errors $\epsilon_{\text{\normalfont  est}}$ and  $\epsilon_{\text{\normalfont  bias}}$, concluding zero constraint violation according to Lemma~\ref{lem.constraint}. On the other hand,  the second case is exactly the zero constraint violation.
	\end{proof}
	
	\begin{remark}[Zero constraint violation: log-linear policy parametrization]
		\label{re.zero.violation.log-linear}
		As done in Corollary~\ref{cor.convergence.loglinear.zeroviolation}, we can refine the constraint violation in Corollary~\ref{cor.convergence.loglinear} to be zero. Given a small desired accuracy $\epsilon>0$, there exists $\delta = \Theta(\epsilon)$ such that 
		if $T = \Omega(1/\epsilon^2)$, and $\epsilon_{\text{\normalfont  est}} = \epsilon_{\text{\normalfont  approx}} = O(\epsilon^2)$, then
		\[
		\begin{array}{rcl}
			\displaystyle 
			\mathbb{E}
			\sbr{\,
				\frac{1}{T}\sum_{t \, = \,0 }^{T-1} \big(\,b\,-\,V_g^{(t)}(\rho)\,\big)
				\,}_+  
			& \leq & 
			0.
		\end{array}
		\]
		We note that $\epsilon_{\text{\normalfont  approx}} = O(\epsilon^2)$ is more difficult to achieve than $\epsilon_{\text{\normalfont  bias}} = O(\epsilon^2)$ in practice. 
	\end{remark}
	
	\subsection{General smooth policy class}
	\label{sec.fa.general}

	For a general class of smooth policies~\citep{zhang2020global,agarwal2021optimality}, we now establish the convergence of algorithm~\eqref{eq.NPGAGD.primalapproximation} with an approximate gradient update:
	\begin{equation}\label{eq.NPGAGD.primal.generalapp}
		\begin{array}{rcl}
			w^{(t)} 
			& = &
			w_r^{(t)} 
			\, + \,
			\lambda^{(t)}
			\, 
			w_g^{(t)}
			\\[0.15cm]
			w_\diamond^{(t)}  
			& \approx &
			\argmin\limits_{\norm{w_\diamond}_2\,\leq\, W}
			\;{E}_\diamond^{\nu^{(t)}} \big(w_\diamond; \theta^{(t)}\big)
		\end{array}
	\end{equation} 
	where $\diamond$ denotes $r$ or $g$ and an exact minimizer is given by $w_{\diamond,\star}^{(t)} \in \argmin_{\norm{w_\diamond}_2 \leq W} {E}_\diamond^{\nu^{(t)}} (w_\diamond; \theta^{(t)})$. We treat $w_\diamond^{(t)}$ as random since it is typically obtained from a sample-based algorithm.
	\begin{assumption}[Policy smoothness]
		\label{as.smoothness}
		For all $s\in S$ and $a\in A$, $\log \pi_\theta(a\,\vert\,s)$ is a $\beta$-smooth function of $\theta$
		\[
		\norm{\nabla_\theta \log \pi_\theta(a\,\vert\,s) \,-\,\nabla_{\theta} \log \pi_{\theta'}(a\,\vert\,s)}
		\;\leq\;
		\beta\norm{ \theta \,-\, \theta'} \; \text{ for all } ~\theta, \, \theta' \, \in \, \mathbb{R}^d.
		\]
	\end{assumption}
	Since both the tabular softmax and log-linear policies are smooth~\citep[Remark 28 and Appendix~D]{agarwal2021optimality}, Assumption~\ref{as.smoothness} covers a broader function class relative to the softmax policy parametrization~\eqref{eq.softmax}. 
	
	Given a state-action distribution $\nu^{(t)}$, we introduce the estimation error as 
	\[
	{E}_{\diamond,\text{\normalfont est}}^{(t)}
	\;
	\DefinedAs
	\;
	\mathbb{E} 
	\left[\,
	{E}_\diamond^{\nu^{(t)}} \big(w_\diamond^{(t)}; \theta^{(t)}\big)
	\, - \,
	{E}_\diamond^{\nu^{(t)}} \big(w_{\diamond,\star}^{(t)}; \theta^{(t)}\big)
	\,\big\vert\,\theta^{(t)}
	\,\right].
	\]
	Furthermore, given a state distribution $\rho$ and an optimal policy $\pi^\star$, we define a state-action distribution $\nu^\star(s,a) \DefinedAs d_\rho^{\pi^\star}(s) \pi^\star(a\,\vert\,s)$ as a comparator and introduce the transfer error
	\[
	{E}_{\diamond,\text{\normalfont bias}}^{(t)}
	\;
	\DefinedAs
	\;
	\mathbb{E} 
	\left[\,
	{E}_\diamond^{\nu^{\star}} \big(w_{\diamond,\star}^{(t)}; \theta^{(t)}\big)
	\,\right].
	\]
	For any state-action distribution $\nu$, we define a Fisher information-like matrix induced by $\pi_\theta$ as
	\[
	\Sigma_{\nu}^\theta 
	\; \DefinedAs \; 
	\mathbb{E}_{(s,a)\,\sim\,\nu} 
	\left[\, 
	\nabla_\theta \log \pi_\theta (a\,\vert\,s) \left( \nabla_\theta \log \pi_\theta (a\,\vert\,s) \right)^\top 
	\,\right]
	\] 
	and use $\Sigma_{\nu}^{(t)}$ to denote $\Sigma_{\nu}^{\theta^{(t)}}$.
	
	\begin{assumption}[Estimation/transfer errors and relative condition number]\label{as.errors+condtion}
		The estimation and transfer errors as well as the expected relative condition number are bounded, i.e., ${E}_{\diamond,\text{\normalfont est}}^{(t)}\leq \epsilon_{\text{\normalfont  est}}$ and ${E}_{\diamond,\text{\normalfont bias}}^{(t)} \leq \epsilon_{\text{\normalfont  bias}}$, for $\diamond = r$ or $g$, and  
		\[
		\mathbb{E}
		\sbr{\,
			\sup_{w\,\in\,\mathbb{R}^d}\, \frac{w^\top \Sigma_{\nu^\star}^{(t)} w}{w^\top \Sigma_{\nu_0}^{(t)}w}
			\,}
		\; \leq \;
		\kappa.
		\]
	\end{assumption}
	
	We next provide convergence guarantees for the algorithm~\eqref{eq.NPGAGD.primalapproximation} in Theorem~\ref{thm.convergence.general} using the approximate update~\eqref{eq.NPGAGD.primal.generalapp}. Even though we set $\theta^{(0)}=0$ and $\lambda^{(0)}=0$ in the proof of Theorem~\ref{thm.convergence.general}, convergence can be established for arbitrary initial conditions.
	
	\begin{theorem}[Convergence and optimality: general policy parametrization]
		\label{thm.convergence.general}
		Let Assumptions~\ref{as.slater}  and~\ref{as.smoothness} hold and let us fix a state distribution $\rho$, a state-action distribution $\nu_0$, and $T>0$.
		If the iterates $\{(\theta^{(t)},\lambda^{(t)})\}_{t=0}^{T-1}$  generated by the algorithm~\eqref{eq.NPGAGD.primalapproximation} using~\eqref{eq.NPGAGD.primal.generalapp} with $\eta_1=\eta_2={1}/{\sqrt{T}}$ satisfy Assumption~\ref{as.errors+condtion}
		and $ \Vert w_\diamond^{(t)}\Vert \leq W$, then
		\[
		\begin{array}{rcl}
			\displaystyle
			\mathbb{E}
			\sbr{\,
				\frac{1}{T} \sum_{t\,=\,0}^{T-1} \big(\,V_r^\star(\rho) \,-\,V_r^{(t)}(\rho)\,\big)
				\,}
			& \leq & \displaystyle
			\frac{C_3}{(1-\gamma)^5}\frac{1}{\sqrt{T}} 
			\,+\, 
			\frac{1+2/\xi}{(1-\gamma)^2} \left( \sqrt{\epsilon_{\text{\normalfont  bias}}} + \sqrt{\frac{\kappa\,\epsilon_{\text{\normalfont  est}}}{1-\gamma}}\right)
			\\[0.4cm]
			\displaystyle
			\mathbb{E}
			\sbr{\,
				\frac{1}{T} \sum_{t\,=\,0}^{T-1} \big(\,b\,-\,V_g^{(t)}(\rho)\,\big)
				\,}_+
			& \leq & \displaystyle
			\frac{C_4}{(1-\gamma)^4}\frac{1}{\sqrt{T}} 
			\,+\,
			\frac{2+\xi}{1-\gamma} \left( \sqrt{\epsilon_{\text{\normalfont  bias}}} + \sqrt{\frac{\kappa\,\epsilon_{\text{\normalfont  est}}}{1-\gamma}}\right)
		\end{array}
		\]
		where $C_3 \DefinedAs 1+\log |A|+5\beta W^2/\xi^2$ and $C_4 \DefinedAs (1+\log |A|+\beta W^2)\xi + (2+4\beta W^2)/\xi$.
	\end{theorem}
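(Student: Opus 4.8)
The plan is to reuse the regret/violation bound of Lemma~\ref{lem.gap.violation}, which is already stated for an arbitrary $\beta$-smooth log-policy class and therefore applies verbatim under Assumption~\ref{as.smoothness}. What remains is to control the two error terms $\text{\normalfont err}_r^{(t)}(\pi^\star)$ and $\text{\normalfont err}_g^{(t)}(\pi^\star)$ appearing on the right-hand sides of that lemma, now in terms of the general compatible error $E_\diamond^\nu$, the estimation error $\epsilon_{\text{\normalfont est}}$, the transfer error $\epsilon_{\text{\normalfont bias}}$, and the \emph{expected} relative condition number $\kappa$ of Assumption~\ref{as.errors+condtion}. The argument mirrors the proof of Theorem~\ref{thm.convergence.loglinear}, but with two simplifications: the comparator distribution here is $\nu^\star(s,a)=d_\rho^{\pi^\star}(s)\pi^\star(a\,\vert\,s)$ rather than a uniform-action distribution, so no factor $|A|$ is incurred; and since we no longer route through state-action value functions, we work directly with the advantage-based error $E_\diamond^\nu$ instead of its $Q$-based counterpart $\mathcal{E}_\diamond^\nu$.

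First I would fix $\diamond=r$ (the case $\diamond=g$ being identical) and split $\text{\normalfont err}_r^{(t)}(\pi^\star)$ by the triangle inequality into a \emph{bias} piece, in which the exact minimizer $w_{r,\star}^{(t)}$ replaces $w_r^{(t)}$, and an \emph{estimation} piece measuring the gap $w_{r,\star}^{(t)}-w_r^{(t)}$. For the bias piece, Cauchy--Schwarz over $(s,a)\sim\nu^\star$ turns the linear functional into $\sqrt{E_r^{\nu^\star}(w_{r,\star}^{(t)};\theta^{(t)})}$; taking expectations and applying Jensen once more gives $\mathbb{E}\big[\sqrt{E_r^{\nu^\star}}\,\big]\le\sqrt{E_{r,\text{\normalfont bias}}^{(t)}}\le\sqrt{\epsilon_{\text{\normalfont bias}}}$.

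Next I would handle the estimation piece. Cauchy--Schwarz over $(s,a)\sim\nu^\star$ against the Fisher-like matrix $\Sigma_{\nu^\star}^{(t)}$ bounds it by $\|w_{r,\star}^{(t)}-w_r^{(t)}\|_{\Sigma_{\nu^\star}^{(t)}}$. I then transfer this norm to the initial distribution via the (now random) ratio $\kappa_t\DefinedAs\sup_w (w^\top\Sigma_{\nu^\star}^{(t)}w)/(w^\top\Sigma_{\nu_0}^{(t)}w)$, and from $\nu_0$ to the on-policy $\nu^{(t)}$ using the componentwise bound $(1-\gamma)\nu_0\le\nu^{(t)}$, yielding $\|w_{r,\star}^{(t)}-w_r^{(t)}\|_{\Sigma_{\nu^\star}^{(t)}}^2\le \tfrac{\kappa_t}{1-\gamma}\|w_{r,\star}^{(t)}-w_r^{(t)}\|_{\Sigma_{\nu^{(t)}}^{(t)}}^2$. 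The first-order optimality of $w_{r,\star}^{(t)}$ over the ball $\|w_r\|\le W$, exactly as in the log-linear proof, shows $\|w_{r,\star}^{(t)}-w_r^{(t)}\|_{\Sigma_{\nu^{(t)}}^{(t)}}^2\le E_r^{\nu^{(t)}}(w_r^{(t)};\theta^{(t)})-E_r^{\nu^{(t)}}(w_{r,\star}^{(t)};\theta^{(t)})$. Taking expectations and invoking Cauchy--Schwarz across the algorithmic randomness, $\mathbb{E}\big[\sqrt{\kappa_t\cdot(\cdots)}\,\big]\le\sqrt{\mathbb{E}[\kappa_t]}\,\sqrt{\mathbb{E}[\cdots]}$, then using $\mathbb{E}[\kappa_t]\le\kappa$ and $\mathbb{E}[E_r^{\nu^{(t)}}(w_r^{(t)})-E_r^{\nu^{(t)}}(w_{r,\star}^{(t)})]=E_{r,\text{\normalfont est}}^{(t)}\le\epsilon_{\text{\normalfont est}}$, gives the estimation bound $\sqrt{\kappa\,\epsilon_{\text{\normalfont est}}/(1-\gamma)}$. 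Combining both pieces yields $\mathbb{E}[\text{\normalfont err}_\diamond^{(t)}(\pi^\star)]\le\sqrt{\epsilon_{\text{\normalfont bias}}}+\sqrt{\kappa\,\epsilon_{\text{\normalfont est}}/(1-\gamma)}$, uniformly in $t$, for $\diamond=r,g$. Substituting this into the two inequalities of Lemma~\ref{lem.gap.violation} and loosening the horizon factors ($\tfrac{1}{1-\gamma}\le\tfrac{1}{(1-\gamma)^2}$ and $\xi\le\tfrac{\xi}{1-\gamma}$) to obtain the clean coefficients $\tfrac{1+2/\xi}{(1-\gamma)^2}$ and $\tfrac{2+\xi}{1-\gamma}$ completes the proof.

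The main obstacle is the estimation piece: unlike the log-linear case where $\kappa$ is a fixed constant, here $\kappa_t$ fluctuates with the random iterate $\theta^{(t)}$, so the condition-number transfer and the estimation-error control must be decoupled carefully via Cauchy--Schwarz over the randomness before Assumption~\ref{as.errors+condtion} can be applied. The delicate bookkeeping is tracking which expectations are conditional on $\theta^{(t)}$ (as in the definition of $E_{\diamond,\text{\normalfont est}}^{(t)}$) versus unconditional, and ensuring the tower property is used consistently throughout.
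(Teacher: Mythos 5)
Your proposal is correct and follows essentially the same route as the paper's proof: it reuses Lemma~\ref{lem.gap.violation}, splits $\text{err}_\diamond^{(t)}(\pi^\star)$ into bias and estimation pieces via the triangle inequality and Cauchy--Schwarz over $\nu^\star$, transfers the $\Sigma_{\nu^\star}^{(t)}$-weighted norm through the per-iterate condition number $\kappa_t$ and the bound $(1-\gamma)\nu_0 \le \nu^{(t)}$, invokes first-order optimality of $w_{\diamond,\star}^{(t)}$ over the ball $\norm{w}\le W$, and then plugs the resulting uniform bound into the two inequalities of the lemma with the same horizon-factor loosening. The only (immaterial) deviation is the final expectation step: where you split $\mathbb{E}\big[\sqrt{\kappa_t\,\Delta_t}\big]$ by Cauchy--Schwarz across the algorithmic randomness, the paper instead uses the tower property --- conditioning on $\theta^{(t)}$ so that the $\theta^{(t)}$-measurable $\kappa^{(t)}$ factors out of the inner (conditionally bounded) estimation error --- followed by Jensen's inequality; both yield the identical bound $\sqrt{\kappa\,\epsilon_{\text{est}}/(1-\gamma)}$.
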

	
	\begin{proof}
		Since Lemma~\ref{lem.gap.violation} holds for any smooth policy class that satisfies Assumption~\ref{as.smoothness}, it remains to bound $\text{\normalfont err}_\diamond^{(t)} (\pi^\star)$ for $\diamond = r$ or $g$. We next separately bound each term on the right-hand side of~\eqref{eq.err}.
		For the first term,
		\begin{equation}\label{eq.err0g}
			\begin{array}{rcl}
				&& \!\!\!\! \!\!\!\! \!\!	
				\mathbb{E}_{s\,\sim\,d_\rho^\star} \mathbb{E}_{a\,\sim\,\pi^\star(\cdot\,\vert\,s)} 
				\sbr{\,
					A_r^{(t)}(s,a) 
					\, - \, \left(w_{r,\star}^{(t)}\right)^\top \nabla_\theta \log \pi_\theta^{(t)}(a\,\vert\,s)
					\,}
				\\[0.2cm]
				& \leq &
				\sqrt{
					\mathbb{E}_{s\,\sim\,d_\rho^\star} \mathbb{E}_{a\,\sim\,\pi^\star(\cdot\,\vert\,s)} 
					\rbr{\,
						A_r^{(t)}(s,a) 
						\, - \,
						(w_{r,\star}^{(t)})^\top \nabla_\theta \log \pi_\theta^{(t)}(a\,\vert\,s) \,}^2
				}
				\\[0.2cm]
				& = &
				\sqrt{{E}_r^{\nu^\star} \left(w_{r,\star}^{(t)} ; \theta^{(t)}\right) }.
			\end{array}
		\end{equation}
		Similarly,
		\begin{subequations}
			\label{eq.err0g.2nd}
			\begin{equation}\label{eq.err1g}
				\begin{array}{rcl}
					&& \!\!\!\! \!\!\!\!  \!\!
					\mathbb{E}_{s\,\sim\,d_\rho^\star} \mathbb{E}_{a\,\sim\,\pi^\star(\cdot\,\vert\,s)} 
					\sbr{ \,
						\left(w_{r,\star}^{(t)} \, - \, w_{r}^{(t)}\right)^\top \nabla_\theta \log \pi_\theta^{(t)}(a\,\vert\,s) \,}
					\\[0.2cm]
					&\leq&
					\sqrt{
						\mathbb{E}_{s\,\sim\,d_\rho^\star} \mathbb{E}_{a\,\sim\,\pi^\star(\cdot\,\vert\,s)} 
						\sbr{\,
							\rbr{ \left(w_{r,\star}^{(t)}
								\, - \,
								w_{r}^{(t)}\right)^\top  \nabla_\theta \log \pi_\theta^{(t)}(a\,\vert\,s) }^2 
							\,}
					}
					\\[0.2cm]
					&=&
					\sqrt{ 
						\norm{ w_{r,\star}^{(t)}
							\, - \,
							w_{r}^{(t)} }_{\Sigma_{\nu^\star}^{(t)}}^2.
					}
				\end{array}
			\end{equation}
			Let $\kappa^{(t)} \DefinedAs \left\Vert{ \left(\Sigma_{\nu_0}^{(t)}\right)^{-1/2} \Sigma_{\nu^\star}^{(t)} \left(\Sigma_{\nu_0}^{(t)}\right)^{-1/2} }\right\Vert_2$ be the relative condition number at time $t$. Thus,
			\[
			\begin{array}{rcl}
				\norm{ w_{r,\star}^{(t)} 
					\,-\,
					w_{r}^{(t)} }_{\Sigma_{\nu^\star}^{(t)}}^2
				& \leq  &
				\displaystyle
				\norm{\left(\Sigma_{\nu_0}^{(t)}\right)^{-1/2} \Sigma_{\nu^\star}^{(t)} \left(\Sigma_{\nu_0}^{(t)}\right)^{-1/2}}
				\norm{ w_{r,\star}^{(t)} \,-\, w_{r}^{(t)} }_{\Sigma_{\nu_0}^{(t)}}^2
				\\[0.2cm]
				& \overset{(a)}{\leq} & 
				\displaystyle
				\frac{\kappa^{(t)}}{1-\gamma}
				\norm{ w_{r,\star}^{(t)} \,-\, w_{r}^{(t)} }_{\Sigma_{\nu^{(t)}}}^2
				\\[0.2cm]
				& \overset{(b)}{\leq}  & 
				\displaystyle
				\frac{\kappa^{(t)}}{1-\gamma}
				\left( {E}_r^{\nu^{(t)}} \left(w_{r}^{(t)} ; \theta^{(t)}\right) \, - \, 
				{E}_r^{\nu^{(t)}} \left(w_{r,\star}^{(t)} ; \theta^{(t)}\right) \right)
			\end{array}
			\]
			where we use 
			$(1-\gamma)\nu_0 \leq \nu_{\nu_0}^{\pi^{(t)}} \DefinedAs \nu^{(t)}$ in (a), and we get (b) by the same reasoning as bounding~\eqref{eq.err2}. Taking an expectation over the randomness in $w_{r}^{(t)}$ and $w_{r,\star}^{(t)}$ for the inequality above from both sides yields
			\begin{equation}\label{eq.err2g}
				\begin{array}{rcl}
					\mathbb{E}\sbr{ 
						\norm{ w_{r,\star}^{(t)}
							\, - \,
							w_{r}^{(t)} }_{\Sigma_{\nu^\star}^{(t)}}^2
					}
					& \leq & \displaystyle
					\mathbb{E}\sbr{  \frac{\kappa^{(t)}}{1-\gamma}
						\mathbb{E}
						\sbr{\, 
							{E}_r^{\nu^{(t)}} (w_{r}^{(t)} ; \theta^{(t)}) \, - \,
							{E}_r^{\nu^{(t)}} (w_{r,\star}^{(t)} ; \theta^{(t)})\,\vert\,\theta^{(t)} 
							\,}  }
					\\[0.4cm]
					& \leq & \displaystyle
					\mathbb{E} \sbr{\frac{\kappa^{(t)}}{1-\gamma}} \epsilon_{\text{\normalfont  est}}
					\\[0.4cm] 
					& \leq & \displaystyle
					\frac{\kappa\, \epsilon_{\text{\normalfont  est}}}{1-\gamma}
				\end{array}
			\end{equation}
		\end{subequations}
		where the last two inequalities are due to Assumption~\ref{as.errors+condtion}.
		
		Substitution of~\eqref{eq.err0g} and~\eqref{eq.err0g.2nd} to the right-hand side of~\eqref{eq.err} yields an upper bound on $\mathbb{E} \big[\,
		\text{\normalfont err}_r^{(t)} (\pi^\star) \,\big]$.
		By the same reasoning, we can establish a similar bound on $\mathbb{E} \big[\,
		\text{\normalfont err}_g^{(t)} (\pi^\star) \,\big]$. 
		Finally, application of these upper bounds to Lemma~\ref{lem.gap.violation} yields the desired result.
	\end{proof}
	
	
	We refine the constraint violation in Theorem~\ref{thm.convergence.general} to be zero by employing the same reasoning as in the proof of Corollary~\ref{cor.convergence.loglinear.zeroviolation}. We state it below as  Corollary~\ref{cor.convergence.general.zeroviolation} and leave out the proof to avoid repetition. 
    
	\begin{corollary}[Zero constraint violation: general policy parametrization]\label{cor.convergence.general.zeroviolation}
		Let Assumptions~\ref{as.slater}  and~\ref{as.smoothness} hold, let us fix a state distribution $\rho$ and a state-action distribution $\nu_0$, and replace the constraint of Problem~\eqref{eq.cmdp} by $V_g^{\pi}(\rho) \geq \bar{b}$, where $\bar{b}\DefinedAs b+\delta$ for some $\delta>0$.
		For $\epsilon<{\xi}/{2}$, there exists $\delta = \Theta(\epsilon)$ such that 
		if we choose $T = \Omega(1/\epsilon^2)$, $\eta_1=\eta_2={1}/{\sqrt{T}}$, and Assumption~\ref{as.errors+condtion} holds for $\epsilon_{\text{\normalfont  est}} = \epsilon_{\text{\normalfont  bias}} = O(\epsilon^2)$, then the iterates $\{\theta^{(t)}\}_{t=0}^{T-1}$ generated by the algorithm~\eqref{eq.NPGAGD.primalapproximation} using~\eqref{eq.NPGAGD.primal.generalapp} satisfy
		\[
		\begin{array}{rcl}
			\text{\normalfont(Optimality gap)}\;\; 
			\displaystyle 
			\mathbb{E}
			\left[\,
			\frac{1}{T} \sum_{t\,=\,0}^{T-1} \big(\,V_r^\star(\rho) \,-\, V_r^{(t)}(\rho)\, \big)
			\,\right]
			& = &
			\displaystyle
			O(\epsilon)
			\\[0.4cm]
			\text{\normalfont(Constraint violation)}\;\; 
			\displaystyle 
			\mathbb{E}
			\sbr{\,
				\frac{1}{T}\sum_{t \, = \,0 }^{T-1} \big(\,b\,-\,V_g^{(t)}(\rho)\,\big)
				\,}_+  
			& \leq & 
			0.
		\end{array}
		\]
	\end{corollary}

	\section{Sample-based NPG-PD algorithms}
	\label{sec.sample}
	
	We now leverage the convergence results established in Theorems~\ref{thm.convergence.loglinear} and~\ref{thm.convergence.general} to design two model-free algorithms that utilize sample-based estimates. In particular, we propose a sample-based extension of the NPG-PD algorithm~\eqref{eq.NPGAGD.primalapproximation} with function approximation and $\Lambda= [\,0, 2/((1-\gamma)\xi)\,]$ as follows
	\begin{equation}\label{eq.NPGAGD.sample-based}
		\begin{array}{rcl}
			\theta^{(t+1)} 
			& = &
			\theta^{(t)} 
			\,+\, 
			\dfrac{\eta_1}{1-\gamma}\, \hat{w}^{(t)} 
			\\[0.15cm]
			\lambda^{(t+1)} 
			& = &
			\calP_\Lambda \left( \lambda^{(t)} 
			\,- \, 
			\eta_2\, \big(\,\hat{V}_g^{(t)}(\rho) \,-\, b\,\big) \right)
		\end{array}
	\end{equation}
	where $\hat{w}^{(t)}$ and $\hat{V}_g^{(t)}(\rho)$ are the sample-based estimates of the gradient and the value function, respectively. At each time $t$, we can access a constrained MDP environment by executing a policy $\pi$ with terminating probability $1-\gamma$. For the minimization problem in~\eqref{eq.NPGAGD.primal.generalapp}, we can run the stochastic gradient descent (SGD) for $K$ rounds, $w_{\diamond,k+1} =\mathcal{P}_{\norm{w_{\diamond, k} }\,\leq\,W}\left( w_{\diamond, k} - \alpha_k \, G_{\diamond,k}\right)$, where $\alpha_k$ is the stepsize. Here, $G_{\diamond,k}$ is a sample-based estimate of the population gradient $\nabla_\theta E_\diamond^{\nu^{(t)}} (w_\diamond; \theta^{(t)})$:
	\[
	G_{\diamond,k} 
	\; = \; 
	2\, \Big((w_{\diamond,k})^\top \nabla_\theta\log \pi_\theta^{(t)}(a\,\vert\,s) \, - \, \hat{A}_\diamond^{(t)}(s,a) \Big) \,
	\nabla_\theta\log \pi_\theta^{(t)}(a\,\vert\,s)
	\] 
	where $\hat{A}_\diamond^{(t)}(s,a) \DefinedAs \hat{Q}_\diamond^{(t)}(s,a)-\hat{V}_\diamond^{(t)}(s)$, $\hat{Q}_\diamond^{(t)}(s,a)$ and $\hat{V}_\diamond^{(t)}(s)$ are undiscounted sums that are collected in Algorithm~\ref{alg.estimate.A}. In addition, we estimate $\hat{V}_g^{(t)}(\rho)$ using an undiscounted sum in Algorithm~\ref{alg.estimate.V}. As shown in Appendix~\ref{alg.general}, $G_{\diamond,k} $, $\hat{A}_\diamond^{(t)}(s,a)$, and $\hat{V}_g^{(t)}(\rho)$ are unbiased estimates and we approximate the gradient using the average of the SGD iterates $\hat{w}^{(t)} = 2{(K(K+1))}^{-1} \sum_{k\,=\,1}^{K} (k+1)(w_{r,k}+\lambda^{(t)}w_{g,k})$, which is an approximate solution to least-squares regression~\citep{lacoste2012simpler}.
	
	\begin{algorithm}[]
		\caption{ Sample-based NPG-PD algorithm with general policy parametrization }
		\label{alg.sample-based.general}
		\begin{algorithmic}[1]
			\STATE \textbf{Initialization}: Learning rates $\eta_1$ and $\eta_2$, number of SGD iterations $K$, SGD learning rate $\alpha_k = \frac{2}{\sigma_F (k+1)}$ for $k\geq0$.
			\STATE Initialize $\theta^{(0)}=0$, $\lambda^{(0)} =0$.
			\FOR{$t=0,\ldots,T-1$} 
			\STATE Initialize $w_{r,0}=w_{g,0} =0$.
			\FOR{$k=0,1,\ldots,K-1$ } 
			\STATE Estimate $\hat A_r(s,a)$ and  $\hat A_g(s,a)$ for some $(s,a) \sim \nu^{(t)}$, using Algorithm~\ref{alg.estimate.A} with policy $\pi_\theta^{(t)}$.
			\STATE Take a step of SGD
			\[
			\begin{array}{rcl}
				\!\!\!\! w_{r,k+1} & \!\! = \!\! & \mathcal{P}_{\norm{w_r} \,\leq\, W}\! \left( w_{r,k} - 2\alpha_k \big( (w_{r,k} )^\top \nabla_\theta \log \pi_\theta^{(t)}(s,a) - \hat{A}_r^{(t)}(s,a)\big) \nabla_\theta \log\pi_\theta^{(t)}(s,a) \right)
				\\[0.15cm]
				\!\!\!\! w_{g,k+1} & \!\! = \!\! & \mathcal{P}_{\norm{w_g} \,\leq\, W}\! \left( w_{g,k} - 2\alpha_k \big( (w_{g,k})^\top \nabla_\theta \log \pi_\theta^{(t)}(s,a) - \hat{A}_g^{(t)}(s,a)\big) \nabla_\theta \log\pi_\theta^{(t)}(s,a) \right).
			\end{array}
			\]
			\ENDFOR
			\STATE Set $\hat{w}^{(t)} =\hat{w}_r^{(t)} +\lambda^{(t)}\hat{w}_g^{(t)}$, where 
			\[
			\displaystyle
			\hat{w}_r^{(t)}
			\; = \;
			\frac{2}{K(K+1)} \sum_{k\,=\,0}^{K-1} (k+1)w_{r,k} 
			\; \text{ and } \;
			\hat{w}_g^{(t)} 
			\; = \;
			\frac{2}{K(K+1)} \sum_{k\,=\,0}^{K-1} (k+1)w_{g,k}.
			\]
			\STATE Estimate $\hat{V}_g^{(t)}(\rho)$ using Algorithm~\ref{alg.estimate.V} with policy $\pi_\theta^{(t)}$.
			\STATE Natural policy gradient primal-dual update 
			\[
			\begin{array}{rcl}
				\theta^{(t+1)} & = & \theta^{(t)} 
				\,+\, 
				\eta_1\,
				\hat{w}^{(t)} 
				\\[0.15cm]
				\lambda^{(t+1)} & = &  \calP_{[\,0,\,2/((1-\gamma)\xi)\,]}  \rbr{\,  \lambda^{(t)} 
					\, - \, 
					\eta_2\, \big(\,\hat{V}_g^{(t)}(\rho)\,-\,b\,\big)
					\,}.
			\end{array}
			\]
			\ENDFOR
		\end{algorithmic}
	\end{algorithm}
	
	\begin{algorithm}[]
		\caption{ $A$-Unbiased estimate ($\mathcal A_\diamond^\text{\normalfont est}$, $\diamond = r$ or $g$) }
		\label{alg.estimate.A}
		\begin{algorithmic}[1]
			\STATE \textbf{Input}: Initial state-action distribution $\nu_0$, policy $\pi$, discount factor $\gamma$. 
			\STATE Sample $(s_0,a_0) \sim \nu_0$, execute the policy $\pi$ with probability $\gamma$ at each step $h$; otherwise, accept $(s_h,a_h)$ as the sample. 
			\STATE Start with $(s_h,a_h)$, execute the policy $\pi$ with the termination probability $1-\gamma$. Once terminated, add all rewards/utilities from step $h$ onward as $\hat{Q}_\diamond^\pi(s_h,a_h)$ for $\diamond = r$ or $g$, respectively.
			\STATE Start with $s_h$, sample $a_h'\sim\pi(\cdot\,\vert\,s_h)$, and execute the policy $\pi$ with the termination probability $1-\gamma$. Once terminated, add all rewards/utilities from step $h$ onward as $\hat{V}_\diamond^\pi(s_h)$ for $\diamond = r$ or $g$, respectively.
			\STATE \textbf{Output}: $(s_h,a_h)$ and $\hat{A}_\diamond^\pi(s_h,a_h) \DefinedAs \hat{Q}_\diamond^\pi(s_h,a_h) - \hat{V}_\diamond^\pi(s_h)$, $\diamond = r$ or $g$.
		\end{algorithmic}
	\end{algorithm}
	
	\begin{algorithm}[] 
		\caption{ $V$-Unbiased estimate ($\mathcal V_g^\text{\normalfont est}$) }
		\label{alg.estimate.V}
		\begin{algorithmic}[1]
			\STATE \textbf{Input}: Initial state distribution $\rho$, policy $\pi$, discount factor $\gamma$. 
			\STATE Sample $s_0 \sim \rho$, execute the policy $\pi$ with the termination probability $1-\gamma$. Once terminated, add all utilities up as $\hat{V}_g^\pi(\rho)$.
			\STATE \textbf{Output}: $\hat{V}_g^\pi(\rho)$.
		\end{algorithmic}
	\end{algorithm}

	To establish the sample complexity of Algorithm~\ref{alg.sample-based.general}, we assume that the score function $\nabla_\theta \log \pi_\theta(a\,\vert\,s)$ has a bounded norm and the policy parametrization $\pi_\theta$ has a non-degenerate Fisher information matrix~\citep{zhang2020global,agarwal2021optimality,liu2020improved}.
	
	\begin{assumption}[Lipschitz policy]
		\label{as.Lipschitzpolicy}
		For $0\leq t<T$, the policy $\pi^{(t)}$ satisfies 
		\[
		\norm{\nabla_\theta \log \pi_\theta^{(t)}(a\,\vert\,s) }  
		\; \leq \; 
		L_\pi, \; \text{ where }  L_\pi>0.
		\]
	\end{assumption}
	
	\begin{assumption}[Fisher-non-degenerate policy]
		\label{as.Fishernonpolicy}
		There exists a $\sigma_F>0$ such that
		\[
		\Sigma_{\nu}^\theta   
		\; \succcurlyeq \; 
		\sigma_F  I
		\]
		for all $\nu \text{ and } \theta \in \mathbb{R}^d$, where $I$ is the identity matrix in $\mathbb{R}^{d\times d}$.
	\end{assumption}
	
	Assumption~\ref{as.Fishernonpolicy} holds for the Gaussian policy class when the parameterized mean has a full row-rank Jacobian and the covariance matrix is fixed~\citep{fatkhullin2023stochastic}, a policy class in the full rank exponential family~\citep{ding2022global}, and certain neural policies~\citep{liu2020improved}. We introduce it to tighten the sample complexity analysis, although this assumption does not necessarily hold for the tabular softmax policy~\citep{fatkhullin2023stochastic}. 
	
	In Theorem~\ref{thm.samplecomplexity.general}, we establish the sample complexity of Algorithm~\ref{alg.sample-based.general}.
	
	\begin{theorem}[Sample complexity: general policy parametrization]\label{thm.samplecomplexity.general}
		Let Assumptions \ref{as.slater}, \ref{as.smoothness}, \ref{as.Lipschitzpolicy}, and~\ref{as.Fishernonpolicy} hold and let us fix a state distribution $\rho$, a state-action distribution $\nu_0$, and $T>0$. If the iterates $\{(\theta^{(t)},\lambda^{(t)})\}_{t=0}^{T-1}$ are generated by the sample-based NPG-PD method described in Algorithm~\ref{alg.sample-based.general} with $\eta_1=\eta_2={1}/{\sqrt{T}}$ and $\alpha_k= 2/(\sigma_F (k+1))$, in which $K$ rounds of trajectory samples are used at each time $t$, then
		\[
		\begin{array}{rcl}
			\displaystyle
			\mathbb{E}
			\sbr{\,
				\frac{1}{T} \sum_{t\,=\,0}^{T-1} \big(\,V_r^\star(\rho) \,-\,V_r^{(t)}(\rho)\,\big)
				\,}
			& \leq &\displaystyle
			\frac{C_5}{(1-\gamma)^5}\frac{1}{\sqrt{T}} \,+\, \frac{1+2/\xi}{(1-\gamma)^3} \left( \sqrt{\epsilon_{\text{\normalfont  bias}}} + \sqrt{\frac{2\,\kappa\, G^2}{\sigma_F (K+1)}}\right)
			\\[0.4cm]
			\displaystyle
			\mathbb{E}
			\sbr{\,
				\frac{1}{T} \sum_{t\,=\,0}^{T-1} \big(\,b\,-\,V_g^{(t)}(\rho)\,\big)
				\,}_+
			& \leq  &\displaystyle
			\frac{C_6}{(1-\gamma)^4}\frac{1}{\sqrt T}  \,+\,
			\frac{2+\xi}{(1-\gamma)^2} \left( \sqrt{\epsilon_{\text{\normalfont  bias}}} + \sqrt{\frac{2\,\kappa \, G^2}{\sigma_F(K+1)}}\right)
		\end{array}
		\]
		where $C_5 \DefinedAs 2+\log |A|+5\beta W^2/\xi^2$, $C_6 \DefinedAs (2+\log |A|+\beta W^2)\xi + (2+4\beta W^2)/\xi$, and $G^2 \DefinedAs 4 ( W^2 L_\pi^2 + {2}/{(1-\gamma)^2} ) L_\pi^2$.
	\end{theorem}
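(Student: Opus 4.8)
The plan is to reduce this statement to Theorem~\ref{thm.convergence.general} by showing that the sample-based primal subroutine of Algorithm~\ref{alg.sample-based.general} realizes the abstract estimation error $\epsilon_{\text{\normalfont  est}}$ required by Assumption~\ref{as.errors+condtion} at the explicit rate $\epsilon_{\text{\normalfont  est}} = O(1/(\sigma_F K))$, and that the extra stochasticity introduced by the sample-based dual update is harmless in expectation. Since Lemma~\ref{lem.gap.violation} and the error decomposition in~\eqref{eq.err}--\eqref{eq.err2g} hold verbatim for any $\beta$-smooth policy class, the only quantities I need to re-examine are (i) the optimization error of the inner SGD loop for each $w_\diamond^{(t)}$, and (ii) the noise in the dual iterate caused by replacing $V_g^{(t)}(\rho)$ with the unbiased estimate $\hat V_g^{(t)}(\rho)$.

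\textbf{Controlling the SGD estimation error.} The inner problem $\minimize_{\norm{w_\diamond}\le W} E_\diamond^{\nu^{(t)}}(w_\diamond;\theta^{(t)})$ is a least-squares regression whose Hessian in $w_\diamond$ is $2\Sigma_{\nu^{(t)}}^{(t)}$; by the Fisher-non-degeneracy of Assumption~\ref{as.Fishernonpolicy} this objective is $2\sigma_F$-strongly convex. Using the unbiasedness of the advantage estimates from Algorithm~\ref{alg.estimate.A} (established in Appendix~\ref{alg.general}), each $G_{\diamond,k}$ is a conditionally unbiased estimate of $\nabla_{w}E_\diamond^{\nu^{(t)}}$, and Assumption~\ref{as.Lipschitzpolicy} together with a second-moment bound on $\hat A_\diamond^{(t)}$ gives $\mathbb{E}\,\norm{G_{\diamond,k}}^2 \le G^2$ with $G^2 = 4(W^2 L_\pi^2 + 2/(1-\gamma)^2)L_\pi^2$. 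The stepsize $\alpha_k = 2/(\sigma_F(k+1))$ and the $(k+1)$-weighted averaging are exactly the scheme of~\citet{lacoste2012simpler}, which for a $2\sigma_F$-strongly convex objective yields $\mathbb{E}[E_\diamond^{\nu^{(t)}}(\hat w_\diamond^{(t)};\theta^{(t)}) - E_\diamond^{\nu^{(t)}}(w_{\diamond,\star}^{(t)};\theta^{(t)})] \le 2G^2/(\sigma_F(K+1))$. Substituting this value of $\epsilon_{\text{\normalfont  est}}$ into the $\sqrt{\kappa\,\epsilon_{\text{\normalfont  est}}/(1-\gamma)}$ term produced by~\eqref{eq.err2g} reproduces the $\sqrt{2\kappa G^2/(\sigma_F(K+1))}$ contributions in both displayed bounds.

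\textbf{Handling the noisy dual update.} Because $\hat V_g^{(t)}(\rho)$ is unbiased for $V_g^{(t)}(\rho)$ (Appendix~\ref{alg.general}), I re-run the drift computations~\eqref{eq.lambda.ap}--\eqref{eq.removelamda} underlying Lemma~\ref{lem.gap.violation}, but now in conditional expectation. Taking $\mathbb{E}[\,\cdot\,\vert\,\theta^{(t)},\lambda^{(t)}]$, the terms linear in $\hat V_g^{(t)}$ collapse to their exact counterparts, while the squared terms $(\hat V_g^{(t)}(\rho)-b)^2$ are replaced by $\mathbb{E}[(\hat V_g^{(t)}(\rho)-b)^2]$, which a geometric-termination argument bounds by a constant multiple of $1/(1-\gamma)^2$. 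This extra second-moment slack is what upgrades the leading constants $C_3,C_4$ of Theorem~\ref{thm.convergence.general} to $C_5,C_6$ (the additive $+1$) and inserts one extra factor of $1/(1-\gamma)$. Assembling the primal bound from the previous paragraph, the dual bound here, and the transfer-error hypothesis ${E}_{\diamond,\text{\normalfont bias}}^{(t)}\le\epsilon_{\text{\normalfont  bias}}$ inside Lemma~\ref{lem.gap.violation}, then applying the tower property over all sources of randomness, yields the two stated inequalities.

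\textbf{Main obstacle.} The crux is the SGD analysis \emph{without} assuming bounded value-function estimates, which is precisely what lets us improve the sample complexity from $O(1/\epsilon^6)$ to $O(1/\epsilon^4)$. Concretely, I must bound the second moment $G^2$ of the stochastic gradients when $\hat A_\diamond^{(t)}$ is built from undiscounted sums over a random, geometrically distributed horizon; this requires controlling $\mathbb{E}[(\hat Q_\diamond^{(t)})^2]$ and $\mathbb{E}[(\hat V_\diamond^{(t)})^2]$ through the $1-\gamma$ termination probability rather than a crude pointwise bound. A secondary technical care point is the bookkeeping of nested expectations --- over the rollouts feeding $\hat A_\diamond^{(t)}$, over the $\hat V_g^{(t)}$ used in the dual step, and over the on-policy sampling of $\nu^{(t)}$ --- so that the conditional strong-convexity rate and the conditional unbiasedness compose correctly into the final unconditional bound.
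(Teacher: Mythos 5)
Your proposal follows essentially the same route as the paper's own proof: the paper first establishes a sample-based version of Lemma~\ref{lem.gap.violation} (its Lemma~\ref{lem.gap.violation.sample}), handling the noisy dual update exactly as you describe—via conditional unbiasedness of $\hat V_g^{(t)}(\rho)$ and a second-moment bound $\mathbb{E}[(\hat V_g^{(t)}(\rho))^2]\leq 2/(1-\gamma)^2$ from the geometric termination, which is what produces the $+1$ in $C_5, C_6$—and then bounds $\mathbb{E}[\widehat{\text{\normalfont err}}_\diamond^{(t)}(\pi^\star)]$ by the same transfer-error/SGD-error decomposition with the Lacoste-Julien rate $2G^2/(\sigma_F(K+1))$ and $G^2 = 4(W^2L_\pi^2+2/(1-\gamma)^2)L_\pi^2$. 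One cosmetic correction: the extra factor of $1/(1-\gamma)$ in front of the error terms (relative to Theorem~\ref{thm.convergence.general}) does not come from the dual noise but from bounding $(1-\gamma)^{-1/2}\leq(1-\gamma)^{-1}$ when the distribution-shift factor is pulled out of the square root so that $\sqrt{\epsilon_{\text{\normalfont bias}}}$ and the SGD term share a common prefactor.
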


	In Theorem~\ref{thm.samplecomplexity.general}, the sampling effect appears as an error of rate $1/\sqrt{K}$, where $K$ is the number of sampled trajectories. This rate follows the standard SGD result~\citep{lacoste2012simpler} and it can be increased to $1/{K}^{1/4}$ under less restrictive assumptions on the policy class~\citep{shamir2013stochastic}. 
	When $\epsilon_{\text{\normalfont  bias}} = 0$, it takes $O({1}/{\epsilon^4})$ sampled trajectories for Algorithm~\ref{alg.sample-based.general} to output an $\epsilon$-optimal policy.
	The proof of Theorem~\ref{thm.samplecomplexity.general} in Appendix~\ref{pf.samplecomplexity.general} follows the proof of Theorem~\ref{thm.convergence.general} except that we use sample-based estimates of gradients in the primal update and sample-based value functions in the dual update. Compared to \cite[Theorem~3]{ding2020natural}, the improved sample complexity from $O(1/\epsilon^8)$ to $O(1/\epsilon^4)$ is owed to a new regret-type primal-dual analysis in Section~\ref{subsec.loglinear.pf}.

	\begin{algorithm}[]
		\caption{ Sample-based NPG-PD algorithm with log-linear policy parametrization }
		\label{alg.sample-based.loglinear}
		\begin{algorithmic}[1]
			\STATE \textbf{Input}: Learning rates $\eta_1$ and $\eta_2$, number of SGD iterations $K$, SGD learning rate  $\alpha_k = \frac{2}{\sigma_F (k+1)}$ for $k\geq0$.
			\STATE Initialize $\theta^{(0)}=0$, $\lambda^{(0)} =0$.
			\FOR{$t=0,\ldots,T-1$} 
			\STATE Initialize $w_{r,0}=w_{g,0}=0$.
			\FOR{$k=0,1,\ldots,K-1$ } 
			\STATE Estimate $\hat Q_r^{(t)}(s,a)$ and $\hat Q_g^{(t)}(s,a)$ for some $(s,a)\sim \nu^{(t)}$, using Algorithm~\ref{alg.estimate.Q} with log-linear policy $\pi_\theta^{(t)}$. 
			\STATE Take a step of SGD
			\[
			\begin{array}{rcl}
				w_{r,k+1} & = & \mathcal{P}_{\norm{w_r} \,\leq\, W} \left( w_{r,k} \, - \, 2\alpha_k \, \big( \phi_{s,a}^\top  w_{r,k} - \hat{Q}_r^{(t)}(s,a)\big) \phi_{s,a} \right)
				\\[0.15cm]
				w_{g,k+1} & = & \mathcal{P}_{\norm{w_g} \,\leq\, W} \left( w_{g,k} \, - \, 2\alpha_k \, \big( \phi_{s,a}^\top w_{g,k}- \hat{Q}_g^{(t)}(s,a)\big) \phi_{s,a} \right).
			\end{array}
			\]
			\ENDFOR
			\STATE Set $\hat{w}^{(t)} =\hat{w}_r^{(t)} +\lambda^{(t)}\hat{w}_g^{(t)}$, where 
			\[
			\displaystyle
			\hat{w}_r^{(t)}
			\; = \;
			\frac{2}{K(K+1)} \sum_{k\,=\,0}^{K-1} (k+1)w_{r,k} 
			\; \text{ and } \;
			\hat{w}_g^{(t)} 
			\; = \;
			\frac{2}{K(K+1)} \sum_{k\,=\,0}^{K-1} (k+1)w_{g,k}.
			\]
			\STATE Estimate $\hat{V}_g^{(t)}(\rho)$ using Algorithm~\ref{alg.estimate.V} with log-linear policy $\pi_\theta^{(t)}$. 
			\STATE Natural policy gradient primal-dual update 
			\begin{equation}\label{eq.NPGAGD.sample-basedsoft}
				\begin{array}{rcl}
					\theta^{(t+1)} & = & \theta^{(t)} 
					\,+\, 
					\dfrac{\eta_1}{1-\gamma} \hat \,
					w^{(t)} 
					\\[0.15cm]
					\lambda^{(t+1)} 
					& = & 
					\calP_{[\,0,\,2/((1-\gamma)\xi)\,]} \rbr{\,  \lambda^{(t)} 
						\, - \, 
						\eta_2 \,  \big(\,\hat{V}_g^{(t)}(\rho)\,-\,b\,\big)\,}.
				\end{array}
			\end{equation}
			\ENDFOR
		\end{algorithmic}
	\end{algorithm}

	\begin{algorithm}[]
		\caption{ $Q$-Unbiased estimate ($\mathcal Q_\diamond^\text{\normalfont est}$, $\diamond = r$ or $g$) }
		\label{alg.estimate.Q}
		\begin{algorithmic}[1]
			\STATE \textbf{Input}: Initial state-action distribution $\nu_0$, policy $\pi$, discount factor $\gamma$. 
			\STATE Sample $(s_0,a_0) \sim \nu_0$, execute the policy $\pi$ with probability $\gamma$ at each step $h$; otherwise, accept $(s_h,a_h)$ as the sample.
			\STATE Start with $(s_h,a_h)$, execute the policy $\pi$ with the termination probability $1-\gamma$. Once terminated, add all rewards/utilities from step $h$ onward as $\hat{Q}_\diamond^\pi(s_h,a_h)$ for $\diamond = r$ or $g$, respectively.
			\STATE \textbf{Output}: $(s_h,a_h)$ and $\hat{Q}_\diamond^\pi(s_h,a_h)$, $\diamond = r$ or $g$.
		\end{algorithmic}
	\end{algorithm}
	
	Algorithm~\ref{alg.sample-based.loglinear} is utilized for the log-linear policy parametrization. For the feature $\phi_{s,a}$ that has bounded norm $\norm{\phi_{s,a}}\leq B$, the sample-based gradient in SGD has the second-order moment bound $G^2 \DefinedAs 4 ( W^2 B^2 + {2}/{(1-\gamma)^2} ) B^2$. In Theorem~\ref{thm.samplecomplexity.loglinear}, we establish the sample complexity for Algorithm~\ref{alg.sample-based.loglinear}; see Appendix~\ref{pf.samplecomplexity.loglinear} for the proof.
	
	\begin{theorem}[Sample complexity: log-linear policy parametrization]\label{thm.samplecomplexity.loglinear}
		Let Assumption~\ref{as.slater} hold and let us fix a state distribution $\rho$ and a state-action distribution $\nu_0$.
		If the iterates $\{(\theta^{(t)}, \lambda^{(t)})\}_{t=0}^{T-1}$ are generated by the sample-based NPG-PD method described in Algorithm~\ref{alg.sample-based.loglinear} with $\norm{\phi_{s,a}}\leq B$, $\eta_1=\eta_2={1}/{\sqrt{T}}$, and $\alpha_k = 2/(\sigma_F (k+1))$, in which $K$ rounds of trajectory samples are used at each time $t$, and there exists $\sigma_F>0$ such that $\mathbb{E}_{(s,a)\,\sim\,\nu^{(t)}}\sbr{ \phi_{s,a} \phi_{s,a}^\top }  
		\succcurlyeq 
		\sigma_F I$,
		then
		\[
		\begin{array}{rcl}
			\displaystyle
			\mathbb{E}
			\sbr{\,
				\frac{1}{T} \sum_{t\,=\,0}^{T-1} \big(\,V_r^\star(\rho) \,-\,V_r^{(t)}(\rho)\,\big)
				\,}
			& \leq &
			\displaystyle
			\frac{C_5}{(1-\gamma)^5}\frac{1}{\sqrt{T}} \,+\, \frac{2+4/\xi}{(1-\gamma)^3} \left( \sqrt{|A|\, \epsilon_{\text{\normalfont  bias}}} + \sqrt{\frac{2\kappa\, |A|\, G^2}{\sigma_F(K+1)}} \right)
			\\[0.4cm]
			\displaystyle
			\mathbb{E}
			\sbr{\,
				\frac{1}{T} \sum_{t\,=\,0}^{T-1} \big(\,b\,-\,V_g^{(t)}(\rho)\,\big)
				\,}_+
			& \leq  &\displaystyle
			\frac{C_6}{(1-\gamma)^4}\frac{1}{\sqrt T}  \,+\,
			\frac{4+2\xi}{(1-\gamma)^2} \left( \sqrt{|A|\, \epsilon_{\text{\normalfont  bias}}} + \sqrt{\frac{2\kappa\, |A| \, G^2}{\sigma_F(K+1)}} \right)
		\end{array}
		\]
		where $C_5 \DefinedAs 2+\log |A|+5\beta W^2/\xi^2$ and $C_6 \DefinedAs (2+\log |A|+\beta W^2)\xi + (2+4\beta W^2)/\xi$.
	\end{theorem}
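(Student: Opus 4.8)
The plan is to reduce the sample-based statement to the exact log-linear analysis of Theorem~\ref{thm.convergence.loglinear}, treating the SGD subroutine and the Monte-Carlo value estimate as the only new sources of error. Since the log-linear policy has $\log\pi_\theta(a\,\vert\,s)$ that is $\beta$-smooth with $\beta=B^2$, the regret/violation bound of Lemma~\ref{lem.gap.violation} applies verbatim to the iterates of Algorithm~\ref{alg.sample-based.loglinear}, now with $w_\diamond^{(t)}$ replaced by the averaged SGD output $\hat w_\diamond^{(t)}$. The difference from the exact case is that $\hat w^{(t)}$ and $\hat V_g^{(t)}(\rho)$ are random, so I would take expectations throughout and invoke the tower rule, conditioning on the filtration generated by $(\theta^{(0)},\lambda^{(0)}),\dots,(\theta^{(t)},\lambda^{(t)})$. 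This leaves two quantities to control in expectation: the primal regression error entering $\mathbb{E}[\text{err}_r^{(t)}(\pi^\star)]$ and $\mathbb{E}[\text{err}_g^{(t)}(\pi^\star)]$ through the decomposition~\eqref{eq.err}, and the dual drift governed by $\hat V_g^{(t)}(\rho)$.

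For the primal term I would replace the abstract estimation error $\epsilon_{\text{\normalfont est}}$ of Theorem~\ref{thm.convergence.loglinear} by an explicit SGD guarantee. The inline assumption $\mathbb{E}_{(s,a)\sim\nu^{(t)}}[\phi_{s,a}\phi_{s,a}^\top]\succcurlyeq\sigma_F I$ makes each least-squares objective $\mathcal{E}_\diamond^{\nu^{(t)}}(\cdot;\theta^{(t)})$ $\sigma_F$-strongly convex in $w_\diamond$, while the sampled gradient $G_{\diamond,k}=2(\phi_{s,a}^\top w_{\diamond,k}-\hat Q_\diamond^{(t)}(s,a))\phi_{s,a}$ is unbiased (Algorithm~\ref{alg.estimate.Q}) with second moment at most $G^2=4(W^2B^2+2/(1-\gamma)^2)B^2$, using $\|\phi_{s,a}\|\le B$, $\|w_{\diamond,k}\|\le W$, and $\mathbb{E}[(\hat Q_\diamond^{(t)})^2]\le 2/(1-\gamma)^2$. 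The polynomially weighted averaging $\hat w_\diamond^{(t)}=\tfrac{2}{K(K+1)}\sum_{k}(k+1)w_{\diamond,k}$ is exactly the scheme of~\citet{lacoste2012simpler}, so $\mathbb{E}[\mathcal{E}_\diamond^{\nu^{(t)}}(\hat w_\diamond^{(t)};\theta^{(t)})-\mathcal{E}_\diamond^{\nu^{(t)}}(w_{\diamond,\star}^{(t)};\theta^{(t)})\,\vert\,\theta^{(t)}]\le \tfrac{2G^2}{\sigma_F(K+1)}$. Substituting this value of $\epsilon_{\text{\normalfont est}}$ into the estimation/transfer steps~\eqref{eq.err0}--\eqref{eq.err3} of the proof of Theorem~\ref{thm.convergence.loglinear} reproduces the $\sqrt{|A|\epsilon_{\text{\normalfont bias}}}+\sqrt{\kappa|A|\epsilon_{\text{\normalfont est}}/(1-\gamma)}$ structure, where the factor $|A|$ comes from reweighting the uniform-action comparator $\nu^\star$ against $\pi^\star$ and $\kappa$ from the relative condition number.

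For the dual term I would re-run the $\lambda$-drift estimates~\eqref{eq.lambda.ap} and~\eqref{eq.removelamda} in conditional expectation. Because $\hat V_g^{(t)}(\rho)$ is an unbiased estimate of $V_g^{(t)}(\rho)$ (Algorithm~\ref{alg.estimate.V}) and is bounded in $[0,1/(1-\gamma)]$, the telescoping bounds on $-\tfrac1T\sum_t\lambda^{(t)}(V_g^\star(\rho)-V_g^{(t)}(\rho))$ and on $\tfrac1T\sum_t(V_g^{(t)}(\rho)-b)(\lambda^{(t)}-\lambda)$ carry over with the same constants after taking expectations, the cross terms vanishing by unbiasedness and the quadratic terms being controlled by $(\hat V_g^{(t)}(\rho)-b)^2\le 1/(1-\gamma)^2$. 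Combining the primal bound, the dual bound, and Lemma~\ref{lem.constraint} with $2/((1-\gamma)\xi)\ge 2\lambda^\star$ exactly as in Theorem~\ref{thm.convergence.loglinear}, then setting $\eta_1=\eta_2=1/\sqrt T$ and collecting constants, yields the two stated inequalities; the extra additive sampling error in the dual is what upgrades the log-term constants from $C_3,C_4$ to $C_5,C_6$.

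\textbf{Main obstacle.} The delicate point is the bookkeeping of two coupled stochastic oracles across iterations: the dual variable $\lambda^{(t)}$ multiplies the estimated gradient $\hat w_g^{(t)}$ in the primal step, while the primal iterate $\theta^{(t)}$ determines the distribution from which $\hat V_g^{(t)}(\rho)$ is sampled, and $\hat V_g^{(t)}$ enters the $\lambda$-recursion quadratically. Establishing that all conditional expectations factor cleanly---so that unbiasedness kills the cross terms without leaving a residual variance that accumulates over $T$ steps---is the technical crux, and it is also where the horizon dependence must be tracked carefully to obtain the extra $1/(1-\gamma)$ factor (the $(1-\gamma)^{-3}$ and $(1-\gamma)^{-2}$ prefactors) relative to the exact bounds of Theorem~\ref{thm.convergence.loglinear}.
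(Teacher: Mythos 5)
Your overall route is the same as the paper's: reduce to the smooth-policy regret/violation bound (taken in expectation over the sampling randomness), plug the Lacoste--Julien SGD guarantee $2G^2/(\sigma_F(K+1))$ into the estimation slot of the $\sqrt{|A|\,\epsilon_{\text{\normalfont bias}}}+\sqrt{\kappa |A|\,\epsilon_{\text{\normalfont est}}/(1-\gamma)}$ decomposition from the exact log-linear analysis, and close with Lemma~\ref{lem.constraint}. However, there is one concrete error in your dual-update bookkeeping. You claim that $\hat V_g^{(t)}(\rho)$ ``is bounded in $[0,1/(1-\gamma)]$'' and that the quadratic terms are ``controlled by $(\hat V_g^{(t)}(\rho)-b)^2\le 1/(1-\gamma)^2$.'' This is false: Algorithm~\ref{alg.estimate.V} returns an \emph{undiscounted} sum of utilities over a trajectory whose length is geometric with parameter $1-\gamma$, so $\hat V_g^{(t)}(\rho)$ is unbounded (it can be as large as the realized trajectory length). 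The paper's proof instead controls it in second moment: $\mathrm{Var}\,[\hat V_g^{(t)}]\le 1/(1-\gamma)^2$ and $\mathbb{E}\,[\hat V_g^{(t)}]=V_g^{(t)}(\rho)\le 1/(1-\gamma)$ give $\mathbb{E}\big[\big(\hat V_g^{(t)}(\rho)\big)^2\big]\le 2/(1-\gamma)^2$, hence $\mathbb{E}\big[(\hat V_g^{(t)}(\rho)-b)^2\big]\le 3/(1-\gamma)^2$, and it is exactly this inflation (a $2\eta_2/(1-\gamma)^2$ drift term instead of $\eta_2/(2(1-\gamma)^2)$, and $3\eta_2^2/(1-\gamma)^2$ instead of $\eta_2^2/(1-\gamma)^2$) that produces the constants $C_5=C_3+1$ and $C_6$ in the statement. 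Indeed, your write-up is internally inconsistent on this point: under your deterministic boundedness claim the dual drift would be identical to the exact case and the constants would remain $C_3,C_4$, yet you simultaneously assert that ``the extra additive sampling error in the dual is what upgrades the log-term constants from $C_3,C_4$ to $C_5,C_6$'' without supplying the mechanism for that upgrade.

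The fix is exactly the argument you already used for the primal oracle: you correctly bounded $\mathbb{E}\big[(\hat Q_\diamond^{(t)})^2\big]\le 2/(1-\gamma)^2$ rather than claiming $\hat Q_\diamond^{(t)}$ is bounded, and the $V$-estimator needs the identical second-moment treatment (both are undiscounted sums over geometric-length rollouts). Note this distinction is not cosmetic for this paper: the authors explicitly flag, as an improvement over their earlier work, that their analysis ``does not require boundedness'' of value-function estimates, and the variance computation in Appendix~\ref{alg.general} is the ingredient that makes the dual recursion close. Your unbiasedness argument for killing the cross terms $\mathbb{E}\big[\lambda^{(t)}\big(V_g^{(t)}(\rho)-\hat V_g^{(t)}(\rho)\big)\big]=0$ (conditioning on the history so that $\lambda^{(t)}$ is independent of the fresh rollout) is correct and matches the paper; once the quadratic term is replaced by its conditional second-moment bound, the rest of your outline goes through and reproduces the stated inequalities.
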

	
	When we specialize the log-linear policy to be the softmax policy, Algorithm~\ref{alg.sample-based.loglinear} becomes a sample-based implementation of the NPG-PD method~\eqref{eq.NPG-PD-softmax} that utilizes the state-action value functions. In this case, $\epsilon_{\text{\normalfont  bias}} = 0$ and $B = 1$ in Theorem~\ref{thm.samplecomplexity.loglinear}. When there are no sampling effects, i.e., as $K\to\infty$, our rate $(1/\sqrt{T}, 1/\sqrt{T})$ matches the rate in Theorem~\ref{thm.convergence.softmax}. It takes $O({1}/{\epsilon^4})$ sampled trajectories for Algorithm~\ref{alg.sample-based.loglinear} to output an $\epsilon$-optimal policy.

\begin{remark}[Zero constraint violation for sample-based NPG-PD algorithms]
		As done in Corollary~\ref{cor.convergence.loglinear.zeroviolation}, we can refine the constraint violation in Theorem~\ref{thm.samplecomplexity.loglinear} to be zero. Given a small desired accuracy $\epsilon>0$, there exists $\delta = \Theta(\epsilon)$ such that 
		if $T =K= \Omega(1/\epsilon^2)$, and $\epsilon_{\text{\normalfont  bias}}  = O(\epsilon^2)$, then
		\[
		\begin{array}{rcl}
			\displaystyle 
			\mathbb{E}
			\sbr{\,
				\frac{1}{T}\sum_{t \, = \,0 }^{T-1} \big(\,b\,-\,V_g^{(t)}(\rho)\,\big)
				\,}_+  
			& \leq & 
			0.
		\end{array}
		\]
		Similarly, we can strengthen Theorem~\ref{thm.samplecomplexity.general} to achieve zero constraint violation.
\end{remark}
	
	\section{Computational experiments}
	\label{sec.experiments}
	
	\begin{figure}[h!]
		\begin{tabular}{c}
			Ant-v1
			\\[0.2cm]
			\begin{tabular}{cccc}
				{\rotatebox{90}{ \;\;\;\;\;\;\; average reward}}
				\!\!\!\!
				&
				{\includegraphics[scale=0.36]{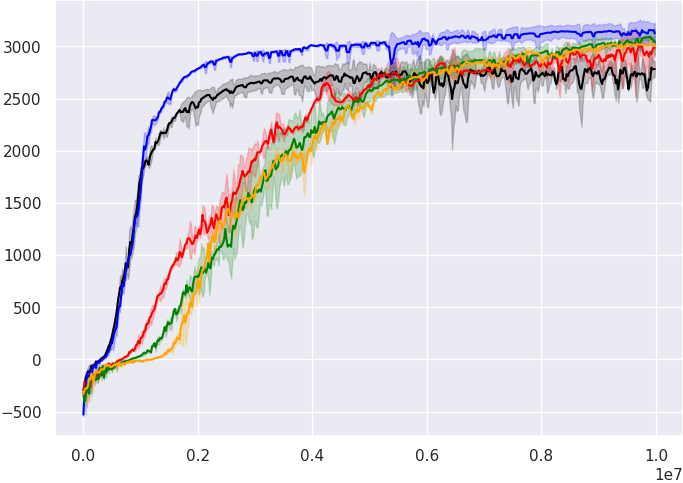}} 
				&  
				\;\;\;\;
				{\rotatebox{90}{ \;\;\;\;\;\;\;\;\; average cost}} 
				\!\!\!\!
				&
				{\includegraphics[scale=0.36]{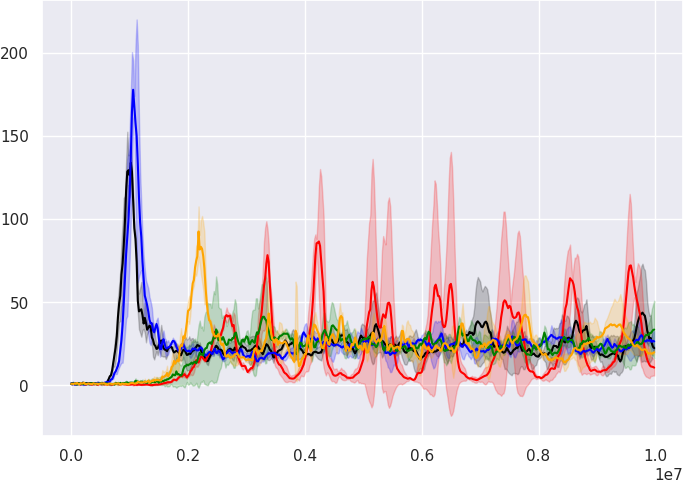}}
				\\[-0.1cm]
				{} & { \;\;\;\; sample count} & {} & { \;\;\;\;\;\; sample count}
			\end{tabular}
			\\[2.8cm]
			Humanoid-v1
			\\[0.2cm]
			\begin{tabular}{cccc}
				{\rotatebox{90}{ \;\;\;\;\;\;\; average reward}}
				\!\!\!\!
				&
				{\includegraphics[scale=0.36]{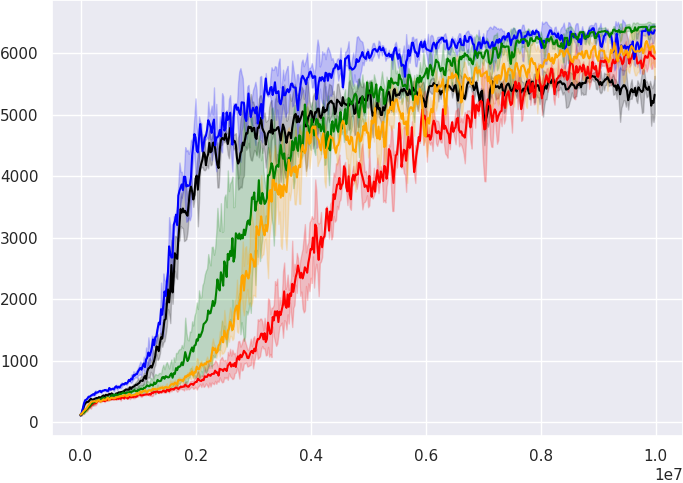}} 
				&  
				\;\;\;\;
				{\rotatebox{90}{ \;\;\;\;\;\;\;\;\; average cost}} 
				\!\!\!\!
				&
				{\includegraphics[scale=0.36]{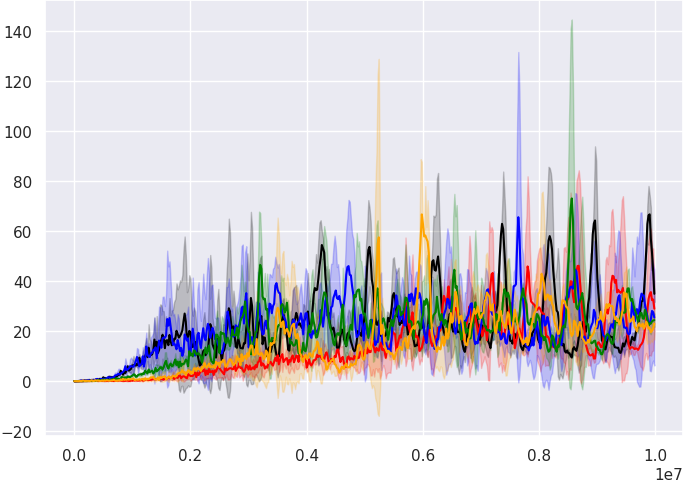}}
				\\[-0.1cm]
				{} & { \;\;\;\; sample count} & {} & { \;\;\;\;\;\; sample count}
			\end{tabular}
		\end{tabular}
		\caption{ Learning curves of NPG-PD method (\textbf{\color{myblue}---}, blue), CUP~\citep{yang2022constrained} (\textbf{\color{myred}---}, red), FOCOPS~\citep{zhang2020first} (\textbf{\color{orange}---}, orange), TRPOLag~\citep{ray2019benchmarking} (\textbf{\color{black}---}, black), and PPOLag~\citep{ray2019benchmarking} (\textbf{\color{mygreen}---}, green)  for Ant-v1 and Humanoid-v1 robotic tasks with the speed limit $25$. The vertical axes represent the average reward and the average cost (i.e., average speed). The solid lines show the means of $1000$ bootstrap samples obtained over $3$ random seeds and the shaded regions display the bootstrap $95\%$ confidence intervals. }
		\label{fig.speed limit last two}
	\end{figure}
	
	\begin{figure}[h!]
		\begin{tabular}{c}
			HalfCheetah-v1
			\\[0.2cm]
			\begin{tabular}{cccc}
				{\rotatebox{90}{ \;\;\;\;\;\;\; average reward}}
				\!\!\!\!
				&
				{\includegraphics[scale=0.36]{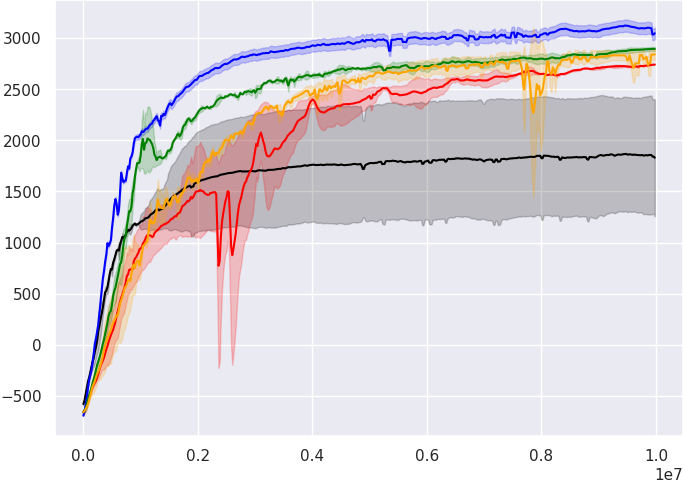}} 
				&  
				\;\;\;\;
				{\rotatebox{90}{ \;\;\;\;\;\;\;\;\;\;\;  average cost}} 
				\!\!\!\!
				&
				{\includegraphics[scale=0.36]{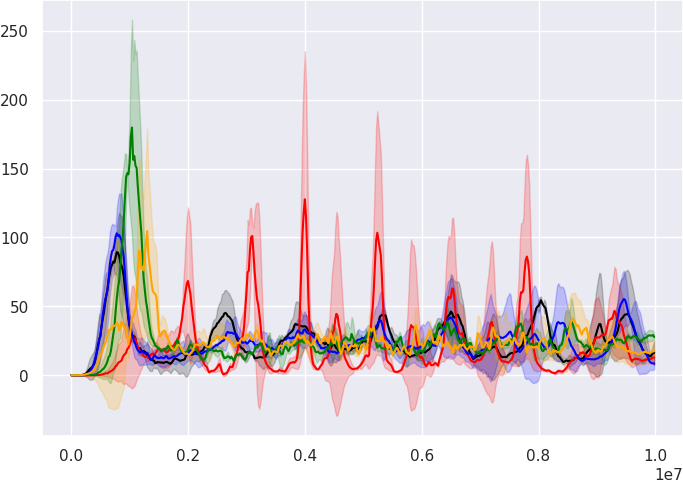}}
				\\[-0.1cm]
				{} & { \;\;\;\; sample count} & {} & { \;\;\;\;\;\; sample count}
			\end{tabular}
			\\[2.8cm]
			Walker2d-v1
			\\[0.2cm]
			\begin{tabular}{cccc}
				{\rotatebox{90}{ \;\;\;\;\;\;\; average reward}}
				\!\!\!\!
				&
				{\includegraphics[scale=0.36]{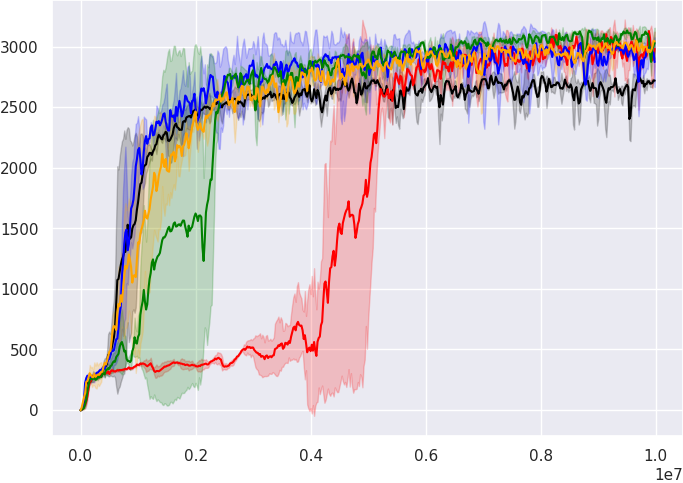}} 
				&  
				\;\;\;\;
				{\rotatebox{90}{ \;\;\;\;\;\;\;\;\;\;\;  average cost}} 
				\!\!\!\!
				&
				{\includegraphics[scale=0.36]{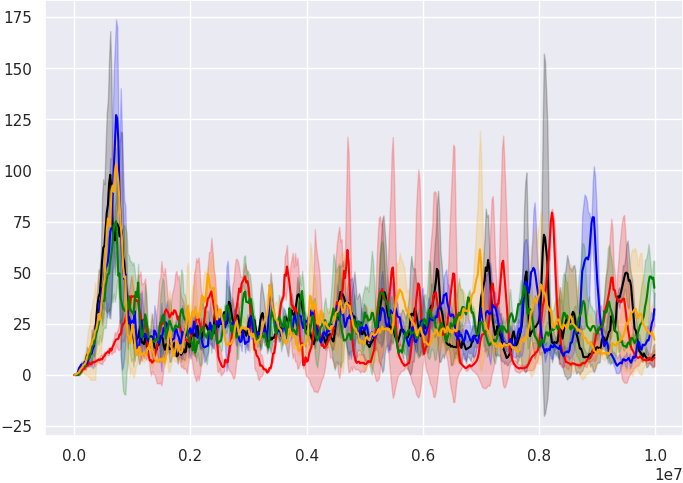}}
				\\[-0.1cm]
				{} & { \;\;\;\; sample count} & {} & { \;\;\;\;\;\; sample count}
			\end{tabular}
		\end{tabular}
		\caption{ Learning curves of NPG-PD method (\textbf{\color{myblue}---}, blue), CUP~\citep{yang2022constrained} (\textbf{\color{myred}---}, red), FOCOPS~\citep{zhang2020first} (\textbf{\color{orange}---}, orange), TRPOLag~\citep{ray2019benchmarking} (\textbf{\color{black}---}, black), and PPOLag~\citep{ray2019benchmarking} (\textbf{\color{mygreen}---}, green) for HalfCheetah-v1 and Walker2d-v1 robotic tasks with the speed limit $25$. The vertical axes represent the average reward and the average cost (i.e., average speed). The solid lines show the means of $1000$ bootstrap samples obtained over $3$ random seeds and the shaded regions display the bootstrap $95\%$ confidence intervals. }
		\label{fig.speed limit second two}
	\end{figure}
	
	\begin{figure}[h!]
		\begin{tabular}{c}
			Hopper-v1
			\\[0.2cm]
			\begin{tabular}{cccc}
				{\rotatebox{90}{ \;\;\;\;\;\;\; average reward}}
				\!\!\!\!
				&
				{\includegraphics[scale=0.36]{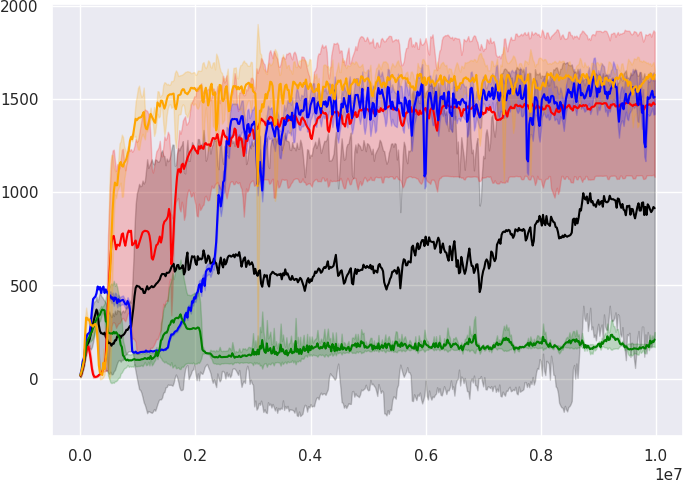}} 
				&  
				\;\;\;\;
				{\rotatebox{90}{ \;\;\;\;\;\;\;\;\;\;\; average cost}} 
				\!\!\!\!
				&
				{\includegraphics[scale=0.36]{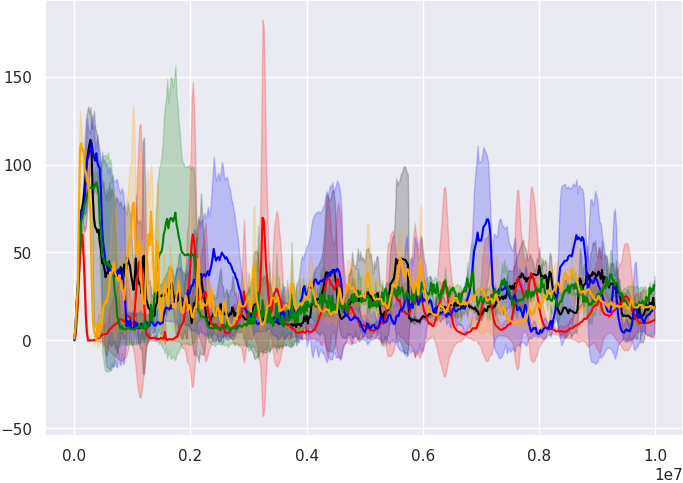}}
				\\[-0.1cm]
				{} & { \;\;\;\; sample count} & {} & { \;\;\;\;\;\; sample count}
			\end{tabular}
			\\[2.8cm]
			Swimmer-v1
			\\[0.2cm]
			\begin{tabular}{cccc}
				{\rotatebox{90}{ \;\;\;\;\;\;\; average reward}}
				\!\!\!\!
				&
				{\includegraphics[scale=0.36]{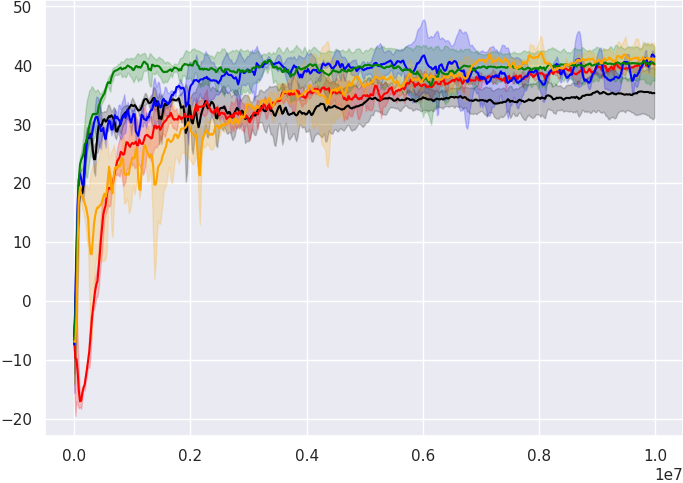}} 
				&  
				\;\;\;\;
				{\rotatebox{90}{ \;\;\;\;\;\;\;\;\;\;\; average cost}} 
				\!\!\!\!
				&
				{\includegraphics[scale=0.36]{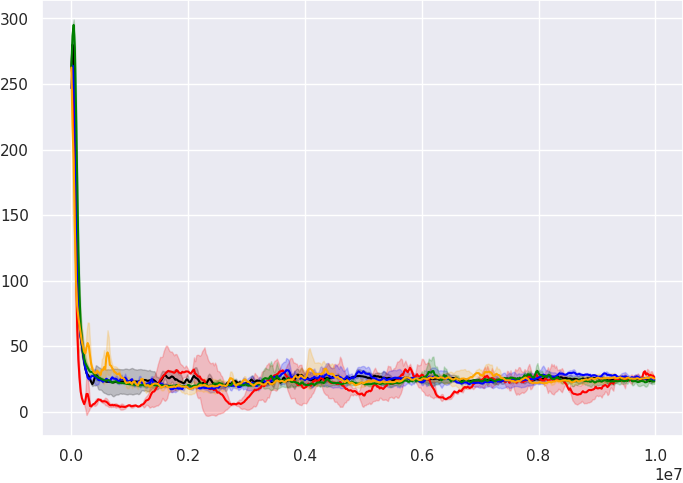}}
				\\[-0.1cm]
				{} & { \;\;\;\; sample count} & {} & { \;\;\;\;\;\; sample count}
			\end{tabular}
		\end{tabular}
		\caption{ Learning curves of NPG-PD method (\textbf{\color{myblue}---}, blue), CUP~\citep{yang2022constrained} (\textbf{\color{myred}---}, red), FOCOPS~\citep{zhang2020first} (\textbf{\color{orange}---}, orange), TRPOLag~\citep{ray2019benchmarking} (\textbf{\color{black}---}, black), and PPOLag~\citep{ray2019benchmarking} (\textbf{\color{mygreen}---}, green) for Hopper-v1 and Swimmer-v1 robotic tasks with the speed limit $25$. The vertical axes represent the average reward and the average cost (i.e., average speed). The solid lines show the means of $1000$ bootstrap samples obtained over $3$ random seeds and the shaded regions display the bootstrap $95\%$ confidence intervals. }
		\label{fig.speed limit first two}
	\end{figure}	
	
	We utilize a set of robotic tasks to demonstrate the effectiveness of our sample-based NPG-PD method described in Algorithm~\ref{alg.sample-based.general}. In our computational experiments, the  robotic agents are trained to move along a straight line or in a plane with speed limits for safety~\citep{zhang2020first}. We compare the performance of our NPG-PD algorithm with two classes of representative state-of-the-art methods: (i) two classical primal-dual policy search methods: Trust Region Policy Optimization based Lagrangian (TRPOLag) method and Proximal Policy Optimization based Lagrangian (PPOLag) method~\citep{ray2019benchmarking}; (ii) two methods that utilize the state-of-the-art policy optimization techniques: Constrained Update Projection (CUP) approach~\citep{yang2022constrained} and First Order Constrained Optimization in Policy Space (FOCOPS) algorithm~\citep{zhang2020first}. We conduct computational experiments in the OmniSafe framework~\citep{ji2024omnisafe} and implement robotic environments using the OpenAI Gym~\citep{brockman2016openai} for the MuJoCo simulators~\citep{todorov2012mujoco}. 
	
	We train six MuJoCo robotic agents to walk: Ant-v1, Humanoid-v1, HalfCheetah-v1, Walker2d-v1, Hopper-v1, and Swimmer-v1, while constraining the moving speed to be under a given threshold. Figure~\ref{fig.speed limit last two} shows that, in the first two tasks, our NPG-PD algorithm uniformly outperforms other four methods by reaching higher rewards while maintaining similar constraint satisfaction costs. This superior  performance of NPG-PD is also demonstrated in HalfCHeetah-v1 and Walker2d-v1 tasks in Figure~\ref{fig.speed limit second two}; in particular, we note that NPG-PD achieves a performance similar to that of PPOLag and that they both outperform the other three methods in Walker2d-v1 task. On the other hand, PPOLag does not perform well in Hopper-v1 in Figure~\ref{fig.speed limit first two}. For the last two tasks, Figure~\ref{fig.speed limit first two} shows a competitive performance of NPG-PD with two state-of-the-art methods: FOCOPS and CUP. Even though early oscillatory behavior slows down convergence of NPG-PD in Hopper-v1, it achieves higher rewards than CUP and FOCOPS. This demonstrates that NPG-PD not only converges faster than classical Lagrangian-based primal-dual methods but also matches the performance of state-of-the-art policy optimization methods.  
	
	\section{Concluding remarks}
	\label{sec.conclusion}
	
	We have proposed a Natural Policy Gradient Primal-Dual (NPG-PD) algorithm for solving the optimal control problems in constrained MDPs. Our algorithm utilized natural policy gradient ascent to update the primal variable and projected subgradient descent to update the dual variable. Although the underlying maximization involves a nonconcave objective function and a nonconvex constraint set, we have established global convergence for  both softmax or general smooth policy parametrizations, and have provided finite-sample complexity guarantees for two model-free extensions of the NPG-PD algorithm. 
	To the best of our knowledge, our work is the first to offer finite-time performance guarantees for policy-based primal-dual methods in the context of discounted infinite-horizon constrained MDPs. 
	
	In future work, we will address the oscillatory behavior commonly observed in primal-dual methods~\citep{stooke2020responsive,moskovitz2023reload}. To ensure policy-iterate convergence, recent works have incorporated optimism and regularization into single-timescale primal-dual algorithms~\citep{ding2023last,muller2024truly}. It is relevant to examine whether similar techniques can achieve policy-iterate convergence in the function approximation setting, e.g., using regularization~\citep{montenegro2024last}. We also aim to extend our policy gradient primal-dual algorithms to other constrained MDP settings, e.g., those with continuous state/action spaces~\citep{rozada2025deterministic}, as well as to those with risk-sensitive and resilient constraints~\citep{chow2017risk,ding2024resilient}. Additional future directions include further improving the sample efficiency of policy gradient primal-dual algorithms~\citep{mondal2024sample}, examining the robustness against model uncertainties~\citep{zhang2024distributionally}, and developing constrained policy optimization methods that work with offline datasets~\citep{hong2024primal,wei2024adversarially}.
	
	\begin{acks}
		Research of  D.~Ding and M.~R.~Jovanovi\'c was supported by the National Science Foundation under Awards ECCS-1708906 and ECCS-1809833. 
		Research of K.~Zhang and T.~Ba\c{s}ar was supported by the US Army Research Laboratory (ARL) Cooperative Agreement W911NF-17-2-0196 and by the Office of Naval Research (ONR) MURI Grant N00014-16-1-2710. K. Zhang also acknowledges the support from the Army Research Laboratory (ARL) Grant W911NF-24-1-0085,  the NSF CAREER Award ECCS-2443704, and the AFOSR YIP Award FA9550-25-1-0258.  T.~Ba\c{s}ar  also acknowledges the support provided by the Army Research Office (ARO) under Grant W911NF-24-1-0085.
	\end{acks}
	

	\newpage	
	\appendix
	
	\section{Proof of Lemma~\ref{lem.nonconvex}}\label{pf.nonconvex}
	
	We prove Lemma~\ref{lem.nonconvex} by providing a constrained MDP example as shown in Figure~\ref{fig.cmdp}. States $s_3$, $s_4$, and $s_5$ are three terminal states with zero reward and utility. We consider a non-trivial state $s_1$ with two actions: $a_1$ moving `up' and $a_2$ going `right', and the associated value functions are given by
	\[
	V_r^{\pi} (s_1) 
	\; = \; 
	\pi(a_2\,\vert\,s_1) \pi(a_1\,\vert\,s_2) 
	\]
	\[
	V_g^{\pi} (s_1) 
	\; = \;
	\pi(a_1\,\vert\,s_1)
	\, + \, 
	\pi(a_2\,\vert\,s_1) \pi(a_1\,\vert\,s_2).
	\]
	
	We consider the following two policies $\pi^{(1)}$ and $\pi^{(2)}$ using the softmax parametrization~\eqref{eq.softmax}:
	\[
	\theta^{(1)} 
	\; = \; 
	\rbr{ \log 1, \log x, \log x, \log 1}
	\]
	\[
	\theta^{(2)} 
	\; = \; 
	\rbr{ -\log 1, -\log x, -\log x, -\log 1}
	\]
	where the parameter is of the form $\rbr{\theta_{s_1,a_1}, \theta_{s_1,a_2}, \theta_{s_2,a_1}, \theta_{s_2,a_2}}$ for $x>0$. 
	
	First, we show that $V_r^{\pi}$ is not concave. We compute that
	\[
	\pi^{(1)} (a_1\,\vert\,s_1)
	\; = \; 
	\frac{1}{1+x},\;
	\pi^{(1)} (a_2\,\vert\,s_1)
	\; = \; \frac{x}{1+x},\;
	\pi^{(1)} (a_1\,\vert\,s_2)
	\; = \; 
	\frac{x}{1+x}
	\]
	\[
	V_r^{(1)}(s_1)  
	\; = \; 
	\rbr{\frac{x}{1+x}}^2,\;
	V_g^{(1)}(s_1)  
	\; = \; 
	\frac{1+x+x^2}{(1+x)^2}
	\]
	\[
	\pi^{(2)} (a_1\,\vert\,s_1)
	\; = \; 
	\frac{x}{1+x},\; 
	\pi^{(2)} (a_2\,\vert\,s_1)
	\; = \; 
	\frac{1}{1+x},\; 
	\pi^{(2)} (a_1\,\vert\,s_2)
	\; = \; 
	\frac{1}{1+x}
	\]
	\[
	V_r^{(2)}(s_1)  
	\; = \; 
	\rbr{\frac{1}{1+x}}^2,\;
	V_g^{(2)}(s_1)  
	\; = \; 
	\frac{1+x+x^2}{(1+x)^2}.
	\]
	
	Now, we consider a policy $\pi^{(\zeta)}$:
	\[
	\zeta\,\theta^{(1)} \,+\, (1-\zeta)\,\theta^{(2)} 
	\; = \; 
	\rbr{\log 1, \log\rbr{x^{2\zeta-1}}, \log\rbr{x^{2\zeta-1}}, \log 1 }
	\]
	for some $\zeta\in[0,1]$, which is defined on the segment between $\theta^{(1)}$ and $\theta^{(2)}$. Therefore, 
	\[
	\pi^{(1)} (a_1\,\vert\,s_1)
	\; = \; 
	\frac{1}{1+x^{2\zeta-1}},\;
	\pi^{(1)} (a_2\,\vert\,s_1)
	\; = \; 
	\frac{x^{2\zeta-1}}{1+x^{2\zeta-1}},\;
	\pi^{(1)} (a_1\,\vert\,s_2)
	\; = \; 
	\frac{x^{2\zeta-1}}{1+x^{2\zeta-1}}
	\]
	\[
	V_r^{(\zeta)}(s_1) 
	\; = \;
	\rbr{\frac{x^{2\zeta-1}}{1+x^{2\zeta-1}}}^2,\; 
	V_g^{(\zeta)}(s_1) 
	\; = \;
	\frac{1+x^{2\zeta-1}+(x^{2\zeta-1})^2}{(1+x^{2\zeta-1})^2}.
	\]
	
	When $x=3$ and $\zeta=\frac{1}{2}$, 
	\[
	\frac{1}{2} V_r^{(1)}(s_1)  \,+\, \frac{1}{2} V_r^{(2)}(s_1)  \;=\; \frac{5}{16} \;>\; V_r^{(\frac{1}{2})}(s_1) \;=\; \frac{4}{16}
	\]
	which implies that $V_r^{\pi}$ is not concave.
	
	When $x=10$ and $\zeta=\frac{1}{2}$, 
	\[
	V_g^{(1)}(s_1) \;=\;
	V_g^{(2)}(s_1) \;\geq\; 0.9 
	\;\text{ and }\;
	V_g^{(\frac{1}{2})}(s_1) \;=\; 0.75
	\]
	which shows that if we take a constraint offset $b=0.9$, then $V_g^{(1)}(s_1) = V_g^{(2)}(s_1) \geq b$, and $ V_g^{(\frac{1}{2})}(s_1) < b$ in which the policy $\pi^{(\frac{1}{2})}$ is infeasible. Therefore, the set $\{\theta \,\vert\,V_g^{\pi_\theta}(s)\geq b\}$ is not convex.
	
	\section{Proof of Theorem~\ref{thm.convergence.direct}}\label{pf.convergence.direct}
	
	Let us first recall the notion of occupancy measure~\citep{altman1999constrained}. An occupancy measure $q^\pi$ of a policy $\pi$ is defined as a set of distributions generated by executing $\pi$:
	\begin{equation}\label{eq.pi2ocm}
		q_{s,a}^\pi 
		\; = \; 
		\sum_{t \, = \,0 }^\infty \gamma^t \, P^\pi(\, s_t = s, a_t = a\,\vert\,s_0\sim\rho\,)
	\end{equation}
	for all $s\in S$ and $a\in A$, where $P^\pi \rbr{s_t=s, a_t=a\,\vert\,s_0 \sim \rho}$ is the probability of visiting a state-action pair $(s, a)$ under the policy $\pi$ for an initial state $s_0$. For brevity, we put all $q_{s,a}^\pi$ together as $q^\pi \in\mathbb{R}^{|S||A|}$ and $q_a^\pi = [\,q_{1,a}^\pi,\cdots,q_{|S|,a}^\pi\,]^\top$. For an action $a$, we collect all transition probabilities $P(s'\,\vert\,s,a)$ for  $s',s\in S$ to have shorthand notation $P_a \in\mathbb{R}^{|S|\times|S|}$. The occupancy measure $q^\pi$ has to satisfy a set of linear constraints given by $\mathcal{Q} \DefinedAs \{ q^\pi\in\mathbb{R}^{|S||A|} \,\vert\,\sum_{a\,\in\,A} (I - \gamma P_a^\top) q_a^\pi =\rho \text{ and } q^\pi\geq 0\}$.
	With a slight abuse of notation, we write $r\in[0,1]^{|S||A|}$ and $g\in[0,1]^{|S||A|}$.
	Thus, the value functions $V_{r}^{\pi}$, $V_{g}^{\pi}$: $S\to\mathbb{R}$ under the initial state distribution $\rho$ are linear in $q^\pi$:
	\[
	V_{r}^{\pi}(\rho) 
	\; = \; 
	\langle q^\pi,r\rangle \;\DefinedAs \; F_r(q^\pi)
	\; \text{ and  } \;
	V_{g}^{\pi}(\rho) 
	\; = \; 
	\langle q^\pi,g\rangle \;\DefinedAs \; F_g(q^\pi).
	\] 
	We are now in a position to consider the primal problem~\eqref{eq.cmdp.p} as a linear program:
	\begin{equation}\label{eq.cmdp.m}
		\begin{array}{c}
			\maximize\limits_{q^\pi\,\in\,\mathcal{Q}}
			\;
			F_r(q^\pi)
			\;\;
			\subject 
			\;\;
			F_g(q^\pi)
			\;\geq\; 
			b
		\end{array}
	\end{equation}
	where the maximization is over all occupancy measures $q^\pi\in\mathcal{Q}$. Once we compute a solution $q^\pi$, the associated policy solution $\pi$ can be recovered via
	\begin{equation}\label{eq.ocm2pi}
		\pi(a\,\vert\,s) 
		\; = \; 
		\frac{q_{s,a}^\pi}{\sum_{a\,\in\, A} q_{s,a}^\pi} \; \text{ for all } s\in S, a\in A.
	\end{equation}
	Abstractly, we let $\pi^q$: $\mathcal{Q}\to \Delta_{A}^{|S|}$ be a mapping from an occupancy measure $q^\pi$ to a policy $\pi$. Similarly, as defined by~\eqref{eq.pi2ocm} we let $q^\pi$: $\Delta_{A}^{|S|} \to \mathcal{Q}$ be a mapping from a policy $\pi$ to an occupancy measure $q^\pi$. Clearly, $q^\pi = (\pi^q)^{-1}$. 
	
	Despite the nonconvexity essence of~\eqref{eq.cmdp.p} in policy space, the reformulation~\eqref{eq.cmdp.m} reveals the underlying convexity in occupancy measure $q^\pi$. In Lemma~\ref{lem.BAP}, we exploit this convexity to show the average policy improvement over $T$ steps. 
	
	\begin{lemma}[Bounded average performance]\label{lem.BAP}
		Let assumptions in Theorem~\ref{thm.convergence.direct} hold.
		Then, the iterates $\{(\theta^{(t)},\lambda^{(t)})\}_{t\,=\,0}^{T-1}$ generated by the PG-PD method~\eqref{eq.PGAGD} satisfy
		\begin{equation}\label{eq.result}
			\frac{1}{T}\sum_{t \, = \,0 }^{T-1}
			\big(\,{ F_r( q^{\theta^\star} ) - F_r(q^{\theta^{(t)}})}\,\big)  
			\, + \, 
			\frac{1}{T}\sum_{t \, = \,0 }^{T-1} \lambda^{(t)}\, \big(\,{F_g ( q^{\theta^\star} ) -F_g(q^{\theta^{(t)}}) }\,\big)
			\; \leq \; 
			\dfrac{D_\theta L_\theta}{{T}^{1/4}}
		\end{equation}
		where 
		$D_\theta = \frac{8 |S|}{(1-\gamma)^2}\|{{d_\rho^{\pi^\star}}/{\rho}}\|_\infty^2$ and $L_\theta = \frac{2 |A| (1+2/\xi)}{(1-\gamma)^4} $.
		
	\end{lemma}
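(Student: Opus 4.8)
The plan is to exploit the hidden convexity of the problem in the occupancy-measure coordinates, where $V_r^{\pi}(\rho)=F_r(q^\pi)$ and $V_g^{\pi}(\rho)=F_g(q^\pi)$ are \emph{linear} in $q^\pi\in\mathcal{Q}$, and to convert the (non-concave) projected gradient ascent of PG-PD~\eqref{eq.PGAGD} in the policy variable $\theta$ into a statement about these linear functionals. I would begin from the standard one-step inequality for the primal update. Since $\calP_\Theta$ is the Euclidean projection onto the convex simplex $\Theta=\Delta_A^{|S|}$ and the update is ascent along $g^{(t)}\DefinedAs\nabla_\theta V_L^{\theta^{(t)},\lambda^{(t)}}(\rho)$, non-expansiveness yields
\[
\nbr{\theta^{(t+1)}-\theta^\star}^2\;\leq\;\nbr{\theta^{(t)}-\theta^\star}^2+2\eta_1\langle g^{(t)},\theta^{(t)}-\theta^\star\rangle+\eta_1^2\nbr{g^{(t)}}^2 ,
\]
together with the companion smoothness/improvement estimate that lower bounds the per-step increase of the Lagrangian by the squared norm of the (projected) gradient mapping $G^{(t)}$.

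The key bridge — and the main obstacle — is to pass from the $\theta$-space quantities to the weighted optimality gaps $F_r(q^{\theta^\star})-F_r(q^{\theta^{(t)}})$ and $F_g(q^{\theta^\star})-F_g(q^{\theta^{(t)}})$. Here I would invoke the performance difference lemma together with a gradient-domination argument in the spirit of~\citep{agarwal2019optimality} for the direct parametrization: because $F_r+\lambda^{(t)}F_g$ is linear in $q$, the local improvement captured by $G^{(t)}$ dominates the global gap up to the distribution-mismatch factor $\nbr{d_\rho^{\pi^\star}/\rho}_\infty$ (which arises from $d_\rho^\pi\geq(1-\gamma)\rho$, see~\eqref{eq.visitation}). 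This is precisely what produces the weight $Z^{(t)}$ multiplying the reward gap and the multiplier $\lambda^{(t)}$ multiplying the utility gap. The difficulty is that the ascent is on the \emph{time-varying} Lagrangian $V_L^{\theta^{(t)},\lambda^{(t)}}$: each change of $\lambda^{(t)}$ perturbs the objective, so the naive telescoping of function values breaks.

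To control this, I would telescope $V_L^{\theta^{(t)},\lambda^{(t)}}$ while accounting for the dual drift: each correction is bounded by $\abr{\lambda^{(t+1)}-\lambda^{(t)}}\,\abr{V_g^{(t+1)}-b}\leq \eta_2/(1-\gamma)^2$, using $\Lambda=[\,0,2/((1-\gamma)\xi)\,]$ and non-expansiveness of $\calP_\Lambda$. Summing the $T$ corrections contributes an $O(\eta_2 T)$ term, while boundedness of the Lagrangian contributes an $O(1/((1-\gamma)^2\xi))$ term; combining with the improvement estimate and $\eta_1=O(1)$ gives an average squared gradient-mapping bound $\tfrac1T\sum_t\nbr{G^{(t)}}^2=O(\eta_2)$. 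A gradient-norm bound of order $L_\theta$ follows from $r,g\in[0,1]$ and $\lambda^{(t)}\leq 2/((1-\gamma)\xi)$ (item (ii) of Lemma~\ref{lem.duality} transferred through the projection set $\Lambda$), which explains the factor $(1+2/\xi)$ and the $|A|$ scaling in $L_\theta$.

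Finally, I would apply Cauchy--Schwarz to convert the second-moment bound into a first-moment bound, $\tfrac1T\sum_t\nbr{G^{(t)}}\leq(\tfrac1T\sum_t\nbr{G^{(t)}}^2)^{1/2}=O(\sqrt{\eta_2})$, and combine with the gradient-domination step and the choice $\eta_2=O(1/\sqrt{T})$. Tracking constants through $D_\theta$ and $L_\theta$ then yields the claimed $D_\theta L_\theta/T^{1/4}$ bound on the averaged weighted gap. The square root in Cauchy--Schwarz is exactly what degrades the familiar $1/\sqrt{T}$ rate to $1/T^{1/4}$, and tracing how $\nbr{d_\rho^{\pi^\star}/\rho}_\infty$ enters both the gradient-domination step and the weighted diameter of $\Theta$ accounts for its appearance in $D_\theta$. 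I expect the delicate point throughout to be matching the gradient-domination constant against $Z^{(t)}$, so that after later lower-bounding $Z^{(t)}$ by a positive constant the averaged gap bounds of Theorem~\ref{thm.convergence.direct} are recovered.
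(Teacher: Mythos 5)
Your strategy is sound, and it is genuinely different from the paper's proof. The paper never invokes gradient mappings or stationarity: it uses smoothness of $\theta \mapsto (F_r + \lambda^{(t)}F_g)(q^\theta)$ (the same constant $L_\theta$) to argue that the projected step with $\eta_1 = 1/L_\theta$ gains at least as much as any competitor move, then restricts competitors to the curve $\theta_\alpha = \pi^q(\alpha q^{\theta^\star} + (1-\alpha)q^{\theta^{(t)}})$, along which $F_r$ and $F_g$ interpolate \emph{exactly} by linearity in the occupancy measure, and bounds $\|\theta_\alpha - \theta^{(t)}\|^2 \leq \alpha^2 D_\theta$ (this is where $|S|$ and the squared mismatch enter $D_\theta$). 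The outcome is the per-iterate recursion \eqref{eq.result2}, whose induction gives the pointwise bound $\alpha^{(t)} \leq T^{-1/4}$ for \emph{every} $t$, with the dual drift absorbed into the recursion in \eqref{eq.key3}. Your route --- ascent inequality plus telescoping of the drifting Lagrangian to get $\frac{1}{T}\sum_t\|G^{(t)}\|^2 = O(\eta_2)$, Cauchy--Schwarz, then variational gradient domination \citep{agarwal2019optimality} applied to the reward $r + \lambda^{(t)}g$ with the \emph{constrained} optimum $\pi^\star$ as comparator (legitimate, since the performance-difference argument allows an arbitrary comparator policy) --- yields the same $O(T^{-1/4})$ average bound. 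The trade-off: the paper's induction controls every iterate, but it is exactly what forces the restrictive initialization $V_r^{\theta^{(0)}}(\rho) \geq V_r^\star(\rho)$ in Theorem~\ref{thm.convergence.direct} (needed so that $\alpha^{(0)} \leq 0$ starts the induction), whereas your argument needs no such initialization and produces a smaller constant (single power of the mismatch and $\sqrt{|S|\,L_\theta}$, versus the paper's squared mismatch and $|S|\,L_\theta$), at the price of controlling only the average --- which is all Lemma~\ref{lem.BAP} asserts.

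Three repairs are needed before this becomes a proof. First, the opening regret-type display in $\|\theta^{(t)}-\theta^\star\|^2$ is unusable here --- without concavity, $\langle g^{(t)},\theta^{(t)}-\theta^\star\rangle$ has no relation to the function gap --- and your actual argument never uses it; delete it. Second, your account of $Z^{(t)}$ is invented: gradient domination produces no such weight, and no step of either proof lower-bounds $Z^{(t)}$ by a positive constant. In the paper's own proof, and in the way Lemma~\ref{lem.BAP} is applied inside the proof of Theorem~\ref{thm.convergence.direct}, the reward gap carries coefficient $1$; the symbol $Z^{(t)}$ in the statement can only be read as identically one, and that is the inequality you should target. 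Third, writing $f_t(\theta) \DefinedAs (F_r + \lambda^{(t)}F_g)(q^\theta)$, gradient-mapping stationarity is certified at the \emph{updated} point, so gradient domination bounds $f_t(\theta^\star) - f_t(\theta^{(t+1)})$; since the ascent step makes $f_t(\theta^{(t+1)}) \geq f_t(\theta^{(t)})$, this does not directly bound the gap at $\theta^{(t)}$, and you must add back the per-step improvements $f_t(\theta^{(t+1)}) - f_t(\theta^{(t)})$, whose sum is controlled by the same telescoping-plus-drift bound and costs only a further $O(1/T) + O(\eta_2)$. (A cosmetic point: $L_\theta$ is the smoothness constant of the Lagrangian, not a gradient-norm bound, but your argument needs it precisely in that role, both for $\eta_1 \leq 1/L_\theta$ and in the factor relating $\|G^{(t)}\|$ to the best-response gap.) With these fixes your proof goes through, with constants different from the stated $D_\theta L_\theta$ --- harmless, since Theorem~\ref{thm.convergence.direct} only asserts absolute constants.
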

	
	\begin{proof}
		From the dual update in~\eqref{eq.PGAGD}, we have $0\leq \lambda^{(t)}\leq 2/((1-\gamma)\xi)$. 
		From the smooth property of the value functions under the direct policy parametrization~\cite[Lemma~D.3]{agarwal2021optimality}, we have
		\[
		\left| F_r(q^\theta) - F_r(q^{\theta^{(t)}}) 
		\, - \, \big\langle \nabla_\theta F_r(q^{\theta^{(t)}}), \theta-\theta^{(t)}\big\rangle \right|
		\; \leq \;
		\dfrac{\gamma|A|}{(1-\gamma)^3} \norm{\theta-\theta^{(t)}}^2.
		\]
		If we fix $\lambda^{(t)} \geq 0$, then
		\[
		\begin{array}{rcl}
			&& \!\!\!\! \!\!\!\! \!\!
			\left|(F_r+\lambda^{(t)} F_g)(q^\theta) \, - \, (F_r+\lambda^{(t)} F_g)(q^{\theta^{(t)}}) 
			\, - \, \big\langle \nabla_\theta F_r(q^{\theta^{(t)}})+\lambda^{(t)}\nabla_\theta F_g(q^{\theta^{(t)}}) , \theta-\theta^{(t)}\big\rangle \right|
			\\[0.2cm]
			& \leq &
			\dfrac{L_\theta}{2} \norm{\theta-\theta^{(t)}}^2.
		\end{array}
		\]
		Thus,
		\begin{equation}\label{eq.ascent}
			\begin{array}{rcl}
				(F_r+\lambda^{(t)} F_g)(q^\theta) 
				& \geq & \displaystyle
				(F_r+\lambda^{(t)} F_g)(q^{\theta^{(t)}}) 
				\, + \, \big\langle \nabla_\theta F_r(q^{\theta^{(t)}}) + \lambda^{(t)}\nabla_\theta F_g(q^{\theta^{(t)}}) , \theta - \theta^{(t)}\big\rangle 
				\\[0.2cm]
				&& -~ \dfrac{L_\theta}{2} \norm{\theta-\theta^{(t)}}^2
				\\[0.2cm]
				& \geq & (F_r+\lambda^{(t)} F_g)(q^\theta) \, - \, L_\theta\norm{\theta - \theta^{(t)}}^2.
			\end{array}
		\end{equation}
		We note that the primal update in~\eqref{eq.PGAGD} is equivalent to
		\[
		\begin{array}{rcl}
			\theta^{(t+1)}
			& = &\displaystyle
			\argmax_{\theta\,\in\,\Theta} \, \Bigg\{V_r^{\theta^{(t)}}(\rho) \, + \, \lambda^{(t)} V_g^{\theta^{(t)}}(\rho) 
			\\[0.2cm]
			&&  \;\;\;\;  \;\;\;\;   \;\;\;\;  \;\;\;\;
			\displaystyle +~ \big\langle \nabla_\theta V_r^{\theta^{(t)}}(\rho) +\lambda^{(t)} \nabla_\theta V_g^{\theta^{(t)}}(\rho), \theta - \theta^{(t)} \big\rangle 
			\, - \, \dfrac{1}{2\eta_1} \norm{\theta-\theta^{(t)}}^2\Bigg\}.
		\end{array}
		\]
		By taking $\eta_1={1}/{L_\theta}$ and $\theta=\theta^{(t+1)}$ in~\eqref{eq.ascent}, we have
		\begin{equation}\label{eq.Frg}
			\begin{array}{rcl}
				&& \!\!\!\! \!\!\!\! \!\!
				(F_r+\lambda^{(t)} F_g)(q^{\theta^{(t+1)}}) 
				\\[0.2cm]
				& \geq & \displaystyle \maximize_{\theta \,\in\,\Theta} \,\Bigg\{
				(F_r+\lambda^{(t)} F_g)(q^{\theta^{(t)}}) 
				\\[0.2cm]
				&& \;\;\;\;  \;\;\;\;  \;\;\;\;  \;\;\;\;
				+~ \big\langle \nabla_\theta F_r(q^{\theta^{(t)}}) \, + \, \lambda^{(t)}\nabla_\theta F_g(q^{\theta^{(t)}}) , \theta-\theta^{(t)}\big\rangle \, -\, \dfrac{L_\theta}{2} \norm{\theta-\theta^{(t)}}^2\Bigg\}
				\\[0.2cm]
				& \geq & \displaystyle \maximize_{\theta \,\in\,\Theta} \,\cbr{ (F_r+\lambda^{(t)} F_g)(q^\theta) \, - \, L_\theta\norm{\theta-\theta^{(t)}}^2 }
				\\[0.2cm]
				& \geq & \displaystyle \maximize_{\alpha \,\in\,[0,1]} \,\cbr{ (F_r+\lambda^{(t)} F_g)(q^{\theta_\alpha}) \, - \, L_\theta\norm{\theta_\alpha-\theta^{(t)}}^2  }
			\end{array}
		\end{equation}
		where $\theta_\alpha \DefinedAs \pi^q (\alpha q^{\theta^\star} + (1-\alpha) q^{\theta^{(t)}} )$, we apply~\eqref{eq.ascent} for the second inequality, and the last inequality is due to $\pi^q \circ q^{\pi} = \text{id}_{SA}$ and linearity of $q^{\theta}$ in $\theta$. Since $F_r$ and $F_g$ are linear in $q^\theta$,  we have
		\begin{equation}\label{eq.key1}
			(F_r+\lambda^{(t)} F_g)(q^{\theta_\alpha}) 
			\; = \;
			\alpha(F_r+\lambda^{(t)} F_g)( q^{\theta^\star} )
			\, + \,
			(1-\alpha) (F_r+\lambda^{(t)} F_g)(q^{\theta^{(t)}}).
		\end{equation}
		By the definition of $\pi^q$, 
		\[
		\displaystyle
		(\pi^q(q) - \pi^q(q'))_{sa} 
		\; = \; 
		\frac{1}{\sum_{a\,\in\, A}q_{sa}} (q_{sa}-q_{sa}') 
		\, + \,
		\frac{\sum_{a\,\in\, A}q_{sa}'-\sum_{a\,\in\, A}q_{sa}}{\sum_{a\,\in\, A}q_{sa}\sum_{a\,\in\, A}q_{sa}'} q_{sa}'
		\]
		which, together with $\|x+y\|^2 \leq 2\|x\|^2+2\|y\|^2$, gives
		\[
		\begin{array}{rcl}
			&& \!\!\!\! \!\!\!\! \!\!
			\norm{\pi^q(q) - \pi^q(q')}^2 
			\\[0.2cm]
			& \leq & \displaystyle 
			2 \sum_{s\,\in\, S}\sum_{a\,\in\, A}  \dfrac{(q_{sa}-q_{sa}')^2}{(\sum_{a\,\in\, A}q_{sa})^2}  
			\,+\, 2\sum_{s\,\in\,S}\sum_{a\,\in\, A} \rbr{\dfrac{\sum_{a\,\in\, A}q_{sa}'-\sum_{a\,\in\, A}q_{sa}}{\sum_{a\,\in\, A}q_{sa}\sum_{a\,\in\, A}q_{sa}'}}^2 (q_{sa}')^2
			\\[0.4cm]
			& \leq &  \displaystyle
			2 \sum_{s\,\in\, S} \dfrac{1}{(\sum_{a\,\in\, A}q_{sa})^2} \left( \sum_{a\,\in\, A} (q_{sa}-q_{sa}')^2 
			\, + \, \rbr{\sum_{a\,\in\, A}q_{sa}'-\sum_{a\,\in\, A}q_{sa}}^2 \right).
		\end{array}
		\]
		Therefore, 
		\[
		\begin{array}{rcl}
			&& \!\!\!\! \!\!\!\! \!\!
			\norm{\theta_\alpha-\theta^{(t)}}^2 
			\\[0.2cm]
			& = & \Big\Vert{ \pi^q \rbr{\alpha q^{\theta^\star} + (1-\alpha) q^{\theta^{(t)}} } - \pi^q \rbr{q^{\theta^{(t)}}}  }\Big\Vert^2
			\\[0.2cm]
			& \leq & \displaystyle
			\sum_{s\,\in\,S} \frac{2\alpha^2}{\rbr{\sum_{a\,\in\, A}q_{sa}^{\theta^{(t)}}}^2 } \left(\sum_{a\,\in\, A}\rbr{q_{sa}^{\theta^\star} - q_{sa}^{\theta^{(t)}}}^2 
			\, + \, \rbr{ \sum_{a\,\in\, A}q_{sa}^{\theta^{(t)}}-\sum_{a\,\in\, A}q_{sa}^{\theta^\star} }^2\right)
		\end{array}
		\]
		in which the upper bound further can be relaxed into
		\begin{equation}\label{eq.key2}
			\begin{array}{rcl}
				&&  \!\!\!\! \!\!\!\! \!\! 
				\displaystyle
				\sum_{s\,\in\,S} \frac{4\alpha^2}{\rbr{\sum_{a\,\in\, A}q_{sa}^{\theta^{(t)}}}^2 } \left(\rbr{\sum_{a\,\in\, A}q_{sa}^{\theta^\star}}^2 
				+ \rbr{\sum_{a\,\in\, A}q_{sa}^{\theta^{(t)}}}^2 \right)
				\\[0.2cm]
				& = &  \displaystyle
				4\alpha^2 \sum_{s\,\in\,S} \tfrac{ \rbr{d_\rho^{\pi^\star} (s)}^2 \, + \, \rbr{d_\rho^{\pi^{(t)}}(s)}^2}{ \rbr{d_\rho^{\pi^{(t)}}(s)}^2}
				\\[0.2cm]
				& \leq &  \displaystyle
				4\alpha^2 |S| \, + \, 4\alpha^2|S| \left\Vert{\frac{d_\rho^{\pi^\star}}{d_\rho^{\pi^{(t)}}}}\right\Vert_\infty^2
				\\[0.2cm]
				& \leq & \displaystyle
				4\alpha^2 |S|\left({1 \, + \, \frac{1}{(1-\gamma)^2}\left\Vert{\frac{d_\rho^{\pi^\star}}{\rho}}\right\Vert_\infty^2}\right)
				\\[0.2cm]
				& \leq &  \alpha^2 D_\theta
			\end{array}
		\end{equation}
		where we apply $d_\rho^{\pi^{(t)}} \geq (1-\gamma)\rho$ componentwise in the second inequality.
		
		We now apply~\eqref{eq.key1} and~\eqref{eq.key2} to~\eqref{eq.Frg},
		\[
		\begin{array}{rcl}
			&& \!\!\!\! \!\!\!\! \!\!
			(F_r+\lambda^{(t)} F_g)( q^{\theta^\star} ) \, - \, (F_r+\lambda^{(t)} F_g)(q^{\theta^{(t+1)}}) 
			\\[0.2cm]
			& \leq & \displaystyle \minimize_{\alpha \,\in\,[0,1]} \, \left\{ L_\theta\norm{\theta_\alpha-\theta^{(t)}}^2
			\, + \, (F_r+\lambda^{(t)} F_g)( q^{\theta^\star} ) \, - \, (F_r+\lambda^{(t)} F_g)(q^{\theta_\alpha})  \right\}
			\\[0.4cm]
			& \leq & \displaystyle \minimize_{\alpha \,\in\,[0,1]} \, \left\{
			\alpha^2 D_\theta L_\theta
			\, + \, (1-\alpha)\big( (F_r+\lambda^{(t)} F_g)( q^{\theta^\star} ) \, - \, (F_r+\lambda^{(t)} F_g)(q^{\theta^{(t)}})\big) \right\}
		\end{array}
		\]
		which further implies 
		\begin{equation}\label{eq.key3}
			\begin{array}{rcl}
				&&\!\!\!\! \!\!\!\! \!\!
				{(F_r+\lambda^{(t+1)} F_g)( q^{\theta^\star} ) \, - \, (F_r+\lambda^{(t+1)} F_g)(q^{\theta^{(t+1)}})}  
				\\[0.2cm]
				& \leq & \displaystyle \minimize_{\alpha \,\in\,[0,1]} \, \left\{  \alpha^2 D_\theta L_\theta
				\, + \, (1-\alpha) \big( (F_r + \lambda^{(t)} F_g)( q^{\theta^\star} ) \, - \, (F_r + \lambda^{(t)} F_g)(q^{\theta^{(t)}})\big) 
				\right\}
				\\[0.2cm]
				&&  -~(\lambda^{(t)} - \lambda^{(t+1)} ) (F_g( q^{\theta^\star} )  - F_g( q^{\theta^{(t+1)}} )).
			\end{array}
		\end{equation}
		We check the right-hand side of the inequality~\eqref{eq.key3}. By the dual update in~\eqref{eq.PGAGD}, it is easy to see that $-(\lambda^{(t)} - \lambda^{(t+1)} ) (F_g( q^{\theta^\star} )  - F_g( q^{\theta^{(t+1)}} )) \leq \vert\lambda^{(t)} - \lambda^{(t+1)}\vert /(1-\gamma)\leq{\eta_2}/{(1-\gamma)^2}$. We can solve the minimization problem in~\eqref{eq.key3} by taking $\alpha = 0$ if $\alpha^{(t)}<0$; $\alpha = 1$ if $\alpha^{(t)}>0$; $\alpha = \alpha^{(t)}$ if $\alpha^{(t)} \in [0, 1]$, where 
			\[
			\alpha^{(t)}
			\; \DefinedAs \;
			\frac{(F_r+\lambda^{(t)} F_g)( q^{\theta^\star} ) \, - \, (F_r+\lambda^{(t)} F_g)(q^{\theta^{(t)}})}{2D_\theta L_\theta}.
			\]
		
		We next discuss three cases.
		\begin{itemize}
			\item[(i)] When $\alpha^{(t)}<0$, we set $\alpha=0$ for~\eqref{eq.key3},
			\begin{equation}\label{eq.result1}
				(F_r+\lambda^{(t+1)} F_g)( q^{\theta^\star} ) \, - \, (F_r+\lambda^{(t+1)} F_g)(q^{\theta^{(t+1)}}) 
				\; \leq \;
				\dfrac{D_\theta L_\theta}{2\sqrt{T}};
			\end{equation}
			\item[(ii)] When $\alpha^{(t)}>1$, we set $\alpha=1$ that leads to $(F_r+\lambda^{(t+1)} F_g)( q^{\theta^\star} )- (F_r+\lambda^{(t+1)} F_g)(q^{\theta^{(t+1)}}) \leq \frac{3}{2} D_\theta L_\theta $, i.e., $\alpha^{(t+1)}\leq {3}/{4}$. Thus, this case reduces to the next case.
			\item[(iii)] When $0\leq \alpha^{(t)}\leq1$, we can express~\eqref{eq.key3} as
			\[
			\begin{array}{rcl}
				&& \!\!\!\! \!\!\!\! \!\!
				(F_r+\lambda^{(t+1)} F_g)( q^{\theta^\star} )- (F_r+\lambda^{(t+1)} F_g)(q^{\theta^{(t+1)}}) 
				\\[0.2cm]
				& \leq &  \rbr{1-\frac{(F_r+\lambda^{(t)} F_g)( q^{\theta^\star} ) - (F_r+\lambda^{(t)} F_g)(q^{\theta^{(t)}})}{4 D_\theta L_\theta}}
				\times\rbr{ (F_r+\lambda^{(t)} F_g)( q^{\theta^\star} ) - (F_r+\lambda^{(t)} F_g)(q^{\theta^{(t)}}) }  
				\\[0.2cm]
				&&+~ \dfrac{D_\theta L_\theta}{2\sqrt{T}}
			\end{array}
			\]
			or equivalently
			\begin{equation}\label{eq.result2}
				\alpha^{(t+1)} 
				\; \leq \; 
				\rbr{1-\frac{\alpha^{(t)}}{2}}\alpha^{(t)} \,+\, \frac{1}{4\sqrt{T}}.
			\end{equation}
			
			By choosing $\lambda^{(0)} = 0$ and $\theta^{(0)}$ such that $V_r^{\theta^{(0)}}(\rho)\geq V_r^{\theta^\star}(\rho)$, we know that $\alpha^{(0)} \leq 0$. Thus, $\alpha^{(1)} \leq {1}/(4\sqrt{T})$. By~\eqref{eq.result1}, the case $\alpha^{(1)}\leq 0$ is trivial. Without loss of generality, we assume that $0\leq\alpha^{(t)} \leq {1}/{{T}^{1/4}} \leq 1$. By induction over $t$ for~\eqref{eq.result2}, 
			\begin{equation}\label{eq.result3}
				\alpha^{(t+1)}
				\; \leq \; 
				\rbr{1-\frac{\alpha^{(t)}}{2}}\alpha^{(t)} \, + \, \frac{1}{4\sqrt{T}}
				\; \leq \; 
				\frac{1}{{T}^{1/4}}.
			\end{equation}
		\end{itemize}
		
		By combining~\eqref{eq.result1} and~\eqref{eq.result3}, and averaging over $t=0,1,\cdots,T-1$, we get the desired bound.
	\end{proof}
	
	\begin{proof}[Proof of Theorem~\ref{thm.convergence.direct}]
		
		\noindent\textbf{Bounding the optimality gap}. By the dual update~\eqref{eq.PGAGD} and $\lambda^{(0)} = 0$, it is convenient to bound $(\lambda^{(T)})^2 $ by
		\[
		\begin{array}{rcl}
			\rbr{\lambda^{(T)}}^2 
			&=&\displaystyle \sum_{t \, = \,0 }^{T-1} \rbr{(\lambda^{(t+1)} )^2
				\,-\,
				(\lambda^{(t)})^2}
			\\[0.2cm]
			& = &
			\displaystyle 
			2\eta_2 
			\sum_{t\,=\,0}^{T-1} \lambda^{(t)} \big(\,b\, -\, F_g(q^{\theta^{(t)}})\,\big) 
			\,+\, 
			\eta_2^2 \sum_{t\,=\,0}^{T-1} \big(\,F_g(q^{\theta^{(t)}})\, -\, b\,\big)^2
			\\[0.2cm]
			& \leq &
			\displaystyle 
			2\eta_2 \sum_{t\,=\,0}^{T-1} \lambda^{(t)} \big(\,{F_g(q^\star)\, -\, F_g(q^{\theta^{(t)}})}\,\big) 
			\,+\, 
			\frac{\eta_2^2\, T}{(1-\gamma)^2}
		\end{array}
		\]
		where the inequality is due to the feasibility of the optimal policy $\pi^\star$ or the associated occupancy measure $q^\star = q^{\theta^\star}$: $F_g(q^\star) \geq b$, and $|F_g(q^{\theta^{(t)}})-b|\leq {1}/(1-\gamma)$. The above inequality further implies
		\[
		- \frac{1}{T}\sum_{t\,=\,0}^{T-1} \lambda^{(t)} \big(\,{F_g(q^\star)\, -\, F_g(q^{\theta^{(t)}})}\,\big)
		\; \leq \; 
		\frac{\eta_2 }{2(1-\gamma)^2}.
		\]
		By substituting the above inequality into~\eqref{eq.result} in Lemma~\ref{lem.BAP}, we obtain the desired optimality gap bound, where we take $\eta_2={(1-\gamma)^2D_\theta L_\theta}/(2 \sqrt{T})$.
		
		\noindent\textbf{Bounding the constraint violation}. From the dual update in~\eqref{eq.PGAGD}, we have for any $\lambda\in[\,0,{2}/((1-\gamma)\xi)\,]$
		\[
		\begin{array}{rcl}
			&&\!\!\!\! \!\!\!\! \!\!
			\vert\lambda^{(t+1)} \, -\,  \lambda\vert^2 
			\\[0.2cm]
			& \overset{(a)}{\leq} & \big|{\lambda^{(t)} \, -\,  \eta_2\,  \big(\, F_g(q^{\theta^{(t)}})\, -\, b \, \big)  \, -\, \lambda}\big|^2
			\\[0.2cm]
			& \overset{(b)}{\leq} &\displaystyle \big|{{\lambda^{(t)} \, -\,  \lambda}}\big|^2 
			\, - \, 
			2\eta_2 \big(\, {F_g(q^{\theta^{(t)}})\, -\, b }\, \big)\big(\, {\lambda^{(t)} \, -\,  \lambda} \, \big) 
			\, + \, 
			\frac{\eta_2^2}{(1-\gamma)^2}
		\end{array}
		\]
		where $(a)$ is due to the non-expansiveness of projection $\calP_\Lambda$ and $(b)$ is due to $({F_g(q^{\theta^{(t)}})-b })^2\leq{1}/{(1-\gamma)^2}$. Summing it up from $t=0$ to $t=T-1$, and dividing it by $T$, yield
		\[
		\begin{array}{rcl}
			&&\!\!\!\! \!\!\!\! \!\!
			\displaystyle\frac{1}{T}\vert\lambda^{(T)} \, -\,  \lambda\vert^2  
			\, - \, 
			\frac{1}{T}\vert{{\lambda^{(0)}  \, -\,  \lambda}}\vert^2
			\\[0.2cm]
			& \leq &\displaystyle  
			-\,
			\frac{2\eta_2}{T}\sum_{t\,=\,0}^{T-1} \big(\, {F_g(q^{\theta^{(t)}})\, -\, b }\, \big)\big(\, {\lambda^{(t)} \,  -\, \lambda}\, \big)
			\, + \, 
			\frac{\eta_2^2}{(1-\gamma)^2}
		\end{array}
		\]
		which further implies
		\[
		\displaystyle\frac{1}{T}\sum_{t\,=\,0}^{T-1} \big(\, { F_g(q^{\theta^{(t)}}) \, -\,  b }\, \big)\big(\, {\lambda^{(t)}  \, -\,  \lambda}\, \big)
		\; \leq \; 
		\frac{\vert{{\lambda^{(0)} \, -\,  \lambda}}\vert^2}{2\eta_2T} 
		\, + \, 
		\frac{\eta_2}{2(1-\gamma)^2}.
		\]
		We note that $F_g(q^{\theta^\star}) \geq b$.
		By adding the inequality above to~\eqref{eq.result} in Lemma~\ref{lem.BAP} from both sides, we have
		\[
		\begin{array}{rcl}
			&& \!\!\!\! \!\!\!\! \!
			\displaystyle \frac{1}{T} \sum_{t\,=\,0}^{T-1} \big(\,{F_r(q^{\theta^\star})  \, -\, F_r(q^{\theta^{(t)}})}\,\big) \,+\, 
			\frac{\lambda}{T} \sum_{t\,=\,0}^{T-1} \big(\,{b\, -\,  F_g(q^{\theta^{(t)}})}\,\big)
			\\[0.2cm]
			&\!\!\leq\!\!&\displaystyle \dfrac{D_\theta L_\theta}{{T}^{1/4}}
			\,+\,
			\frac{1}{2\eta_2T}\vert{{\lambda^{(0)}  \, -\, \lambda}}\vert^2 
			\,+\,
			\frac{\eta_2}{2(1-\gamma)^2}.
		\end{array}
		\]
		We choose $\lambda = \frac{2}{(1-\gamma)\xi}$ if ${\sum_{t\,=\,0}^{T-1} \big({b -F_g(q^{\theta^{(t)}})}} \big)\geq 0$; otherwise $\lambda=0$. Thus, 
		\[
		\displaystyle F_r(q^{\theta^\star}) \, -\,  F_r(q') \,+\, 
		\frac{2}{(1-\gamma)\xi} \sbr{\,b \, -\,  F_g(q')\,}_+
		\; \leq \;
		\displaystyle
		\dfrac{D_\theta L_\theta}{{T}^{1/4}}
		\,+\,
		\frac{1}{2\eta_2(1-\gamma)^2\xi^2 T}
		\,+\,
		\frac{\eta_2}{2(1-\gamma)^2}
		\]
		where there exists $q'$ such that $F_r(q') \DefinedAs \frac{1}{T} \sum_{t\,=\,0}^{T-1}{ F_r(q^{\theta^{(t)}}) } $ and $F_g(q') \DefinedAs\frac{1}{T} \sum_{t\,=\,0}^{T-1}{ F_g(q^{\theta^{(t)}}) }$ by the definition of occupancy measure.
		
		From Lemma~\ref{lem.duality} (ii), we have $\lambda^\star \leq 1/((1-\gamma)\xi)$. Application of Lemma~\ref{lem.constraint} with $C= {2}/((1-\gamma)\xi)$ yields
		\[
		\sbr{\,b \, -\,   {F_g(q')}\,}_+
		\; \leq \;
		\dfrac{(1-\gamma)\xi D_\theta L_\theta }{{T}^{1/4}}
		\, + \, 
		\frac{1}{2\eta_2(1-\gamma)\xi T}
		\, + \,
		\frac{\eta_2\xi}{2(1-\gamma)}
		\]
		which readily leads to the desired constraint violation bound by noting that $$\frac{1}{T} \sum_{t\,=\,0}^{T-1} \big(\,{b \, -\, {F_g(q^{\theta^{(t)}})}}\,\big) 
		\;  =  \; 
		b \,-\,  {F_g(q')}$$
		and taking $\eta_2={(1-\gamma)^2D_\theta L_\theta}/(2 \sqrt{T})$ and $\|{{d_\rho^{\pi^\star}}/{\rho}}\|_\infty^2\geq (1-\gamma)^2$.  
		
	\end{proof}
	
	\section{Proof of Lemma~\ref{lem.npgpd}}
	\label{app.npgpd}
	
	The dual update follows Lemma~\ref{lem.duality}. Since
	$\lambda^\star\leq \rbr{V_r^{\star}(\rho)-V_r^{  \bar{\pi}} (\rho)}/ {\xi}$ with $0\leq V_r^{\star}$, $V_r^{  \bar{\pi}}\leq{1}/(1-\gamma)$, we take a projection interval $\Lambda=[\,0, {2}/((1-\gamma)\xi)\,]$ such that the upper bound ${2}/((1-\gamma)\xi)$ satisfies ${2}/((1-\gamma)\xi)\geq 2 \lambda^\star$.
	
	We now verify the primal update. 
	We expand the primal update in~\eqref{eq.NPGAGD} into
	\begin{equation}\label{eq.NPGAGD.primal}
		\theta^{(t+1)} 
		\;=\;
		\theta^{(t)} 
		\,+\, 
		\eta_1 F^\dagger_\rho(\theta^{(t)}) \nabla_\theta V_r^{\theta^{(t)}} (\rho)
		\,+\, 
		\eta_1 \lambda^{(t)} F^\dagger_\rho(\theta^{(t)}) \nabla_\theta V_g^{\theta^{(t)}} (\rho).
	\end{equation}
	
	We now deal with $F^\dagger_\rho(\theta^{(t)}) \nabla_\theta V_r^{\theta^{(t)}} (\rho)$ and $F^\dagger_\rho(\theta^{(t)}) \nabla_\theta V_g^{\theta^{(t)}} (\rho)$. For the first one, the proof begins with a solution to the following approximation error minimization problem:
	\[
	\minimize_{w \,\in\,\mathbb{R}^{|S||A|}}\; E_r (w) 
	\;\DefinedAs\;
	\mathbb{E}_{s\,\sim\,d_\rho^{\pi_\theta}, a \,\sim\, \pi_\theta(a\,\vert\,s)}  
	\sbr{ \,
		\rbr{A_r^{\pi_\theta}(s,a) - w^\top \nabla_\theta \log \pi_\theta(a\,\vert\,s)}^2 
		\,}.
	\]
	
	Using the Moore-Penrose inverse, an optimal solution reads
	\[
	w_r^\star 
	\; = \;
	F^\dagger_\rho(\theta) \mathbb{E}_{s\,\sim\, d_{\rho}^{\pi_\theta},a\,\sim\,\pi_\theta(a\,\vert\,s)} 
	\sbr{ \,
		\nabla_\theta \log \pi_\theta(a\,\vert\,s) \,A_r^{\pi_\theta,\lambda} (s,a) 
		\, }
	\; = \; 
	(1-\gamma)F^\dagger_\rho(\theta) \nabla_\theta V_r^{\pi_\theta,\lambda}(\rho)
	\] 
	where $F_\rho(\theta)$ is the Fisher information matrix induced by $\pi_\theta$. One key observation from this solution is that $w_r^\star$ is parallel to the NPG direction 
	$F^\dagger_\rho(\theta) \nabla_\theta V_r^{\pi_\theta,\lambda} (\rho)$.
	
	On the other hand, it is easy to verify that $A_r^{\pi_\theta}$ is a minimizer of $E_r(w)$. The softmax parametrization~\eqref{eq.softmax} implies that 
	\begin{equation}\label{eq.policyderivative}
		\frac{\partial \log\pi_\theta(a\,\vert\,s)}{\partial  \theta_{s',a'}}
		\; = \;
		\Ind{s=s'} \rbr{\Ind{a=a'} - \pi_\theta(a'\,\vert\,s)}
	\end{equation}
	where $\Ind{E}$ is the indicator function of event $E$ being true. Thus, we have
	\[
	w^\top \nabla_\theta \log \pi_\theta (a\,\vert\,s)
	\; = \;
	w_{s,a} 
	\, - \, 
	\sum_{a'\,\in\,A}^{} w_{s,a'} \pi_\theta (a'\,\vert\,s).
	\]
	
	The above equality together with the fact: $\sum_{a\,\in\, A}^{} \pi_\theta(a\,\vert\,s) A_r^{\pi_\theta,\lambda}(s,a) =0$, show that $E_r(A_r^{\pi_\theta}) = 0$. 
	However, $A_r^{\pi_\theta}$ may not be the unique minimizer. 
	We consider the following general form of possible solutions:
	\[
	A_r^{\pi_\theta} 
	\,+\, 
	u, 
	\;\text{ where }\; u \,\in\,\mathbb{R}^{|S||A|}.
	\]
	
	For any state $s$ and action $a$ such that $s$ is reachable under $\rho$, using~\eqref{eq.policyderivative} yields
	\[
	u^\top \nabla_\theta \log \pi_\theta(a\,\vert\,s)
	\;=\;
	u_{s,a} 
	\,-\, 
	\sum_{a'\,\in\,A} u_{s,a'} \pi_\theta(a'\,\vert\,s).
	\]
	
	Here, we make use of the following fact: $\pi_\theta$ is a stochastic policy with $\pi_\theta(a\,\vert\,s)>0$ for all actions $a$ in each state $s$, so that if a state is reachable under $\rho$, then it will also be reachable using $\pi_\theta$. Therefore, we require zero derivative at each reachable state:
	\[
	u^\top \nabla_\theta \log \pi_\theta(a\,\vert\,s) 
	\;=\; 
	0
	\]
	for all $(s,a)$, so that $u_{s,a}$ is independent of the action and becomes a constant $c_s$ for each $s$. Therefore, the minimizer of $E_r(w)$ is given by, up to some state-dependent offset
	\begin{equation}\label{eq.Fisher.r}
		F^\dagger_\rho(\theta) \nabla_\theta V_r^{\pi_\theta} (\rho) 
		\;=\; 
		\frac{A_r^{\pi_\theta}}{1-\gamma} \,+\,
		u
	\end{equation}
	where $u_{s,a} = c_s$ for some $c_s\in\mathbb{R}$ for each $s$ and $a$. 
	
	We can repeat the above procedure for $F^\dagger_\rho(\theta^{(t)}) \nabla_\theta V_g^{\theta^{(t)}} (\rho)$ and show
	\begin{equation}\label{eq.Fisher.g}
		F^\dagger_\rho(\theta) \nabla_\theta V_g^{\pi_\theta} (\rho) 
		\; = \; 
		\frac{A_g^{\pi_\theta}}{1-\gamma} \,+\, 
		v
	\end{equation}
	where $v_{s,a} = d_s$ for some $d_s\in\mathbb{R}$ for each state $s$ and action $a$. 
	
	Substituting~\eqref{eq.Fisher.r} and~\eqref{eq.Fisher.g} into the primal update~\eqref{eq.NPGAGD.primal} yields
	\[
	\theta^{(t+1)} 
	\; = \; 
	\theta^{(t)} 
	\, + \, 
	\frac{\eta_1 }{1-\gamma} \,\rbr{A_r^{(t)} +\lambda^{(t)}A_g^{(t)} }
	\, + \,
	\eta_1 \rbr{u+ \lambda^{(t)}v}
	\]
	\[
	\pi^{(t+1)}(a\,\vert\,s) 
	\; = \;
	\pi^{(t)}(a\,\vert\,s) 
	\,
	\frac{\exp \rbr{ \frac{\eta_1}{1-\gamma} \rbr{A_r^{(t)}(s,a)+\lambda^{(t)}A_g^{(t)}(s,a)} 
			\, + \,
			\eta_1 \rbr{c_s+\lambda^{(t)}d_s} }}{Z^{(t)}(s)}
	\]
	where the second equality also utilizes the normalization term $Z^{(t)}(s)$. Finally, we complete the proof by setting $c_s = d_s = 0$.
	
	\section{Sample-based NPG-PD algorithm with function approximation}\label{alg.general}
	
	We describe a sample-based NPG-PD algorithm with function approximation in Algorithm~\ref{alg.sample-based.general}. 
	We calculate the computational complexity of Algorithm~\ref{alg.sample-based.general} as follows: each round has expected length ${2}/(1-\gamma)$, yielding the expected number of total samples ${4KT}/(1-\gamma)$; the total number of gradient computations $\nabla_\theta\log\pi^{(t)}(a\,\vert\,s)$ is $2KT$; the total number of scalar multiplies, divides, and additions is $O (dKT+{KT}/(1-\gamma))$.
	
	The following unbiased estimates that are useful in our analysis.
	\[
	\begin{array}{rcl}
		\mathbb{E} 
		\sbr{\,
			\hat{V}_g^{(t)}(s) 
			\,} 
		&=&\displaystyle 
		\mathbb{E} \sbr{\,
			\sum_{k\,=\,0}^{K'-1} g(s_k,a_k)\, \bigg\vert\, \theta^{(t)}, s_0=s
			\,}
		\\[0.2cm]
		&=&\displaystyle \mathbb{E} \sbr{\,
			\sum_{k\,=\,0}^{\infty} \Ind{K'-1\geq k\geq 0} g(s_k,a_k)\, \bigg\vert\, \theta^{(t)}, s_0=s
			\,}
		\\[0.2cm]
		&\overset{(a)}{=}& \displaystyle\sum_{k\,=\,0}^{\infty} \mathbb{E} 
		\sbr{\,
			\mathbb{E}_{K'}\sbr{\Ind{K'-1\geq k\geq 0}} g(s_k,a_k)\, \bigg\vert\, \theta^{(t)}, s_0=s
			\, }
		\\[0.2cm]
		&\overset{(b)}{=}& \displaystyle\sum_{k\,=\,0}^{\infty} \mathbb{E} 
		\sbr{ \,
			\gamma^k g(s_k,a_k)\, \bigg\vert\, \theta^{(t)}, s_0=s
			\,}
		\\[0.2cm]
		&\overset{(c)}{=}& \displaystyle \mathbb{E} 
		\sbr{ \, \sum_{k\,=\,0}^{\infty}\gamma^k g(s_k,a_k)\, \bigg\vert\, \theta^{(t)}, s_0=s
			\,}
		\\[0.2cm]
		&=& \displaystyle{V}_g^{(t)}(s)
	\end{array}
	\] 
	where we apply the Monotone Convergence Theorem and the Dominated Convergence Theorem for $(a)$ and swap the expectation and the infinite sum in $(c)$, and in $(b)$ we use $\mathbb{E}_{K'}\sbr{\, \Ind{K'-1\geq k\geq 0} \,} = 1- P\rbr{K'< k} = \gamma^k$ since $K'\sim\text{Geo}(1-\gamma)$, a geometric distribution.
	
	By a similar agument as above,
	\[		
	\begin{array}{rcl}
		\mathbb{E}
		\sbr{\,
			\hat{Q}_r^{(t)}(s,a)
			\,} 
		&=&\displaystyle 
		\mathbb{E}
		\sbr{\, \sum_{k\,=\,0}^{K'-1} r(s_k,a_k)\,\bigg\vert\,\theta^{(t)}, s_0=s, a_0=a
			\,}
		\\[0.2cm]
		&=&\displaystyle \mathbb{E}
		\sbr{\, 
			\sum_{k\,=\,0}^{\infty} \Ind{K'-1\geq k\geq 0}r(s_k,a_k)\,\bigg\vert\,\theta^{(t)}, s_0=s, a_0=a
			\,}
		\\[0.2cm]
		&=&\displaystyle \sum_{k\,=\,0}^{\infty} \mathbb{E} 
		\sbr{\, \mathbb{E}_{K'}\sbr{\Ind{K'-1\geq k\geq 0}} r(s_k,a_k)\, \bigg\vert\, \theta^{(t)}, s_0=s, a_0=a
			\,}
		\\[0.2cm]
		&=&\displaystyle \sum_{k\,=\,0}^{\infty} \mathbb{E} 
		\sbr{\,
			\gamma^kr(s_k,a_k)\, \bigg\vert\, \theta^{(t)}, s_0=s, a_0=a
			\,}
		\\[0.2cm]
		&=&\displaystyle  \mathbb{E} 
		\sbr{\,
			\sum_{k\,=\,0}^{\infty} \gamma^kr(s_k,a_k)\, \bigg\vert\, \theta^{(t)}, s_0=s, a_0=a
			\,}
		\\[0.2cm]
		&=&\displaystyle {Q}_r^{(t)}(s,a).
	\end{array}
	\] 
	
	Therefore,
	\[
	\mathbb{E}\sbr{\,\hat{A}_r^{(t)}(s,a)\,} 
	\;=\; \mathbb{E}\sbr{\,\hat{Q}_r^{(t)}(s,a)\,}\,-\,\mathbb{E}\sbr{\,\hat{V}_r^{(t)}(s)\,}
	\;=\;
	{Q}_r^{(t)}(s,a)\,-\,{V}_r^{(t)}(s) 
	\;=\;
	{A}_r^{(t)}(s,a).
	\]
	
	We also provide a bound on the variance of $\hat{V}_g^{(t)}(s)$ as follows
	\[
	\begin{array}{rcl}
		\text{Var}\sbr{\,\hat{V}_g^{(t)}(s)\,} &=& \mathbb{E}\sbr{\,\rbr{\hat{V}_g^{(t)}(s) \, -\,  {V}_g^{(t)}(s)}^2\, \bigg\vert\, \theta^{(t)}, s_0=s\,} 
		\\[0.2cm]
		&=& \displaystyle
		\mathbb{E}
		\sbr{\,
			\rbr{\sum_{k\,=\,0}^{K'-1} g(s_k,a_k) \, -\,  {V}_g^{(t)}(s)}^2\, \bigg\vert\, \theta^{(t)}, s_0=s
			\,} 
		\\[0.2cm]
		&=& \displaystyle
		\mathbb{E}_{K'}
		\sbr{\, \mathbb{E}\sbr{\,\rbr{\sum_{k\,=\,0}^{K'-1} g(s_k,a_k) \, -\,  {V}_g^{(t)}(s)}^2\,}\, \bigg\vert\, K'
			\,} 
		\\[0.2cm]
		&\overset{(a)}{\leq}& \mathbb{E}_{K'}
		\sbr{\,
			\rbr{K'}^2\, \big\vert\, K'
			\,} 
		\\[0.2cm]
		&\overset{(b)}{=}&\displaystyle\frac{1}{(1-\gamma)^2}
	\end{array}
	\]
	where $(a)$ is due to $0\leq g(x_k,a_k)\leq 1$ and $V_g^{(t)}(s)\geq 0$ and $(b)$ is clear from $K' \sim\text{Geo}(1-\gamma)$. Similarly, we have the variance bound
	$\text{Var}\sbr{\hat{Q}_r^{(t)}(s,a)} \leq \frac{1}{(1-\gamma)^2}$.
	
	By the sampling scheme of Algorithm~\ref{alg.estimate.A}, we can show that $G_{r,k}$ is an unbiased estimate of the population gradient $\nabla_\theta E_r^{\nu^{(t)}} (w_r; \theta^{(t)}) $:
	\[
	\begin{array}{rcl}
		\mathbb{E}_{(s,a)\,\sim\,d^{(t)}}\sbr{\,G_{\diamond,k}\,} 
		& = & 2\mathbb{E}
		\sbr{\,
			\Big(w_{r,k}^\top \nabla_\theta\log \pi_\theta^{(t)}(a\,\vert\,s) \, -\,  \hat{A}_r^{(t)}(s,a) \Big) \, \nabla_\theta\log \pi_\theta^{(t)}(a\,\vert\,s)
			\,}
		\\[0.2cm]
		& = & 2\mathbb{E}
		\sbr{\,
			\Big(w_{r,k}^\top \nabla_\theta\log \pi_\theta^{(t)}(a\,\vert\,s) \, - \, \mathbb{E} \sbr{\hat{A}_r^{(t)}(s,a)\,\big\vert\,s,a} \Big)\, \nabla_\theta\log \pi_\theta^{(t)}(a\,\vert\,s)
			\,}
		\\[0.2cm]
		& = & 2\mathbb{E}
		\sbr{\,
			\Big(w_{r,k}^\top \nabla_\theta\log \pi_\theta^{(t)}(a\,\vert\,s) \,-\, {A}_r^{(t)}(s,a) \Big) \,
			\nabla_\theta\log \pi_\theta^{(t)}(a\,\vert\,s)
			\,}
		\\[0.2cm]
		& = & \nabla_{w_r} E_r^{\nu^{(t)}} (w_r; \theta^{(t)}).
	\end{array}
	\] 
	
	\section{Proof of Theorem~\ref{thm.samplecomplexity.general}}\label{pf.samplecomplexity.general}
	
	We first adapt Lemma~\ref{lem.gap.violation} to the sample-based case as follows. 
	
	\begin{lemma}[Sample-based regret/violation lemma]\label{lem.gap.violation.sample}
		Let Assumption~\ref{as.slater} hold
		and let us fix a state distribution $\rho$ and $T>0$. 
		Assume that $\log \pi_\theta(a\,\vert\,s)$ is $\beta$-smooth in $\theta$  for any $(s,a)$.
		If the iterates $\{(\theta^{(t)},\lambda^{(t)})\}_{t\,=\,0}^{T-1}$ are generated by Algorithm~\ref{alg.sample-based.general} with $\theta^{(0)} = 0$, $\lambda^{(0)}=0$, $\eta_1=\eta_2=1/{\sqrt{T}}$, and $\Vert \hat w_r^{(t)}\Vert$, $\Vert \hat  w_g^{(t)}\Vert\leq W$, then
		\[
		\begin{array}{rcl}
			\displaystyle  \mathbb{E}
			\sbr{\,
				\frac{1}{T} \sum_{t\,=\,0}^{T-1} \big(\,V_r^\star(\rho) \,-\,V_r^{(t)}(\rho)\,\big)
				\,} 
			& \leq &
			\displaystyle
			\frac{C_5}{(1-\gamma)^5} \frac{1}{\sqrt{T}}
			\, + \,
			\sum_{t\,=\,0}^{T-1} \frac{ \mathbb{E}\sbr{\text{\normalfont err}_r^{(t)} (\pi^\star)}}{(1-\gamma)T}
			\, + \,
			\sum_{t\,=\,0}^{T-1} \frac{2 \mathbb{E}\sbr{\text{\normalfont err}_g^{(t)} (\pi^\star)}}{(1-\gamma)^2\xi T}
			\\[0.4cm]
			\displaystyle \mathbb{E}
			\sbr{\,
				\frac{1}{T} \sum_{t\,=\,0}^{T-1} \big(\,{b\,-\,V_g^{(t)}(\rho)}\,\big)
				\,}_+
			& \leq &
			\displaystyle
			\frac{C_6}{(1-\gamma)^4}\frac{1}{\sqrt T}
			\, + \,
			\sum_{t\,=\,0}^{T-1} \frac{\xi \mathbb{E}\sbr{\text{\normalfont err}_r^{(t)} (\pi^\star)} }{T}
			\, + \,
			\sum_{t\,=\,0}^{T-1} \frac{2 \mathbb{E}\sbr{\text{\normalfont err}_g^{(t)}(\pi^\star)}}{(1-\gamma) T}
		\end{array}
		\]
		where $C_5 = 2+\log |A|+5\beta W^2/\xi^2$, $C_6 = (2+\log |A|+\beta W^2)\xi + (2+4\beta W^2)/\xi$, and 	
		\[
		\widehat{\text{\normalfont err}}_\diamond^{(t)} (\pi)
		\; \DefinedAs \; 
		\left\vert\,
		\mathbb{E}_{s\,\sim\,d_\rho^\pi} \mathbb{E}_{a\,\sim\,\pi(\cdot\,\vert\,s)} \sbr{A_\diamond^{(t)}(s,a) \,-\, \left(\hat w_\diamond^{(t)}\right)^\top \nabla_\theta \log \pi_\theta^{(t)}(a\,\vert\,s)}
		\,\right\vert,
		\; \text{ where } \;
		\diamond \, = \, r \text{ or }g.
		\]
	\end{lemma}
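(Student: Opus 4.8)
The plan is to mirror the proof of Lemma~\ref{lem.gap.violation} essentially line by line, replacing every exact quantity by its sample-based counterpart and propagating expectations throughout. Stochasticity enters in exactly two places --- the $\beta$-smoothness step governing the primal update and the drift step governing the dual update --- so I would concentrate the argument there and treat the remainder as a mechanical substitution. Throughout I would work with the filtration $\mathcal{F}_t$ generated by all randomness through the end of iteration $t-1$, so that $\theta^{(t)}$ and $\lambda^{(t)}$ are $\mathcal{F}_t$-measurable while the time-$t$ samples used to form $\hat w^{(t)}$ and $\hat V_g^{(t)}(\rho)$ are drawn conditionally on $\mathcal{F}_t$.

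First, for the primal update I would reuse the smoothness inequality~\eqref{eq.smoothLL}, now instantiated with $\theta^{(t+1)}-\theta^{(t)}=\eta_1\hat w^{(t)}/(1-\gamma)$ and $\hat w^{(t)}=\hat w_r^{(t)}+\lambda^{(t)}\hat w_g^{(t)}$. Because $\|\hat w_\diamond^{(t)}\|\le W$ and the dual projection enforces $0\le\lambda^{(t)}\le 2/((1-\gamma)\xi)$, every term bounded by $W^2$ or $(\lambda^{(t)})^2W^2$ in the deterministic derivation remains bounded identically, and the advantage/linear-approximation decomposition now produces the sample-based errors $\widehat{\text{\normalfont err}}_r^{(t)}(\pi)$ and $\widehat{\text{\normalfont err}}_g^{(t)}(\pi)$ in place of $\text{\normalfont err}_r^{(t)}$ and $\text{\normalfont err}_g^{(t)}$. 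Telescoping the Kullback--Leibler divergences and averaging over $t$ then yields the exact analog of~\eqref{eq.keyone} with the hatted error terms, after which I take expectations and invoke $D_{\text{KL}}(p\,\Vert\,\text{Unif}_A)\le\log|A|$.

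Second, the dual drift is where the constants genuinely change. In the telescoping bound~\eqref{eq.lambda.ap} the increment is now driven by the sampled value $\hat V_g^{(t)}(\rho)$, so $(\lambda^{(T)})^2\le 2\eta_2\sum_t\lambda^{(t)}(b-\hat V_g^{(t)}(\rho))+\eta_2^2\sum_t(\hat V_g^{(t)}(\rho)-b)^2$. Conditioning on $\mathcal{F}_t$, the estimator is unbiased by the computation in Appendix~\ref{alg.general}, so $\mathbb{E}[\,\lambda^{(t)}(b-\hat V_g^{(t)}(\rho))\mid\mathcal{F}_t\,]=\lambda^{(t)}(b-V_g^{(t)}(\rho))$ because $\lambda^{(t)}$ is $\mathcal{F}_t$-measurable, whereas the squared residual obeys $\mathbb{E}[(\hat V_g^{(t)}(\rho)-b)^2]\le\mathrm{Var}[\hat V_g^{(t)}(\rho)]+(V_g^{(t)}(\rho)-b)^2\le 2/(1-\gamma)^2$ by the variance bound of Appendix~\ref{alg.general}. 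This doubling of the expected squared residual, which replaces the deterministic bound $1/(1-\gamma)^2$ on $(V_g^{(t)}(\rho)-b)^2$, is precisely what enlarges the constants $C_3,C_4$ to $C_5,C_6$; the same conditioning argument applied to the analog of~\eqref{eq.removelamda} preserves that bound in expectation.

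Finally I would combine the two expected inequalities as in the deterministic proof: add the dual bound to the expected primal regret bound, select $\lambda=2/((1-\gamma)\xi)$ or $\lambda=0$ according to the sign of $\sum_t(b-V_g^{(t)}(\rho))$, represent the time-average of the value functions by a single policy $\pi'$ using convexity of the occupancy-measure set~\cite[Chapter~10]{altman1999constrained}, and apply Lemma~\ref{lem.constraint} with $2/((1-\gamma)\xi)\ge 2\lambda^\star$. The principal obstacle I anticipate is the measurability bookkeeping needed to invoke unbiasedness legitimately at each step: the cross term $\lambda^{(t)}\hat w_g^{(t)}$ couples the current dual iterate with the fresh time-$t$ samples, so I must verify that $\lambda^{(t)}$ is $\mathcal{F}_t$-measurable and independent of the time-$t$ sampling noise before pulling expectations through the sums. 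Once this conditioning structure is fixed, the rest of the argument reduces to substituting the hatted quantities into the already-completed computations of Lemma~\ref{lem.gap.violation}.
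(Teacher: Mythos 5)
Your proposal is correct and follows essentially the same route as the paper's own proof: it mirrors the argument of Lemma~\ref{lem.gap.violation} with hatted quantities, controls the dual drift via conditional unbiasedness of $\hat{V}_g^{(t)}(\rho)$ given the past history (so the cross term with $\lambda^{(t)}$ vanishes in expectation) together with the variance bound from Appendix~\ref{alg.general}, and closes with the same choice of $\lambda$, the occupancy-measure averaging argument, and Lemma~\ref{lem.constraint}. Your bias--variance bound $\mathbb{E}\big[(\hat{V}_g^{(t)}(\rho)-b)^2\big]\leq 2/(1-\gamma)^2$ is in fact marginally tighter than the paper's, which only affects absolute constants.
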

	\begin{proof}
		The smoothness of log-linear policy in conjunction with an application of Taylor's theorem to $\log \pi_\theta^{(t)}(a\,\vert\,s)$ yield
		\[
		\log \frac{\pi_\theta^{(t)}(a\,\vert\,s) }{\pi_\theta^{(t+1)}(a\,\vert\,s) } 
		\,+\, 
		\rbr{\theta^{(t+1)} \, -\, \theta^{(t)} }^\top \nabla_\theta \log \pi_\theta^{(t)}(a\,\vert\,s)  
		\; \leq \;
		\frac{\beta}{2} \left\Vert \theta^{(t+1)} \, -\, \theta^{(t)}  \right\Vert^2
		\]
		where $\theta^{(t+1)} - \theta^{(t)} = \frac{\eta_1}{1-\gamma} \hat w^{(t)}$. 
		We unload $d_\rho^{\pi^\star}$ as $d^\star$ since $\pi^\star$ and $\rho$ are fixed. Therefore,
		\[
		\begin{array}{rcl}
			&& \!\!\!\! \!\!\!\! \!\! \mathbb{E}_{s\,\sim\,d^\star} \sbr{\, D_\text{KL}\left(\pi^\star(\cdot\,\vert\,s)\,\big\Vert\,\pi_\theta^{(t)}(\cdot\,\vert\,s)\right) \,-\, D_\text{KL}\left(\pi^\star(\cdot\,\vert\,s)\,\big\Vert\,\pi_\theta^{(t+1)}(\cdot\,\vert\,s)\right) \,} 
			\\[0.2cm]
			& = & \displaystyle-~\mathbb{E}_{s\,\sim\,d^\star} \mathbb{E}_{a\,\sim\,\pi^\star(\cdot\,\vert\,s)}\sbr{\, \log \frac{\pi_\theta^{(t)}(a\,\vert\,s) }{\pi_\theta^{(t+1)}(a\,\vert\,s) }\,}
			\\[0.2cm]
			& \geq &\displaystyle \eta_1\mathbb{E}_{s\,\sim\,d^\star} \mathbb{E}_{a\,\sim\,\pi^\star(\cdot\,\vert\,s)} \sbr{\,\left( \hat w^{(t)}\right)^\top\nabla_\theta \log \pi_\theta^{(t)}(a\,\vert\,s)\,} 
			\,-\,
			\beta\frac{\eta_1^2}{2(1-\gamma)^2} \left\Vert \hat w^{(t)}\right\Vert^2
			\\[0.2cm]
			& = & \displaystyle\eta_1\mathbb{E}_{s\,\sim\,d^\star} \mathbb{E}_{a\,\sim\,\pi^\star(\cdot\,\vert\,s)} \sbr{\,\left(\hat w_r^{(t)}\right)^\top\nabla_\theta \log \pi_\theta^{(t)}(a\,\vert\,s)\,}
			\\[0.2cm]
			&&\displaystyle +~\eta_1 \lambda^{(t)}\mathbb{E}_{s\,\sim\,d^\star} \mathbb{E}_{a\,\sim\,\pi^\star(\cdot\,\vert\,s)} \sbr{\,\left(\hat w_g^{(t)}\right)^\top \nabla_\theta \log \pi_\theta^{(t)}(a\,\vert\,s)\,} 
			\,-\,
			\beta\frac{\eta_1^2}{2(1-\gamma)^2} \left\Vert \hat w^{(t)}\right\Vert^2
			\\[0.4cm]
			& = &\displaystyle \eta_1\mathbb{E}_{s\,\sim\,d^\star} \mathbb{E}_{a\,\sim\,\pi^\star(\cdot\,\vert\,s)} A_r^{(t)}(s,a)
			\,+\,
			\eta_1 \lambda^{(t)}\mathbb{E}_{s\,\sim\,d^\star} \mathbb{E}_{a\,\sim\,\pi^\star(\cdot\,\vert\,s)} A_g^{(t)}(s,a) 
			\\[0.2cm]
			&&\displaystyle+~\eta_1\mathbb{E}_{s\,\sim\,d^\star} \mathbb{E}_{a\,\sim\,\pi^\star(\cdot\,\vert\,s)} \!\sbr{\,\left({\hat w_r^{(t)}\!+\!\lambda^{(t)} \hat w_g^{(t)}}\right)^\top \nabla_\theta \log \pi_\theta^{(t)}(a\,\vert\,s) \!-\! \big({A_r^{(t)}(s,a)\!+\!\lambda^{(t)} A_g^{(t)}(s,a)}\big)\,}
			\\[0.2cm]
			&&\displaystyle-~\beta\frac{\eta_1^2}{(1-\gamma)^2} \Big(\left\Vert\hat w_r^{(t)}\right\Vert^2 + \big(\lambda^{(t)}\big)^2 \left\Vert \hat w_g^{(t)}\right\Vert^2\Big)
			\\[0.4cm]
			&\geq&\displaystyle \eta_1(1-\gamma) \big(\,{V_r^\star(\rho) - V_r^{(t)}(\rho)} \,\big)
			\,+\,
			\eta_1(1-\gamma) \lambda^{(t)} \big(\,{ V_g^\star(\rho) \, -\,  V_g^{(t)}(\rho)}\,\big)
			\\[0.2cm]
			&&\displaystyle-~\eta_1  \widehat{\text{\normalfont err}}_r^{(t)} (\pi^\star)
			\displaystyle
			\, -\,
			\eta_1 \lambda^{(t)}  \widehat{\text{\normalfont err}}_g^{(t)} (\pi^\star)
			\,-\,
			\beta\frac{\eta_1^2\, W^2}{(1-\gamma)^2} 
			\,-\,
			\beta\frac{\eta_1^2\, W^2}{(1-\gamma)^2}  \big(\lambda^{(t)}\big)^2
		\end{array}
		\]
		where $\hat w^{(t)} = \hat w_r^{(t)}+\lambda^{(t)} \hat w_g^{(t)}$ for a given $\lambda^{(t)}$, in the last inequality we apply the performance difference lemma, notation of $\widehat{\text{\normalfont err}}_r^{(t)} (\pi^\star)$ and $\widehat{\text{\normalfont err}}_g^{(t)} (\pi^\star)$, and $\Vert \hat w_r^{(t)}\Vert$, $ \Vert \hat w_g^{(t)}\Vert\leq W$.
		
		Rearranging the inequality above leads to
		\[
		\begin{array}{rcl}
			&& \!\!\!\! \!\!\!\! \!\! V_r^\star(\rho) \,-\, V_r^{(t)}(\rho) 
			\\[0.2cm]
			& \leq &\displaystyle \frac{1}{1-\gamma} \frac{1}{\eta_1} \mathbb{E}_{s\,\sim\,d^\star} \sbr{\, D_\text{KL}\left(\pi^\star(\cdot\,\vert\,s)\,\big\Vert\,\pi_\theta^{(t)}(\cdot\,\vert\,s)\right) \,-\, D_\text{KL}\left(\pi^\star(\cdot\,\vert\,s)\,\big\Vert\,\pi_\theta^{(t+1)}(\cdot\,\vert\,s)\right) \,}  
			\\[0.4cm]
			&& \displaystyle +~\frac{1}{1-\gamma} \widehat{\text{\normalfont err}}_r^{(t)} (\pi^\star)  \,+\, 
			\frac{2}{(1-\gamma)^2 \xi } \widehat{\text{\normalfont err}}_g^{(t)} (\pi^\star)
			\,+\, 
			\beta\frac{\eta_1 W^2}{(1-\gamma)^3}  
			\,+\, 
			\beta\frac{4\eta_1W^2}{(1-\gamma)^5\xi^2} 
			\\[0.4cm]
			&& \displaystyle
			-~ \lambda^{(t)} \big(\,{ V_g^\star(\rho) \, -\,  V_g^{(t)}(\rho)}\,\big)
		\end{array}
		\]
		where we utilize $0\leq \lambda^{(t)} \leq 2/((1-\gamma)\xi)$ from the dual update of Algorithm~\ref{alg.sample-based.general}.
		
		Therefore,
		\[
		\begin{array}{rcl}
			&& \!\!\!\! \!\!\!\! \!\!\displaystyle \frac{1}{T} \sum_{t\,=\,0}^{T-1} \big(\, V_r^\star(\rho) \,-\, V_r^{(t)}(\rho) \,\big)
			\\[0.2cm]
			&\leq&\displaystyle \frac{1}{(1-\gamma)\eta_1 T}\sum_{t \, = \,0 }^{T-1} \mathbb{E}_{s\,\sim\,d^\star} \sbr{\, D_\text{KL}\left(\pi^\star(\cdot\,\vert\,s)\,\big\Vert\,\pi_\theta^{(t)}(\cdot\,\vert\,s)\right) \,-\, D_\text{KL}(\pi^\star\left(\cdot\,\vert\,s)\,\big\Vert\,\pi_\theta^{(t+1)}(\cdot\,\vert\,s)\right) \,}  
			\\[0.2cm]
			&&\displaystyle +~\frac{1}{(1-\gamma)T}\sum_{t\,=\,0}^{T-1} \widehat{\text{\normalfont err}}_r^{(t)} (\pi^\star)  \,+\, 
			\frac{2}{(1-\gamma)^2\xi T}\sum_{t\,=\,0}^{T-1}  \widehat{\text{\normalfont err}}_g^{(t)} (\pi^\star)
			\,+\,
			\beta\frac{\eta_1W^2 }{(1-\gamma)^3} 
			\,+\, 
			\beta\frac{4\eta_1W^2}{(1-\gamma)^5\xi^2}  
			\\[0.2cm]
			&& \displaystyle 
			-~\frac{1}{T}\sum_{t\,=\,0}^{T-1}\lambda^{(t)} \big(\,{ V_g^\star(\rho) - V_g^{(t)}(\rho)}\,\big)
			\\[0.2cm]
			&\leq&\displaystyle \frac{\log|A|}{(1-\gamma)\eta_1 T}
			\,+\,
			\frac{1}{(1-\gamma)T}\sum_{t\,=\,0}^{T-1} \widehat{\text{\normalfont err}}_r^{(t)} (\pi^\star)
			\,+\,
			\frac{2}{(1-\gamma)^2\xi T}\sum_{t\,=\,0}^{T-1}  \widehat{\text{\normalfont err}}_g^{(t)} (\pi^\star)
			\\[0.2cm]
			&&  \displaystyle+~\beta\frac{\eta_1W^2}{(1-\gamma)^3} 
			\,+\, 
			\beta\frac{4\eta_1W^2}{(1-\gamma)^5\xi^2} 
			\,+\, 
			\frac{1}{T}\sum_{t\,=\,0}^{T-1}\lambda^{(t)} \big(\,{ V_g^\pi(\rho) \, -\,  V_g^{(t)}(\rho)}\,\big)
		\end{array}
		\]
		where in the last inequality we take a telescoping sum of the first sum and drop a non-positive term. Taking the expectation over the randomness in sampling on both sides of the inequality above yields
		\begin{equation}\label{eq.keyone.sample}
			\begin{array}{rcl}
				&& \!\!\!\! \!\!\!\! \!\! 
				\displaystyle
				\mathbb{E}\sbr{\,
					\frac{1}{T} \sum_{t\,=\,0}^{T-1} \big(\,V_r^\star(\rho) \, -\,  V_r^{(t)}(\rho)\,\big)\,}  
				\,+\, 
				\mathbb{E}\sbr{\, 
					\frac{1}{T}\sum_{t\,=\,0}^{T-1}\lambda^{(t)} \big(\,{ V_g^\star(\rho)\,  -\,  V_g^{(t)}(\rho)}\,\big)\,}
				\\[0.2cm]
				& \leq & \displaystyle
				\frac{\log|A|}{(1-\gamma)\eta_1 T}
				\,+\,
				\frac{1}{(1-\gamma)T}\sum_{t\,=\,0}^{T-1}\mathbb{E}\sbr{\, \widehat{\text{\normalfont err}}_r^{(t)} (\pi^\star)\,}
				\,+\,
				\frac{2}{(1-\gamma)^2\xi T}\sum_{t\,=\,0}^{T-1}\mathbb{E}\sbr{\, \widehat{\text{\normalfont err}}_g^{(t)} (\pi^\star)\,}
				\\[0.2cm]
				&& \displaystyle+~\beta\frac{\eta_1W^2}{(1-\gamma)^3} 
				\,+\, 
				\beta\frac{4\eta_1W^2}{(1-\gamma)^5\xi^2}.
			\end{array}
		\end{equation}
		
		\noindent\textbf{Proving the first inequality}. From the dual update in Algorithm~\ref{alg.sample-based.general}, we have
		\[
		\begin{array}{rcl}
			0 \; \leq \; 
			\rbr{\lambda^{(T)}}^2 &=&\displaystyle \sum_{t \, = \,0 }^{T-1} \big(\,(\lambda^{(t+1)} )^2\,-\,(\lambda^{(t)})^2\,\big)
			\\[0.2cm]
			&\leq& \displaystyle\sum_{t \, = \,0 }^{T-1} \rbr{ \big(\lambda^{(t)} \, -\,  \eta_2\,  \big(\, \hat{V}_g^{(t)}(\rho)\, -\, b\, \big)\big)^2\,-\,(\lambda^{(t)})^2 }
			\\[0.2cm]
			&=& \displaystyle2\eta_2 \sum_{t\,=\,0}^{T-1} \lambda^{(t)} \big(\, b\, -\, \hat{V}_g^{(t)}(\rho)\, \big)
			\,+\,
			\eta_2^2 \sum_{t\,=\,0}^{T-1}\big(\, \hat{V}_g^{(t)}(\rho)\, -\, b\, \big)^2
			\\[0.2cm]
			&\leq&\displaystyle 2\eta_2 \sum_{t\,=\,0}^{T-1} \lambda^{(t)} \big(\, V_g^\star(\rho)\, -\, {V}_g^{(t)}(\rho)\, \big)
			\,+\,
			2\eta_2 \sum_{t\,=\,0}^{T-1} \lambda^{(t)} \big(\, {V}_g^{(t)}(\rho)\, -\, \hat{V}_g^{(t)}(\rho)\, \big)
			\\[0.2cm]
			&&\displaystyle+~ \eta_2^2 \sum_{t\,=\,0}^{T-1}\big(\, \hat{V}_g^{(t)}(\rho)\, -\, b\, \big)^2
		\end{array}
		\]
		where the second inequality is due to the feasibility of the policy $\pi^\star$: $V_g^\star (\rho) \geq b$. Since ${V}_g^{(t)}(\rho)$ is a population quantity and $\hat{V}_g^{(t)}(\rho)$ is an estimate that is independent of $\lambda^{(t)}$ given the past history, $\lambda^{(t)}$ is independent of ${V}_g^{(t)}(\rho)-\hat{V}_g^{(t)}(\rho)$ at time $t$ and thus $\mathbb{E}\big[\,{\lambda^{(t)} \big({{V}_g^{(t)}(\rho)-\hat{V}_g^{(t)}(\rho)}\big)}\,\big] = 0$ due to the fact $\mathbb{E}\big[\,{\hat{V}_g^{(t)}(\rho)}\,\big] = {V}_g^{(t)}(\rho)$; see it in Appendix~\ref{alg.general}. Therefore, 
		\begin{equation}\label{eq.keytwo.sample}
			\displaystyle-\,\mathbb{E}\sbr{\,\frac{1}{T}\sum_{t\,=\,0}^{T-1} \lambda^{(t)} \big(\,V_g^\star(\rho)\, -\, {V}_g^{(t)}(\rho)\,\big)\,}
			\; \leq \;
			\displaystyle\mathbb{E}\sbr{\,\frac{\eta_2}{2T} \sum_{t\,=\,0}^{T-1}\big(\,\hat{V}_g^{(t)}(\rho)\, -\, b\,\big)^2\,}
			\; \leq \;
			\displaystyle\frac{2\eta_2}{(1-\gamma)^2}
		\end{equation}
		where in the second inequality we drop a non-positive term and use the fact
		\[
		\mathbb{E}\sbr{\,\big(\hat{V}_g^{(t)}(\rho)\big)^2\,} 
		\; = \; 
		\displaystyle\text{Var}\sbr{\,\hat{V}_g^{(t)}(s)\,} 
		\,+ \, \rbr{\mathbb{E}\sbr{\,\hat{V}_g^{(t)}(s)\,} }^2
		\; \leq \;
		\displaystyle\frac{2}{(1-\gamma)^2}
		\]
		where the inequality is due to that $\text{Var}\big[\,{\hat{V}_g^{(t)}(s)}\,\big] \leq {1}/(1-\gamma)^2$; see it in Appendix~\ref{alg.general}, and $\mathbb{E}\sbr{\,\hat{V}_g^{(t)}(\rho)\,} = {V}_g^{(t)}(\rho)$, where $0\leq {V}_g^{(t)}(s) \leq {1}/(1-\gamma)$. 
		
		Adding the inequality~\eqref{eq.keytwo.sample} to~\eqref{eq.keyone.sample} on both sides and taking $\eta_1=\eta_2={1}/{\sqrt{T}}$ yield the first inequality.
		
		\noindent\textbf{Proving the second inequality}. From the dual update in Algorithm~\ref{alg.sample-based.general}, we have for any $\lambda\in\Lambda \DefinedAs \big[\, 0,{2}/((1-\gamma)\xi) \, \big]$
		\[
		\begin{array}{rcl}
			&& \!\!\!\!\!\!\!\!\!\!\!\! \mathbb{E}\sbr{\vert\lambda^{(t+1)} - \lambda\vert^2} 
			\\[0.2cm]
			&=& \mathbb{E}\sbr{\,\abr{\calP_\Lambda \rbr{\lambda^{(t)} \, -\,  \eta_2 \big(\, \hat V_g^{(t)}(\rho)\, -\, b\,  \big)}  -\calP_\Lambda(\lambda)}^2\,}
			\\[0.2cm]
			&\overset{(a)}{\leq}& \mathbb{E}
			\sbr{\,
				\big\vert{{\lambda^{(t)} \, -\,  \eta_2 \big(\, \hat V_g^{(t)}(\rho)\, -\, b\,  \big)}  \, -\, \lambda}\big\vert^2
				\,}
			\\[0.2cm]
			&=& \mathbb{E}
			\sbr{
				\,\abr{{\lambda^{(t)}  \, -\, \lambda}}^2
				\,} 
			\,-\,
			2\eta_2\,\mathbb{E}
			\sbr{ \, \big(\, \hat V_g^{(t)}(\rho)\, -\, b\,  \big) \rbr{\lambda^{(t)}  \, -\, \lambda} \,}
			\,+\,
			\eta_2^2\, \mathbb{E}\sbr{\,
				\big(\,{\hat V_g^{(t)}(\rho)\, -\, b }\,\big)^2\, }
			\\[0.2cm]
			&\overset{(b)}{\leq}&\displaystyle \mathbb{E}\sbr{\,\big\vert{{\lambda^{(t)} \, -\, \lambda}}\big\vert^2\,} 
			\,-\,
			2\eta_2\, \mathbb{E}\sbr{\,\big(\, \hat V_g^{(t)}(\rho)\, -\, b\, \big)\big(\, \lambda^{(t)}  \, -\, \lambda\, \big)\,} 
			\,+\,
			\frac{3\eta_2^2}{(1-\gamma)^2}
		\end{array}
		\]
		where $(a)$ is due to the non-expansiveness of projection $\calP_\Lambda$ and $(b)$ is due to $\mathbb{E}\big[\,{({\hat V_g^{(t)}(\rho)-b })^2}\,\big]\leq {2}/{(1-\gamma)^2}+{1}/{ (1-\gamma)^2}$. Summing it up from $t=0$ to $t=T-1$ and dividing it by $T$ yield
		\[
		\begin{array}{rcl}
			0&\leq&
			\displaystyle\frac{1}{T}\, \mathbb{E}\sbr{\,\vert\lambda^{(T)} \, -\,  \lambda\vert^2 \,}
			\\[0.2cm]
			&\leq& \displaystyle\frac{1}{T}\,\mathbb{E}\sbr{\,\big\vert{{\lambda^{(0)}  \, -\, \lambda}}\big\vert^2\,} 
			\,-\,
			\frac{2\eta_2}{T}\sum_{t\,=\,0}^{T-1} \mathbb{E}\sbr{\,\big(\, \hat V_g^{(t)}(\rho)\, -\, b\, \big)\big(\, \lambda^{(t)}  \, -\,  \lambda\, \big)\,} 	
			\,+\,
			\frac{3 \eta_2^2}{(1-\gamma)^2}
		\end{array}
		\]
		which further implies that
		\[
		\mathbb{E}
		\sbr{\,
			\frac{1}{T}\sum_{t\,=\,0}^{T-1} \big(\, V_g^{(t)}(\rho)\, -\, b \, \big)\big(\, \lambda^{(t)}  \, -\, \lambda\, \big)
			\,}
		\;\leq\; 
		\frac{1}{2\eta_2T}\, \mathbb{E}
		\sbr{\,
			\big\vert{{\lambda^{(0)}  \, -\, \lambda}\big\vert}^2 \,} \,+\,
		\frac{2\eta_2}{(1-\gamma)^2}
		\]
		where we use $\mathbb{E}\big[\,{\hat V_g^{(t)}(\rho)} \,\big]= V_g^{(t)}(\rho)$ and $\lambda^{(t)}$ is independent of $\hat V_g^{(t)}(\rho)$ given the past history.
		We now add the above inequality into~\eqref{eq.keyone.sample} on both sides and utilize $V_g^{\star}(\rho) \geq b$:
		\[
		\begin{array}{rcl}
			&& \!\!\!\!\!\!\!\!\!\!\!\!\displaystyle \mathbb{E}\sbr{\,\frac{1}{T} \sum_{t\,=\,0}^{T-1} \big( \, V_r^\star(\rho) \,-\, V_r^{(t)}(\rho) \,\big)\,} \,+\, \lambda\,\mathbb{E}\sbr{\,\frac{1}{T} \sum_{t\,=\,0}^{T-1} \big(\,b\,-\, V_g^{(t)}(\rho)\,\big)\,}
			\\[0.2cm]
			&\leq&\displaystyle \frac{\log|A|}{(1-\gamma)\eta_1 T} 
			\,+\,
			\frac{1}{(1-\gamma)T}\sum_{t\,=\,0}^{T-1} \mathbb{E} \sbr{\,\text{\normalfont err}_r^{(t)} (\pi^\star)\,}
			\,+\,
			\frac{2}{(1-\gamma)^2\xi T}\sum_{t\,=\,0}^{T-1} \mathbb{E}\sbr{\,\text{\normalfont err}_g^{(t)} (\pi^\star)\,}
			\\[0.2cm]
			&& \displaystyle
			+~\beta\frac{\eta_1W^2}{(1-\gamma)^3} 
			\,+\, 
			\beta\frac{4\eta_1W^2}{(1-\gamma)^5\xi^2}
			\,+\,
			\frac{1}{2\eta_2T} \mathbb{E}\sbr{\, \big\vert{{\lambda^{(0)}  -\lambda}}\big\vert^2 \,}
			\,+\,
			\frac{2\eta_2}{(1-\gamma)^2}.
		\end{array}
		\]
		By taking $\lambda = \frac{2}{(1-\gamma)\xi}$ when ${\sum_{t\,=\,0}^{T-1} \big({b -V_g^{(t)}(\rho)}}\big)\geq 0$; otherwise $\lambda=0$, we reach
		\[
		\begin{array}{rcl}
			&& \!\!\!\! \!\!\!\! \!\!\displaystyle \mathbb{E}\sbr{\,V_r^\star(\rho)\,-\, \frac{1}{T} \sum_{t\,=\,0}^{T-1}{ V_r^{(t)}(\rho) }\,} 
			\,+\, 
			\frac{2}{(1-\gamma)\xi}\mathbb{E}\,\sbr{\,b \,-\, \frac{1}{T} \sum_{t\,=\,0}^{T-1} {V_g^{(t)}(\rho)}\,}_+
			\\[0.2cm]
			&\leq&\displaystyle \frac{\log|A|}{(1-\gamma)\eta_1 T} 
			\,+\,
			\frac{1}{(1-\gamma)T}\sum_{t\,=\,0}^{T-1} \mathbb{E}\sbr{\,\text{\normalfont err}_r^{(t)} (\pi^\star)\,}
			\,+\,
			\frac{2}{(1-\gamma)^2\xi T}\sum_{t\,=\,0}^{T-1} \mathbb{E}\sbr{\,\text{\normalfont err}_g^{(t)} (\pi^\star)\,}
			\\[0.2cm]
			&& \displaystyle
			+~\beta\frac{\eta_1W^2}{(1-\gamma)^3} 
			\,+\, 
			\beta\frac{4\eta_1W^2}{(1-\gamma)^5\xi^2}
			\,+\,
			\frac{2}{\eta_2 (1-\gamma)^2\xi^2T}
			\,+\,
			\frac{2\eta_2}{(1-\gamma)^2}.
		\end{array}
		\]
		
		Since $V_r^{(t)}(\rho)$ and $V_g^{(t)}(\rho)$ are linear functions in the occupancy measure~\cite[Chapter~10]{altman1999constrained}, there exists a policy $\pi'$ such that $V_r^{\pi'}(\rho)=\frac{1}{T} \sum_{t\,=\,0}^{T-1}{ V_r^{(t)}(\rho) }$ and $V_g^{\pi'}(\rho)=\frac{1}{T} \sum_{t\,=\,0}^{T-1}{ V_g^{(t)}(\rho) }$. Hence,
		\[
		\begin{array}{rcl}
			&& \!\!\!\! \!\!\!\! \!\! \displaystyle \mathbb{E}\sbr{\,V_r^\star(\rho)\,-\, V_r^{\pi'}(\rho)\,} 
			\,+\,
			 \frac{2}{(1-\gamma)\xi} \mathbb{E}\,\sbr{\,b \,-\,  {V_g^{\pi'}(\rho)}\,}_+
			\\[0.2cm]
			&\leq&\displaystyle \frac{\log|A|}{(1-\gamma)\eta_1 T} 
			\,+\,
			\frac{1}{(1-\gamma)T}\sum_{t\,=\,0}^{T-1} \mathbb{E}\sbr{\,\text{\normalfont err}_r^{(t)} (\pi^\star)\,}
			\,+\,
			\frac{2}{(1-\gamma)^2\xi T}\sum_{t\,=\,0}^{T-1} \mathbb{E}\sbr{\,\text{\normalfont err}_g^{(t)} (\pi^\star)\,}
			\\[0.2cm]
			&& \displaystyle
			+~\beta\frac{\eta_1W^2}{(1-\gamma)^3} 
			\,+\, 
			\beta\frac{4\eta_1W^2}{(1-\gamma)^5\xi^2}
			\,+\,
			\frac{2}{\eta_2 (1-\gamma)^2\xi^2T}
			\,+\,
			\frac{2\eta_2}{(1-\gamma)^2}.
		\end{array}
		\]
		From Lemma~\ref{lem.duality} (ii), we have $\lambda^\star \leq 1/((1-\gamma)\xi)$. Application of Lemma~\ref{lem.constraint} with $C= {2}/((1-\gamma)\xi)$ yields
		\[
		\begin{array}{rcl}
			\mathbb{E}\sbr{\,b \,-\,  {V_g^{\pi'}(\rho)}\,}_+
			&\leq&\displaystyle \frac{\xi\log|A|}{\eta_1 T} 
			\,+\,
			\frac{\xi}{T}\sum_{t\,=\,0}^{T-1} \mathbb{E}\sbr{\,\text{\normalfont err}_r^{(t)} (\pi^\star)\,}
			\,+\,
			\frac{2}{(1-\gamma) T}\sum_{t\,=\,0}^{T-1} \mathbb{E}\sbr{\,\text{\normalfont err}_g^{(t)} (\pi^\star)\,}
			\\[0.2cm]
			&& \displaystyle
			+~\beta\frac{\eta_1\xi W^2}{(1-\gamma)^2} 
			\,+\, 
			\beta\frac{4\eta_1 W^2}{(1-\gamma)^4\xi }
			\,+\,
			\frac{2}{\eta_2 (1-\gamma)\xi T}
			\,+\,
			\frac{2\eta_2\,\xi}{(1-\gamma)}.
		\end{array}
		\]
		which leads to our constraint violation bound if we utilize $\mathbb{E}\sbr{\,\frac{1}{T} \sum_{t\,=\,0}^{T-1} \big({b -V_g^{(t)}(\rho)}\big)\,} = \mathbb{E}\sbr{\,b -  {V_g^{\pi'}(\rho)}\,}$ and taking $\eta_1 =\eta_2={1}/{\sqrt{T}}$.
	\end{proof}
	
	\begin{proof}[Proof of Theorem~\ref{thm.samplecomplexity.general}]
		
		By Lemma~\ref{lem.gap.violation.sample}, we only need to consider the randomness in sequences of $\hat w^{(t)}$ and bound $\mathbb{E}\sbr{\,\text{\normalfont err}_\diamond^{(t)} (\pi^\star)\,}$ for $\diamond = r$ or $g$. Application of the triangle inequality yields
		\begin{equation}\label{eq.err.sample}
			\begin{array}{rcl}
				\widehat{\text{\normalfont err}}_r^{(t)} (\pi^\star)
				& \leq &
				\left\vert
				\mathbb{E}_{s\,\sim\,d_\rho^\star} \mathbb{E}_{a\,\sim\,\pi^\star(\cdot\,\vert\,s)} \sbr{A_r^{(t)}(s,a) \,-\, \left(w_{r,\star}^{(t)}\right)^\top \nabla_\theta \log \pi_\theta^{(t)}(a\,\vert\,s)}
				\right\vert
				\\[0.2cm]
				&& +~\left\vert
				\mathbb{E}_{s\,\sim\,d_\rho^\star} \mathbb{E}_{a\,\sim\,\pi^\star(\cdot\,\vert\,s)} \sbr{ \left(w_{r,\star}^{(t)}\, -\, \hat w_{r}^{(t)}\right)^\top \nabla_\theta \log \pi_\theta^{(t)}(a\,\vert\,s) }
				\right\vert
			\end{array}
		\end{equation}
		where $w_{r,\star}^{(t)} \in \argmin_{\norm{w_r}_2\,\leq\, W} {E}_r^{\nu^{(t)}} (w_r; \theta^{(t)})$. We next bound each term on the right-hand side of~\eqref{eq.err.sample}, separately.
		For the first term,
		\begin{equation}\label{eq.err0g.sample}
			\begin{array}{rcl}
				&& \!\!\!\! \!\!\!\! \!\!	
				\mathbb{E}_{s\,\sim\,d_\rho^\star} \mathbb{E}_{a\,\sim\,\pi^\star(\cdot\,\vert\,s)} 
				\sbr{\,
					A_r^{(t)}(s,a) 
					\, - \,
					\left(w_{r,\star}^{(t)}\right)^\top \nabla_\theta \log \pi_\theta^{(t)}(a\,\vert\,s)
					\,}
				\\[0.2cm]
				& \leq &
				\sqrt{
					\mathbb{E}_{s\,\sim\,d_\rho^\star} \mathbb{E}_{a\,\sim\,\pi^\star(\cdot\,\vert\,s)} \rbr{A_r^{(t)}(s,a) 
						\, - \, \left(w_{r,\star}^{(t)}\right)^\top \nabla_\theta \log \pi_\theta^{(t)}(a\,\vert\,s) }^2
				}
				\\[0.2cm]
				& = &
				\sqrt{{E}_r^{\nu^\star} \left(w_{r,\star}^{(t)} ; \theta^{(t)}\right) }.
			\end{array}
		\end{equation}
		Similarly,
		\begin{equation}\label{eq.err1g.sample}
			\begin{array}{rcl}
				&& \!\!\!\! \!\!\!\!  \!\!
				\mathbb{E}_{s\,\sim\,d_\rho^\star} \mathbb{E}_{a\,\sim\,\pi^\star(\cdot\,\vert\,s)} \sbr{ \left(w_{r,\star}^{(t)}\, -\, \hat w_{r}^{(t)}\right)^\top \nabla_\theta \log \pi_\theta^{(t)}(a\,\vert\,s) }
				\\[0.2cm]
				&\leq&
				\sqrt{
					\mathbb{E}_{s\,\sim\,d_\rho^\star} \mathbb{E}_{a\,\sim\,\pi^\star(\cdot\,\vert\,s)} \sbr{ \rbr{ \left(w_{r,\star}^{(t)}\, -\, \hat w_{r}^{(t)}\right)^\top  \nabla_\theta \log \pi_\theta^{(t)}(a\,\vert\,s) }^2 }
				}
				\\[0.2cm]
				&=&
				\sqrt{ 
					\norm{ w_{r,\star}^{(t)}\, -\, \hat w_{r}^{(t)} }_{\Sigma_{\nu^\star}^{(t)}}^2.
				}
			\end{array}
		\end{equation}
		
		We let $\kappa^{(t)} \DefinedAs \big\Vert{ \left(\Sigma_{\nu_0}^{(t)}\right)^{-1/2} \Sigma_{\nu^\star}^{(t)} \left(\Sigma_{\nu_0}^{(t)}\right)^{-1/2} }\big\Vert_2$ be the relative condition number at time $t$. Thus,
		\begin{equation}\label{eq.err2g.sample}
			\begin{array}{rcl}
				\norm{ w_{r,\star}^{(t)}\, -\, \hat w_{r}^{(t)} }_{\Sigma_{\nu^\star}^{(t)}}^2
				& \leq  &
				\displaystyle
				\norm{\left(\Sigma_{\nu_0}^{(t)}\right)^{-1/2} \Sigma_{\nu^\star}^{(t)} \left(\Sigma_{\nu_0}^{(t)}\right)^{-1/2}}
				\norm{ w_{r,\star}^{(t)}\, -\, \hat w_{r}^{(t)} }_{\Sigma_{\nu_0}^{(t)}}^2
				\\[0.2cm]
				& \overset{(a)}{\leq} & 
				\displaystyle
				\frac{\kappa^{(t)}}{1-\gamma}
				\norm{ w_{r,\star}^{(t)}\, -\, \hat w_{r}^{(t)} }_{\Sigma_{\nu^{(t)}}}^2
				\\[0.2cm]
				& \overset{(b)}{\leq} & 
				\displaystyle
				\frac{\kappa^{(t)}}{1-\gamma}
				\left( {E}_r^{\nu^{(t)}} \big(\hat w_{r}^{(t)} ; \theta^{(t)}\big) \, -\,  {E}_r^{\nu^{(t)}} \big(w_{r,\star}^{(t)} ; \theta^{(t)}\big) \right)
			\end{array}
		\end{equation}
		where we use 
		$(1-\gamma)\nu_0 \leq \nu_{\nu_0}^{\pi^{(t)}} \DefinedAs \nu^{(t)}$ in $(a)$, and we get $(b)$ due to that
		the first-order optimality condition for $w_{r,\star}^{(t)}$:
		\[
		\left( w_r \, -\,  w_{r,\star}^{(t)} 
		\right)^\top
		\nabla_\theta {E}_r^{\nu^{(t)}} \big(w_{r,\star}^{(t)}; \theta^{(t)}\big) 
		\; \geq  \;
		0, 
		\; \text{ for any } w_r \text{ satisfying } \norm{w_r} \,\leq\, W
		\]
		further implies that
		\[
		\begin{array}{rcl}
			&& \!\!\!\! \!\!\!\! \!\!
			{E}_r^{\nu^{(t)}} \big(w_{r}; \theta^{(t)}\big) \, - \, {E}_r^{\nu^{(t)}} \big(w_{r,\star}^{(t)}; \theta^{(t)}\big) 
			\\[0.2cm]
			& = & 	
			\mathbb{E}_{(s,a)\,\sim\,\nu^{(t)}} \left[\,  \rbr{ A_r^{(t)}(s,a) \, -\,   \phi_{s,a}^\top w_{r,\star}^{(t)} \, +\,  \phi_{s,a}^\top  w_{r,\star}^{(t)} \, -\,  \phi_{s,a}^\top w_r  }^2 \,\right]
			\, - \, 
			{E}_r^{\nu^{(t)}} \big(w_{r,\star}^{(t)}; \theta^{(t)}\big) 
			\\[0.4cm]
			& = & 
			2 \left(w_{r,\star}^{(t)} \, -\,  w_r\right)^\top
			\mathbb{E}_{s,a\,\sim\,\nu^{(t)}} \left[\,  \rbr{ A_r^{(t)}(s,a) \, -\,   \phi_{s,a}^\top w_{r,\star}^{(t)}} \phi_{s,a}  \,\right]
			\\[0.2cm]
			&& + \,
			\mathbb{E}_{(s,a)\,\sim\,\nu^{(t)}} \left[\,  \rbr{  \phi_{s,a}^\top w_{r,\star}^{(t)} \, -\,  \phi_{s,a}^\top w_r  }^2 \,\right]
			\\[0.4cm]
			& = &  \left( w_r  \, -\,  w_{r,\star}^{(t)} \right)^\top \nabla_\theta {E}_r^{\nu^{(t)}} (w_{r,\star}^{(t)}; \theta^{(t)})  
			\, + \, 
			\norm{ w_r \, -\,  w_{r,\star}^{(t)} }_{\Sigma_{\nu^{(t)}}}^2
			\\[0.4cm]
			& \geq &  \norm{ w_r \, -\,  w_{r,\star}^{(t)} }_{\Sigma_{\nu^{(t)}}}^2.
		\end{array}
		\]
		
		Taking an expectation over~\eqref{eq.err2g.sample} from both sides yields
		\begin{equation}\label{eq.err1g.sample.2nd}
			\begin{array}{rcl}
				\mathbb{E}\sbr{ 
					\norm{ w_{r,\star}^{(t)} \, - \, w_{r}^{(t)} }_{\Sigma_{\nu^\star}^{(t)}}^2
				}
				& \leq & \displaystyle
				\mathbb{E}
				\sbr{\,  
					\frac{\kappa^{(t)}}{1-\gamma}\,
					\mathbb{E}\sbr{\,
						{E}_r^{\nu^{(t)}} \big(\hat w_{r}^{(t)} ; \theta^{(t)}\big) 
						\, - \,
						{E}_r^{\nu^{(t)}} \big(w_{r,\star}^{(t)} ; \theta^{(t)}\big)\,\bigg\vert\,\theta^{(t)} 
						\,}  
					\,}
				\\[0.4cm]
				& \overset{(a)}{\leq} & \displaystyle 	\mathbb{E}\sbr{  \frac{\kappa^{(t)}}{1-\gamma}\,\frac{2G^2}{\sigma_F(K+1)} }
				\\[0.4cm]
				& \overset{(b)}{\leq} & \displaystyle 	 \frac{2\kappa\, G^2}{ \sigma_F (1-\gamma)(K+1) }
			\end{array}
		\end{equation}
		where $(a)$ is due to the standard SGD result~\citep{lacoste2012simpler}: for $\alpha_k = 2/(\sigma_F (k+1))$, 
		\[
		E_{r,\text{\normalfont est}}^{(t)} 
		\; = \;
		\mathbb{E}
		\sbr{\, 
			{E}_r^{\nu^{(t)}} \big(\hat w_{r}^{(t)} ; \theta^{(t)}\big) 
			\, - \,
			{E}_r^{\nu^{(t)}} \big(w_{r,\star}^{(t)} ; \theta^{(t)}\big) 
			\,} 
		\;\leq\;
		\frac{2G^2}{\sigma_F (K+1)}
		\]
		and $(b)$ follows from Assumption~\ref{as.errors+condtion}. Here, it is straightforward to check the second-order moment of stochastic gradient  $G_{\diamond,k}$ using Assumption~\ref{as.Lipschitzpolicy}:
		\[
		\mathbb{E}\sbr{\, \norm{G_{\diamond,k}}^2 \,}
		\;\leq\; 
		4L_\pi^2 
		\left( W^2 L_\pi^2 
		\,+\, 
		\frac{2}{(1-\gamma)^2} \right)
		\; = \; G^2.
		\]
		
		Substitution of~\eqref{eq.err1g.sample} and~\eqref{eq.err1g.sample.2nd} into the right-hand side of~\eqref{eq.err.sample} yields an upper bound on $\mathbb{E} \big[\,
		\text{\normalfont err}_r^{(t)} (\pi^\star) \,\big]$.
		By the same reasoning, we can establish a similar bound on $\mathbb{E} \big[\,
		\text{\normalfont err}_g^{(t)} (\pi^\star) \,\big]$. 
		Finally, application of these upper bounds to Lemma~\ref{lem.gap.violation.sample} leads to our desired results.
	\end{proof}
	
	\section{Proof of Theorem~\ref{thm.samplecomplexity.loglinear}}\label{pf.samplecomplexity.loglinear}
	
	By $\norm{\phi_{s,a}}\leq B$, for the log-linear policy class, $\log \pi_\theta(a\,\vert\,s)$ is $\beta$-smooth with $\beta = B^2$.
	By Lemma~\ref{lem.gap.violation.sample}, we only need to consider the randomness in the sequence of $\hat w^{(t)}$ and the error bounds for $\mathbb{E}\big[\,{\widehat{\text{\normalfont err}}_r^{(t)}(\pi^\star)}\,\big]$ and $\mathbb{E}\big[\,{\widehat{\text{\normalfont err}}_g^{(t)}(\pi^\star)}\,\big]$. 
	We first use~\eqref{eq.err.sample} and consider the following cases.
	By~\eqref{eq.policyshift} and $A_r^{(t)}(s,a)  = Q_r^{(t)}(s,a) -  \mathbb{E}_{a'\,\sim\,\pi_\theta^{(t)}(\cdot\,\vert\,s)} \left[\,Q_r^{(t)}(s,a')\,\right]$,
	\begin{equation}\label{eq.err0.sample}
		\begin{array}{rcl}
			&& \!\!\!\! \!\!\!\! \!\!	
			\mathbb{E}_{s\,\sim\,d_\rho^\star} \mathbb{E}_{a\,\sim\,\pi^\star(\cdot\,\vert\,s)} 
			\sbr{\,
				A_r^{(t)}(s,a) 
				\, - \, \left(w_{r,\star}^{(t)}\right)^\top \nabla_\theta \log \pi_\theta^{(t)}(a\,\vert\,s)
				\,}
			\\[0.2cm]
			& = & 
			\mathbb{E}_{s\,\sim\,d_\rho^\star} \mathbb{E}_{a\,\sim\,\pi^\star(\cdot\,\vert\,s)} 
			\sbr{\,
				Q_r^{(t)}(s,a) 
				\, - \,
				\phi_{s,a}^\top w_{r,\star}^{(t)}
				\,}
			\\[0.2cm]
			&&  - \,
			\mathbb{E}_{s\,\sim\,d_\rho^\star} \mathbb{E}_{a'\,\sim\,\pi_\theta^{(t)}(\cdot\,\vert\,s)} 
			\sbr{\,
				Q_r^{(t)}(s,a') 
				\, - \,
				\phi_{s,a'}^\top w_{r,\star}^{(t)}
				\,}
			\\[0.2cm]
			& \leq &
			\sqrt{
				\mathbb{E}_{s\,\sim\,d_\rho^\star} \mathbb{E}_{a\,\sim\,\pi^\star(\cdot\,\vert\,s)} \sbr{\rbr{Q_r^{(t)}(s,a) 
						\, - \,
						\phi_{s,a}^\top w_{r,\star}^{(t)}}^2}
			}
			\\[0.2cm]
			&& + \,
			\sqrt{
				\mathbb{E}_{s\,\sim\,d_\rho^\star} \mathbb{E}_{a'\,\sim\,\pi_\theta^{(t)}(\cdot\,\vert\,s)} \sbr{\rbr{Q_r^{(t)}(s,a') \,-\, \phi_{s,a'}^\top w_{r,\star}^{(t)} }^2}
			}
			\\[0.2cm]
			& \leq & 2
			\sqrt{ |A| \,
				\mathbb{E}_{s\,\sim\,d_\rho^\star} \mathbb{E}_{a\,\sim\,\text{Unif}_A} \sbr{ \rbr{Q_r^{(t)}(s,a) \,-\, \phi_{s,a}^\top w_{r,\star}^{(t)} }^2 }
			}
			\\[0.2cm]
			& = &
			2\sqrt{ |A| \, \mathcal{E}_r^{\nu^\star} \big(w_{r,\star}^{(t)} ; \theta^{(t)}\big) }.
		\end{array}
	\end{equation}
	Similarly,
	\begin{equation}\label{eq.err1.sample}
		\begin{array}{rcl}
			&& \!\!\!\! \!\!\!\!  \!\!
			\mathbb{E}_{s\,\sim\,d_\rho^\star} \mathbb{E}_{a\,\sim\,\pi^\star(\cdot\,\vert\,s)} 
			\sbr{\, \left(w_{r,\star}^{(t)}\, -\, \hat w_{r}^{(t)}\right)^\top \nabla_\theta \log \pi_\theta^{(t)}(a\,\vert\,s) 
				\,}
			\\[0.2cm]
			&=&
			\mathbb{E}_{s\,\sim\,d_\rho^\star} \mathbb{E}_{a\,\sim\,\pi^\star(\cdot\,\vert\,s)} 
			\sbr{\, 
				\left(w_{r,\star}^{(t)}\, -\, \hat w_{r}^{(t)}\right)^\top \phi_{s,a} 
				\,}
			\\[0.2cm]
			&& - ~ \mathbb{E}_{s\,\sim\,d_\rho^\star} \mathbb{E}_{a'\,\sim\,\pi_\theta^{(t)}(\cdot\,\vert\,s)} \sbr{ \left(w_{r,\star}^{(t)}\, -\, \hat w_{r}^{(t)}\right)^\top \phi_{s,a'} }
			\\[0.2cm]
			&\leq&
			2 \sqrt{ |A|
				\mathbb{E}_{s\,\sim\,d_\rho^\star} \mathbb{E}_{a\,\sim\,\text{Unif}_A} \sbr{ \rbr{ \left(w_{r,\star}^{(t)}\, -\, \hat w_{r}^{(t)}\right)^\top \phi_{s,a} }^2 }
			}
			\\[0.2cm]
			&=&
			2 \sqrt{ 
				|A|  \norm{ w_{r,\star}^{(t)}\, -\, \hat w_{r}^{(t)} }_{\Sigma_{\nu^\star}}^2
			}
		\end{array}
	\end{equation}
	where $\Sigma_{\nu^\star} \DefinedAs \mathbb{E}_{(s,a)\,\sim\,\nu^\star} \left[\, \phi_{s,a} \phi_{s,a}^\top \,\right]$. By the definition of $\kappa$,
	\begin{equation}\label{eq.err2.sample}
		\norm{ w_{r,\star}^{(t)}\, -\, \hat w_{r}^{(t)} }_{\Sigma_{\nu^\star}}^2
		\; \leq  \;
		\kappa
		\norm{ w_{r,\star}^{(t)}\, -\, \hat w_{r}^{(t)} }_{\Sigma_{\nu_0}}^2
		\; \leq  \;
		\frac{\kappa}{1-\gamma}
		\norm{ w_{r,\star}^{(t)}\, -\, \hat w_{r}^{(t)} }_{\Sigma_{\nu^{(t)}}}^2
	\end{equation}
	where we use 
	$(1-\gamma)\nu_0 \leq \nu_{\nu_0}^{\pi^{(t)}} \DefinedAs \nu^{(t)}$ in the second inequality. We note that $w_{r,\star}^{(t)} \in \argmin_{\norm{w_r}_2\,\leq\, W} \,\mathcal{E}_r^{\nu^{(t)}} (w_r; \theta^{(t)})$. Application of the first-order optimality condition for $w_{r,\star}^{(t)}$ yields
	\[
	\left( w_r \, -\,  w_{r,\star}^{(t)} 
	\right)^\top
	\nabla_\theta \mathcal{E}_r^{\nu^{(t)}} \big(w_{r,\star}^{(t)}; \theta^{(t)}\big) 
	\; \geq  \;
	0, 
	\; \text{ for any } w_r \text{ satisfying } \norm{w_r} \,\leq\, W. 
	\]
	Thus,
	\[
	\begin{array}{rcl}
		&& \!\!\!\! \!\!\!\! \!\!
		\mathcal{E}_r^{\nu^{(t)}} \big(w_{r}; \theta^{(t)}\big) \, -\,  \mathcal{E}_r^{\nu^{(t)}} \big(w_{r,\star}^{(t)}; \theta^{(t)}\big) 
		\\[0.2cm]
		& = & 	
		\mathbb{E}_{s,a\,\sim\,\nu^{(t)}} \left[\,  \rbr{ Q_r^{(t)}(s,a) \, -\,   \phi_{s,a}^\top w_{r,\star}^{(t)} \, +\,  \phi_{s,a}^\top w_{r,\star}^{(t)} \, -\,  \phi_{s,a}^\top w_r  }^2 \,\right]
		\, - \, \mathcal{E}_r^{\nu^{(t)}} \big(w_{r,\star}^{(t)}; \theta^{(t)}\big) 
		\\[0.4cm]
		& = & 
		2 \left(w_{r,\star}^{(t)} \, -\,  w_r\right)^\top 
		\mathbb{E}_{s,a\,\sim\,\nu^{(t)}} \left[\,  \rbr{ Q_r^{(t)}(s,a) \, -\,   \phi_{s,a}^\top w_{r,\star}^{(t)} } \phi_{s,a}  \,\right]
		\\[0.2cm]
		&& + ~
		\mathbb{E}_{s,a\,\sim\,\nu^{(t)}} \left[\,  \rbr{  \phi_{s,a}^\top w_{r,\star}^{(t)} \, -\,  \phi_{s,a}^\top w_r  }^2 \,\right]
		\\[0.4cm]
		& = &  \left( w_r  \, -\,  w_{r,\star}^{(t)} \right)^\top	\nabla_\theta \mathcal{E}_r^{\nu^{(t)}} \big(w_{r,\star}^{(t)}; \theta^{(t)}\big) \, + \, \norm{ w_r \, -\,  w_{r,\star}^{(t)} }_{\Sigma_{\nu^{(t)}}}^2
		\\[0.4cm]
		& \geq &  \norm{ w_r \, -\,  w_{r,\star}^{(t)} }_{\Sigma_{\nu^{(t)}}}^2.
	\end{array}
	\]
	Taking $w_r =  \hat w_r^{(t)}$ in the inequality above and combining it with~\eqref{eq.err2.sample} and~\eqref{eq.err1.sample} yield
	\begin{equation}\label{eq.err3.sample}
		\begin{array}{rcl}
			&& \!\!\!\! \!\!\!\! \!\!
			\mathbb{E}_{s\,\sim\,d_\rho^\star} \mathbb{E}_{a\,\sim\,\pi^\star(\cdot\,\vert\,s)} 
			\sbr{\, \left(w_{r,\star}^{(t)}\, -\, \hat w_{r}^{(t)}\right)^\top \nabla_\theta \log \pi_\theta^{(t)}(a\,\vert\,s) 
				\,}
			\\[0.2cm]
			& \leq & \displaystyle
			2\sqrt{\frac{|A|\kappa}{1-\gamma} \left( \mathcal{E}_r^{\nu^{(t)}} \big(\hat w_{r}^{(t)}; \theta^{(t)}\big) \, -\,  \mathcal{E}_r^{\nu^{(t)}} \big(w_{r,\star}^{(t)}; \theta^{(t)}\big)  \right)}.
		\end{array}
	\end{equation}
	
	We now substitute~\eqref{eq.err0.sample} and~\eqref{eq.err3.sample} into the right-hand side of~\eqref{eq.err.sample} as follows
	\[
	\begin{array}{rcl}
		\mathbb{E} \sbr{\,
			\text{\normalfont err}_r^{(t)} (\pi^\star)\,}
		& \leq &
		2\sqrt{ |A| \,  \mathbb{E} \sbr{\, \mathcal{E}_r^{d^\star} \big(w_{r,\star}^{(t)} ; \theta^{(t)}\big) }\,}
		\, + \,
		2\sqrt{\dfrac{|A|\kappa}{1-\gamma} \mathbb{E} \sbr{\, \mathcal{E}_r^{\nu^{(t)}} \big(\hat w_{r}^{(t)}; \theta^{(t)}\big) - \mathcal{E}_r^{\nu^{(t)}} \big(w_{r,\star}^{(t)}; \theta^{(t)}\big) \,} }
		\\[0.2cm]
		& \leq &
		2\sqrt{ |A| \,  \mathbb{E} \sbr{\, \mathcal{E}_r^{d^\star} \big(w_{r,\star}^{(t)} ; \theta^{(t)}\big) \,}}
		\, + \,
		2\sqrt{\dfrac{|A|\kappa}{1-\gamma} \,\dfrac{2G^2}{\sigma_F (K+1)} }
	\end{array}
	\]
	where the second inequality is due to the standard SGD result~\citep{lacoste2012simpler}: for $\alpha_k = 2/(\sigma_F (k+1))$, 
	\[
	\mathcal E_{r,\text{\normalfont est}}^{(t)} 
	\; = \;
	\mathbb{E}\sbr{\, \mathcal{E}_r^{\nu^{(t)}} \big(\hat w_{r}^{(t)} ; \theta^{(t)}\big) \, -\,  \mathcal{E}_r^{\nu^{(t)}} \big(w_{r,\star}^{(t)} ; \theta^{(t)}\big) \,} 
	\;\leq\;
	\frac{2G^2}{\sigma_F(K+1)}.
	\]
	By the same reasoning, we can find a similar bound on $\mathbb{E} \big[\,{
		\text{\normalfont err}_g^{(t)} (\pi^\star)}\,\big]$. 
	Finally, our desired results follow by applying Assumption~\ref{as.errors} and Lemma~\ref{lem.gap.violation.sample}.
	
	\newpage

\end{document}